\newcommand{\bd}{\begin{description}}
\newcommand{\ed}{\end{description}}
\newcommand{\bi}{\begin{itemize}}
\newcommand{\ei}{\end{itemize}}
\newcommand{\be}{\begin{enumerate}}
\newcommand{\ee}{\end{enumerate}}
\newcommand{\beq}{\begin{equation}}
\newcommand{\eeq}{\end{equation}}
\newcommand{\beqs}{\begin{eqnarray*}}
\newcommand{\eeqs}{\end{eqnarray*}}
\definecolor{DarkGreen}{rgb}{0.2, 0.6, 0.3}
\newtheorem{theorem}{Theorem}[section]
\newtheorem{conjecture}{Conjecture}[section]
\newtheorem{lemma}{Lemma}[section]
\newtheorem{remark}{Remark}[section]
\newtheorem{definition}{Definition}
\newtheorem{corollary}[theorem]{Corollary}
\newtheorem{claim}{Claim}
\newtheorem{fact}{Fact}
\newtheorem{proposition}{Proposition}[section]
\begin{document}
\title{\textbf{Some multivariable Rado numbers\footnote{Supported by the National Science Foundation of China
        (Nos. 12061059, 11601254, 11551001, 11161037, and 11461054) and the Qinghai Key
Laboratory of Internet of Things Project (2017-ZJ-Y21).}}}

\author{Gang Yang\footnote{College of Science, University of
Shanghai for Science and Technology, Shanghai 200093, China. {\tt
gangyang98@outlook.com; changxiang-he@163.com}}, \ \ Yaping
Mao\footnote{School of Mathematics and Statistics, Qinghai Normal
University, Xining, Qinghai 810008, China. {\tt
maoyaping@ymail.com}} \footnote{Corresponding author}
\footnote{Academy of Plateau Science and Sustainability, Xining,
Qinghai 810008, China}, \ \ Changxiang He \footnotemark[2], \ \ Zhao
Wang\footnote{College of Science, China Jiliang University, Hangzhou
310018, China. {\tt wangzhao@mail.bnu.edu.cn}}}

\date{}
\maketitle

\begin{abstract}
The Rado number of an equation is a Ramsey-theoretic quantity
associated to the equation. Let $\mathcal{E}$ be a linear equation.
Denote by $\operatorname{R}_r(\mathcal{E})$ the minimal integer, if
it exists, such that any $r$-coloring of $[1,
\operatorname{R}_r(\mathcal{E})]$ must admit a monochromatic
solution to $\mathcal{E}$. In this paper, we give upper and lower
bounds for the Rado number of $\sum_{i=1}^{m-2}x_i+kx_{m-1}=\ell
x_{m}$, and some exact values are also given. Furthermore, we derive
some results for the cases that $\ell=m=4$ and $m=5, \ell=k+i \
(1\leq i\leq 5)$. As a generalization, the \emph{$r$-color Rado
numbers} for linear equations
$\mathcal{E}_1,\mathcal{E}_2,...,\mathcal{E}_r$ is defined as the
minimal integer, if it exists, such that any $r$-coloring of $[1,
\operatorname{R}_r(\mathcal{E}_1,\mathcal{E}_2,\ldots,\mathcal{E}_r)]$
must admit a monochromatic solution to some $\mathcal{E}_i$, where
$1\leq i\leq r$. A lower bound for
$\operatorname{R}_r(\mathcal{E}_1,\mathcal{E}_2,\ldots,\mathcal{E}_r)$
and the exact values of $\operatorname{R}_2(x+y=z,\ell x=y)$ and $\operatorname{R}_2(x+y=z, x+a=y)$ were given by Lov\'{a}sz Local Lemma.\\[2mm]
{\bf Keywords:} Coloring; Linear Equation; Rado Number; Lov\'{a}sz Local Lemma.\\[2mm]
{\bf AMS subject classification 2020:} 11B25, 05D10, 05D40.
\end{abstract}

\section{Introduction}\label{s:1}

One important result in Ramsey theory is the theorem of van der
Waerden \cite{Waerden27}, stating that for any positive integer $k$,
there exists an integer $w(r,k)$ such that any $r$-coloring of
$\{1,2,\ldots,w(r,k)\}$ yields a monochromatic $k$-term arithmetic
progression. This topic has received great attention and wide
research on the arithmetic progression, monochromatic arithmetic
progression, and their generalizations; see the book \cite{LaRo03}
and some papers \cite{BrLa99, Landman98, LaWy97, Robertson16,
Thanatipanonda09, Vijay12} for details.

Let $\mathbb{N}$ represent the set of natural numbers and let
$[a,b]$ denote the set $\{n\in \mathbb{N}\,|\,a\leq n\leq b\}$. A
function $\chi: [1,n]\longrightarrow [0, r-1]$ is referred to as a
\emph{$r$-coloring} of the set $[1,n]$. Given a $r$-coloring $\chi$
and a linear equation $\mathcal{E}$, a solution $(x_1, x_2, \ldots
,x_m)$ to the linear equation $\mathcal{E}$ is \emph{monochromatic}
if and only if $\chi(x_1)=\chi(x_2)=\cdots= \chi(x_m)$.

In the sequel, we use symbols $x,y,z,w$ with or without subscripts
to indicate variables in our equations. Unspecified (integer)
coefficients will be denoted by $a,b,c,d,k,\ell$, with or without
subscripts. We assume coefficients are positive integers, and so $ax+by+cz=dw$ is not the same as $ax+by=cz+dw$.

In 1933, Rado \cite{Rodo33} generalized the concept of Schur numbers
to arbitrary systems of linear equations. The Rado number of an
equation is a Ramsey-theoretic quantity associated to the
equation.

\begin{definition}\label{defi-1}
The \emph{$r$-color Rado number} for a linear equation
$\mathcal{E}$ is defined as the minimal integer, if it exists, such
that any $r$-coloring of $[1, \operatorname{R}_r(\mathcal{E})]$ must
admit a monochromatic solution to $\mathcal{E}$.
\end{definition}

For $r=2$ we may write simply $\operatorname{RR}(\mathcal{E})$.

Harborth and Maasberg \cite{HaMa97} studied the $2$-color Rado
numbers of the equation $a(x+y)=bz$. Kosek and Schaal \cite{KoSc01}
investigated the $2$-color Rado numbers of the equation
$\sum^{m-1}_{i=1} x_i+c=x_m$ for negative values of $c$. Hopkins and
Schaal \cite{HoSc05} studied the Rado numbers for
$\sum^{m-1}_{i=1}a_ix_i=x_m$. Adhikari et al. \cite{ABEMR16} and
Boza et al. \cite{BMRS28} got the results for the $k$-color Rado
number for the equation $x_1+x_2+\cdots+x_n+c=x_{n+1}$. For more
details on the Rado and Schur numbers, we refer to the book
\cite{LaRo03} and papers \cite{ABEMR16, BMRS28, HaMa97, HoSc05,
JoSc05, KoSc03, KoSc01, KSW09, MyRo07, Rodo33, RoMy08, SaWy08,
YaXi15}.

It is natural to have the following generalization of Rado numbers.
\begin{definition}\label{defi-1}
The \emph{$r$-color Rado number}
for linear equations $\mathcal{E}_1,\mathcal{E}_2,\ldots ,\mathcal{E}_r$
is defined as the minimal integer, if it exists, such that any
$r$-coloring of $[1,
\operatorname{R}_r(\mathcal{E}_1,\mathcal{E}_2,\ldots,\mathcal{E}_r)]$
must admit a monochromatic solution to some $\mathcal{E}_i$, where
$1\leq i\leq r$.
\end{definition}

If the above $r$-color Rado number does not exist, then we write
$\operatorname{R}_r(\mathcal{E}_1,\mathcal{E}_2,...,\mathcal{E}_r)=\infty$.
If $r=2$, then we write
$\operatorname{RR}(\mathcal{E}_1,\mathcal{E}_2)$ for short.

The following probability result, due to L. Lov\'{o}sz
\cite{ErdosLovasz}, fundamentally improves the existence argument in
many instances. Let $A_1,\ldots,A_n$ be events in a probability
space $\Omega$. We say that the graph $\Gamma$ with vertex set
$\{1,2,\ldots,n\}$ is a \emph{dependency graph}
$\{A_1,A_2,\ldots,A_n\}$ if:
\begin{center}
$\{i\}$ not joined to $j_1,j_2,\ldots,j_s$ $\Longrightarrow $ $A_i$
and $A_{j_1}\cap A_{j_2}\cap \cdots \cap A_{j_s}$ are independent.
\end{center}

\begin{theorem} {\upshape (\textbf{Lov\'{a}sz Local Lemma} \cite{ErdosLovasz})}\label{th-LLL}
Let $A_1,\ldots,A_n$ be events in a probability space $\Omega$ with
dependence graph $\Gamma$. Suppose that there exists
$x_1,\ldots,x_n$ with $0<x_i\leq 1$ such that
$$
\Pr[A_i]<(1-x_i)\prod_{\{i,j\}\in \Gamma}x_j, \ 1\leq i\leq n.
$$
Then $\Pr[\bigwedge_{i} \overline{A_i}]>0$.
\end{theorem}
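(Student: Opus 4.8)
The plan is the standard induction used to establish the general (lopsided-free) form of the Local Lemma. The quantitative statement I would prove by induction on $|S|$ is: for every index $i$ and every subset $S\subseteq\{1,\ldots,n\}\setminus\{i\}$ with $\Pr[\bigwedge_{j\in S}\overline{A_j}]>0$, one has
\[
\Pr\Big[A_i\;\Big|\;\bigwedge_{j\in S}\overline{A_j}\Big]\;\leq\;1-x_i .
\]
(Positivity of all the conditioning events that appear will come out of the same argument, so I would really prove the conjunction of this bound and the positivity claim.) The base case $S=\emptyset$ is immediate from the hypothesis: $\Pr[A_i]<(1-x_i)\prod_{\{i,j\}\in\Gamma}x_j\leq 1-x_i$, since each $x_j\leq 1$ forces the product to be at most $1$.

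For the inductive step, fix $i$ and $S$ with $|S|\geq 1$, and split $S=S_1\cup S_2$, where $S_1$ is the set of indices in $S$ joined to $i$ in $\Gamma$ and $S_2=S\setminus S_1$. Write $F=\bigwedge_{j\in S_1}\overline{A_j}$ and $G=\bigwedge_{j\in S_2}\overline{A_j}$, and estimate
\[
\Pr[A_i\mid F\wedge G]=\frac{\Pr[A_i\wedge F\mid G]}{\Pr[F\mid G]}
\]
by bounding numerator and denominator separately. For the numerator, $\Pr[A_i\wedge F\mid G]\leq\Pr[A_i\mid G]=\Pr[A_i]$, the last equality because $i$ is not joined to any index of $S_2$, so the dependency-graph hypothesis makes $A_i$ independent of $G$; hence the numerator is at most $(1-x_i)\prod_{\{i,j\}\in\Gamma}x_j$. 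For the denominator, enumerate $S_1=\{j_1,\ldots,j_k\}$ and telescope:
\[
\Pr[F\mid G]=\prod_{t=1}^{k}\Big(1-\Pr\big[A_{j_t}\mid\overline{A_{j_1}}\wedge\cdots\wedge\overline{A_{j_{t-1}}}\wedge G\big]\Big).
\]
Each conditioning event here is a conjunction over a subset of $S\setminus\{j_t\}$, hence over fewer than $|S|$ indices, so the inductive hypothesis gives $\Pr[A_{j_t}\mid\cdots]\leq 1-x_{j_t}$ and therefore $\Pr[F\mid G]\geq\prod_{t=1}^{k}x_{j_t}\geq\prod_{\{i,j\}\in\Gamma}x_j$, the final inequality because $S_1$ is contained in the neighbourhood of $i$ and all $x_j\in(0,1]$. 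Dividing, $\Pr[A_i\mid F\wedge G]\leq 1-x_i$, which closes the induction; the positivity of $\Pr[F\mid G]$ (and inductively of $\Pr[G]$, $\Pr[F\wedge G]$) follows from the same telescoping, since each factor is at least the corresponding $x_j>0$.

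To finish, I would apply the telescoping identity once more to the whole family:
\[
\Pr\Big[\bigwedge_{i=1}^{n}\overline{A_i}\Big]=\prod_{i=1}^{n}\Big(1-\Pr\big[A_i\mid\overline{A_1}\wedge\cdots\wedge\overline{A_{i-1}}\big]\Big)\;\geq\;\prod_{i=1}^{n}x_i\;>\;0,
\]
using the claim with $S=\{1,\ldots,i-1\}$ at the $i$-th step and $x_i>0$ at the end.

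The step I expect to be the main obstacle is the bookkeeping in the inductive step: cleanly separating the neighbours $S_1$ of $i$ from the non-neighbours $S_2$, checking that every conditioning event produced by the telescoping really involves a strictly smaller index set (so the induction is legitimate) and has positive probability, and being careful to invoke the dependency-graph hypothesis only in the form actually available, namely that $A_i$ is independent of Boolean combinations of the non-adjacent $A_j$'s — which is what justifies replacing $\Pr[A_i\mid G]$ by $\Pr[A_i]$. Everything else reduces to routine manipulation of conditional probabilities and the elementary inequalities $x_j\le 1$.
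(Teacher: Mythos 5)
Your proof is correct: it is the standard inductive argument for the general Local Lemma (inducting on $|S|$ to show $\Pr[A_i\mid\bigwedge_{j\in S}\overline{A_j}]\leq 1-x_i$, splitting $S$ into neighbours and non-neighbours of $i$, telescoping the denominator, and then telescoping once more over all $n$ events), and all the delicate points — strict decrease of the index set in the induction, positivity of the conditioning events, and where the dependency-graph hypothesis is actually invoked — are handled properly. Note, however, that the paper itself gives no proof of this statement: it is quoted verbatim from Erd\H{o}s--Lov\'asz, so there is no internal argument to compare against. One small remark: the paper's stated definition of dependency graph only asserts independence of $A_i$ from intersections of the non-adjacent $A_j$'s, whereas your argument (like every proof of the lemma) needs independence of $A_i$ from $\bigwedge_{j\in S_2}\overline{A_j}$; this is a defect of the paper's phrasing rather than of your proof, since the standard definition requires mutual independence from the non-adjacent events, which covers the Boolean combination you use.
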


A slightly more convenient form of the local lemma results from the
following observation. Set
$$
y_i=\frac{1-x_i}{x\Pr[A_i]},
$$
so that
$$
x_i=\frac{1}{1+y_i\Pr[A_i]}.
$$

Since $1+z\leq \exp(z)$, we have:
\begin{corollary}[\cite{GrahamRothschildSpencer}]\label{Lemma-LLL}
Suppose that $A_1,\ldots,A_n$ are events in a probability space
having dependence graph $\Omega$, and there exist positive
$y_1,y_2,\ldots,y_n$ satisfying
$$
\ln y_i>\sum_{\{i,j\}\in \Gamma}y_j\Pr[A_j]+y_i\Pr[A_i],
$$
for $1\leq i\leq n$. Then $\Pr[\bigwedge \overline{A_i}]>0$.
\end{corollary}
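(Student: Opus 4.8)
The plan is to obtain Corollary~\ref{Lemma-LLL} as a direct consequence of Theorem~\ref{th-LLL} through the reparametrisation recorded just above the statement. First I would, for each $i$, set $x_i = 1/(1 + y_i\Pr[A_i])$; since $y_i > 0$ and $\Pr[A_i] \ge 0$ this yields $0 < x_i \le 1$, so the $x_i$ are legitimate weights for Theorem~\ref{th-LLL}. (Incidentally, the displayed formula $y_i = (1-x_i)/(x\Pr[A_i])$ in the text is a typo for $y_i = (1-x_i)/(x_i\Pr[A_i])$, which is equivalent to this choice of $x_i$; I would state the substitution in the clean form $x_i = 1/(1+y_i\Pr[A_i])$ and work from there.)

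Next I would verify the hypothesis $\Pr[A_i] < (1-x_i)\prod_{\{i,j\}\in\Gamma} x_j$ of Theorem~\ref{th-LLL}. From $1 - x_i = y_i\Pr[A_i]/(1 + y_i\Pr[A_i])$ and the definition of the $x_j$, this inequality becomes, after cancelling the common factor $\Pr[A_i]$ (the case $\Pr[A_i] = 0$ being immediate),
\[
y_i > (1 + y_i\Pr[A_i])\prod_{\{i,j\}\in\Gamma}(1 + y_j\Pr[A_j]).
\]

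The only genuine ingredient is the elementary bound $1 + z \le e^{z}$, applied to each factor on the right-hand side above; it gives
\[
(1 + y_i\Pr[A_i])\prod_{\{i,j\}\in\Gamma}(1 + y_j\Pr[A_j]) \le \exp\Bigl(y_i\Pr[A_i] + \sum_{\{i,j\}\in\Gamma} y_j\Pr[A_j]\Bigr).
\]
Hence it suffices that $y_i$ exceed this exponential, i.e.\ that $\ln y_i > y_i\Pr[A_i] + \sum_{\{i,j\}\in\Gamma} y_j\Pr[A_j]$, which is exactly the assumed inequality; strictness is preserved at every step, so Theorem~\ref{th-LLL} applies and delivers $\Pr[\bigwedge_i \overline{A_i}] > 0$.

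There is no genuinely hard step here — the argument is bookkeeping built on $1 + z \le e^{z}$. The only points that require care are the admissibility check $0 < x_i \le 1$, tracking that the strict inequalities keep pointing the correct way when one passes between the multiplicative and logarithmic forms, and disposing of the degenerate case $\Pr[A_i] = 0$; beyond that the corollary is essentially a restatement of Theorem~\ref{th-LLL}.
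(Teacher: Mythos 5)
Your proof is correct and follows essentially the same route as the paper, which merely sketches it: the substitution $x_i=1/(1+y_i\Pr[A_i])$ (the paper's displayed $y_i=(1-x_i)/(x\Pr[A_i])$ is indeed a typo for $x_i$ in the denominator) followed by the bound $1+z\leq e^{z}$ to verify the hypothesis of Theorem \ref{th-LLL}. You supply the bookkeeping the paper omits, including the admissibility check and the trivial $\Pr[A_i]=0$ case, so nothing further is needed.
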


By Corollary \ref{Lemma-LLL}, we can derive the following lower
bound in Section $2$.

\begin{theorem}\label{th-Lower-LLL}
Let $m_1,\ldots,m_r$ be the number of variables of equation
$\mathcal{E}_i$ with $m_1\geq 3$ and $m_1\leq \cdots \leq m_r$. For
all large $m_r$,
\begin{equation}          \label{eqlll}
\operatorname{R}_r(\mathcal{E}_1,\mathcal{E}_2,...,\mathcal{E}_r)\geq
\frac{m_r(r-1)}{c_2(\ln m_r(r-1)-\ln c_2)},
\end{equation}
where $c_1,c_2,c_3$ are positive such that $c_3-c_1^{m_1}c_2>0$ and
$c_3-c_1c_2+m_r<0$.
\end{theorem}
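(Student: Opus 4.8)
The plan is to prove \eqref{eqlll} by the probabilistic method, with Corollary \ref{Lemma-LLL} as the existence engine. Write $N$ for the right-hand side of \eqref{eqlll} and put $n=\lceil N\rceil-1$. It suffices to produce an $r$-coloring of $[1,n]$ admitting no monochromatic solution to any $\mathcal{E}_i$: such a coloring forces $\operatorname{R}_r(\mathcal{E}_1,\dots,\mathcal{E}_r)>n$, hence $\operatorname{R}_r(\mathcal{E}_1,\dots,\mathcal{E}_r)\ge\lceil N\rceil\ge N$. To build it, color $1,2,\dots,n$ independently and uniformly at random with colors from $[0,r-1]$.

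For $i\in[1,r]$ and each solution $\mathbf{s}=(s_1,\dots,s_{m_i})\in[1,n]^{m_i}$ of $\mathcal{E}_i$, let $A_{i,\mathbf{s}}$ be the event that $\chi$ is constant on the set $\{s_1,\dots,s_{m_i}\}$. The goal is $\Pr\big[\bigwedge_{i,\mathbf{s}}\overline{A_{i,\mathbf{s}}}\big]>0$. Two estimates feed Corollary \ref{Lemma-LLL}. First, since $\mathcal{E}_i$ has positive coefficients and (as we may assume) no solution with all coordinates equal, every solution $\mathbf{s}$ uses at least two distinct integers; if it uses $t$ of them then $\Pr[A_{i,\mathbf{s}}]=r^{\,1-t}\le r^{-1}$, and for the non-degenerate solutions $t=m_i$, so this probability is as small as $r^{\,1-m_i}$ — which is what keeps the equations with many variables under control. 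Second, the number of solutions of $\mathcal{E}_j$ in $[1,n]$ through a prescribed integer is at most $m_j n^{m_j-2}$ (choose the coordinate holding that integer, then all but one of the remaining coordinates freely), so in the dependency graph $\Gamma$ (adjacency: sharing an integer) a fixed $A_{i,\mathbf{s}}$ has at most $\sum_{j=1}^{r}m_i m_j n^{m_j-2}$ neighbours, one for every pair consisting of an integer occurring in $\mathbf{s}$ and a solution of some $\mathcal{E}_j$ through that integer.

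Now apply Corollary \ref{Lemma-LLL} with weights depending only on the equation; the form of the hypotheses suggests $y_{i,\mathbf{s}}:=c_3\,c_1^{-m_i}/c_2$. The assumption $c_3-c_1^{m_1}c_2>0$ forces $0<c_1<1$ and, since $m_i\ge m_1$, also $c_3>c_1^{m_i}c_2$; hence $y_{i,\mathbf{s}}>1$ and $\ln y_{i,\mathbf{s}}>0$, as Corollary \ref{Lemma-LLL} requires. It then remains to verify, for every $i$ and $\mathbf{s}$,
\[
\ln y_{i,\mathbf{s}}\;>\;\sum_{\{(i,\mathbf{s}),(j,\mathbf{t})\}\in\Gamma}y_{j,\mathbf{t}}\,\Pr[A_{j,\mathbf{t}}]\;+\;y_{i,\mathbf{s}}\,\Pr[A_{i,\mathbf{s}}].
\]
Feeding in the degree bound and the probability estimates, the right-hand side is at most an expression of the shape $\frac{m_i}{r}\sum_{j=1}^{r}m_j n^{m_j-2}\,\frac{c_3}{c_1^{m_j}c_2}+\frac{1}{r}\,\frac{c_3}{c_1^{m_i}c_2}$. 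Writing $m_j n^{m_j-2}c_1^{-m_j}=m_j n^{-2}(n/c_1)^{m_j}$ and using that the choice $n\asymp m_r r/\big(c_2\ln(m_r r)\big)$ keeps $n/c_1$ small compared with $m_r$, the geometric-type sum is controlled by its extreme terms, and after taking logarithms the required inequality collapses to the single numerical condition $c_3-c_1c_2+m_r<0$, which holds for all sufficiently large $m_r$. With every hypothesis of Corollary \ref{Lemma-LLL} checked, $\Pr\big[\bigwedge_{i,\mathbf{s}}\overline{A_{i,\mathbf{s}}}\big]>0$, so the desired coloring of $[1,n]$ exists and \eqref{eqlll} follows.

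The main obstacle is exactly the last step: selecting the weights $y_{i,\mathbf{s}}$ and pushing the asymptotic bookkeeping through so that precisely the two stated sign conditions on $c_1,c_2,c_3$ appear. The two delicate points are (i) the solutions with few distinct integers, for which $\Pr[A_{i,\mathbf{s}}]$ is only $r^{-1}$ rather than $r^{\,1-m_i}$, so their relatively sparse occurrences must be absorbed separately; and (ii) ensuring the $j=r$ term — the equation with the most variables, hence potentially the most solutions through a point — does not dominate the dependency sum. It is there that the hypothesis ``for all large $m_r$'' and the precise form of $N$ are consumed; a crude estimate of either the dependency degree or of $\Pr[A_{i,\mathbf{s}}]$ would break the chain of inequalities.
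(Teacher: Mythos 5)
Your overall plan (random coloring plus Corollary \ref{Lemma-LLL}) is the right one, but the specific randomization you choose---a \emph{uniform} $r$-coloring, with $A_{i,\mathbf{s}}$ the event that a solution of $\mathcal{E}_i$ is monochromatic in \emph{any} color---cannot reach the stated bound, and this is not a bookkeeping issue that "collapses to $c_3-c_1c_2+m_r<0$." Under the uniform coloring a non-degenerate solution of $\mathcal{E}_1$ (which has only $m_1\geq 3$ variables) is monochromatic with probability $r^{1-m_1}\leq r^{-2}$, and there are $\Theta(n^{m_1-2})\geq\Theta(n)$ such solutions through any fixed integer; with your weights $y_{i,\mathbf{s}}=c_3c_1^{-m_i}/c_2$ the left side $\ln y_{i,\mathbf{s}}=\ln(c_3/c_2)-m_i\ln c_1$ is bounded for the short equations, while the $j=1$ portion of the dependency sum alone is already $\Theta(n/r^2)$. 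So the hypothesis of Corollary \ref{Lemma-LLL} fails precisely in the regime "$m_r$ large" (hence $n$ large) that the theorem addresses. Moreover, in your setup $c_3$ enters only through the bounded factor $\ln(c_3/c_2)$, so there is no mechanism by which a condition such as $c_3-c_1c_2+m_r<0$ (which forces $c_1c_2>m_r$) could emerge from your estimates.

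The missing idea is the \emph{asymmetric} random coloring the paper uses, which is the standard device for off-diagonal LLL lower bounds. The paper assigns color $i$ with probability $p_i$, where $p_1=\cdots=p_{r-1}=p/(r-1)$, $p_r=1-p$, $p=c_1(r-1)/n$, and attaches to each solution of $\mathcal{E}_i$ the event that it is monochromatic \emph{in color $i$} (this designated-color convention is the one used throughout the paper, e.g.\ in the proof of Theorem \ref{th-twoo}). Then a solution of a short equation $\mathcal{E}_i$, $i<r$, is bad only with probability $(c_1/n)^{m_i}$, which beats the $\Theta(n^{m_i-2})$ count of solutions through a point; and a solution of the long equation $\mathcal{E}_r$, with $m_r\approx c_2n(\ln n)/(r-1)$ variables, is bad with probability $(1-p)^{m_r}\approx n^{-c_1c_2}$. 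Pairing this with the weights $y_1=\cdots=y_{r-1}=1+\epsilon$ and $y_r=n^{c_3}$ is exactly where the two sign conditions $c_3-c_1^{m_1}c_2>0$ and $c_3-c_1c_2+m_r<0$ arise. Your side remark about degenerate solutions with repeated entries is a legitimate concern (for both proofs), but it is secondary: the structural obstacle above is what prevents your argument from producing \eqref{eqlll}.
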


\begin{theorem}\label{th-twoo}
$(i)$ For $k\geq 3$, $\operatorname{RR}(x+y=z,\ell x=y)=5k$.

$(ii)$ For $a\geq 2$, we have
$$
\operatorname{RR}(x+y=z, x+a=y)=
\begin{cases}
\infty &\text{$a$ is odd,}\\
3a+2 &\text{$a$ is even.}\\
\end{cases}
$$
\end{theorem}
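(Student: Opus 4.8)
The plan splits each claim into a lower bound (an explicit $2$-colouring of $[1,N-1]$ avoiding both configurations) and an upper bound (every $2$-colouring of $[1,N]$ creates one of them). For part (ii) with $a$ odd the lower bound works for every $n$: colour the odd integers $0$ and the even integers $1$; the colour-$0$ class is sum-free and so has no solution of $x+y=z$, while the colour-$1$ class contains no two integers differing by the odd number $a$ and so has no solution of $x+a=y$; hence $\operatorname{RR}(x+y=z,x+a=y)=\infty$. For $a$ even I would colour $[a+1,2a+1]$ with $0$ and $[1,a]\cup[2a+2,3a+1]$ with $1$: the colour-$0$ block is sum-free because $2(a+1)>2a+1$, and in the colour-$1$ set both intervals have diameter $a-1<a$ while the gap between them is $a+1>a$, so no colour-$1$ pair is at distance $a$; thus $[1,3a+1]$ avoids both configurations and $\operatorname{RR}\ge 3a+2$. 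For part (i) the lower bound comes from placing into colour $0$ a small sum-free set that meets every pair $\{j,\ell j\}\subseteq[1,5\ell-1]$ --- for $\ell$ not too small the set $\{1,4,2\ell,3\ell\}$ does this --- and colouring everything else $1$, so that colour $0$ has no Schur triple and colour $1$ has no pair $\{x,\ell x\}$; this yields $\operatorname{RR}\ge 5\ell$.

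For the upper bound in part (i), suppose $[1,5\ell]$ is $2$-coloured with no colour-$0$ solution of $x+y=z$ and no colour-$1$ solution of $\ell x=y$, and let $C$ be the colour-$0$ class. Then $C$ is sum-free, and since the pairs $\{x,\ell x\}$ lying in $[1,5\ell]$ are exactly those with $x\in\{1,2,3,4,5\}$, $C$ must meet each of $\{1,\ell\},\{2,2\ell\},\{3,3\ell\},\{4,4\ell\},\{5,5\ell\}$. I would then project onto $\{1,\dots,5\}$: set $A=C\cap\{1,\dots,5\}$ and $B=\{\,j\le 5:\ \ell j\in C\,\}$. Both $A$ and $B$ are sum-free ($B$ because $\ell a+\ell b=\ell(a+b)\in C$ whenever $a,b,a+b\in B$), and the five meeting conditions say precisely $A\cup B=\{1,\dots,5\}$. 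So $\{1,\dots,5\}$ is a union of two sum-free sets, contradicting the classical fact $\operatorname{RR}(x+y=z)=5$ (Schur). Hence $\operatorname{RR}(x+y=z,\ell x=y)\le 5\ell$, and this upper bound, matched by the construction above, gives the claimed value $5\ell$.

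For the upper bound in part (ii) with $a$ even, suppose $[1,3a+2]$ is $2$-coloured avoiding both configurations and split on the colour of $1$. If $1$ has colour $1$, then $a+1$ has colour $0$ (else $\{1,a+1\}$ is a colour-$1$ solution of $x+a=y$), hence $2a+2$ has colour $1$ (else $(a+1)+(a+1)=2a+2$ is a colour-$0$ Schur triple); iterating ``a colour-$0$ element $c$ of $[a+1,2a+1]$ together with $a+1\in C$ forces $c+(a+1)$ to colour $1$, and then the distance-$a$ edge forces $c+1$ to colour $0$'' drives $[a+1,2a+2]$ entirely into colour $0$, and $(a+1)+(a+1)=2a+2$ contradicts sum-freeness. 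If $1$ has colour $0$, then $2$ has colour $1$, so $a+2$ has colour $0$, so $a+3$ has colour $1$ (from $1+(a+2)=a+3$), so $3$ has colour $0$, so $4$ has colour $1$ (from $1+3=4$), so $a+4$ has colour $0$; then the induction ``if $j$ is odd of colour $0$ with $j\le 2a-2$, then $a+j+4=j+(a+4)$ has colour $1$, hence $j+4$ has colour $0$'' forces every odd integer in $[1,2a+1]$ to colour $0$. Since $a$ is even, $a+3$ is odd and at most $2a+1$, so $a+3$ has colour $0$ --- contradicting the earlier deduction. This gives $\operatorname{RR}\le 3a+2$, hence equality for $a$ even.

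The delicate point I expect is the last case (colour of $1$ equal to $0$, $a$ even): making the induction ``all small odd integers have colour $0$'' airtight, keeping the range restriction $j\le 2a-2$ straight, and then noticing that the parity of $a$ forces $a+3$ onto that list --- this is precisely the step that separates $a$ even from $a$ odd, and the tiny values of $a$ should be cross-checked directly. A secondary difficulty is that part (i) is not uniform in $\ell$: the lower-bound transversal is not a single formula, and while the projection argument always gives $\operatorname{RR}\le 5\ell$, a matching construction must be produced, so the small values of $\ell$ must be examined by hand (the thing to watch is the chain $1,\ell,\ell^2$ interacting with sum-freeness when $\ell^2$ is small).
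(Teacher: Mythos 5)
Part (ii) of your proposal is correct and follows essentially the paper's route: the parity colouring for odd $a$, the block colouring of $[1,3a+1]$ for even $a$, and then a forcing chain starting from the colour of $1$; your way of organising the chains as inductions (the step $c\mapsto c+1$ inside $[a+1,2a+2]$ in the first case, and the step $j\mapsto j+4$ over odd $j\le 2a-2$ in the second, with the parity of $a$ placing $a+3$ among the forced odd numbers) is sound and in fact tidier than the paper's ad hoc element chase. For part (i) your upper bound is a genuinely different argument: instead of chasing the colours of $1,2,4,2\ell,3\ell,4\ell,5\ell$ as the paper does, you note that the colour-$0$ class is sum-free and must meet each pair $\{j,\ell j\}$, $1\le j\le 5$, and project to write $\{1,\dots,5\}$ as a union of two sum-free sets, contradicting $\operatorname{RR}(x+y=z)=5$. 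That step is correct and buys robustness: it avoids, in particular, the moment in the paper's second case where a \emph{blue} solution of $x+y=z$ is invoked as a contradiction, which under the colour-specific (off-diagonal) reading it is not.

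The genuine gap is the lower bound in part (i), which you defer as a ``small $\ell$'' check. Your transversal $\{1,4,2\ell,3\ell\}$ fails exactly at $\ell=4$ (there $4+4=2\ell$ and $4+2\ell=3\ell$), and this is not a case that can be settled by a better construction: under the convention you use everywhere else, namely that variables may repeat (your steps $1+1=2$ and $(a+1)+(a+1)=2a+2$, and the value $\operatorname{RR}(x+y=z)=5$, all require repeats), no valid colouring of $[1,19]$ exists for $\ell=4$. Indeed the colour-$0$ class would have to be sum-free and meet $\{1,4\}$, $\{2,8\}$ and $\{4,16\}$: if $4$ has colour $0$, then neither $2$ nor $8$ may ($2+2=4$, $4+4=8$), so $\{2,8\}$ is uncovered; if $4$ does not, then $1$ and $16$ must, and again neither $2$ nor $8$ may ($1+1=2$, $8+8=16$). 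Hence $\operatorname{RR}(x+y=z,4x=y)\le 19<5\cdot 4$, so the claimed value cannot be matched at $\ell=4$; it could only be rescued by banning repeated variables in $x+y=z$, but then your $1+1=2$ steps and the appeal to Schur no longer apply. (The paper's own colouring $\{1,4,2k,4k\}$ has the same defect, since $2k+2k=4k$, so the difficulty originates in the source, but your write-up cannot claim the stated equality for all $\ell\ge 3$ as it stands.) Note also that the worry you flag about the chain $1,\ell,\ell^2$ is not where the trouble lies: for $\ell=3$ the element $\ell^2=3\ell$ sits in your colour-$0$ set harmlessly, and for $\ell\ge 5$, together with your projection argument, your construction does give $5\ell$.
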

\begin{itemize}
\item[]
Let $\mathcal{E}(m,k,\ell)$ denote the equation
\begin{equation}          \label{eq01}
\sum_{i=1}^{m-2}x_i+kx_{m-1}=\ell x_{m}.
\end{equation}
\item[]
Let $\mathcal{E}_1(n,\ell)$ denote the equation
\begin{equation}          \label{eq1}
\sum_{i=1}^{n-1}x_i=\ell x_{n}.
\end{equation}

\item[]
Let $\mathcal{E}_2(\sum_{i=1}^{m-1}a_i,\ell)$ denote the equation
\begin{equation}          \label{eq2}
\sum_{i=1}^{m-1}a_ix_i=y_1+y_2+\cdots+y_{\ell}.
\end{equation}
\end{itemize}

Let $\operatorname{R}_r(\mathcal{E}(m,k,\ell))$,
$\operatorname{R}_r(\mathcal{E}_1(n,\ell))$,
$\operatorname{R}_r(\mathcal{E}_2(\sum_{i=1}^{m-1}a_i,\ell))$ be the
$r$-color Rado number of Equations (\ref{eq01}), (\ref{eq1}),
(\ref{eq2}) respectively. If $r=2$, then
$\operatorname{RR}(\mathcal{E}(m,k,\ell))=\operatorname{R}_2(\mathcal{E}(m,k,\ell))$,
$\operatorname{RR}(\mathcal{E}_1(n,\ell))=\operatorname{R}_2(\mathcal{E}_1(n,\ell))$,
and
$\operatorname{RR}(\mathcal{E}_2(\sum_{i=1}^{m-1}a_i,\ell))=\operatorname{R}_2(\mathcal{E}_2(\sum_{i=1}^{m-1}a_i,\ell))$.

\begin{remark}
$(i)$ If $y_1=y_2=\cdots=y_{\ell}=x_m$ and $a_i=1$ for
$i=1,2,...,m-2$ and $a_{m-1}=k$, then the solutions of (\ref{eq2})
equal to the solutions of (\ref{eq01}).

$(ii)$ If $a_i=1$ for $i=1,2,\ldots,m-2$ and $a_{m-1}=k$, then the
solutions of (\ref{eq01}) are also the solutions of (\ref{eq2}).
Furthermore, we have $\operatorname{RR}(\mathcal{E}_2(k+m-2,\ell))\leq
\operatorname{RR}(\mathcal{E}(m,k,\ell))$.
\end{remark}

Saracino \cite{SaD13} studied the $2$-color Rado number of $\mathcal{E}_1(n,\ell)$.

\begin{theorem}[\cite{SaD13}]\label{th-SaD13-1}
For every integer $\ell\geq 3$ and $n\geq \ell/2+1$, we have {\small
\begin{table}[h]
\caption{$\operatorname{RR}(\mathcal{E}_1(n,\ell))$\,.}
\label{tab:1} \centering
\begin{tabular}{ccc}
\cline{1-2} \hline Values  & Conditions on $n,\ell$ \\[0.1cm]
\hline
$1$ & $n=\ell+1$; \\[0.1cm]
$3$ & $\ell+2 \leq n \leq \frac{3 \ell}{2}+1$ and $\ell \equiv n-1(\bmod\, 2)$; \\[0.1cm]
  & $\frac{2 \ell}{3}+1 \leq n \leq \ell$ and $\ell \geq 4$ and $\ell \equiv
n-1(\bmod\, 2)$;\\[0.1cm]
$4$ & $\ell+2 \leq n \leq \frac{3 \ell}{2}+1$ and $\ell \not \equiv n-1(\bmod\, 2)$;\\[0.1cm]
    & $\frac{3 \ell}{2}+1 < n \leq 2 \ell+1$ and $\ell \equiv n-1(\bmod\, 3)$;\\[0.1cm]
    & $\frac{2 \ell}{3}+1 \leq n \leq \ell$ and $\ell \geq 4$ and $\ell \not \equiv n-1(\bmod\, 2)$;\\[0.1cm]
   & $\frac{\ell}{2}+1 \leq n<\frac{2 \ell}{3}+1$ and $\ell \geq 4$ and $\ell
 \equiv n-1(\bmod\, 3)$;\\[0.1cm]
$5$ & $\frac{3 \ell}{2}+1 < n \leq 2 \ell+1$ and $\ell\not \equiv n-1(\bmod\, 3)$; \\[0.1cm]
   & $\frac{\ell}{2}+1 \leq n<\frac{2 \ell}{3}+1$ and $\ell \geq 4$ and $\ell \not \equiv n-1(\bmod\, 3)$\\
  & except $\ell=4,5$, $n=\ell-1$ and $10\leq \ell\leq 14, n=\ell-4$;\\[0.1cm]
$6$ & $10 \leq \ell \leq 14$ and $n=\ell-4$;\\[0.1cm]
$9$  & $\ell=n=3$, or $\ell=5$ and $n=4$;\\[0.1cm]
$10$  & $\ell=4$ and $n=3$;\\[0.1cm]
$\left\lceil\frac{n-1}{\ell}\left\lceil\frac{n-1}{\ell}\right\rceil\right\rceil$
&  $n \geq 2 \ell+2$.\\[0.1cm]
\cline{1-2}
\end{tabular}
\end{table}
}
\end{theorem}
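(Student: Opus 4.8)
The plan is to pin down $\operatorname{RR}(\mathcal{E}_1(n,\ell))$ by a case analysis following the rows of the table, i.e.\ by the position of $n$ relative to $\ell$, establishing in each case a matching lower bound (an explicit $2$-colouring of $[1,N-1]$ with no monochromatic solution) and upper bound (every $2$-colouring of $[1,N]$ has one), where $N$ is the claimed value. The sporadic rows --- $\ell=n=3$; $\ell=5$ and $n=4$; $\ell=4$ and $n=3$; $10\le\ell\le14$ with $n=\ell-4$ (values $9,9,10,6$) --- are disposed of by ad hoc colourings together with a finite check. Throughout, the standing hypotheses $\ell\ge3$ and $n\ge\ell/2+1$ are used.

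The backbone is the classification of \emph{solution-free intervals}. For $n\ge\ell+2$ one checks that an integer interval $[a,A]$ contains no solution of $\sum_{i=1}^{n-1}x_i=\ell x_n$ with all variables in $[a,A]$ if and only if $\ell A<(n-1)a$, equivalently $A\le\ceil{(n-1)a/\ell}-1$: since $\ell<n-1$ here, every integer in $[(n-1)a,(n-1)A]$ is a sum of $n-1$ members of $[a,A]$, while $\ell v<(n-1)A$ automatically for $v\le A$, so solvability just requires $\ell v\ge(n-1)a$ for some admissible $v$. In particular, with $q:=\ceil{(n-1)/\ell}$, the interval $[1,A]$ is solution-free iff $A\le q-1$, and $[q,A]$ is solution-free iff $A\le\ceil{q(n-1)/\ell}-1$. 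For $n\le\ell$ the correct statement is instead that $[a,A]$ is solution-free iff for every $v\in[a,A]$ one has $\ell v<(n-1)a$ or $\ell v>(n-1)A$ (both can now happen), which produces the ``gaps'' responsible for the small values. The lower bounds follow immediately: for $n\ge2\ell+2$ the two-block colouring that is $0$ on $[1,q-1]$ and $1$ on $[q,\ceil{q(n-1)/\ell}-1]$ has no monochromatic solution, so $\operatorname{RR}(\mathcal{E}_1(n,\ell))\ge\ceil{\tfrac{n-1}{\ell}\ceil{\tfrac{n-1}{\ell}}}$; for $\ell+2\le n\le2\ell+1$ a two- or three-block colouring with boundaries forced by the same inequality, refined by a parity (resp.\ residue mod $3$) obstruction inside one block, gives the lower bounds $3,4,5$, with $\ell\equiv n-1\ (\mathrm{mod}\ 2)$ at $n=\tfrac32\ell+1$ and $\ell\equiv n-1\ (\mathrm{mod}\ 3)$ at $n=2\ell+1$ deciding between consecutive values; for $\ell/2+1\le n\le\ell$ short explicit colourings of $[1,2],\dots,[1,5]$ work, using that a monochromatic $\{1,2\}$ already forces the solution $x_n=1$, $x_i\in\{1,2\}$.

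For the upper bounds I would proceed differently in the bounded and unbounded ranges. When the claimed value is at most $6$ (i.e.\ $\ell/2+1\le n\le2\ell+1$) it is a finite verification: from $\chi(1)$ one propagates forced colours across $[1,3],[1,4],[1,5],[1,6]$ and checks that, whichever colour each new point receives, one of the resulting small classes is solvable precisely under the stated arithmetic condition --- e.g.\ $\{2,3\}$ is solvable iff $n\ge\tfrac23\ell+1$, and $\{1,3\}$ iff $\ell\equiv n-1\ (\mathrm{mod}\ 2)$, which is how the thresholds $\tfrac23\ell+1$ and the parity condition enter. In the main regime $n\ge2\ell+2$ one runs a ``jumping'' argument: assuming $\chi$ colours $[1,N]$ with $N=\ceil{q(n-1)/\ell}$ and has no monochromatic solution, normalise $\chi(1)=0$; then the colour-$0$ initial segment lies in $[1,q-1]$, so the first colour-$1$ value is $\le q$, and the maximal colour-$1$ interval above it must stop below $\tfrac{n-1}{\ell}$ times its left endpoint, forcing a fresh colour-$0$ value $w$ not far above; taking $x_n=w$ and splitting the remaining $n-1$ variables between $w$ and the block $[1,q-1]$ --- possible since $w\le N$ is not too large, so $\ceil{(n-1-\ell)/(w-1)}\le\ell$ of them may be set to $w$ --- produces a monochromatic solution. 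Iterating (each overlong monochromatic interval forces a new point of the opposite colour until a class becomes solvable) squeezes the colour classes into short intervals and pins the upper bound to the table value; the scaling symmetry $(x_1,\dots,x_n)\mapsto(cx_1,\dots,cx_n)$ is used to organise this induction by passing between scales, and one may alternatively try to show directly that some extremal solution-free colouring is a block colouring.

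The hard part will be the upper-bound forcing at the three boundary thresholds $n=\tfrac23\ell+1$, $n=\tfrac32\ell+1$ and $n=2\ell+1$, where the colour propagation branches into many subcases and the precise arithmetic of $\ell\bmod2$, $\ell\bmod3$ and $n\bmod\ell$ must be tracked to distinguish consecutive table entries; a secondary obstacle is verifying that the sporadic colourings giving $9,9,10,6$ cannot be improved. Once these are settled, the clean closed form $\ceil{\tfrac{n-1}{\ell}\ceil{\tfrac{n-1}{\ell}}}$ for $n\ge2\ell+2$ emerges directly from matching the two-block colouring against the jumping argument.
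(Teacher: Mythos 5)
First, a point of comparison: this theorem is quoted from Saracino \cite{SaD13} and the present paper contains no proof of it, so your attempt has to stand on its own as a reconstruction of Saracino's argument. As such, what you have written is a correct and well-organised \emph{plan} rather than a proof. The lower-bound half is essentially complete: your characterisation of solution-free intervals (for $n\ge\ell+2$, the interval $[a,A]$ carries no solution iff $\ell A<(n-1)a$) is right, the two-block colouring it produces gives $\operatorname{RR}(\mathcal{E}_1(n,\ell))\ge\lceil\frac{n-1}{\ell}\lceil\frac{n-1}{\ell}\rceil\rceil$ for $n\ge2\ell+2$, and your solvability criteria for the small sets $\{1,2\}$, $\{1,3\}$, $\{2,3\}$ correctly explain where the thresholds $\frac{2\ell}{3}+1$, $\frac{3\ell}{2}+1$ and the mod-$2$ and mod-$3$ conditions in the table come from.

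The genuine gap is the upper bounds, which are the actual content of the theorem and which you explicitly defer (``the hard part will be\ldots''). Two concrete problems. (1) In the regime $n\ge2\ell+2$ your ``jumping'' argument tacitly assumes the colour classes are unions of a few intervals: you speak of ``the colour-$0$ initial segment,'' ``the maximal colour-$1$ interval above it,'' and ``a fresh colour-$0$ value $w$ not far above.'' An arbitrary $2$-colouring of $[1,\lceil\frac{n-1}{\ell}\lceil\frac{n-1}{\ell}\rceil\rceil]$ need not have this structure, and reducing to (or otherwise handling) non-block colourings is exactly the hard step of Saracino's proof; you name ``show that some extremal solution-free colouring is a block colouring'' as an alternative one might try, but carry out neither route. (2) In the range $\ell/2+1\le n\le2\ell+1$ the upper bound is not literally ``a finite verification'': although the target $N\le6$ is bounded, whether a given colour class of $[1,N]$ is solvable depends on $(n,\ell)$ through the stated arithmetic conditions, so one must run the case analysis over all colourings of $[1,N]$ uniformly in $(n,\ell)$ for each residue/range combination, and separately rule out improvements to the sporadic colourings giving the values $9,9,10,6$. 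None of this is executed, so the proposal identifies the right skeleton but does not constitute a proof of the statement.
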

Saracino \cite{Saracino16} obtained some good results for $2$-color
Rado number of the Equation (\ref{eq2}).
\begin{theorem}[\cite{Saracino16}]\label{th-Saracino-LBounds01}
Let $\ell\geq 2$ and $a_1,\ldots,a_{\ell}$ be positive integers. If $\sum_{i=1}^{m-1}a_i\leq 2\ell$, then
$$
\operatorname{RR}\left(\mathcal{E}_2\left(\sum_{i=1}^{m-1}a_i,\ell\right)\right)= \operatorname{RR}\left(\mathcal{E}_1\left(\ell+1,\sum_{i=1}^{m-1}a_i\right)\right),
$$
where the pair $(\sum_{i=1}^{m-1}a_i,\ell)$ is none of $(3,2),
(4,2), (5,3), (10,5), (11,6), (12,7), (13,8), (14,9)$.
\end{theorem}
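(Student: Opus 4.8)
Write $s=\sum_{i=1}^{m-1}a_i$, so that $\mathcal{E}_2(s,\ell)$ is the equation $\sum_{i=1}^{m-1}a_ix_i=y_1+\cdots+y_\ell$ and $\mathcal{E}_1(\ell+1,s)$ is $x_1+\cdots+x_\ell=sx_{\ell+1}$. The plan is to prove the two inequalities $\operatorname{RR}(\mathcal{E}_2(s,\ell))\le\operatorname{RR}(\mathcal{E}_1(\ell+1,s))$ and $\operatorname{RR}(\mathcal{E}_2(s,\ell))\ge\operatorname{RR}(\mathcal{E}_1(\ell+1,s))$ separately. The first is routine: given any monochromatic solution $(x_1,\dots,x_\ell,x_{\ell+1})$ of $\mathcal{E}_1(\ell+1,s)$ under some coloring, set every left-hand variable of $\mathcal{E}_2$ equal to $x_{\ell+1}$ and the $\ell$ right-hand variables equal to $x_1,\dots,x_\ell$; then both sides equal $sx_{\ell+1}$, so we get a monochromatic solution of $\mathcal{E}_2(s,\ell)$. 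Hence every $2$-coloring of $[1,\operatorname{RR}(\mathcal{E}_1(\ell+1,s))]$ already contains a monochromatic solution of $\mathcal{E}_2(s,\ell)$, which gives the first inequality (compare the Remark above).

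For the reverse inequality it suffices to establish the following purely arithmetic statement: if $s\le 2\ell$ and $(s,\ell)$ is none of the excluded pairs, then any set $C$ of positive integers that contains no solution of $\mathcal{E}_1(\ell+1,s)$ contains no solution of $\mathcal{E}_2(s,\ell)$ either. Granting this, put $N=\operatorname{RR}(\mathcal{E}_1(\ell+1,s))$; since $s\le 2\ell$, the hypothesis $n\ge s/2+1$ of Theorem \ref{th-SaD13-1} holds with $n=\ell+1$ and coefficient $s$, so $N$ is finite and there is a $2$-coloring $\chi$ of $[1,N-1]$ with no monochromatic solution of $\mathcal{E}_1(\ell+1,s)$. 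Applying the arithmetic statement to each color class of $\chi$ shows $\chi$ has no monochromatic solution of $\mathcal{E}_2(s,\ell)$, whence $\operatorname{RR}(\mathcal{E}_2(s,\ell))\ge N$; together with the first part this yields the claimed equality. (The cases $s\le 2$ are immediate, since then $\mathcal{E}_2(s,\ell)$ is either $\mathcal{E}_1(\ell+1,s)$ itself or the equation $x_1+x_2=y_1+\cdots+y_\ell$.)

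To prove the arithmetic statement, argue by contraposition: suppose $x_1,\dots,x_{m-1},y_1,\dots,y_\ell\in C$ satisfy $\sum a_ix_i=\sum_j y_j$. Let $A$ be the multiset obtained by taking each $x_i$ with multiplicity $a_i$ (so $|A|=s$) and let $B=\{y_1,\dots,y_\ell\}$ (so $|B|=\ell$); then $\sum A=\sum B$ (summing with multiplicity) and $A\cup B\subseteq C$. If $C=[p,q]$ is an interval the conclusion is easy: the right-hand side of $\mathcal{E}_2$ ranges over every integer in $[\ell p,\ell q]$ (as $\ell\ge 2$), the left-hand side attains the endpoints $sp$ (all $x_i=p$) and $sq$ (all $x_i=q$) of $[sp,sq]$, and the intersection $[\ell p,\ell q]\cap[sp,sq]$ — which is nonempty because $\mathcal{E}_2$ is by hypothesis solvable in $C$ — always contains $sp$ or $sq$, a multiple of $s$; hence $C$ also contains a solution of $\mathcal{E}_1(\ell+1,s)$, and for interval color classes the two equations are solvable in $C$ under exactly the same conditions, with no exceptions. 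The real difficulty is when $C$ is not an interval, which is precisely what happens for the extremal colorings of Theorem \ref{th-SaD13-1} in the regime where $N$ is one of the small constants $3,4,5,6,9,10$: there a color class can be a union of a ``small block'' and a ``large block,'' and one must either rearrange the multiset $A$ against $B$, using $s\le 2\ell$, to produce an element $v\in C$ with $sv$ a sum of $\ell$ members of $C$, or invoke a congruence obstruction shared by both equations (for instance, when $s$ is even, $\ell$ is odd and $C$ consists of odd numbers, neither equation has a solution in $C$). This case analysis on the explicit small classes is the main obstacle, and it is exactly here that the pairs $(3,2),(4,2),(5,3),(10,5),(11,6),(12,7),(13,8),(14,9)$ must be set aside, since for them $N$ — or the shape of the extremal class — is irregular enough that the method breaks down and, in fact, $\operatorname{RR}(\mathcal{E}_2(s,\ell))$ and $\operatorname{RR}(\mathcal{E}_1(\ell+1,s))$ can differ for some admissible choice of the $a_i$.
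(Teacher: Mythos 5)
The first half of your argument (every monochromatic solution of $\mathcal{E}_1(\ell+1,s)$ yields one of $\mathcal{E}_2(s,\ell)$ by setting all left-hand variables equal to $x_{\ell+1}$, hence $\operatorname{RR}(\mathcal{E}_2(s,\ell))\le\operatorname{RR}(\mathcal{E}_1(\ell+1,s))$) is correct and is exactly the observation recorded in the paper's Remark. But the reverse inequality, which is the entire content of the theorem, is not proved. You reduce it to the claim that every set $C$ with no solution of $\mathcal{E}_1(\ell+1,s)$ has no solution of $\mathcal{E}_2(s,\ell)$, and you verify this only when $C$ is an interval; you then state explicitly that the non-interval case --- which is precisely the case that occurs for the extremal colorings of Theorem \ref{th-SaD13-1}, and precisely where the hypothesis $s\le 2\ell$ and the eight excluded pairs must do their work --- is ``the main obstacle,'' without carrying out that analysis. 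Announcing that one must ``either rearrange the multiset $A$ against $B$ \dots or invoke a congruence obstruction'' is a description of a strategy, not a proof, so the argument has a genuine gap at its crucial step.

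Two further points. First, your intermediate ``arithmetic statement'' is formulated for \emph{arbitrary} sets $C$ of positive integers, which is stronger than what is needed (only the two color classes of some extremal coloring of $[1,N-1]$ matter, and you are free to choose a convenient extremal coloring rather than argue about every $\mathcal{E}_1$-free set); as stated it is unsupported and quite possibly false, and nothing in your sketch uses $s\le 2\ell$ or the excluded pairs in a verifiable way. Second, be aware that the paper itself does not prove this statement: it is quoted from Saracino \cite{Saracino16}, whose proof proceeds by analyzing the explicit extremal colorings behind Theorem \ref{th-SaD13-1} and checking that they also avoid monochromatic solutions of $\mathcal{E}_2$ for every choice of $a_1,\ldots,a_{m-1}$ with the given sum --- exactly the case analysis you have left undone. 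The concluding assertion that the two Rado numbers ``can differ'' for the excluded pairs is also left unjustified, though it is not needed for the theorem.
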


\begin{theorem}[\cite{Saracino16}]\label{th-Saracino-LBounds}
If $\ell\geq 2$ and $\sum_{i=1}^{m-1}a_i>2\ell$, then
$$
\operatorname{RR}\left(\mathcal{E}_2(m-1,\ell)\right)\geq \left\lceil
\frac{\sum_{i=1}^{m-1}a_i}{\ell}\left\lceil
\frac{\sum_{i=1}^{m-1}a_i}{\ell}\right\rceil\right\rceil.
$$
\end{theorem}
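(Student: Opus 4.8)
The plan is to exhibit an explicit $2$-coloring of an initial interval $[1,N]$ with no monochromatic solution of $\mathcal{E}_2$; by the definition of the $2$-color Rado number this forces $\operatorname{RR}(\mathcal{E}_2(\sum_{i=1}^{m-1}a_i,\ell))\ge N+1$, and the coloring will be designed so that $N+1$ equals the claimed value. Write $s=\sum_{i=1}^{m-1}a_i$ and $t=\lceil s/\ell\rceil$, so the target quantity is $\lceil st/\ell\rceil$. Set $b=t-1$ and $N=\lceil st/\ell\rceil-1$, and define $\chi\colon[1,N]\to\{0,1\}$ by $\chi(n)=0$ on $[1,b]$ and $\chi(n)=1$ on $[b+1,N]$. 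Since $s>2\ell$ we have $t\ge 3$, hence $b\ge 2$, and one checks easily that $N>b$ (indeed $N\ge st/\ell-1\ge(s/\ell)^2-1>s/\ell>t-1$), so $\chi$ is a genuine $2$-coloring.

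Next I would verify that no color class of $\chi$ contains a monochromatic solution of $\sum_{i=1}^{m-1}a_ix_i=y_1+\cdots+y_\ell$. A monochromatic solution requires all $m-1+\ell$ variables to receive the same color, so only two cases occur. If $x_1,\dots,x_{m-1},y_1,\dots,y_\ell\in[1,b]$, then (using $x_i\ge 1$ and $a_i\ge 1$) the left side is at least $\sum_{i=1}^{m-1}a_i=s$, while the right side is at most $\ell b=\ell(t-1)<s$, the last inequality being the defining property of $t=\lceil s/\ell\rceil$; hence equality is impossible. If instead all variables lie in $[b+1,N]$, then the left side is at least $s(b+1)=st$, while the right side is at most $\ell N=\ell(\lceil st/\ell\rceil-1)<st$; again equality is impossible. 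Thus $\chi$ has no monochromatic solution.

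Combining the two parts, $\operatorname{RR}(\mathcal{E}_2(\sum_{i=1}^{m-1}a_i,\ell))\ge N+1=\lceil st/\ell\rceil=\lceil\frac{\sum_{i=1}^{m-1}a_i}{\ell}\lceil\frac{\sum_{i=1}^{m-1}a_i}{\ell}\rceil\rceil$, which is the claim. This two-block coloring is exactly the one Saracino uses for $\mathcal{E}_1$ (compare Theorems~\ref{th-SaD13-1} and~\ref{th-Saracino-LBounds01}), adapted to $\mathcal{E}_2$ by exploiting that the coefficients $a_i$ on the left are positive and the coefficients on the right all equal $1$, so that comparing the minimum of the left side with the maximum of the right side already excludes solutions.

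I do not anticipate a genuine obstacle here, since only the lower bound is asserted and the construction is essentially forced. The one delicate point is the ceiling arithmetic: one must confirm $\ell(t-1)<s\le\ell t$ together with the parallel $\ell N<st\le\ell(N+1)$, and check that the hypothesis $s>2\ell$ is exactly what keeps $b\ge 1$ and $N>b$, so that the two blocks do not degenerate into one (when $s\le 2\ell$ a single block already carries a solution, and indeed a larger value can occur, cf. Theorem~\ref{th-Saracino-LBounds01}).
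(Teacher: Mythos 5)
Your proposal is correct: writing $s=\sum_{i=1}^{m-1}a_i$ and $t=\lceil s/\ell\rceil$, the two-block coloring of $[1,\lceil st/\ell\rceil-1]$ with first block $[1,t-1]$, together with the comparison of the minimum of the left side ($s$, resp.\ $st$) against the maximum of the right side ($\ell(t-1)<s$, resp.\ $\ell(\lceil st/\ell\rceil-1)<st$), is exactly the standard argument, and your checks that $s>2\ell$ keeps the two blocks nondegenerate are the right bookkeeping. The paper itself only quotes this theorem from Saracino without proof, but your construction matches the canonical one and is the same two-interval coloring the paper uses for its analogous lower bound in Lemma~\ref{lem-upBounds}.
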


If $a_1=a_2=\cdots=a_{m-2}=1$, $a_{m-1}=k$ and
$y_1=y_2=\cdots=y_{\ell}=x_m$, we can stand up the relation for Equations (\ref{eq01}), (\ref{eq1}), (\ref{eq2}).
\begin{theorem}\label{th-UBounds01}
If $(k+m-2)/2 \leq \ell \leq k+2(m-2)$ and $\ell> k$, then
$$
\operatorname{RR}(\mathcal{E}_1(\ell+1,k+m-2))\leq
\operatorname{RR}\left(\mathcal{E}(m,k,\ell)\right)\leq \operatorname{RR}(\mathcal{E}_1(m-1,\ell-k)).
$$
\end{theorem}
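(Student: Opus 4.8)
The plan is to prove the two inequalities independently, each by converting a monochromatic solution of one equation into a monochromatic solution of another and then invoking the known data for $\mathcal{E}_1$.

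For the upper bound I would show that any monochromatic solution of $\mathcal{E}_1(m-1,\ell-k)$ produces a monochromatic solution of $\mathcal{E}(m,k,\ell)$ of the same colour. Since $\ell>k$, the coefficient $\ell-k$ is a positive integer and $\mathcal{E}_1(m-1,\ell-k)$ is the equation $\sum_{i=1}^{m-2}x_i=(\ell-k)x_{m-1}$. Given a solution $(x_1,\dots,x_{m-1})$ of it with $\chi(x_1)=\cdots=\chi(x_{m-1})$, set $y_i=x_i$ for $1\le i\le m-2$ and $y_{m-1}=y_m=x_{m-1}$; then
$$\sum_{i=1}^{m-2}y_i+ky_{m-1}=(\ell-k)x_{m-1}+kx_{m-1}=\ell x_{m-1}=\ell y_m,$$
so $(y_1,\dots,y_m)$ solves (\ref{eq01}) and is monochromatic because each $y_j$ is one of $x_1,\dots,x_{m-1}$ (I use the convention of Section~\ref{s:1} that the entries of a solution may repeat). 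This gives $\operatorname{RR}(\mathcal{E}(m,k,\ell))\le\operatorname{RR}(\mathcal{E}_1(m-1,\ell-k))$, and the hypothesis $\ell\le k+2(m-2)$, which is exactly $m-1\ge(\ell-k)/2+1$, is precisely what allows Theorem~\ref{th-SaD13-1} and Table~\ref{tab:1} to evaluate the right-hand side.

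For the lower bound I would chain two facts already in the paper. Taking $a_1=\cdots=a_{m-2}=1$ and $a_{m-1}=k$ in part~(ii) of the Remark preceding Theorem~\ref{th-SaD13-1} gives $\operatorname{RR}(\mathcal{E}_2(k+m-2,\ell))\le\operatorname{RR}(\mathcal{E}(m,k,\ell))$. The hypothesis $(k+m-2)/2\le\ell$ says precisely $\sum_{i=1}^{m-1}a_i=k+m-2\le 2\ell$, and $\ell>k\ge 1$ gives $\ell\ge 2$, so Theorem~\ref{th-Saracino-LBounds01} applies and yields $\operatorname{RR}(\mathcal{E}_2(k+m-2,\ell))=\operatorname{RR}(\mathcal{E}_1(\ell+1,k+m-2))$. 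Combining the two relations gives $\operatorname{RR}(\mathcal{E}_1(\ell+1,k+m-2))\le\operatorname{RR}(\mathcal{E}(m,k,\ell))$.

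There is no deep obstacle here: the substitution identity and the reformulations of the two size hypotheses are routine, and the content is essentially the right choice of substitution together with careful citation. The one place that needs attention is the exceptional list in Theorem~\ref{th-Saracino-LBounds01}: the identity $\operatorname{RR}(\mathcal{E}_2(k+m-2,\ell))=\operatorname{RR}(\mathcal{E}_1(\ell+1,k+m-2))$ is guaranteed there only when $(k+m-2,\ell)$ avoids $(3,2),(4,2),(5,3),(10,5),(11,6),(12,7),(13,8),(14,9)$, so the finitely many triples $(m,k,\ell)$ in range that meet this list would have to be treated separately; one should also observe that when $\ell-k$ is too small (for instance $m=3$, $\ell=k+2$, where $\operatorname{RR}(\mathcal{E}_1(2,2))=\infty$) the right-hand side of the upper bound is infinite and the inequality holds vacuously, while away from these cases both bounds are finite and explicit via Table~\ref{tab:1}.
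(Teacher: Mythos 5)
Your proposal is correct and follows essentially the same route as the paper: the upper bound comes from the substitution $x_m=x_{m-1}$ turning a monochromatic solution of $\mathcal{E}_1(m-1,\ell-k)$ into one of $\mathcal{E}(m,k,\ell)$, and the lower bound comes from the specialization to $\mathcal{E}_2(k+m-2,\ell)$ combined with Theorem~\ref{th-Saracino-LBounds01}, using $(k+m-2)/2\le\ell$ to get $k+m-2\le 2\ell$. If anything, you are slightly more careful than the paper, since you explicitly flag the exceptional pairs of Theorem~\ref{th-Saracino-LBounds01} (which the theorem statement here silently omits) and the vacuous case where the right-hand side is infinite.
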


\begin{theorem}\label{th-UBounds}
If $k<\ell<(k+m-2)/2$, then
$$
\left\lceil \frac{k+m-2}{\ell}\left\lceil
\frac{k+m-2}{\ell}\right\rceil\right\rceil\leq
\operatorname{RR}\left(\mathcal{E}(m,k,\ell)\right)\leq \operatorname{RR}(\mathcal{E}_1(m-1,\ell-k)).
$$
Furthermore, if $k(\ell-k)^2\geq (m-2)^3$ and $m\geq
(\ell-k)^2-(\ell-k)+2$, then
$$
\operatorname{RR}(\mathcal{E}(m,k,\ell))=
\left\lceil\frac{m-2}{\ell-k}\left\lceil\frac{m-2}{\ell-k}\right\rceil\right\rceil.
$$
\end{theorem}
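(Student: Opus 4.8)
The plan is to deduce everything from two elementary bridges between $\mathcal{E}(m,k,\ell)$ and the one-sided equations $\mathcal{E}_1$ and $\mathcal{E}_2$, and then to feed the known values from Theorems~\ref{th-SaD13-1} and~\ref{th-Saracino-LBounds} through these bridges. The first bridge is the substitution $x_m:=x_{m-1}$: if $(x_1,\dots,x_{m-1})$ is a monochromatic solution of $\sum_{i=1}^{m-2}x_i=(\ell-k)x_{m-1}$, then $(x_1,\dots,x_{m-2},x_{m-1},x_{m-1})$ is a monochromatic solution of $\mathcal{E}(m,k,\ell)$, so
$$
\operatorname{RR}(\mathcal{E}(m,k,\ell))\le\operatorname{RR}(\mathcal{E}_1(m-1,\ell-k)),
$$
which is the upper bound in both displayed inequalities (only $\ell>k$ is used here). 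The second bridge is the Remark above: putting $y_1=\dots=y_\ell:=x_m$ turns every solution of $\mathcal{E}(m,k,\ell)$ into a solution of $\mathcal{E}_2(k+m-2,\ell)$, so $\operatorname{RR}(\mathcal{E}_2(k+m-2,\ell))\le\operatorname{RR}(\mathcal{E}(m,k,\ell))$. When $k<\ell<(k+m-2)/2$ we have $k+m-2>2\ell$, so Theorem~\ref{th-Saracino-LBounds} applies and yields $\operatorname{RR}(\mathcal{E}_2(k+m-2,\ell))\ge\ceil{\frac{k+m-2}{\ell}\ceil{\frac{k+m-2}{\ell}}}$, completing the first displayed line.

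For the equality, the upper bound is again the first bridge, now evaluated through Theorem~\ref{th-SaD13-1}: one checks that $m\ge(\ell-k)^2-(\ell-k)+2$ forces $m-1\ge 2(\ell-k)+2$, which places $n=m-1$ in the last row of Table~\ref{tab:1} with modulus $\ell-k$, so $\operatorname{RR}(\mathcal{E}_1(m-1,\ell-k))=\ceil{\frac{m-2}{\ell-k}\ceil{\frac{m-2}{\ell-k}}}$. The substance is the matching lower bound. Put $j=\ell-k$, $t=\ceil{(m-2)/j}$ and $N=\ceil{\frac{m-2}{j}\ceil{\frac{m-2}{j}}}$, and take the $2$-coloring $\chi$ of $[1,N-1]$ with $\chi(x)=0$ on $[1,t-1]$ and $\chi(x)=1$ on $[t,N-1]$; this is exactly the extremal coloring witnessing $\operatorname{RR}(\mathcal{E}_1(m-1,j))\ge N$, so $\chi$ admits no monochromatic solution of $\sum_{i=1}^{m-2}x_i=jx_{m-1}$. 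I would prove that $\chi$ admits no monochromatic solution of $\mathcal{E}(m,k,\ell)$ on $[1,N-1]$ either, which gives $\operatorname{RR}(\mathcal{E}(m,k,\ell))\ge N$, hence the equality.

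Suppose $(x_1,\dots,x_m)$ were such a solution. Rewrite the equation as $\ell x_m-kx_{m-1}=\sum_{i=1}^{m-2}x_i$, so the right-hand side lies in $[\,m-2,\,(m-2)(N-1)\,]$. The hypothesis $k(\ell-k)^2\ge(m-2)^3$, that is $k\ge(m-2)^3/j^2$, is more than enough to force $k>(m-2)(t-1)$ and $2k+j>(m-2)(N-1)$; from these, $x_m\le x_{m-1}-1$ gives $\ell x_m-kx_{m-1}\le j(N-1)-\ell<m-2$, and $x_m\ge x_{m-1}+2$ gives $\ell x_m-kx_{m-1}\ge 2k+j>(m-2)(N-1)$, so both are impossible. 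If $x_m=x_{m-1}$ then $(x_1,\dots,x_{m-1})$ is a monochromatic solution of $\mathcal{E}_1(m-1,j)$, against the choice of $\chi$. So $x_m=x_{m-1}+1$, whence $\sum_{i=1}^{m-2}x_i=jx_{m-1}+\ell$: if all the $x_i$ have colour $0$ (hence lie in $[1,t-1]$) then $k+2j\le jx_{m-1}+\ell=\sum_{i=1}^{m-2}x_i\le(m-2)(t-1)<k$, which is absurd; if all have colour $1$ (hence lie in $[t,N-1]$) then $jt+\ell\le jx_{m-1}+\ell=\sum_{i=1}^{m-2}x_i\le(m-2)(N-1)$, and this last inequality must be shown to contradict the hypotheses.

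The only delicate step, and where I expect the main obstacle, is this last subcase with all variables in the upper block $[t,N-1]$: there is no crude min/max bound, and one must push $k\ge(m-2)^3/(\ell-k)^2$ through the two nested ceilings defining $t$ and $N$, treating separately whether $\ell-k$ divides $m-2$. The second hypothesis $m\ge(\ell-k)^2-(\ell-k)+2$ is precisely what absorbs the rounding errors there (and incidentally guarantees $t\ge 2$ and $N-1\ge t$, so that $\chi$ genuinely uses both colours). Everything else in the argument is a short direct estimate.
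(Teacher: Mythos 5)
Your first displayed inequality and the upper bound in the equality are fine and essentially the paper's argument: the upper bound is the same substitution $x_m:=x_{m-1}$, and your lower bound via $\operatorname{RR}(\mathcal{E}_2(k+m-2,\ell))\leq\operatorname{RR}(\mathcal{E}(m,k,\ell))$ together with Theorem~\ref{th-Saracino-LBounds} is a legitimate shortcut; the paper (Lemma~\ref{lem-upBounds}) instead writes out the two-block coloring explicitly, which is exactly the coloring underlying Saracino's bound, so the substance is the same.

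The genuine gap is the subcase you yourself flag, and it is not merely delicate: for the coloring you chose it can actually fail. Take $j=\ell-k=2$, $m=13$, $k=333$, $\ell=335$. Then $k(\ell-k)^2=1332\geq 1331=(m-2)^3$ and $m\geq(\ell-k)^2-(\ell-k)+2=4$, while $N=\lceil\tfrac{11}{2}\lceil\tfrac{11}{2}\rceil\rceil=33$ and $t=6$, so your coloring is red on $[1,5]$ and blue on $[6,32]$. But $(x_1,\dots,x_{11},x_{12},x_{13})=(32,\dots,32,27,6,7)$ (ten $32$'s and one $27$) lies entirely in the upper block and satisfies $347+333\cdot 6=2345=335\cdot 7$, i.e.\ it is a monochromatic solution of $\mathcal{E}(13,333,335)$ with $x_m=x_{m-1}+1$ inside $[1,N-1]$. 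So the plan ``verify the extremal two-block coloring also kills $\mathcal{E}(m,k,\ell)$'' cannot be completed as stated, at least not for all parameters allowed by the hypotheses; the rounding issue you anticipated is exactly where it breaks (here $\ell-k\nmid m-2$ and $k$ sits right at the threshold).

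The paper's route differs at precisely this point, and that difference is the missing idea: it never analyzes a specific coloring. In Lemma~\ref{le} it argues that, under the hypotheses, \emph{every} solution of $\mathcal{E}(m,k,\ell)$ with all entries in $[1,B-1]$, $B=\lceil\frac{m-2}{\ell-k}\lceil\frac{m-2}{\ell-k}\rceil\rceil$, must have $x_{m-1}=x_m$: the case $x_m>x_{m-1}$ is excluded because $\sum_{i=1}^{m-2}x_i\leq(m-2)(B-1)\leq k<\ell x_m-kx_{m-1}$, and the case $x_m<x_{m-1}$ because it would force $k\leq(\ell-k)(B-1)+2-m$, contradicting $k\geq(m-2)^3/(\ell-k)^2$ and $m\geq(\ell-k)^2-(\ell-k)+2$. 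Once $x_{m-1}=x_m$ is forced, \emph{any} coloring of $[1,B-1]$ that is valid for $\mathcal{E}_1(m-1,\ell-k)$ (whose existence is Theorem~\ref{th-SaD13-1}) works, and your troublesome case $x_m=x_{m-1}+1$ never has to be treated. You should be aware, though, that the example above equally shows the forcing step $(m-2)(B-1)\leq k$ is not a consequence of $k\geq(m-2)^3/(\ell-k)^2$ alone when $\ell-k\nmid m-2$, so the boundary of the stated hypotheses is genuinely tight; but the structural reduction ``all solutions in $[1,B-1]$ have $x_{m-1}=x_m$, then quote the existence of a Saracino coloring'' is what your proposal is missing and what would have to replace the coloring-by-coloring analysis.
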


For some special cases, we can get the exact values in Section $2$.
\begin{theorem}\label{th-Exact}
Let the pair $(k+m-2,\ell)$ be none of $(3,2),
(4,2), (5,3), (10,5), (11,6),(12,7), (13,8),$ $(14,9)$ and $\ell\geq 2$.

$(i)$ If $2(m-2)/3+k \leq \ell \leq 3(m-2)/2+k$ and $\ell \neq m+k-2$, then
$$
\operatorname{RR}\left(\mathcal{E}(m,k,\ell)\right)=
\begin{cases}
3 & \text{$k+m-2\equiv \ell \ (\bmod\, 2)$ and $\ell-k\equiv m-2 \ (\bmod\, 2)$},\\
4 & \text{$k+m-2\not \equiv \ell \ (\bmod\, 2)$ and $\ell-k\not
\equiv m-2 \ (\bmod \,2)$}.
\end{cases}
$$

$(ii)$ If $\ell=k+m-2$, then
$\operatorname{RR}(\mathcal{E}(m,k,\ell))=1$.

$(iii)$ If $(m-2)/2+k \leq \ell < 2(k+m-2)/3$ or $3(m-2)/2+k< \ell\leq k+2m-4$ and $m\geq k$, then
$$
\operatorname{RR}\left(\mathcal{E}(m,k,\ell)\right)=
\begin{cases}
4 & \text{$k+m-2\equiv \ell \ (\bmod \,3)$ and $\ell-k\equiv m-2 \ (\bmod \,3)$},\\
5 & \text{$k+m-2\not \equiv \ell \ (\bmod\, 3)$ and $\ell-k\not
\equiv m-2 \ (\bmod\, 3)$}.
\end{cases}
$$

$(iv)$ If $k\geq m-2$, then
$\operatorname{RR}\left(\mathcal{E}(m,(m-2)k,(m-2)k)\right)=2k$.
\end{theorem}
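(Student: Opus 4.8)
The plan is to derive all four parts from the two-sided estimates already established, together with Saracino's table of $\operatorname{RR}(\mathcal{E}_1(n,\ell))$, treating part $(iv)$ separately. Part $(ii)$ is immediate: when $\ell=k+m-2$ the all-ones tuple solves $\mathcal{E}(m,k,\ell)$ (since $(m-2)+k=k+m-2$), so any colouring of $[1,1]$ already contains a monochromatic solution and $\operatorname{RR}(\mathcal{E}(m,k,\ell))=1$.

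For parts $(i)$ and $(iii)$ the idea is to use the sandwich
$$
\operatorname{RR}\big(\mathcal{E}_1(\ell+1,k+m-2)\big)\ \le\ \operatorname{RR}\big(\mathcal{E}(m,k,\ell)\big)\ \le\ \operatorname{RR}\big(\mathcal{E}_1(m-1,\ell-k)\big),
$$
whose right inequality is Theorem~\ref{th-UBounds01} and whose left inequality combines the Remark preceding Theorem~\ref{th-SaD13-1} (which gives $\operatorname{RR}(\mathcal{E}_2(k+m-2,\ell))\le\operatorname{RR}(\mathcal{E}(m,k,\ell))$) with Theorem~\ref{th-Saracino-LBounds01} (which gives $\operatorname{RR}(\mathcal{E}_2(k+m-2,\ell))=\operatorname{RR}(\mathcal{E}_1(\ell+1,k+m-2))$, legitimate because $k+m-2\le 2\ell$ throughout these ranges and the excluded eight pairs $(k+m-2,\ell)$ are exactly the exceptional ones of that theorem). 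It then remains to check that the two outer Rado numbers are equal, which I would do by reading Table~\ref{tab:1}. The key preliminary observation is that for any modulus $d$ one has $k+m-2\equiv\ell\ (\bmod\, d)$ if and only if $\ell-k\equiv m-2\ (\bmod\, d)$; this explains why the two congruence conditions listed inside $(i)$ and $(iii)$ coincide, and, more to the point, shows that the residue data controlling the left endpoint $(\ell+1,k+m-2)$ agrees with that controlling the right endpoint $(m-1,\ell-k)$. After that the argument is range chasing: in $(i)$ the hypotheses $2(m-2)/3+k\le\ell\le 3(m-2)/2+k$ and $\ell\neq k+m-2$ place both pairs in the rows of Table~\ref{tab:1} of value $3$ when $k+m-2\equiv\ell\ (\bmod\, 2)$ and value $4$ otherwise; in $(iii)$ the hypotheses $(m-2)/2+k\le\ell<2(k+m-2)/3$ or $3(m-2)/2+k<\ell\le k+2m-4$ (with $m\ge k$) place both pairs in the $\bmod\, 3$ rows of value $4$ when $k+m-2\equiv\ell\ (\bmod\, 3)$ and value $5$ otherwise; one then equates the resulting entries.

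For part $(iv)$ write $K=(m-2)k$, so that $\mathcal{E}(m,K,K)$ is $x_1+\dots+x_{m-2}=K(x_m-x_{m-1})$. The decisive point is that on any $[1,N]$ with $N\le 2k-1$ one has $x_1+\dots+x_{m-2}\le(m-2)(2k-1)<2K$, forcing $x_m-x_{m-1}=1$; hence a monochromatic solution on such an interval would need two consecutive integers of the same colour, and colouring $[1,2k-1]$ by parity avoids this, giving $\operatorname{RR}(\mathcal{E}(m,K,K))\ge 2k$. For the matching upper bound I would show that every $2$-colouring of $[1,2k]$ has a monochromatic solution: now $x_m-x_{m-1}\in\{1,2\}$, and if the colour class of $k$ also contains a pair of consecutive integers $t,t+1$ then $(k,\dots,k,t,t+1)$ works; otherwise that class has no two consecutive integers, which forces the colouring to be close to the parity colouring, and one exhibits either a solution $(a_1,\dots,a_{m-2},t,t+1)$ with $\sum a_i=(m-2)k$ in the other class (which then contains $k\pm1$) or a solution $(2k,\dots,2k,t,t+2)$ in the class of $2k$, the hypothesis $k\ge m-2$ being what guarantees the required sum representation $(m-2)k=\sum a_i$ inside the relevant class.

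I expect the main obstacle to be organisational: for $(i)$ and $(iii)$ one must split the interval of admissible $\ell$ into the right sub-pieces so that both endpoints of the sandwich land in the intended rows of Table~\ref{tab:1} and so that none of the anomalous small-value entries (hence the eight excluded pairs, and the $n\ge\ell/2+1$ restriction) is triggered; for $(iv)$ the delicate step is the upper-bound case analysis — ruling out that some $2$-colouring of $[1,2k]$ dodges every solution template above — and this is the place where $k\ge m-2$ is genuinely used.
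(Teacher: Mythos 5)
Your plan coincides with the paper's own proof: parts (i)--(iii) are obtained exactly as you describe, via the sandwich $\operatorname{RR}(\mathcal{E}_1(\ell+1,k+m-2))\le\operatorname{RR}(\mathcal{E}(m,k,\ell))\le\operatorname{RR}(\mathcal{E}_1(m-1,\ell-k))$ of Theorem \ref{th-UBounds01} followed by range chasing in Saracino's table (using that $k+m-2\equiv\ell$ iff $\ell-k\equiv m-2$), part (ii) is the same one-line observation, and part (iv)'s lower bound is the same parity/divisibility argument. The one step you leave unexecuted, the upper bound in (iv), is completed in the paper by a short forcing chain built from precisely your templates: with $k$ red, $k-1$ and $k+1$ are forced blue (your $(k,\dots,k,t,t+1)$), then $2k$ is forced red by $(2k,\dots,2k,k-1,k+1)$ and $k-2$ blue by $(2k,\dots,2k,k-2,k)$ (your $(2k,\dots,2k,t,t+2)$), and finally one exhibits a blue solution whose first $m-2$ entries lie in $\{k-2,k-1,k+1\}$ and sum to $(m-2)k$, split according to the parity of $m$.
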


Robertson and Myers \cite{RoMy08} studied the case that $m=4$ of
Equation (\ref{eq01}) and proposed the following conjecture.

\begin{conjecture}[\cite{RoMy08}]\label{conj}
For $\ell\geq 2$ fixed and $k\geq \ell+2$, we have
$$
\operatorname{RR}(x+y+kz=\ell w)=\left\lfloor
\frac{k+\ell+1}{\ell}\right\rfloor^2+O\left(\frac{k}{\ell^2}\right).
$$
\end{conjecture}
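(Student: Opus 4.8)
The plan is to establish matching bounds showing $\operatorname{RR}(x+y+kz=\ell w)=t^2+O(k/\ell^2)$, where I write $t:=\lfloor(k+\ell+1)/\ell\rfloor=\lceil(k+2)/\ell\rceil$ and note that $x+y+kz=\ell w$ is exactly the equation $\mathcal{E}(4,k,\ell)$; since $k\geq\ell+2$ we are in the regime $\ell\leq k-2$, which is not covered by Theorems~\ref{th-UBounds01}--\ref{th-Exact}, so both bounds must be argued essentially from scratch.

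For the lower bound I would use a two-block colouring: give colour $0$ to $[1,t-1]$ and colour $1$ to $[t,\lceil(k+2)t/\ell\rceil-1]$. A monochromatic colour-$0$ solution would need $\ell w=x+y+kz\geq k+2$, impossible because $\ell(t-1)\leq k+1$; a monochromatic colour-$1$ solution would need $\ell w\geq(k+2)t$, impossible because $\ell\bigl(\lceil(k+2)t/\ell\rceil-1\bigr)<(k+2)t$. Hence $\operatorname{RR}(\mathcal{E}(4,k,\ell))\geq\lceil(k+2)t/\ell\rceil$, and since $(k+2)/\ell>t-1$ this is at least $t^2-t+1$, which for fixed $\ell$ is $t^2+O(k/\ell^2)$. (The same estimate follows from the observation that every monochromatic solution of $\mathcal{E}(4,k,\ell)$ is also one of $\mathcal{E}_2(k+2,\ell)$, together with Theorem~\ref{th-Saracino-LBounds}, which applies since $k+2>2\ell$ for large $k$; appending further blocks only affects the lower-order term.)

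The substance is the upper bound $\operatorname{RR}(\mathcal{E}(4,k,\ell))\leq t^2+Ck/\ell^2$ for a suitable $C=C(\ell)$ and all $k$ large. Suppose for contradiction that $\chi\colon[1,N]\to\{0,1\}$ has no monochromatic solution, with $N=t^2+\lceil Ck/\ell^2\rceil$ and, after relabelling colours, $\chi(1)=0$. I would first bound the initial run: if $\chi(1)=\cdots=\chi(j)=0$ with $j\geq t$, then $(x,y,z,w)=\bigl(1,\,1+(\ell t-k-2),\,1,\,t\bigr)$ --- here $\ell t-k-2\in[0,\ell-1]$ and $1+(\ell t-k-2)\leq\ell\leq t\leq j$ for $k$ large --- is a monochromatic colour-$0$ solution, a contradiction; so the first run has length at most $t-1$ and some $s\leq t$ has colour $1$. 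The parallel substitution $\bigl(s,\,s+r',\,s,\,\lceil(k+2)s/\ell\rceil\bigr)$ with $r'=\ell\lceil(k+2)s/\ell\rceil-(k+2)s\in[0,\ell-1]$ bounds the first colour-$1$ run below $\lceil(k+2)t/\ell\rceil$, and iterating forces $\chi$ into an essentially two-block shape with boundaries near $t$ and near $\lceil(k+2)t/\ell\rceil$. The decisive step is to show that no $v$ with $\lceil(k+2)t/\ell\rceil<v\leq N$ can be coloured at all: for each such $v$ one must exhibit two monochromatic solutions in which $v$ plays a forced role --- one built from $1$ together with the colour-$0$ element already forced near $\lceil(k+2)t/\ell\rceil$ (a mixed solution with one large and one small variable), the other built from the small colour-$1$ elements forced above. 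This is exactly how the small cases resolve; e.g.\ for $k=4,\ \ell=2$ one obtains $\operatorname{RR}=9=t^2$, the value $7$ being forced simultaneously into both colours.

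I expect this last step to be the real obstacle: one must handle arbitrary $2$-colourings rather than clean block colourings, and cope with the divisibility conditions that govern whether each substitution actually produces an integral witness. Two plausible routes are (i) a stability statement --- every solution-free colouring of $[1,\approx t^2]$ is a bounded perturbation of one of the block colourings above --- or (ii) a direct weighting/double-counting argument forcing a monochromatic solution as soon as $N$ exceeds $t^2$ by the permitted slack. Either way, combining the two bounds gives $\operatorname{RR}(x+y+kz=\ell w)=t^2+O(k/\ell^2)$ for fixed $\ell\geq2$ and all large $k$, which is the conjecture; the computations in fact suggest the sharper statement that $\operatorname{RR}(\mathcal{E}(4,k,\ell))$ equals $\lceil(k+2)t/\ell\rceil$ up to an error bounded in terms of $\ell$.
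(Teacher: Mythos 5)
You are attempting to prove something that the paper does not prove and does not claim: the statement you were given is Conjecture~\ref{conj}, quoted from Robertson and Myers \cite{RoMy08}. The paper has no proof of it; on the contrary, it records that Saracino and Wynne \cite{SaWy08} showed the conjecture is \emph{partially false} already for $\ell=3$, and the authors respond by proposing a modified conjecture with error term $O\!\left(k/\ell^3\right)$ in place of $O\!\left(k/\ell^2\right)$. So any argument that purports to establish the original statement in full must contain an error somewhere, unless it silently reinterprets the error term. Your proposal does exactly that: by allowing the implied constant to be $C=C(\ell)$ for fixed $\ell$, the expressions $O(k/\ell^2)$ and $O(k/\ell^3)$ both collapse to $O(k)$, so the statement you would be proving is much weaker than (and different from) the conjecture's intended content, which only has bite if the constant is uniform in $\ell$. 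That uniformity is precisely what fails for $\ell=3$.

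Beyond this, the argument itself is incomplete at the decisive point. Your lower bound via the two-block coloring $[1,t-1]\cup[t,\lceil (k+2)t/\ell\rceil-1]$ is fine and matches the style of the lower-bound constructions the paper uses in Propositions~\ref{pro3-1}--\ref{pro3-20}; but the upper bound, which is the entire substance, is left as a sketch: you concede that forcing an arbitrary solution-free $2$-coloring into the two-block shape is ``the real obstacle'' and offer only two ``plausible routes'' (a stability statement or a double-counting argument) without carrying either out. The paper's own experience shows why this is not a routine step: even for the single value $\ell=4$, Theorem~\ref{th-Exact-1} requires the preparatory Lemma~\ref{lem1} plus a separate case analysis over residue classes of $k$ modulo $16$ and $64$, with the exact answer genuinely depending on the residue (e.g.\ $\frac{k^2+6k+16}{16}$ versus $\frac{4k^2+30k+66}{64}$), and the divisibility obstructions you wave at are exactly what drives that case split. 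As it stands, then, your proposal establishes only the lower bound $\operatorname{RR}(\mathcal{E}(4,k,\ell))\geq\lceil (k+2)t/\ell\rceil$; the matching upper bound is missing, and the target statement itself should be the paper's modified conjecture (or the exact formulas of Theorem~\ref{th-Exact-1} for $\ell=4$), not Conjecture~\ref{conj} as written.
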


Saracino and Wynne \cite{SaWy08} showed that the conjecture is
partially not true for $\ell=3$, and they obtained the exact value
of $2$-color Rado number of $x+y+kz=3w$.

From \cite{RoMy08}, the values of $\operatorname{RR}(x+y+kz=4w)$ for
$k=1,2,3,4,5,16$ are $4,1,4,4,6,24$, respectively. The following
theorem shows the values of $\operatorname{RR}(x+y+kz=4w)$ for other
positive integer $k$, and the proofs are given in Section $3$.

\begin{theorem}\label{th-Exact-1}
For $k\geq 6$ and $k\neq 16$, we have
$$
\operatorname{RR}(x+y+kz=4w)=
\begin{cases}
\frac{k^2+6k+16}{16} & \text{for $k\equiv 0 \pmod {16}$},\\
\frac{k^2+5k+10}{16} & \text{for $k\equiv 1,10\pmod {16}$},\\
\frac{k^2+4k+4}{16} & \text{for $k\equiv 2 \pmod {16}$},\\
\frac{k^2+8k+7}{16} & \text{for $k\equiv 7 \pmod {16}$},\\
\frac{k^2+7k+8}{16} & \text{for $k\equiv 8 \pmod {16}$},\\
\frac{k^2+6k+9}{16} & \text{for $k\equiv 9 \pmod {16}$},\\
\frac{k^2+9k+20}{16} & \text{for $k\equiv 11 \pmod {16}$},\\
\frac{k^2+8k+16}{16} & \text{for $k\equiv 12 \pmod {16}$},\\
\frac{k^2+7k+12}{16} & \text{for $k\equiv 13 \pmod {16}$},\\
\frac{k^2+6k+8}{16} & \text{for $k\equiv 14 \pmod {16}$},\\
\frac{k^2+7k+22}{16} & \text{for $k\equiv 15 \pmod {16}$},\\
\end{cases}
$$
and
$$
\operatorname{RR}(x+y+kz=4w)=
\begin{cases}
\frac{4k^2+30k+66}{64} & \text{for $k\equiv 3,35 \pmod {64}$},\\
\frac{4k^2+30k+34}{64} & \text{for $k\equiv 19,51 \pmod {64}$},\\
\frac{4k^2+26k+24}{64} & \text{for $k\equiv 4,36 \pmod {64}$},\\
\frac{4k^2+26k+56}{64} & \text{for $k\equiv 20,52 \pmod {64}$},\\
\frac{4k^2+22k+46}{64} & \text{for $k\equiv 5,37 \pmod {64}$},\\
\frac{4k^2+22k+78}{64} & \text{for $k\equiv 21,53 \pmod {64}$},\\
\frac{4k^2+18k+68}{64} & \text{for $k\equiv 6,38 \pmod {64}$},\\
\frac{4k^2+18k+36}{64} & \text{for $k\equiv 22,54 \pmod {64}$}.\\
\end{cases}
$$
\end{theorem}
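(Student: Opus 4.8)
The plan is to establish each claimed equality by proving its two matching bounds separately, case by case according to the residue of $k$: directly modulo $16$ for $k\equiv 0,1,2,7,8,9,10,11,12,13,14,15$, and modulo $64$ for $k\equiv 3,4,5,6\pmod{16}$. Write $E$ for the equation $x+y+kz=4w$ and, in each case, let $N$ be the value in the statement; the goal is to produce (i) a $2$-coloring of $[1,N-1]$ with no monochromatic solution of $E$, which gives $\operatorname{RR}(x+y+kz=4w)\ge N$, and (ii) a proof that every $2$-coloring of $[1,N]$ contains one, which gives $\operatorname{RR}(x+y+kz=4w)\le N$. The facts that organize the argument are: in any solution $4w=x+y+kz\ge k+2$, so $w\ge\lceil(k+2)/4\rceil$, and $kz\le 4w-2$, so $z\le(4w-2)/k$; and, when $k\equiv 2\pmod 4$, $\bigl(z,z,z,(k+2)z/4\bigr)$ solves $E$ for every $z$, so any valid coloring has $\chi(z)\ne\chi((k+2)z/4)$ whenever both lie in the interval, and in particular $\chi(1)=\chi((k+2)^2/16)$.

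For the lower bound I take $\chi$ to be an interval coloring. Up to swapping colours the colour class of $1$ is an initial segment $[1,t]$ with $t$ of order $(k+2)/4$, augmented in several cases by a short top segment $[a,N-1]$ with $a$ of order $(k+2)^2/16$, while the other colour class is the middle band $[t+1,a-1]$ (or $[t+1,N-1]$ when there is no top segment). Such a coloring is solution-free because: for the middle band, the smallest value of $x+y+kz$ there is $(k+2)(t+1)$, which exceeds $4w$ for every $w$ in the band once $t,a$ are chosen as above; and for the class $[1,t]\cup[a,N-1]$, any solution must use a small $z\in[1,t]$ and a large $w\in[a,N-1]$, and then the relevant values of $4w-kz$ — a short, explicit list — all miss the set of sums $x+y$ representable inside $[1,t]\cup[a,N-1]$, which is itself a short explicit union of intervals. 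The precise breakpoints $t,a$, hence the bounded additive corrections to the leading term $k^2/16$ in the numerators, depend on the parity and the $2$-adic valuation of $k$ and of $k+2$; in a few residue classes (including some of the mod-$64$ ones) one must shift $t$ by a bounded amount or use a genuine three-interval coloring to make the stated numerator exact.

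For the upper bound I argue by contradiction. Suppose $\chi\colon[1,N]\to\{0,1\}$ has no monochromatic solution of $E$. The propagation step is: if $z=a$ and $w=b$ receive the same colour $c$ and $s:=4b-ka\ge 2$, then $s$ is not a sum of two elements of colour $c$. Once $\chi(1)$ is fixed, say $=c$, this says that for every $b$ of colour $c$ with $4b-k\ge 2$ the number $4b-k$ has no colour-$c$ representation as $x+y$, which pins down the colours of a whole family of small integers; combining this with the dilation solutions above and pushing the forced colours along, one is driven to a monochromatic initial (or final) segment long enough to contain a solution of $E$, contradicting the assumption. Carrying this through with the exact value of $N$ for the residue class in hand closes each case.

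The main difficulty is combinatorial rather than conceptual: there are about twenty residue classes to handle, and in each one the extremal coloring and the forcing argument must be tuned so that the bounded corrections to $k^2/16$ — the $+6k+16$, $+5k+10$, and the finer splitting that distinguishes $k\bmod 16$ from $k\bmod 64$ — come out exactly right, which requires tracking floors, ceilings and the congruences of $k$, $k+2$ and $(k+2)^2/16$ simultaneously. I expect the classes $k\equiv 0\pmod{16}$ and $k\equiv 3,4,5,6\pmod{16}$, the latter needing the second-level refinement modulo $64$, to be the most delicate; a few of the smallest values of $k$ in a residue class, where one of the blocks degenerates, may also have to be verified by hand.
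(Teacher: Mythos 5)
Your overall strategy is the same as the paper's: lower bounds from explicit two- or three-interval colorings of $[1,N-1]$ (a small red initial block, a blue middle band, and in most classes a red top block near $k^2/16$), upper bounds by assuming a valid coloring of $[1,N]$ and forcing colors to a contradiction, all organized by the residue of $k$ modulo $16$ or $64$, with small $k$ checked separately. The genuine gap is that the proposal never executes any of the cases, and for this theorem the cases \emph{are} the content: the statement asserts exact numerators such as $k^2+6k+16$ versus $k^2+5k+10$, and you explicitly leave their derivation open (``the precise breakpoints\dots depend on\dots'', ``one must shift $t$ by a bounded amount\dots to make the stated numerator exact'', ``may also have to be verified by hand''). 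Nothing in the proposal confirms a single one of the twenty displayed formulas, so the theorem is not proved; at best you have reproduced the shape of the argument.

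Two concrete weaknesses in the plan itself. First, your upper-bound engine is only the tautological restatement of ``no monochromatic solution'' (if $z=a$, $w=b$ share color $c$ then $4b-ka$ is not a color-$c$ sum $x+y$). The paper needs substantially more: an inductive structural lemma (its Lemma 3.1) showing that in any valid coloring with $2$ red one gets $1,2,\dots,4j-1$ red and $k,2k,\dots,jk$ blue for every $j$ with $jk\le M$, and the contradiction in each residue class then comes from a hand-picked solution mixing small forced-red integers with forced-red integers of size about $k^2/16$, e.g.\ $\bigl(\tfrac{k^2+6k+16}{16},\tfrac{2k+48}{16},\tfrac{3k+16}{16},\tfrac{k^2+6k+16}{16}\bigr)$ when $k\equiv 0\pmod{16}$. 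Your claim that the forcing ``drives one to a monochromatic initial (or final) segment long enough to contain a solution'' cannot be the mechanism: an initial red segment of length about $k/4$ contains no solution at all, since any solution has $w\ge\lceil (k+2)/4\rceil$, so the contradiction must be of this mixed-scale type and must be constructed separately in each class --- exactly the step you defer. Second, in the lower bound the decisive verification is that for a solution with $x,y,w$ in the top red block and $z$ in the bottom block, $z$ is trapped in an interval of length less than one containing no integer (this is where the congruence of $k$ modulo $16$ or $64$, and hence the finer mod-$64$ splitting for $k\equiv 3,4,5,6\pmod{16}$, actually enters); your remark that the values $4w-kz$ ``miss the representable sums'' names this computation but does not perform it, and without it the stated $N$ is not certified. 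Finally, the paper settles $6\le k<32$, $k\ne 16$ by citing the computed values of Robertson and Myers; your plan should handle this range explicitly rather than as an afterthought, since several of your asymptotic colorings degenerate there.
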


In terms of the current results in \cite{RoMy08} and this paper, we
modify Conjecture \ref{conj} and propose the following conjecture.
\begin{conjecture}
For $\ell\geq 2$ fixed and $k\geq \ell+2$, we have
$$
\operatorname{RR}(x+y+kz=\ell w)=\left\lfloor
\frac{k+\ell+1}{\ell}\right\rfloor^2+O\left(\frac{k}{\ell^3}\right).
$$
\end{conjecture}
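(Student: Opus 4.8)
Since the final statement is a conjecture, I present a line of attack rather than a proof. Writing the equation as $\mathcal{E}(4,k,\ell)$ from (\ref{eq01}) and putting $q=\lfloor(k+\ell+1)/\ell\rfloor=\lceil(k+2)/\ell\rceil$, the target is $\operatorname{RR}(\mathcal{E}(4,k,\ell))=q^{2}+O(k/\ell^{3})$. The regime $k\geq\ell+2$ is precisely $\ell<k$, which lies outside the hypotheses of Theorems \ref{th-UBounds01} and \ref{th-UBounds} (both require $\ell>k$), so neither a general upper nor a general lower bound is available ready-made; the plan is to produce both and squeeze them together to within $O(k/\ell^{3})$.

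\textbf{Lower bound.} I would start from the naive two-block colouring $\chi(n)=0$ for $1\leq n\leq q-1$ and $\chi(n)=1$ for $q\leq n\leq N-1$. Colour $0$ carries no solution because $\min(x+y+kz)=k+2>\ell(q-1)=\max(\ell w)$ there, while colour $1$ carries no solution as long as $\ell(N-1)<(k+2)q$, since any monochromatic value of $x+y+kz$ in that block is at least $q+q+kq=(k+2)q$. This gives $\operatorname{RR}(\mathcal{E}(4,k,\ell))\geq\lceil(k+2)q/\ell\rceil$, which is exactly the bound one also obtains from Theorem \ref{th-SaD13-1} applied to $\mathcal{E}_1(k+3,\ell)$ (every solution of $\mathcal{E}(4,k,\ell)$ is one of $\mathcal{E}_1(k+3,\ell)$) or from Theorem \ref{th-Saracino-LBounds} together with the Remark. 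Writing $(k+2)/\ell=q-\delta$ with $\delta\in[0,1)$ this equals $q^{2}-q\delta=q^{2}-O(k/\ell)$, so the entire task on the lower side is to reduce the error from $O(k/\ell)$ to $O(k/\ell^{3})$. The mechanism I would use: replace the single break point $q$ by a finer partition of $[1,N-1]$ into intervals coloured $0,1,0,1,\dots$ whose lengths are adjusted according to $k\bmod\ell$ and then $k\bmod\ell^{2}$, exploiting that a monochromatic solution pins $z=(\ell w-x-y)/k$ into a short window, so that the congruence $\ell\mid x+y+kz$ together with the demand that all of $x,y,z,w$ lie in one colour eliminates all but $O(k/\ell^{3})$ candidate solutions; carefully tuning the endpoints should then carry a solution-free colouring up to $N=q^{2}-O(k/\ell^{3})$. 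The cases $\ell=3$ (Saracino--Wynne) and $\ell=4$ (Theorem \ref{th-Exact-1}) serve as base cases to calibrate the construction.

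\textbf{Upper bound.} To show that every $2$-colouring $\chi$ of $[1,q^{2}+O(k/\ell^{3})]$ has a monochromatic solution, I would run the customary pivot argument: inspect the colours of the $O(\log_{q}k)$ pivots $1,q,q^{2},\dots$ and of a bounded neighbourhood of each, and in each colour pattern exhibit $x,y,z,w$ of one colour with $x+y+kz=\ell w$, using the free variables $x,y$ to soak up the additive defect and $z$ to descend by the factor $q$. The sensitive part is the endgame, with $w$ within $O(k/\ell^{3})$ of $q^{2}$: there one must show that however $\chi$ splits the top band, some $w$ of colour $c$ has a partner $z$ of colour $c$ near $\ell w/k$ for which $\ell w-kz$ is realisable as $x+y$ in colour $c$, and controlling the interplay between the size of $x+y$ (up to $\approx 2q^{2}$) and the forced location of $z$ (near $q$) to within $O(k/\ell^{3})$ is what decides the bound.

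\textbf{Main obstacle.} The crux is pinning the lower-order term to $O(k/\ell^{3})$ in both directions: the naive colouring and the naive pivot argument only yield $q^{2}\pm O(k/\ell)$, and removing the extra powers of $\ell$ forces the fine, residue-dependent bookkeeping already on display in the split by $k\bmod 16$ and $k\bmod 64$ in Theorem \ref{th-Exact-1}, now to be executed for every modulus $\ell$ at once. The most realistic route is probably to determine $\operatorname{RR}(x+y+kz=\ell w)$ exactly first, as a piecewise-quadratic function of $k$ governed by $k\bmod\ell$ and $k\bmod\ell^{2}$ (extending the method behind Theorem \ref{th-Exact-1}), and only then to read off the $O(k/\ell^{3})$ estimate; failing such a closed form, one is left matching the explicit colouring against the explicit case analysis term by term, which is where I expect the argument to stall and also where the exponent $3$ would have to be confirmed or corrected.
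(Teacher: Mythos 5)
The statement you were asked about is a \emph{conjecture}: the paper does not prove it, and offers it only as a modification of the Robertson--Myers conjecture in light of the exact values computed for $\ell=4$ in Theorem \ref{th-Exact-1} (and the Saracino--Wynne result for $\ell=3$). So there is no proof in the paper to compare yours against, and you are right to present a plan rather than a proof. The parts of your plan that are actually checkable are correct: the identity $\lfloor(k+\ell+1)/\ell\rfloor=\lceil(k+2)/\ell\rceil=q$ holds, the two-block colouring does give $\operatorname{RR}(x+y+kz=\ell w)\geq\lceil(k+2)q/\ell\rceil=q^{2}-O(k/\ell)$, and this matches the general lower bound available from Theorem \ref{th-Saracino-LBounds} via the Remark. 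Your observation that Theorems \ref{th-UBounds01} and \ref{th-UBounds} do not apply in the regime $k\geq\ell+2$ is also accurate.

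The genuine gap is everything beyond that: neither the refined multi-block colouring that is supposed to push the lower bound from $q^{2}-O(k/\ell)$ up to $q^{2}-O(k/\ell^{3})$, nor the pivot argument for the matching upper bound, is actually constructed, and the paper gives no guidance because it proves nothing here either. Two further cautions. First, as literally stated (``for $\ell\geq 2$ fixed''), $O(k/\ell^{3})$ and $O(k/\ell^{2})$ are indistinguishable as $k\to\infty$; the exponent $3$ only has content if the implied constant is uniform in $\ell$, and the only data supporting that are the cases $\ell=3,4$ (e.g.\ for $\ell=4$, $k\equiv 0\pmod{16}$, the error is exactly $q^{2}-\frac{k^{2}+6k+16}{16}=\frac{k}{8}=8\cdot\frac{k}{\ell^{3}}$, which is equally consistent with $2\cdot k/\ell^{2}$). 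Second, your suggested route of first obtaining a closed form piecewise-quadratic in $k$ governed by residues modulo $\ell$ and $\ell^{2}$ is plausible, and is indeed how Theorem \ref{th-Exact-1} proceeds for $\ell=4$ (splitting by $k\bmod 16$ and $k\bmod 64$), but executing it uniformly in $\ell$ is an open problem, not a step you can take for granted. In short: your proposal correctly diagnoses the situation but does not close the conjecture, and no one should expect it to.
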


For five variable Rado numbers, the exact values for
$\operatorname{RR}(\mathcal{E}(5,k,k+j))$ for $1\leq j\leq 5$ are
determined in Section $4$.

\section{Results for multivariable Rado numbers}

At first, we give the proof of Theorem
\ref{th-Lower-LLL}.\vskip0.2mm

\noindent \textbf{Proof of Theorem \ref{th-Lower-LLL}}: Color each
integer of $[N]$ by colors $c_1,\ldots,c_r$ randomly and
independently, in which each integer is colored $c_i$ with
probability $p_i$, where $\sum_{i=1}^rp_i=1$. For each solution
$S_i$ of $\mathcal{E}_i$ of $[N]$, let $A_{S_i}$ denote the event
that $S_i$ is a monochromatic solution with $c_i$, where $1\leq
i\leq r$. Then
$$
\Pr[A_S]=p_{i}^{m_i}.
$$

\begin{fact}\label{Fact-1}
The number of solutions of a equation $\mathcal{E}_i$ with $m_i$
variables containing $x$ in $[1,n]$ is at most $m_in^{m_i-2}$.
\end{fact}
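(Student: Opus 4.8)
The plan is a dimension count: pin down which coordinate of a solution equals $x$, bound the number of completions, and sum over the possible positions. Throughout I use the standing convention from Section~\ref{s:1} that every coefficient of $\mathcal{E}_i$ is a positive integer.

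First I would fix an index $j\in\{1,\ldots,m_i\}$ and count the tuples $(x_1,\ldots,x_{m_i})\in[1,n]^{m_i}$ satisfying $\mathcal{E}_i$ with $x_j=x$. Substituting $x_j=x$ into $\mathcal{E}_i$ produces a single linear equation in the remaining $m_i-1$ unknowns in which every coefficient is still nonzero. Now single out one of those unknowns, say $x_t$; assign arbitrary values in $[1,n]$ to the other $m_i-2$ unknowns — there are at most $n^{m_i-2}$ such assignments — and observe that the equation then reduces to $a_t x_t=(\text{a constant})$ with $a_t\neq 0$, which determines $x_t$ uniquely (discard the assignment if the forced value is not an integer in $[1,n]$). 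Hence at most $n^{m_i-2}$ solutions have $x_j=x$. Summing over the $m_i$ choices of $j$ yields the bound $m_i n^{m_i-2}$; this only overcounts — a solution in which $x$ occurs in several coordinates is counted once per occurrence — which is harmless for an upper bound, and $m_i\ge m_1\ge 3$ keeps the exponent $m_i-2$ nonnegative.

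There is no real obstacle here. The one point that must be invoked explicitly is that fixing all but one of the free variables pins down the last one, which rests on that variable's coefficient being nonzero — precisely the positivity-of-coefficients assumption. So the write-up is just the routine argument above; the only care needed is the bookkeeping of the (harmless) overcounting and making sure the reduction to $a_t x_t=(\text{const})$ is stated cleanly.
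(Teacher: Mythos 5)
Your proof is correct: fixing the position $j$ where $x$ occurs, assigning the other $m_i-2$ free variables arbitrarily in $[1,n]$, and noting that the last variable is forced because its coefficient is a nonzero (positive) integer gives at most $n^{m_i-2}$ solutions per position, and summing over the $m_i$ positions (with harmless overcounting) yields $m_in^{m_i-2}$. The paper states Fact~\ref{Fact-1} without any proof, and your dimension-count argument is precisely the standard justification the authors implicitly rely on, so there is nothing to reconcile.
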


Let $\Gamma$ denote the graph corresponding to all possible
$A_{S_i}$, where $\{A_{S_i},A_{S_j}\}$ is an edge of $\Gamma$ if and
only if $|S_i\cap T_j|\geq 1$ (i.e., the events $A_{S_i}$ and
$A_{S_j}$ are dependent), the same applies to pairs of the form
$\{A_{S_i},A_{S_i'}\}$ and $\{A_{S_j},A_{S_j'}\}$. Let $N_{A_iA_i}$
denote the number of vertices of the form $A_{S_i}$ for some $S_i$
joined to some other vertex of this form, and let $N_{A_iA_j} $ and
$N_{A_jA_i}$ be defined analogously.

\begin{fact}\label{Fact-2}
$N_{A_iA_i}\leq m_i^2n^{m_i-2}$.
\end{fact}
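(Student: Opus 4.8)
The plan is to read $N_{A_iA_i}$ as the (worst-case) number of type-$i$ neighbours of a single vertex $A_{S_i}$ in $\Gamma$, i.e.\ as an upper bound, taken over all solutions $S_i$ of $\mathcal{E}_i$ in $[1,n]$, for the number of solutions $S_i'$ of the \emph{same} equation $\mathcal{E}_i$ with $A_{S_i'}$ joined to $A_{S_i}$; by the definition of the dependence graph this is exactly the number of solutions $S_i'$ with $S_i'\cap S_i\neq\emptyset$. (The more literal reading ``number of all $S_i$ meeting some other solution'' would make the inequality false for large $n$, since nearly every solution meets another one and the total count is of order $n^{m_i-1}$, so the degree interpretation is the intended one and is the one needed to feed Corollary~\ref{Lemma-LLL}.)

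First I would fix a solution $S_i=(x_1,x_2,\ldots,x_{m_i})$, so that, viewed as a set, $S_i$ has at most $m_i$ elements. If $A_{S_i'}$ is joined to $A_{S_i}$ then $S_i'$ must contain the value $x_j$ for at least one index $j\in\{1,\ldots,m_i\}$, so by the union bound
$$
\#\{\,S_i' : S_i' \text{ a solution of } \mathcal{E}_i \text{ in } [1,n],\ S_i'\cap S_i\neq\emptyset\,\}
\;\le\; \sum_{j=1}^{m_i}\#\{\,S_i' : x_j\in S_i'\,\}.
$$
Now I would apply Fact~\ref{Fact-1} to each term on the right: for every fixed value $x_j$ the number of solutions of $\mathcal{E}_i$ in $[1,n]$ containing $x_j$ is at most $m_i n^{m_i-2}$ (pick which of the $m_i$ variables equals $x_j$, assign $m_i-2$ of the remaining variables freely, and let the equation force the last one). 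Summing over the $m_i$ choices of $j$ gives $m_i\cdot m_i n^{m_i-2}=m_i^2 n^{m_i-2}$; since $S_i$ was arbitrary, this bounds $N_{A_iA_i}$.

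I do not expect a real obstacle: this is a two-step counting estimate (a union bound over the $\le m_i$ coordinates of $S_i$, followed by Fact~\ref{Fact-1}). The only points worth care are that the bound must be phrased for the worst-case $S_i$ so that it is a legitimate degree bound in the Local Lemma, and that the same argument, with the second copy of $\mathcal{E}_i$ replaced by $\mathcal{E}_j$, yields the companion estimates for $N_{A_iA_j}$ and $N_{A_jA_i}$ used afterwards — there the exponent of $n$ is governed by whichever equation's solutions are being enumerated (so $N_{A_iA_j}\le m_im_j n^{m_j-2}$ and $N_{A_jA_i}\le m_jm_i n^{m_i-2}$), and one should make sure the exponent used downstream matches this convention.
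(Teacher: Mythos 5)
Your proposal is correct and matches the argument the paper intends: the paper states Fact \ref{Fact-2} without an explicit proof, but it is meant to follow from Fact \ref{Fact-1} exactly as you do it, by a union bound over the at most $m_i$ entries of a fixed solution $S_i$, each entry lying in at most $m_in^{m_i-2}$ solutions of $\mathcal{E}_i$, giving $m_i\cdot m_in^{m_i-2}=m_i^2n^{m_i-2}$. Your reading of $N_{A_iA_i}$ as a worst-case degree bound (rather than a global count) is also the interpretation the paper uses when it feeds these quantities into Corollary \ref{Lemma-LLL}, and your remark on the companion bounds $N_{A_iA_j}$, $N_{A_jA_i}$ agrees with the paper's "Similarly" step.
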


Similarly, we have
$$
N_{A_iA_j}\leq m_im_jn^{m_j-2}, \ N_{A_jA_i}\leq m_im_jn^{m_i-2}.
$$

If there exist positive $p_i,y_i \ (1\leq i\leq k)$ such that
$$
\log y_i > y_i\cdot p_i^{m_i}\cdot m_i^2n^{m_i-2}+\sum_{j=1, \ j\neq
i}^ry_j\cdot p_j^{m_j}\cdot m_im_jn^{m_j-2}
$$
then
$\operatorname{GR}_r(\mathcal{E}_1,\mathcal{E}_2,...,\mathcal{E}_r)>n$.

We shall choose $N,p_i,y_i$ in order by
$$
p_1=\cdots=p_{r-1}=\frac{p}{r-1}, \ \ p_r=1-p, \ \ m_r=c_2n(\ln
n)(r-1)^{-1},
$$
and
$$
p=c_1n^{-1}(r-1), \ \ y_1=\cdots=y_{r-1}=1+\epsilon, \ \
y_r=e^{{c_3}\ln n}.
$$

Observe that for $1\leq i\leq r-1$, we have
\begin{eqnarray*}
\log y_i &>&y_1\cdot p_1^{m_1}\cdot (r-1)\cdot
m_im_{r-1}n^{m_{r-1}-2}+(1-p)^{m_r}\cdot
m_im_rn^{m_r-2}y_r\\
&>&y_i\cdot p_i^{m_i}\cdot m_i^2n^{m_i-2}+\sum_{j=1, \ j\neq
i}^rp_j^{m_j}\cdot m_im_jn^{m_j-2}y_j
\end{eqnarray*}
and
\begin{eqnarray*}
\log y_r &>&y_1\cdot p_1^{m_1}\cdot (r-1)\cdot
m_rm_{r-1}n^{m_{r-1}-2}+(1-p)^{m_r}\cdot
m_r^2n^{m_r-2}y_r\\
&>&\sum_{j=1, \ j\neq r}^ry_j\cdot p_j^{m_j}\cdot
m_rm_jn^{m_j-2}+y_r\cdot p_r^{m_r}\cdot m_r^2n^{m_r-2}.
\end{eqnarray*}

If $c_3-c_1^{m_1}c_2>0$, $c_3-c_1c_2+m_r<0$, and $m_r$ is
sufficiently large, then the inequations hold. Since
$$
m_r=c_2n(\ln n)(r-1)^{-1}\leq c_2n\left(\ln
m_r(r-1)c_2^{-1}\right)(r-1)^{-1},
$$
it follows that
$$
\operatorname{GR}_r(\mathcal{E}_1,\mathcal{E}_2,...,\mathcal{E}_r)\geq
\frac{m_r(r-1)}{c_2(\ln m_r(r-1)-\ln c_2)},
$$
and hence the result follows.\\

\begin{remark}
The lower bound of (\ref{eqlll}) holds under the condition
$c_3-c_1^{m_1}c_2>0$. Here, $c_1,c_2,c_3$ rely on the value of
$m_r$, and can be seen as the function of $m_r$. To improve the
lower bound for given specific equations, one need to improve Fact
\ref{Fact-1} first.
\end{remark}

Next, we give the proof of Theorem \ref{th-twoo} by the following two
lemmas.
\begin{lemma}
For $k\geq 3$, we have $\operatorname{RR}(x+y=z, kx=y)=5k$.
\end{lemma}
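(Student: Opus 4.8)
The plan is to compute the invariant by locating the largest $n$ that admits a \emph{bad} coloring, meaning a partition $[1,n]=R\sqcup B$ in which $R$ has no solution of $x+y=z$ (call such an $R$ sum-free, equal summands allowed) and $B$ has no solution of $kx=y$ (equivalently, $B$ contains no pair $\{a,ka\}$). By the definition of $\operatorname{RR}$, which is invariant under swapping the two colors, $\operatorname{RR}(x+y=z,kx=y)=1+N$, where $N$ is that largest $n$, so the target is $N=5k-1$. The pivotal observation is that $B\subseteq[1,n]$ has no solution of $kx=y$ precisely when, for every $i$ with $ik\le n$, at least one of $i$ and $ik$ lies in $R$; that is, $R$ is a vertex cover of the graph on $[1,n]$ with edge set $\{\{i,ik\}:ik\le n\}$, which for $n$ near $5k$ and $k\ge 6$ is, up to isolated vertices, the matching $\{\{i,ik\}:1\le i\le 5\}$.

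For the lower bound I would exhibit the coloring of $[1,5k-1]$ given by $R=\{1,4,2k,3k\}$ and $B$ its complement. Since $5k>5k-1$, the only edges of the multiplication-by-$k$ graph on $[1,5k-1]$ are $\{1,k\},\{2,2k\},\{3,3k\},\{4,4k\}$, and $R$ meets every one of them, so $B$ has no solution of $kx=y$. That $R$ is sum-free is a one-line check: none of $1+4,\ 1+2k,\ 1+3k,\ 4+2k,\ 4+3k,\ 2k+3k$ and none of $1+1,\ 4+4,\ 2k+2k,\ 3k+3k$ lies in $\{1,4,2k,3k\}$ (for $k\ge 5$; the smallest values of $k$ are treated by a direct variant). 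Hence $[1,5k-1]$ is badly colorable and $\operatorname{RR}(x+y=z,kx=y)\ge 5k$.

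For the upper bound, fix a partition $[1,5k]=R\sqcup B$ in which $B$ has no solution of $kx=y$; I must show $R$ is not sum-free. For each $i\in\{1,\dots,5\}$ (here $ik\le 5k$) $R$ contains $i$ or $ik$, so, setting $S:=\{i\le 5:i\in R\}$ and $T:=S\cup\{ik:i\notin S\}\subseteq R$, it suffices to show $T$ is not sum-free. If $T$ were sum-free, then $S\subseteq T$ is sum-free, and $\{ik:i\notin S\}\subseteq T$ is sum-free, so, since $x\mapsto kx$ preserves the relation $a+b=c$, the set $\{1,\dots,5\}\setminus S$ is sum-free too; thus $S$ and its complement in $\{1,\dots,5\}$ would both be sum-free. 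But the maximal sum-free subsets of $\{1,2,3,4,5\}$ are exactly $\{1,4\},\{2,3\},\{2,5\},\{1,3,5\},\{3,4,5\}$, and the union of any two of them omits an element of $\{1,\dots,5\}$; hence $S$ and its complement cannot both be sum-free, a contradiction. Therefore $T$, and hence $R$, contains a solution of $x+y=z$, which gives $\operatorname{RR}(x+y=z,kx=y)\le 5k$.

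The argument just sketched is written for $k\ge 6$, and the main obstacle is making it airtight at the low end. When $k\le 5$ one has $k^2\le 5k$, so $[1,5k]$ contains the chain $1,k,k^2$ and the multiplication-by-$k$ graph has a path-component on $\{1,k,k^2\}$ rather than the single edge $\{1,k\}$; then ``$B$ has no solution of $kx=y$'' forces $R$ to contain a vertex cover of that path ($k\in R$, or $\{1,k^2\}\subseteq R$), and a few cross-terms $i+jk$ can re-enter $T$. I would handle the finitely many small cases $k\in\{3,4,5\}$ by direct computation, rerunning the covering bookkeeping against the modified constraints for the upper bound and checking the corresponding colorings of $[1,5k-1]$ for the lower bound (adjusting the set $\{1,4,2k,3k\}$ whenever a collision such as $4+4=2k$ intervenes).
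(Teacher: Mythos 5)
Your upper bound is correct and takes a genuinely different, cleaner route than the paper's: instead of chasing colors element by element starting from the color of $1$, you reduce everything to the fact that no $S\subseteq\{1,\dots,5\}$ has both $S$ and $\{1,\dots,5\}\setminus S$ sum-free, and your list of maximal sum-free subsets is right. Note that this part needs none of the caveats you attach to small $k$: the inclusion $T\subseteq R$ uses only that for each $i\le 5$ either $i\in R$ or $ik\in R$, which holds verbatim for $k=3,4,5$ as well (overlaps between $\{1,\dots,5\}$ and $\{k,\dots,5k\}$ are harmless), so $\operatorname{RR}\le 5k$ is proved uniformly for all $k\ge 3$. Your lower-bound set $\{1,4,2k,3k\}$ is also an improvement on the paper's $\{1,4,2k,4k\}$, which is not a valid coloring for any $k$: it contains the red solution $2k+2k=4k$ and leaves $3$ and $3k$ both blue.

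The genuine gap is the case $k=4$, which you defer to ``a direct variant'': no variant exists, because the statement itself fails there. With the convention the paper itself uses (repeated entries allowed in a solution; its proof invokes $1+1=2$ and $2k+2k=4k$), take any $2$-coloring of $[1,16]$ whose blue class has no solution of $4x=y$; then $R$ must meet each of $\{1,4\},\{2,8\},\{3,12\},\{4,16\}$. If $4\in R$, covering $\{2,8\}$ forces $2+2=4$ or $4+4=8$ inside $R$; if $4\notin R$, then $1,16\in R$ and covering $\{2,8\}$ forces $1+1=2$ or $8+8=16$ inside $R$. So every $2$-coloring of $[1,16]$ already yields a red solution of $x+y=z$ or a blue solution of $4x=y$, whence $\operatorname{RR}(x+y=z,4x=y)\le 16<20$ (and coloring $\{1,8,12\}$ red in $[1,15]$ shows the value is exactly $16$). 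Your argument actually establishes the correct result for $k=3$ and $k\ge 5$, but as written the proposal cannot be completed at $k=4$, and no adjustment of the red set will fix it.
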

\begin{proof}
To show the lower bound, we color $1,4,2k,4k$ red and other integers
in $[1,5k-1]$ blue. It is clear that there is no red solution to
$x+y=z$. If $x\in [1,5k-1]\setminus \{1,4,2k,4k\}$, then $y=2k, 3k$
or $y\geq 5k$, and hence there is no blue solution to $kx=y$.

For the upper bound, let $R$ be the set of red integers, and $B$ be
the set of blue integers. For any $2$-coloring of $[1,5k]$, we
assume that there is neither red solutions to $x+y=z$ nor blue
solutions to $kx=y$. Suppose that $1\in R$. To avoid a red solution,
$2\in B$, and hence $2k\in R$. It implies that $4k \in B$ by
considering the solution $2k+2k=4k$. Then, we have $4\in R$ and so
$3,5\in B$. It shows that $3k,5k\in R$. However $2k+3k=5k$ is a red
solution, a contradiction. Suppose that $1\in B$. To avoid a blue
solution, $k\in R$, and hence $2k\in B$. It implies that $2k\in R$.
Then, we have $4\in B$, and so $4k\in R$. It shows that $3k,5k\in
B$. However $2k+3k=5k$ is a blue solution, a contradiction.
\end{proof}

\begin{lemma}\label{th-two}
For $a\geq 2$, we have
$$
\operatorname{RR}(x+y=z, x+a=y)=
\begin{cases}
\infty &\text{$a$ is odd,}\\
3a+2 &\text{$a$ is even.}\\
\end{cases}
$$
\end{lemma}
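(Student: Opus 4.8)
The plan is to treat the two parities of $a$ separately, and within the even case to prove the lower bound by an explicit colouring and the upper bound by a forcing argument. When $a$ is odd I would exhibit a single colouring of all of $\mathbb{N}$ witnessing $\operatorname{RR}=\infty$: colour every odd integer red (the class required to avoid $x+y=z$) and every even integer blue (the class required to avoid $x+a=y$). No red triple $x+y=z$ can occur, since a sum of two odd numbers is even; no blue pair $x,\,x+a$ can occur, since $x$ even and $a$ odd force $x+a$ to be odd. Hence for every $N$ the restriction to $[1,N]$ has no required monochromatic solution, so $\operatorname{RR}(x+y=z,\,x+a=y)=\infty$.

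From now on assume $a$ is even and set $N=3a+2$. For the lower bound I would colour $[1,3a+1]$ by putting $R=[a+1,2a+1]$ red and $B=[1,a]\cup[2a+2,3a+1]$ blue. Then $R$ is sum-free because the least sum of two of its elements is $2(a+1)=2a+2>\max R$; and $B$ contains no two elements differing by $a$, since for $x\in[1,a]$ we have $x+a\in[a+1,2a]\subseteq R$, while for $x\in[2a+2,3a+1]$ we have $x+a\ge 3a+2>3a+1$. This yields $\operatorname{RR}(x+y=z,\,x+a=y)\ge 3a+2$.

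For the matching upper bound I would argue by contradiction: suppose some $2$-colouring of $[1,3a+2]$ makes $R$ sum-free and leaves $B$ without two elements differing by $a$, and split on the colour of $1$. If $1\in R$, then $2=1+1\notin R$, so $2\in B$, whence the pair $\{2,\,a+2\}$ forces $a+2\in R$; one then proves by induction that $a+2i\in R$ for $i=1,\dots,\tfrac a2+1$, the step being that $(a+2)+(a+2i)=2a+2i+2\le 3a+2$ cannot lie in $R$, hence lies in $B$, so the pair $\{a+2i+2,\,2a+2i+2\}$ forces $a+2(i+1)\in R$. In particular $a+2,\,2a+2\in R$, and then from $1\in R$ together with $1+(a+1)=a+2$ and $1+(2a+1)=2a+2$ we get $a+1,\,2a+1\in B$ — but these differ by $a$, a blue solution, a contradiction. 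If instead $1\in B$, the pair $\{1,\,a+1\}$ forces $a+1\in R$, and an analogous induction (at stage $i$, $(a+1)+(a+i)=2a+1+i\le 3a+1$ cannot lie in $R$, so lies in $B$, and the pair $\{a+1+i,\,2a+1+i\}$ forces $a+1+i\in R$) gives $a+1,\dots,2a+1\in R$ and $2a+2,\dots,3a+1\in B$; finally $(a+1)+(2a+1)=3a+2$ cannot lie in $R$, so $3a+2\in B$, and then $\{2a+2,\,3a+2\}$ is a blue pair differing by $a$, again a contradiction. Combining the two bounds gives $\operatorname{RR}(x+y=z,\,x+a=y)=3a+2$ for even $a$.

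The genuinely delicate part is the upper bound: one has to organise the two chains of forced colours so that every sum $x+y$ invoked stays inside $[1,3a+2]$ — it is precisely these inequalities, which rely on $a$ being even and on the exact value $3a+2$, that make the argument close — and then observe that the two ends of a length-$a$ gap are pushed into the same colour class. The base cases for the smallest even $a$ (for instance $a=2$, $N=8$, where one checks directly that $[1,7]$ admits the bad colouring $R=\{3,4,5\}$, $B=\{1,2,6,7\}$ while $[1,8]$ does not) should be verified by hand to be safe.
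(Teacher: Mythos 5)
Your proof is correct. The odd case and the even-case lower bound match the paper's (your red block $[a+1,2a+1]$ in fact repairs a small slip there: the paper's stated colouring assigns no colour to $2a+1$). The upper bound is where you genuinely diverge. The paper also branches on the colour of $1$, but then runs a constant-length forcing chain built on halving: from $a+2\in R$ it deduces $\frac{a+2}{2}\in B$ (via $\frac{a+2}{2}+\frac{a+2}{2}=a+2$), hence $\frac{3a+2}{2}\in R$, then $2a+4\in B$, $a+4\in R$, $\frac{3a+4}{2}\in R$, and closes with the red solution $1+\frac{3a+2}{2}=\frac{3a+4}{2}$ (with a similar short chain when $1\in B$). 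You instead colour the whole middle block by a length-$\Theta(a)$ induction along the maps $x\mapsto x+a$ and $(x,y)\mapsto x+y$, forcing $[a+1,2a+1]\subseteq R$ and $[2a+2,3a+1]\subseteq B$ before reaching the contradiction. Both arguments need $a$ even --- the paper through the integrality of $\frac{a+2}{2}$, you through the step size $2$ in your first case --- and both are careful to keep every invoked sum inside $[1,3a+2]$. The paper's chain is shorter; yours is longer but essentially reconstructs the unique extremal colouring, which makes it clearer why the threshold is exactly $3a+2$.
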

\begin{proof}
Suppose that $a$ is odd. We color the odd integers in $\mathbb{Z}^+$
red and the even integers in $\mathbb{Z}^+$ blue. If $x,y,z$ are
red, then $x+y$ must be even and hence there is no red solutions to
$x+y=z$. If $x,y$ are blue, then $x+a$ must be odd, and so there is
no blue solutions to $x+a=y$. It follows that
$\operatorname{RR}(x+y=z, x+a=y)=\infty$.

Suppose that $a$ is even. For the lower bound, we color the integers
in $[a+1,2a]$ red and the integers in $[1,a]\cup[2a+2,3a+1]$. If
$x,y \in [a+1,2a]$, then $x+y\in [2a+2,4a]$, and hence there is no
red solution to $x+y=z$. If $x\in [1,a]\cup[2a+2,3a+1]$, then
$x+a\in [a+1,2a]\cup[3a+2,4a+1]$, and hence there is no blue
solution to $x+a=y$. We now show the upper bound. For any
$2$-coloring of $[1,3a+2]$, we assume that there is neither red
solutions to $x+y=z$ nor blue solutions to $x+a=y$. Suppose that
$1\in R$. To avoid a red solution, $2\in B$, and hence $a+2\in R$.
Then we have $\frac{a+2}{2}\in B$. It follows that
$\frac{3a+2}{2}\in R$. We have $2a+4 \in B$ by considering
$a+2+a+2=2a+4$, then $a+4\in R$ and hence $\frac{3a+4}{2}\in R$. It
follows that $1+\frac{3a+2}{2}=\frac{3a+4}{2}$, a contradiction.
Suppose that $1\in B$. To avoid a blue solution, $a+1\in R$. If
$2\in R$, then $a+3, 4\in B$ and hence $a+4,3\in R$. It follows that
$a+1+3=a+4$, a contradiction. If $2\in B$, then $a+2\in R$, and
hence $\frac{a+2}{2}\in B$. This means that $\frac{3a+2}{2}\in R$
and hence $3a+2\in B$. Since $a+1\in R$, we have $2a+2\in B$, and
hence $2a+2+a=3a+2$, a contradiction.
\end{proof}

\noindent \textbf{Proof of Theorem \ref{th-UBounds01}}:
The $2$-color Rado number of $\mathcal{E}(m,k,\ell)$ is at least that of
\begin{equation}          \label{eq5}
x_1+\cdots+x_{m-2}+kx_{m-1}=y_1+\cdots+y_{\ell}
\end{equation}
and at most that of
\begin{equation}\label{eq6}
x_1+\cdots+x_{m-2}=(\ell-k)x_{m-1}.
\end{equation}
Let $\mathcal{E}_2(k+m-2,\ell)$ and $\mathcal{E}_1(m-2,\ell-k)$
denote the equations (\ref{eq5}) and (\ref{eq6}), respectively.
Since any monochromatic solution of $\mathcal{E}(m,k,\ell)$ is also a
monochromatic solution of $\mathcal{E}_2(k+m-2,\ell)$ and any monochromatic solution of $\mathcal{E}_1(m-2,\ell-k)$ is also a
monochromatic solution of $\mathcal{E}(m,k,\ell)$, it follows that
$$
\operatorname{RR}\left(\mathcal{E}_2(k+m-2,\ell)\right)\leq
\operatorname{RR}\left(\mathcal{E}(m,k,\ell)\right)\leq
\operatorname{RR}\left(\mathcal{E}_1(m-2,\ell-k)\right).
$$
From Theorem \ref{th-Saracino-LBounds01}, we have $\operatorname{RR}\left(\mathcal{E}_2(k+m-2,\ell)\right)=
\operatorname{RR}\left(\mathcal{E}_1(\ell+1,k+m-2)\right)$, where
$k+m-2\leq 2\ell$ and the pair $(k+m-2,\ell)$ is none of $(3,2),
(4,2), (5,3), (10,5), (11,6),$ $(12,7), (13,8), (14,9)$. Therefore, we have
$$
\operatorname{RR}\left(\mathcal{E}_1(\ell+1,k+m-2)\right)\leq
\operatorname{RR}\left(\mathcal{E}(m,k,\ell)\right)\leq
\operatorname{RR}\left(\mathcal{E}_1(m-2,\ell-k)\right).
$$

We now in a position to prove Theorem \ref{th-UBounds} by the
following two lemmas.

\begin{lemma}\label{lem-upBounds}
If $k+3\leq \ell \leq k+2(m-2)$, then
$$
\left\lceil \frac{k+m-2}{\ell}\left\lceil
\frac{k+m-2}{\ell}\right\rceil\right\rceil\leq
\operatorname{RR}\left(\mathcal{E}(m,k,\ell)\right)\leq \left\lceil
\frac{m-2}{\ell-k}\left\lceil
\frac{m-2}{\ell-k}\right\rceil\right\rceil.
$$
\end{lemma}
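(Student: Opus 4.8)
The plan is to establish the two inequalities separately, leveraging the chain already proved in Theorem \ref{th-UBounds01} for the upper bound and a direct colouring construction for the lower bound. For the upper bound $\operatorname{RR}(\mathcal{E}(m,k,\ell))\leq \lceil\frac{m-2}{\ell-k}\lceil\frac{m-2}{\ell-k}\rceil\rceil$, I would invoke Theorem \ref{th-UBounds01} (or rather its proof): any monochromatic solution of $\mathcal{E}_1(m-2,\ell-k)$, the equation $x_1+\cdots+x_{m-2}=(\ell-k)x_{m-1}$, is a monochromatic solution of $\mathcal{E}(m,k,\ell)$, so $\operatorname{RR}(\mathcal{E}(m,k,\ell))\leq \operatorname{RR}(\mathcal{E}_1(m-2,\ell-k))$. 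Now $\mathcal{E}_1(m-2,\ell-k)$ has $n=m-2$ variables and multiplier $\ell-k\geq 3$ (since $\ell\geq k+3$), and the number of summands is $m-3 = n-1$. Under the hypothesis $\ell\leq k+2(m-2)$ we have $\ell-k\leq 2(m-2) = 2(n-1) < 2(n-1)+2$, so we are \emph{not} in the regime $n\geq 2(\ell-k)+2$ of the last row of Table \ref{tab:1}; however, what we actually want is the complementary bound $n=m-2 \leq 2(\ell-k)+1$, which would put us in one of the small-value rows. Wait — I need $\ell-k\geq 3$ and $(m-2) \geq (\ell-k)/2+1$ for Theorem \ref{th-SaD13-1} to apply at all; the hypothesis $\ell-k\leq 2(m-2)$ gives exactly $(m-2)\geq (\ell-k)/2$, which I would need to upgrade to $(m-2)\geq(\ell-k)/2+1$ by a separate small-case argument or by noting the ceiling expression already handles the boundary. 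When $m-2\geq 2(\ell-k)+2$, Table \ref{tab:1} gives $\operatorname{RR}(\mathcal{E}_1(m-2,\ell-k))=\lceil\frac{m-3}{\ell-k}\lceil\frac{m-3}{\ell-k}\rceil\rceil$, and I would need a short lemma comparing $\lceil\frac{m-3}{\ell-k}\lceil\frac{m-3}{\ell-k}\rceil\rceil$ with $\lceil\frac{m-2}{\ell-k}\lceil\frac{m-2}{\ell-k}\rceil\rceil$ — the latter is the clean statement, so this monotone comparison (which direction the off-by-one goes) is the first thing to pin down.

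For the lower bound $\lceil\frac{k+m-2}{\ell}\lceil\frac{k+m-2}{\ell}\rceil\rceil\leq \operatorname{RR}(\mathcal{E}(m,k,\ell))$, the natural route is through Theorem \ref{th-Saracino-LBounds}: since $\mathcal{E}(m,k,\ell)$ with $a_1=\cdots=a_{m-2}=1$, $a_{m-1}=k$ has coefficient sum $k+m-2$ on the left and $\ell$ copies of $x_m$ on the right, Remark (ii) gives $\operatorname{RR}(\mathcal{E}_2(k+m-2,\ell))\leq \operatorname{RR}(\mathcal{E}(m,k,\ell))$, and Theorem \ref{th-Saracino-LBounds} (applicable when $k+m-2>2\ell$) yields $\operatorname{RR}(\mathcal{E}_2(k+m-2,\ell))\geq\lceil\frac{k+m-2}{\ell}\lceil\frac{k+m-2}{\ell}\rceil\rceil$. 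If instead $k+m-2\leq 2\ell$, I would fall back to Theorem \ref{th-Saracino-LBounds01} together with Table \ref{tab:1}'s last row, which again produces the same ceiling expression when $\ell+1\geq 2(k+m-2)+2$; for the remaining finite range of parameters I would exhibit the colouring directly, colouring $[1,s-1]$ where $s=\lceil\frac{k+m-2}{\ell}\lceil\frac{k+m-2}{\ell}\rceil\rceil$ by assigning colour $0$ to $[1,t-1]$ and colour $1$ to $[t,s-1]$ for an appropriate threshold $t\approx \lceil\frac{k+m-2}{\ell}\rceil$, then checking that no monochromatic solution fits: a solution needs the left side $\sum x_i + kx_{m-1}\geq k+m-2$ (all variables $\geq 1$) forcing $\ell x_m\geq k+m-2$ hence $x_m\geq\lceil\frac{k+m-2}{\ell}\rceil$, and if all variables lie in a single block the arithmetic is too large to close up — this is the standard "interval colouring" argument for Rado numbers of this shape.

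The main obstacle I anticipate is \emph{bookkeeping the exceptional pairs and the off-by-one in the number of summands}. Theorems \ref{th-SaD13-1}, \ref{th-Saracino-LBounds01}, and \ref{th-Saracino-LBounds} each carry a list of forbidden small pairs and each is stated for a specific count of variables/summands, and $\mathcal{E}(m,k,\ell)$ translates into $\mathcal{E}_1$ and $\mathcal{E}_2$ with slightly shifted parameters ($m-2$ vs.\ $m-3$ summands, $\ell+1$ vs.\ $k+m-2$ as the multiplier). Making the two ceiling expressions in the statement match the ceiling expressions coming out of those theorems — and verifying the hypothesis $k+3\leq\ell\leq k+2(m-2)$ genuinely lands us outside all the listed exceptions (or handling the finitely many that survive by hand) — is where the real work lies; the inequalities themselves are immediate once the translation is set up, since they are just the two halves of the sandwich $\operatorname{RR}(\mathcal{E}_2)\leq\operatorname{RR}(\mathcal{E}(m,k,\ell))\leq\operatorname{RR}(\mathcal{E}_1)$ combined with the known evaluations of $\operatorname{RR}(\mathcal{E}_1)$ and the lower bound for $\operatorname{RR}(\mathcal{E}_2)$.
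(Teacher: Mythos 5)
Your overall route is the paper's route — upper bound by embedding solutions of $x_1+\cdots+x_{m-2}=(\ell-k)x_{m-1}$ into $\mathcal{E}(m,k,\ell)$ and quoting Theorem \ref{th-SaD13-1}, lower bound by a two-interval colouring — but two of the "obstacles" you flag are not real, and resolving them simplifies your plan to essentially the paper's proof. First, the off-by-one you worry about does not exist: the equation $x_1+\cdots+x_{m-2}=(\ell-k)x_{m-1}$ is $\mathcal{E}_1(n,\ell')$ with $n=m-1$ variables and $\ell'=\ell-k$ (not $n=m-2$; the paper's own proof of Theorem \ref{th-UBounds01} mislabels it, which is probably what misled you), so the last row of Table \ref{tab:1} reads $\left\lceil\frac{n-1}{\ell'}\left\lceil\frac{n-1}{\ell'}\right\rceil\right\rceil=\left\lceil\frac{m-2}{\ell-k}\left\lceil\frac{m-2}{\ell-k}\right\rceil\right\rceil$, exactly the expression in the statement; no comparison lemma between $m-3$ and $m-2$ numerators is needed. (What \emph{does} remain, for you and for the paper alike, is that this formula is only the last row of Table \ref{tab:1}, valid when $m-1\geq 2(\ell-k)+2$; in the other regimes $\operatorname{RR}(\mathcal{E}_1(m-1,\ell-k))$ is one of the small constants and the claimed inequality needs a separate check — the paper glosses over this too.) Second, your "fallback" interval colouring for the lower bound is in fact the paper's \emph{entire} lower-bound argument, and it works unconditionally: colour $[1,\lceil\frac{k+m-2}{\ell}\rceil-1]$ red and $[\lceil\frac{k+m-2}{\ell}\rceil,A-1]$ blue with $A=\lceil\frac{k+m-2}{\ell}\lceil\frac{k+m-2}{\ell}\rceil\rceil$; a red solution forces $x_m\geq\lceil\frac{k+m-2}{\ell}\rceil$ (too big for the red block) and a blue one forces $x_m\geq A$ (too big for the blue block). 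So the detour through Theorem \ref{th-Saracino-LBounds}, the case split on $k+m-2\lessgtr 2\ell$, and all the exceptional-pair bookkeeping for the lower bound can be deleted: the direct colouring is both the simplest and the complete argument there.
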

\begin{proof}
Since any solution to $\mathcal{E}_1(m-1,\ell-k)$ is also a solution
to $\mathcal{E}(m,k,\ell)$ for any $k\in \mathbb{Z}^+$, it follows
from Theorem \ref{th-SaD13-1} that
$\operatorname{RR}\left(\mathcal{E}(m,k,\ell)\right)\leq
\operatorname{RR}\left(\mathcal{E}_1(m-1,\ell-k)\right)$.

To show the lower bound, let $A=\lceil \frac{k+m-2}{\ell}\lceil
\frac{k+m-2}{\ell}\rceil\rceil$. We color all elements in $[1,
\lceil\frac{k+m-2}{\ell}\rceil-1]$ red and the remaining elements in
$[\lceil\frac{k+m-2}{\ell}\rceil,A-1]$ blue. If $x_1,
x_2,\ldots,x_{m-1}$ are all red, then $x_1+x_2+\ldots+x_{m-1}+kx_{m-1}\geq
k+m-2$, and hence $x_m\geq \lceil\frac{k+m-2}{\ell}\rceil$. Clearly,
there is no red solution. If $x_1, x_2,\ldots,x_{m-1}$ are all blue,
then $x_1+x_2+\ldots+x_{m-1}+kx_{m-1}\geq A$, and so $w\geq B$, and
hence there is no blue solution in
$\left[\left\lceil\frac{k+m-2}{\ell}\right\rceil, A-1\right]$.
\end{proof}

\begin{lemma}\label{le}
For all $k\geq \frac{(m-2)^3}{(\ell-k)^2}$ and $m\geq (\ell-k)^2-(\ell-k)+2$, we have
$$
\operatorname{RR}(\mathcal{E}(m,k,\ell))=
\left\lceil\frac{m-2}{\ell-k}\left\lceil\frac{m-2}{\ell-k}\right\rceil\right\rceil.
$$
\end{lemma}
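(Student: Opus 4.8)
The plan is to sharpen Lemma~\ref{lem-upBounds} under the extra hypotheses. From that lemma we already have $\operatorname{RR}(\mathcal{E}(m,k,\ell))\le \big\lceil\frac{m-2}{\ell-k}\big\lceil\frac{m-2}{\ell-k}\big\rceil\big\rceil$, so it suffices to prove the matching lower bound, i.e. to exhibit, for $N=\big\lceil\frac{m-2}{\ell-k}\big\lceil\frac{m-2}{\ell-k}\big\rceil\big\rceil-1$, a $2$-coloring of $[1,N]$ with no monochromatic solution to $\mathcal{E}(m,k,\ell)$. Write $d=\ell-k\ge 2$ and $M=m-2$, so the target value is $\big\lceil\frac{M}{d}\lceil\frac{M}{d}\rceil\big\rceil$. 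The natural candidate, mirroring the coloring in Lemma~\ref{lem-upBounds}, is to color $[1,\lceil M/d\rceil-1]$ red and $[\lceil M/d\rceil,N]$ blue. The point of the two numerical hypotheses $k\ge M^3/d^2$ and $m\ge d^2-d+2$ (equivalently $M\ge d^2-d$) should be exactly that these force $\lceil M/d\rceil\ge d$, so that the ``blue block'' $[\lceil M/d\rceil, N]$ is genuinely an interval of the right shape and the arithmetic below goes through.

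First I would verify there is no red monochromatic solution: if $x_1,\dots,x_{m-1}$ all lie in $[1,\lceil M/d\rceil-1]$, then the left side $\sum_{i=1}^{m-2}x_i+kx_{m-1}\ge (m-2)+k=M+k$, hence $x_m\ge (M+k)/\ell > k/\ell \cdot$ (something); more usefully $x_m\ge \lceil (M+k)/\ell\rceil$, and I must check this exceeds $\lceil M/d\rceil-1$, i.e. that $x_m$ is forced out of the red block. Since $\ell=k+d$, we have $(M+k)/\ell = (M+k)/(k+d)$, which for large $k$ is close to $1$ but the additive slack from $M$ pushes the ceiling up; here is where $k\ge M^3/d^2$ will be used to control the comparison. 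Second, the blue case: if $x_1,\dots,x_{m-1}\in[\lceil M/d\rceil, N]$, then $\sum x_i + kx_{m-1}\ge (M+k)\lceil M/d\rceil$, so $x_m\ge \lceil M/d\rceil\cdot (M+k)/\ell = \lceil M/d\rceil (M+k)/(k+d)$; I need this to be $\ge N+1=\big\lceil\frac{M}{d}\lceil\frac{M}{d}\rceil\big\rceil$, forcing $x_m$ above the blue block. The inequality $\lceil M/d\rceil(M+k)/(k+d)\ge \frac{M}{d}\lceil M/d\rceil$ would follow from $(M+k)/(k+d)\ge M/d$, which is generally false, so the real argument must instead compare $\lceil M/d\rceil(M+k)/(k+d)$ against the integer $N+1$ directly, using that $k$ is enormous relative to $M$: as $k\to\infty$ the ratio $(M+k)/(k+d)\to 1$ from above when $M>d$, so $\lceil M/d\rceil(M+k)/(k+d) > \lceil M/d\rceil \ge$ the ceiling we want once the error term $\lceil M/d\rceil(M-d)/(k+d)$ is at least the fractional deficit of $\frac{M}{d}\lceil M/d\rceil$ below its own ceiling. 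That deficit is at most $1$, and $\lceil M/d\rceil(M-d)/(k+d)\ge 1$ is implied by $k\le \lceil M/d\rceil(M-d)-d$, which... goes the wrong way — so the correct reading is that the hypothesis $k\ge M^3/d^2$ makes $(M+k)/(k+d)$ so close to $1$ that $\lceil M/d\rceil(M+k)/(k+d)$ and $\lceil M/d\rceil$ straddle no integer, i.e. $\big\lceil \lceil M/d\rceil(M+k)/(k+d)\big\rceil = \big\lceil \frac{M}{d}\lceil M/d\rceil\big\rceil$; I will set up this ceiling-equality as the central estimate.

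The main obstacle, then, is purely the interllocking of floors and ceilings: showing that under $k\ge M^3/d^2$ and $M\ge d^2-d$ the quantity $\lceil M/d\rceil(M+k)/(k+d)$ rounds up to the same integer $N+1$ as $\frac{M}{d}\lceil M/d\rceil$ does, while simultaneously $\lceil(M+k)/\ell\rceil>\lceil M/d\rceil-1$ for the red side. I would handle this by writing $M=qd+t$ with $0\le t<d$, so $\lceil M/d\rceil = q + \mathbf{1}[t>0]$, expanding $\frac{M}{d}\lceil M/d\rceil$ and $N+1$ explicitly in terms of $q,t,d$, and bounding $\big|\lceil M/d\rceil\frac{M+k}{k+d} - \lceil M/d\rceil\big| = \lceil M/d\rceil\frac{M-d}{k+d} \le \frac{(q+1)(M-d)}{k} \le \frac{M^2/d \cdot M}{k} = \frac{M^3}{dk} \le d \le \ldots$ — wait, that bound is too weak, so in fact one needs the sharper form $k\ge M^3/d^2$ giving $\frac{M^3}{dk}\le \frac1d\cdot d^2\cdot\frac1d\cdots$; the precise chain is exactly the routine computation I will carry out in the write-up, the upshot being that the perturbation is smaller than the distance from $\frac{M}{d}\lceil M/d\rceil$ to the next integer below $N+1$, so no integer lies strictly between the perturbed and unperturbed values and the two ceilings agree. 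Combining the two cases shows the proposed $2$-coloring of $[1,N]$ is solution-free, giving $\operatorname{RR}(\mathcal{E}(m,k,\ell))\ge N+1$, and with Lemma~\ref{lem-upBounds} this yields equality.
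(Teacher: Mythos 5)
Your overall plan (upper bound from Lemma \ref{lem-upBounds}, lower bound by exhibiting a solution-free $2$-coloring of $[1,B-1]$ with $B=\lceil\frac{m-2}{\ell-k}\lceil\frac{m-2}{\ell-k}\rceil\rceil$) is sound, and the two-block coloring you choose is in fact the right one; but the verification you sketch has a genuine gap, because minimal-sum estimates alone can never force $x_m$ out of either block. Writing $d=\ell-k$ and $M=m-2$, if $x_1,\dots,x_{m-1}$ are all blue the bound you get is only $x_m\ge \lceil M/d\rceil(M+k)/(k+d)$, which for $k\ge M^3/d^2$ is barely above $\lceil M/d\rceil$, the \emph{bottom} of the blue block, nowhere near $B$. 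Your proposed central estimate $\lceil\,\lceil M/d\rceil(M+k)/(k+d)\,\rceil=\lceil\frac{M}{d}\lceil M/d\rceil\rceil$ is false: for $M=6$, $d=2$, $k=54$ (so $k=M^3/d^2$ exactly) the left-hand side is $\lceil 180/56\rceil=4$ while the right-hand side is $9$. The red side fails for the same reason: $\lceil(M+k)/(k+d)\rceil=2$ for large $k$, which does not leave the red block $[1,\lceil M/d\rceil-1]$ as soon as $\lceil M/d\rceil\ge 3$. The cause is structural: the dominant terms $kx_{m-1}$ and $kx_m$ nearly cancel, so the equation only pins $x_m$ close to $x_{m-1}$, not above the block containing $x_{m-1}$.

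The missing idea, which is exactly the paper's mechanism, is a trichotomy on $x_{m-1}$ versus $x_m$. Under the hypotheses $k\ge(m-2)^3/(\ell-k)^2$ and $m\ge(\ell-k)^2-(\ell-k)+2$, any solution of $\sum_{i=1}^{m-2}x_i+kx_{m-1}=\ell x_m$ with all entries in $[1,B-1]$ must satisfy $x_{m-1}=x_m$: if $x_{m-1}<x_m$ then $\ell x_m\ge\ell(x_{m-1}+1)>kx_{m-1}+k\ge\sum_{i=1}^{m-2}x_i+kx_{m-1}$ because $(m-2)(B-1)\le k$; if $x_{m-1}>x_m$ one is forced to $k\le(\ell-k)(B-1)+2-m$, contradicting the size of $k$. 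Once $x_{m-1}=x_m$ the $k$-terms cancel and the equation reduces to $\sum_{i=1}^{m-2}x_i=(\ell-k)x_m$, i.e.\ to $\mathcal{E}_1(m-1,\ell-k)$, and only at that point does your two-block coloring (which is Saracino's extremal coloring for that equation) eliminate all monochromatic solutions, by the $x_{m-1}=x_m$ computation you essentially already have. If you insert this reduction your argument can be completed and coincides with the paper's proof; without it, the claim that the coloring admits no monochromatic solution is unproved, and the ceiling-perturbation estimate you propose in its place is false.
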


\begin{proof}
For $m\geq (\ell-k)^2-(\ell-k)+2$, let
$B=\lceil\frac{m-2}{\ell-k}\lceil\frac{m-2}{\ell-k}\rceil\rceil$.
From Theorem \ref{th-SaD13-1}, we have
$\operatorname{RR}(\mathcal{E}(m,k,\ell))\leq
\operatorname{RR}(\mathcal{E}_1(m-1,\ell-k))=B$. It suffices to prove that
$$
\operatorname{RR}(\mathcal{E}(m,k,\ell))=\operatorname{RR}\left(\sum_{i=1}^{m-2}x_i+kx_{m-1}=\ell
x_{m}\right)\geq B.
$$
Suppose that there exists a $2$-coloring of $[1,B-1]$ that admit a
monochromatic solution to $\sum_{i=1}^{m-2}x_i+kx_{m-1}=\ell x_{m}$.
Then, we show that any solution to
$\sum_{i=1}^{m-2}x_i+kx_{m-1}=\ell x_{m}$ with $x_{i}\leq B-1$ must
have $x_{m-1}=x_{m}$, where $1\leq i\leq m$. If $x_{m-1}< x_{m}$,
then $\ell x_{m}\geq \ell (x_{m-1}+1)>kx_{m-1}+k$. Since $k\geq
\frac{(m-2)^3}{(\ell-k)^2}$, it follows that
$\sum_{i=1}^{m-2}x_i+kx_{m-1}\leq (m-2)(B-1)+kx_{m-1}\leq
kx_{m-1}+k< \ell x_m$, and hence there is no monochromatic solution.
If $x_{m-1}>x_m$, then $\ell x_m\leq kx_{m-1}-1)+(\ell-k)(B-1)$.
Since $\sum_{i=1}^{m-2}x_i+kx_{m-1}\geq m-2+kx_{m-1}$, we have
$m-2+kx_{m-1}\leq \sum_{i=1}^{m-2}x_i+kx_{m-1}=\ell x_m\leq
k(x_{m-1}-1)+(\ell-k)(B-1)$, and hence $k\leq (\ell-k)(B-1)+2-m$.
However, $k\geq \frac{(m-2)^3}{(\ell-k)^2}>(\ell-k)(B-1)+2-m$, where
$m\geq (\ell-k)^2-(\ell-k)+2$, a contradiction. Therefore, we have
$x_{m-1}=x_m$. This means that any solution to
$\sum_{i=1}^{m-2}x_i+kx_{m-1}=\ell x_{m}$ is also a solution to
$\sum_{i=1}^{m-2}x_i=(\ell-k)x_{m-1}$. From Dan Saracino's result,
there exists a $2$-coloring of $[1,B-1]$ with no monochromatic
solution to $\sum_{i=1}^{m-2}x_i=(\ell-k)x_{m-1}$, and hence there
exists a $2$-coloring of $[1,B-1]$ with no monochromatic solution to
$\sum_{i=1}^{m-2}x_i+kx_{m-1}=\ell x_{m}$, and so
$\operatorname{RR}\left(\mathcal{E}(m,k,\ell)\right)\geq B=
\left\lceil\frac{m-2}{\ell-k}\left\lceil\frac{m-2}{\ell-k}\right\rceil\right\rceil$.
\end{proof}

We now give the proof of Theorem \ref{th-Exact} by the following
four lemmas.
Let the pair $(k+m-2,\ell)$ be none of $(3,2),(4,2), (5,3), (10,5), (11,6),$ $(12,7), (13,8), (14,9)$ and $\ell\geq 2$.

\begin{lemma}\label{pro1}
Let $2(m-2)/3+k \leq \ell \leq 3(m-2)/2+k$ and $\ell \neq m+k-2$. Then
$$
\operatorname{RR}\left(\mathcal{E}(m,k,\ell)\right)=
\begin{cases}
3 & \text{$k+m-2\equiv \ell \ (\bmod\, 2)$ and $\ell-k\equiv m-2 \ (\bmod\, 2)$},\\
4 & \text{$k+m-2\not \equiv \ell \ (\bmod\, 2)$ and $\ell-k\not
\equiv m-2 \ (\bmod\, 2)$}.
\end{cases}
$$
\end{lemma}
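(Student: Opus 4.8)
The plan is to pin down $\operatorname{RR}(\mathcal{E}(m,k,\ell))$ by sandwiching it between two single‑equation Rado numbers coming from Saracino's Theorem~\ref{th-SaD13-1}, and then showing that the two ends agree. The hypothesis $2(m-2)/3+k\le \ell\le 3(m-2)/2+k$ easily implies $(k+m-2)/2\le \ell\le k+2(m-2)$ and $\ell>k$, so Theorem~\ref{th-UBounds01} applies and gives
$$
\operatorname{RR}\bigl(\mathcal{E}_1(\ell+1,k+m-2)\bigr)\le \operatorname{RR}\bigl(\mathcal{E}(m,k,\ell)\bigr)\le \operatorname{RR}\bigl(\mathcal{E}_1(m-1,\ell-k)\bigr),
$$
where the eight excluded pairs $(k+m-2,\ell)$ are exactly those that Theorem~\ref{th-Saracino-LBounds01} forbids when one identifies $\mathcal{E}_2(k+m-2,\ell)$ with $\mathcal{E}_1(\ell+1,k+m-2)$.

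For the upper bound I would set $n=m-1$ and $\ell'=\ell-k$; the interval hypothesis rewrites as $\tfrac{2\ell'}{3}+1\le n\le \tfrac{3\ell'}{2}+1$, and the exclusion $\ell\ne m+k-2$ is precisely $n\ne \ell'+1$. Consulting Table~\ref{tab:1} (the rows valued $3$ and $4$, in the sub‑cases $\tfrac{2\ell'}{3}+1\le n\le \ell'$ and $\ell'+2\le n\le \tfrac{3\ell'}{2}+1$) yields $\operatorname{RR}(\mathcal{E}_1(m-1,\ell-k))=3$ when $\ell'\equiv n-1\pmod{2}$, i.e.\ $\ell-k\equiv m-2\pmod{2}$, and $=4$ otherwise. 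The lower bound is the mirror computation: with $n''=\ell+1$ and $\ell''=k+m-2$, the hypothesis puts $(n'',\ell'')$ into the union of the ranges $\tfrac{2\ell''}{3}+1\le n''\le \ell''$ and $\ell''+2\le n''\le \tfrac{3\ell''}{2}+1$ — the lone value $n''=\ell''+1$ being killed by $\ell\ne m+k-2$ — so Table~\ref{tab:1} returns $\operatorname{RR}(\mathcal{E}_1(\ell+1,k+m-2))=3$ when $\ell''\equiv n''-1\pmod{2}$, i.e.\ $k+m-2\equiv \ell\pmod{2}$, and $=4$ otherwise.

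The two parity conditions $\ell-k\equiv m-2\pmod{2}$ and $k+m-2\equiv \ell\pmod{2}$ are the same statement (subtract one congruence from the other), so the upper and lower bounds coincide and the sandwich collapses to the claimed value; moreover the disjunction of the two parity alternatives is a tautology, which is why the two cases in the Lemma exhaust all configurations in the stated range.

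The delicate point is the fine print of Table~\ref{tab:1}. Several of its rows carry the proviso $\ell\ge 4$ (and the table as a whole needs $\ell\ge 3$), which fails for the finitely many configurations with $\ell-k\le 3$ or $k+m-2\le 3$ — effectively $m=4$ (forcing $\ell=k+3$) together with a short list of $m=5$ cases. For these the crude bound above is not tight: e.g.\ $\operatorname{RR}(\mathcal{E}_1(3,3))=9$ although $\operatorname{RR}(x+y+z=4w)=4$. I would dispose of them by hand — exhibit an explicit $2$‑colouring of the relevant short interval to get the lower bound, then run the standard forcing argument (fix $\chi(1)$, propagate the colours forced through the equation, and reach a contradiction), exactly as in the arguments behind Theorem~\ref{th-twoo}. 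The last bookkeeping step is to confirm that the list of eight excluded pairs is precisely what is needed so that none of the remaining degenerate table entries (the rows valued $1$, $6$, $9$, $10$) is ever invoked.
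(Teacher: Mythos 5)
Your proposal is correct and follows essentially the same route as the paper: sandwich $\operatorname{RR}(\mathcal{E}(m,k,\ell))$ between $\operatorname{RR}(\mathcal{E}_1(\ell+1,k+m-2))$ and $\operatorname{RR}(\mathcal{E}_1(m-1,\ell-k))$ via Theorem~\ref{th-UBounds01}, then read both values off Saracino's table (Theorem~\ref{th-SaD13-1}) and note that the two parity conditions coincide. Your attention to the table's provisos ($\ell\ge 3$, $\ell\ge 4$) in the degenerate small cases is a point the paper's own proof passes over silently, so your treatment is, if anything, the more careful one.
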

\begin{proof}
If $2(m-2)/3+k \leq \ell \leq m+k-3$, then $(\ell-k)+2 \leq m-1 \leq
3(\ell-k)/2+1$ and $2(k+m-2)/3+k/3+1\leq \ell+1\leq k+m-2$.
If $m+k-1\leq \ell \leq 3(m-2)/2+k$, then $2(\ell-k)/3+1 \leq m-1 \leq
\ell-k$ and $(k+m-2)+2\leq \ell+1\leq 3(k+m-2)/2+1-k/2$.

From Theorem \ref{th-SaD13-1}, if $k+m-2\equiv \ell \ (\bmod\, 2)$
and $\ell-k\equiv m-2 \ (\bmod\, 2)$, then
$\operatorname{RR}(\mathcal{E}_1(\ell+1,k+m-2))=
\operatorname{RR}(\mathcal{E}_1(m-1,\ell-k))=3$. From Theorem
\ref{th-UBounds01}, we have
$\operatorname{RR}\left(\mathcal{E}(m,k,\ell)\right)=3$. If
$k+m-2\not \equiv \ell \ (\bmod\, 2)$ and $\ell-k\not \equiv m-2 \
(\bmod \,2)$, then $\operatorname{RR}(\mathcal{E}_1(\ell+1,k+m-2))=
\operatorname{RR}(\mathcal{E}_1(m-1,\ell-k))=4$. From Theorem
\ref{th-UBounds01}, we have
$\operatorname{RR}\left(\mathcal{E}(m,k,\ell)\right)=4$.
\end{proof}

\begin{lemma}
If $\ell=k+m-2$, then
$\operatorname{RR}(\mathcal{E}(m,k,\ell))=1$.
\end{lemma}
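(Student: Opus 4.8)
The plan is to exhibit a single monochromatic solution that already fits inside the one-element interval $[1,1]$, which forces the Rado number to be as small as it can possibly be. First I would substitute the constant vector $(x_1,x_2,\ldots,x_m)=(1,1,\ldots,1)$ into Equation~(\ref{eq01}): the left-hand side becomes $(m-2)\cdot 1 + k\cdot 1 = k+m-2$, and the right-hand side becomes $\ell\cdot 1 = \ell$, so the two sides coincide precisely because of the hypothesis $\ell = k+m-2$. Hence $(1,1,\ldots,1)$ is a solution of $\mathcal{E}(m,k,\ell)$ for every admissible $k$ and $m$ in this regime; here I am using the paper's convention that a solution of a linear equation may repeat values (the same convention that makes the first entry of Table~\ref{tab:1} equal to $1$).

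Next I would observe that this particular solution is monochromatic under \emph{every} colouring, since all of its coordinates equal the integer $1$. Therefore any $2$-colouring $\chi\colon[1,1]\to[0,1]$ already admits a monochromatic solution of $\mathcal{E}(m,k,\ell)$, which shows $\operatorname{RR}(\mathcal{E}(m,k,\ell))\le 1$. Because a Rado number is by definition a positive integer, the bound $\operatorname{RR}(\mathcal{E}(m,k,\ell))\ge 1$ holds trivially, and combining the two inequalities gives $\operatorname{RR}(\mathcal{E}(m,k,\ell))=1$.

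I do not expect any genuine obstacle; the only point requiring a moment's care is the convention on whether a solution may repeat coordinates, and once that is granted the argument is immediate. As a consistency check, when $\ell = k+m-2$ both bounds in Theorem~\ref{th-UBounds01} collapse: $\mathcal{E}_1(\ell+1,k+m-2)=\mathcal{E}_1(\ell+1,\ell)$ and $\mathcal{E}_1(m-1,\ell-k)=\mathcal{E}_1(m-1,m-2)$ are each of the form $\mathcal{E}_1(t+1,t)$, whose $2$-color Rado number is $1$ by Table~\ref{tab:1}, so the sandwich reproduces the value $1$; but the direct computation above is cleaner and avoids the exceptional pairs and side conditions attached to that table.
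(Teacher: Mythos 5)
Your argument is correct and is essentially the paper's own proof: both verify that the all-ones vector satisfies $\sum_{i=1}^{m-2}x_i+kx_{m-1}=\ell x_m$ exactly when $\ell=k+m-2$, so every coloring of $[1,1]$ contains a monochromatic solution and $\operatorname{RR}(\mathcal{E}(m,k,\ell))=1$. Your extra remarks on repeated coordinates and the consistency check via Theorem~\ref{th-UBounds01} are fine but not needed.
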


\begin{proof}
Clearly, $x_i=1$ for $1\leq i \leq m$ is the solution to the
$\sum_{i=1}^{m-2}x_i+kx_{m-1}=\ell x_{m}$. Then
$\operatorname{RR}(\mathcal{E}(m,k,\ell))=1$.
\end{proof}

\begin{lemma}
Suppose that $(m-2)/2+k \leq \ell < 2(k+m-2)/3$ or $3(m-2)/2+k< \ell\leq k+2m-4$ and $m\geq k$. Then
$$
\operatorname{RR}\left(\mathcal{E}(m,k,\ell)\right)=
\begin{cases}
4 & \text{$k+m-2\equiv \ell \ (\bmod\, 3)$ and $\ell-k\equiv m-2 \ (\bmod\, 3)$},\\
5 & \text{$k+m-2\not \equiv \ell \ (\bmod\, 3)$ and $\ell-k\not
\equiv m-2 \ (\bmod\, 3)$}.
\end{cases}
$$
\end{lemma}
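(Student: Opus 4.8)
The plan is to mimic the proof of Lemma~\ref{pro1}: sandwich $\operatorname{RR}(\mathcal{E}(m,k,\ell))$ between the Rado numbers of two instances of $\mathcal{E}_1$ via Theorem~\ref{th-UBounds01}, and then read those two numbers off Saracino's table (Theorem~\ref{th-SaD13-1}), this time from its ``$\bmod\,3$'' rows rather than its ``$\bmod\,2$'' rows. First I would check applicability of Theorem~\ref{th-UBounds01}: in the first range $(m-2)/2+k\le\ell$ forces $\ell\ge(k+m-2)/2$, in the second $3(m-2)/2+k<\ell$ forces the same, both ranges satisfy $\ell\le k+2(m-2)$, and $\ell>k$ holds once $m\ge3$. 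Hence
$$
\operatorname{RR}(\mathcal{E}_1(\ell+1,k+m-2))\le\operatorname{RR}(\mathcal{E}(m,k,\ell))\le\operatorname{RR}(\mathcal{E}_1(m-1,\ell-k)).
$$
I would also record the arithmetic fact that $k+m-2\equiv\ell\ (\bmod\,3)$ and $\ell-k\equiv m-2\ (\bmod\,3)$ are equivalent (each says $k+m-2-\ell\equiv0$), so the two alternatives in the statement are exhaustive; write $(\star)$ for the congruent one.

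For the range $(m-2)/2+k\le\ell<2(k+m-2)/3$, I would rewrite the hypotheses as $\tfrac{k+m-2}{2}+1\le\ell+1<\tfrac{2(k+m-2)}{3}+1$ (parameters $n=\ell+1$, coefficient $k+m-2$ for the left equation) and as $\tfrac{3(\ell-k)}{2}+1<m-1\le2(\ell-k)+1$ (parameters $n=m-1$, coefficient $\ell-k$ for the right equation), the strict lower bound on $m-1$ being exactly what $\ell<2(k+m-2)/3$ buys. Feeding these into Theorem~\ref{th-SaD13-1} through its rows $\tfrac{\ell}{2}+1\le n<\tfrac{2\ell}{3}+1$ and $\tfrac{3\ell}{2}+1<n\le2\ell+1$ shows both $\operatorname{RR}(\mathcal{E}_1(\ell+1,k+m-2))$ and $\operatorname{RR}(\mathcal{E}_1(m-1,\ell-k))$ equal $4$ under $(\star)$ and $5$ otherwise, so the sandwich pins $\operatorname{RR}(\mathcal{E}(m,k,\ell))$ to the claimed value. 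The eight excluded pairs $(k+m-2,\ell)$ in the hypothesis are precisely those that would collide with the exceptional entries of Saracino's table (values $6,9,10$ and the listed exceptions to the value-$5$ row), so this case is complete.

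For the range $3(m-2)/2+k<\ell\le k+2(m-2)$ with $m\ge k$, the same algebra gives $\tfrac{\ell-k}{2}+1\le m-1<\tfrac{2(\ell-k)}{3}+1$, whence $\operatorname{RR}(\mathcal{E}_1(m-1,\ell-k))=4$ under $(\star)$ and $5$ otherwise, which already yields the upper bound. For the matching lower bound I would split on $\ell$: if $\ell>\tfrac32(k+m-2)$, then $\tfrac{3(k+m-2)}{2}+1<\ell+1\le2(k+m-2)+1$ and $\mathcal{E}_1(\ell+1,k+m-2)$ again lands in the ``$\bmod\,3$'' row $\tfrac{3\ell}{2}+1<n\le2\ell+1$ with the same value, closing the sandwich; but in the residual band $\tfrac32(m-2)+k<\ell\le\tfrac32(k+m-2)$ the reduced equation $\mathcal{E}_1(\ell+1,k+m-2)$ reaches only a ``$\bmod\,2$'' row, and its Rado number ($3$ or $4$) may be strictly below the target, so there I would instead construct and verify an explicit extremal $2$-colouring of $[1,3]$ (resp.\ $[1,4]$) — adapting Saracino's colourings for the reduced equation and exploiting the residue of $k+m-2$ modulo~$3$ — admitting no monochromatic solution of $\sum_{i=1}^{m-2}x_i+kx_{m-1}=\ell x_m$.

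The inequality bookkeeping that assigns each reduced equation to its row is routine. The real obstacle is exactly the lower bound in this residual band of the second range: one must see that naive colourings such as red $=\{1\}$, blue $=\{2,3\}$ are inadequate (choosing $x_m=2$, $x_{m-1}=3$ and suitably many $x_i=3$ produces a monochromatic blue solution throughout the band, using precisely $\ell>k+\tfrac32(m-2)$ and $\ell\le\tfrac32(k+m-2)$), and then produce a colouring tuned to the relevant residue class; verifying that it has no monochromatic solution, by bounding the values attainable by $\sum x_i+kx_{m-1}$ on each colour class, is where the work concentrates.
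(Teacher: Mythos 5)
Your overall route is the paper's route: sandwich $\operatorname{RR}(\mathcal{E}(m,k,\ell))$ between $\operatorname{RR}(\mathcal{E}_1(\ell+1,k+m-2))$ and $\operatorname{RR}(\mathcal{E}_1(m-1,\ell-k))$ via Theorem~\ref{th-UBounds01}, then read both numbers off the ``$\bmod\,3$'' rows of Theorem~\ref{th-SaD13-1}, exactly as in Lemma~\ref{pro1}. Your treatment of the first range $(m-2)/2+k\le\ell<2(k+m-2)/3$, and of the upper bound in the second range (showing $m-1$ lies in the row $\tfrac{\ell}{2}+1\le n<\tfrac{2\ell}{3}+1$ for coefficient $\ell-k$), coincides with the paper's inequality bookkeeping, and your observation that the two congruence conditions are equivalent is correct.

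The genuine gap is precisely where you say ``the work concentrates.'' In the band $\tfrac32(m-2)+k<\ell\le\tfrac32(k+m-2)$ the reduced equation $\mathcal{E}_1(\ell+1,k+m-2)$ falls only into a ``$\bmod\,2$'' row of Saracino's table, so the sandwich gives a lower bound of $3$ or $4$, which does not force the claimed value $4$ or $5$; you correctly diagnose this, and you correctly note that the naive colouring (red $=\{1\}$, blue $=\{2,3\}$) fails there, but you then only announce that you ``would construct and verify'' an extremal $2$-colouring of $[1,3]$ or $[1,4]$ tuned to the residue of $k+m-2$ modulo $3$, without exhibiting any colouring or checking that it avoids monochromatic solutions of $\sum_{i=1}^{m-2}x_i+kx_{m-1}=\ell x_m$. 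Since that lower bound is the only nontrivial content of the second range, the statement remains unproved on that band; a proof must either produce and verify the colouring or show the band is empty under the stated hypotheses (it is not, in general). For comparison, the paper's own argument handles the second range only under the stronger inequality $3(k+m-2)/2+k<\ell$, i.e.\ it never enters the band you isolated, so your diagnosis of the difficulty is sound, but your proposal does not close it.
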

\begin{proof}
If $(m-2)/2+k \leq \ell < 2(k+m-2)/3$, then $3(\ell-k)/2+1+k/2 \leq
m-1 \leq 2(\ell-k)+1$ and $(k+m-2)/2+1\leq \ell+1\leq 2(k+m-2)/3+1$.
If $3(k+m-2)/2+k < \ell \leq k+2m-4$, then $(\ell-k)/2+1 \leq m-1 <
2(\ell-k)/3+1$ and $3(k+m-2)/2+1+k < \ell+1 \leq 2(k+m-2)-k$. From
Theorem \ref{th-SaD13-1}, if $k+m-2\equiv \ell \ (\bmod\, 3)$ and
$\ell-k\equiv m-2 \ (\bmod\, 3)$, then
$\operatorname{RR}(\mathcal{E}_1(\ell+1,k+m-2))=
\operatorname{RR}(\mathcal{E}_1(m-1,\ell-k))=4$. From Theorem
\ref{th-UBounds01}, we have
$\operatorname{RR}\left(\mathcal{E}(m,k,\ell)\right)=4$. If
$k+m-2\not \equiv \ell \ (\bmod\, 3)$ and $\ell-k\not \equiv m-2 \
(\bmod\, 3)$, then $\operatorname{RR}(\mathcal{E}_1(\ell+1,k+m-2))=
\operatorname{RR}(\mathcal{E}_1(m-1,\ell-k))=5$. From Theorem
\ref{th-UBounds01}, we have
$\operatorname{RR}\left(\mathcal{E}(m,k,\ell)\right)=5$.
\end{proof}

\begin{lemma}
If $k\geq m-2$, then
$\operatorname{RR}\left(\mathcal{E}(m,(m-2)k,(m-2)k)\right)=2k$.
\end{lemma}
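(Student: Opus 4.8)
The plan is to first pin down, inside the short interval $[1,2k]$, every solution of $\mathcal{E}(m,(m-2)k,(m-2)k)$. Writing the equation as $\sum_{i=1}^{m-2}x_i=(m-2)k\,(x_m-x_{m-1})$ and assuming all variables lie in $[1,2k]$, one gets $m-2\le\sum_{i=1}^{m-2}x_i\le (m-2)\cdot 2k$, so $x_m-x_{m-1}\in\{1,2\}$, and $x_m-x_{m-1}=2$ forces $x_1=\cdots=x_{m-2}=2k$. Hence inside $[1,2k]$ every solution has one of two forms: \emph{type (a)}, with $x_m=x_{m-1}+1$ and $x_1+\cdots+x_{m-2}=(m-2)k$; or \emph{type (b)}, with $x_1=\cdots=x_{m-2}=2k$ and $x_m=x_{m-1}+2$. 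In particular, inside $[1,2k-1]$ only type (a) can occur, so there $x_{m-1}$ and $x_m$ must be consecutive integers. For the lower bound I would then colour $[1,2k-1]$ by parity: a monochromatic solution would have $x_{m-1},x_m$ consecutive, hence of opposite parity and opposite colour, which is impossible, so this colouring has no monochromatic solution and $\operatorname{RR}(\mathcal{E}(m,(m-2)k,(m-2)k))\ge 2k$.

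For the upper bound, suppose $[1,2k]$ carries a $2$-colouring with no monochromatic solution; swapping the two colours if necessary, assume $k$ is red. Plugging $x_1=\cdots=x_{m-2}=k$ into a type (a) solution shows that if some $t,t+1$ were both red then $(k,\dots,k,t,t+1)$ would be a red solution; hence the red class has no two consecutive integers, so in particular $k-1$ and $k+1$ are blue. I would then argue that $2k$ must be red as well: otherwise $2k,k-1,k+1$ are all blue and $x_1=\cdots=x_{m-2}=2k$, $x_{m-1}=k-1$, $x_m=k+1$ is a monochromatic blue solution of type (b). Applying the type (b) idea to the red class itself (with $x_1=\cdots=x_{m-2}=2k$ and $x_{m-1}=t$, $x_m=t+2$) shows the red class also has no two integers at distance $2$; hence consecutive red integers differ by at least $3$, and since $k$ is red this forces $k-2,k-1,k+1,k+2$ to all be blue.

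The final step is to manufacture a monochromatic blue solution of type (a): $m-2$ blue numbers summing to $(m-2)k$, plus a consecutive blue pair to play the role of $(x_{m-1},x_m)$. The pair $(k-2,k-1)$ serves the latter, and for the former I would take $\tfrac{m-2}{2}$ copies each of $k-1$ and $k+1$ when $m$ is even, or $\tfrac{m-1}{2}$ copies of $k-1$ together with $\tfrac{m-5}{2}$ copies of $k+1$ and one copy of $k+2$ when $m$ is odd; a one-line check gives the sum $(m-2)k$ in both cases, and every entry lies in $\{k-2,k-1,k+1,k+2\}$, which consists of blue numbers. This contradiction yields $\operatorname{RR}(\mathcal{E}(m,(m-2)k,(m-2)k))\le 2k$, and combined with the lower bound the theorem follows.

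The part I expect to require care is the boundary bookkeeping. The construction above needs $k-2\ge 1$, i.e.\ $k\ge 3$, so the extreme admissible values — in particular $k=2$, which (together with $k\ge m-2$) forces $m=4$ — and the degenerate case $m=3$, where the phrase ``$m-2$ blue numbers summing to $(m-2)k$'' collapses to the single number $k$ (which is red), must be handled separately; this is precisely where the hypothesis $k\ge m-2$ and the excluded small pairs come into play. For those finitely many cases I would repeat the red/blue chasing of the previous paragraphs, falling back on type (b) solutions wherever type (a) is no longer available.
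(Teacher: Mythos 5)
Your proposal is correct and follows essentially the same route as the paper: the identical parity coloring of $[1,2k-1]$ for the lower bound, and the same forced-color chain ($k$ red $\Rightarrow$ $k-1,k+1$ blue $\Rightarrow$ $2k$ red $\Rightarrow$ $k-2$ blue) culminating in a blue solution with $(x_{m-1},x_m)=(k-2,k-1)$ and the first $m-2$ entries drawn from values near $k$. The only differences are cosmetic — you use $k+2$ in the odd-$m$ construction where the paper uses a second copy of $k-2$, and you explicitly flag the small cases ($k\le 2$, $m=3$) that the paper's argument silently skips.
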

\begin{proof}
Consider the $2$-coloring of $[1,2k-1]$ defined by coloring the odd
integers red and the even integers blue. Suppose that there exists a
monochromatic solution to
$\sum_{i=1}^{m-2}x_i+(m-2)kx_{m-1}=(m-2)kx_{m}$. Then $(m-2)k\mid
\sum_{i=1}^{m-2}x_i$. Since $x_i \leq 2k-1$, where $1\leq i \leq
m-2$, it follows that $\sum_{i=1}^{m-2}x_i=(m-2)k$, and hence
$x_m=x_{m-1}+1$, which contradicts to the fact that there is no two
consecutive integers have same color. Thus, we have
$\operatorname{RR}\left(\mathcal{E}(m,(m-2)k,(m-2)k)\right)\geq 2k$.

To show that
$\operatorname{RR}\left(\mathcal{E}(m,(m-2)k,(m-2)k)\right)\leq 2k$,
we assume that there exists a $2$-coloring $\chi$ of $[1,2k]$ with
no monochromatic solution. Without loss of generality, we can assume
that $k$ is red. To avoid a red solutions $(k,k,...,k,k-1,k)$ and
$(k,k,...,k,k+1)$, $\chi(k-1)=\chi(k+1)$ is blue.
Furthermore, one can see that $\chi(2k)$ is red by the solution
$(2k,2k,...,2k,k-1,k+1)$. Therefore, using the solution
$(2k,2k,...,2k,k-2,k)$, $\chi(k-2)$ is blue. If $m$ is odd, then
$(k-1,...,k-1,k+1,...,k+1,k-2,k-2,k-1)$ is a blue solution, where
the number of $k-1$ (resp. $k+1$) is $\frac{m-3}{2}-1$ (resp.
$\frac{m-3}{2}+1$), a contradiction. If $m$ is even, then
$(k-1,...,k-1,k+1,...,k+1,k-2,k-1)$ is a blue solution, where the
number of $k-1$ (resp. $k+1$) is $\frac{m-2}{2}$ (resp.
$\frac{m-2}{2}$), a contradiction.
\end{proof}

\section{Results for four variable Rado numbers}

Let $R$ denote the set of red elements and $B$ the set of blue elements and $k\geq 32$.
\begin{lemma}\label{lem1}
Let $M$ be a positive integer which makes a $2$-coloring of $[1,M]$ admits no monochromatic solution to $x+y+kz=4w$. If $k> 4j-1, jk\leq M$ and $2\in R$, then $1, 2, 3,\ldots,4j-1\in R$ and $k, 2k,\ldots, jk \in B$.
\end{lemma}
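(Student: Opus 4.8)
The plan is to argue by induction, building the red initial segment $\{1,\dots,4j-1\}$ and the blue multiples $k,2k,\dots,jk$ one block at a time, driven by two explicit families of solutions of $x+y+kz=4w$. The first family is $(tk,tk,2t,tk)$, which is a solution for every $t\ge 1$ since $tk+tk+k\cdot 2t=4tk$; hence whenever $2t\in R$ and $tk\in[1,M]$ we must have $tk\in B$, as otherwise all four entries would be red. The second family is $(ak,bk,v,mk)$ with $a+b+v=4m$, which is a solution because $ak+bk+kv=k(a+b+v)=4mk$; hence whenever $ak,bk,mk$ are all blue — which holds as soon as $a,b,m$ do not exceed the index of the largest multiple of $k$ already known to be blue, using $jk\le M$ — the value $v$ cannot be blue, so $v\in R$. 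An elementary check shows that for $j\ge 3$ every $v\in\{1,\dots,4j-2\}$ can be written as $4m-a-b$ with $1\le a,b,m\le j$, so once $k,\dots,jk$ are known to be blue this second family forces $\{1,\dots,4j-2\}\subseteq R$.

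Concretely, from $2\in R$ and the solution $(k,k,2,k)$ one gets $k\in B$. Then, inductively, assuming $k,\dots,(t-1)k\in B$ together with the corresponding initial red segment: since $2t$ already lies in that red segment (for $t$ past a small threshold), the solution $(tk,tk,2t,tk)$ forces $tk\in B$, and then the second family, now with $k,\dots,tk$ available, pushes the red segment out to $4t-2$. Iterating to $t=j$ gives $\{k,\dots,jk\}\subseteq B$ and $\{1,\dots,4j-2\}\subseteq R$. Finally $4j-1$ is obtained from one more application of the second family with $m=j+1$: since $2j+2\in\{1,\dots,4j-2\}\subseteq R$, the element $(j+1)k$ — when it lies in $[1,M]$ — is forced blue by the first family, and then $(2k,3k,4j-1,(j+1)k)$ forces $4j-1\in R$.

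Two places need real care. The first is the start of the induction: for small $j$ the available red segment is too short to contain the index needed by the first family, so $1,2,3\in R$ (and then $4\in R$) must be bootstrapped directly from $2\in R$. I expect this to reduce to tracing a short chain of forced colours — roughly, $2\in R\Rightarrow k\in B$, then, assuming $1\in B$ for contradiction, deducing the colours of a handful of further values (a small multiple of $k$ and one or two numbers of the form $k\pm O(1)$ or $\lfloor k/4\rfloor$) until a monochromatic solution appears — and the precise chain will depend on the residue of $k$ modulo a small number, since the existence of the required solution rests on a divisibility-by-$4$ condition; this case analysis is the main obstacle. The second delicate point is the top value $4j-1$, which is the unique element of $\{1,\dots,4j-1\}$ not of the form $4m-a-b$ with $a,b,m\le j$; reaching it forces one to invoke the $(j+1)$-st multiple of $k$, so the argument must guarantee (or separately check, in the boundary situation $jk\le M<(j+1)k$) that this multiple lies in $[1,M]$. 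Everything else is routine bookkeeping of which of the two solution families to apply to which target.
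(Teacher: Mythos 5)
Your inductive engine is exactly the paper's: the family $(tk,tk,2t,tk)$ turns a red even number $2t$ into a blue multiple $tk$, and the family $(ak,bk,v,mk)$ with $a+b+v=4m$ turns blue multiples into red small numbers, and your arithmetic observation (every $v\le 4j-2$ is $4m-a-b$ with $a,b,m\le j$ once $j\ge 3$, while $4j-1$ forces $m=j+1$) is correct. The genuine gap is the bootstrap, which you leave as an ``expected'' short chain: starting from the given $2\in R$, your two families produce $k\in B$ and then stall, since the first family next needs $4\in R$ and the second family with only $k$ blue forces nothing beyond $v=2$. So $4\in R$ and $3\in R$ cannot be extracted from the two families at all; they need a different ingredient, namely solutions mixing small integers with integers near $k$. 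That is precisely what the paper's Claims 1 and 2 supply: assuming $4\in B$ one follows $(2k,2k,4,2k)$, $(4,4,4,k+2)$, $(k+2,k+2,2,k+1)$, $(k+1,3,3,k+1)$, $(k+2,6,3,k+2)$, $(3k,3k,6,3k)$ to the red solution $(3k,3k,2,2k)$; assuming $3\in B$ one follows $(3,k+1,3,k+1)$, $(k+2,k+2,2,k+1)$, $(4,2k,2,k+1)$, $(2k,k,1,k)$, $(4,3k,1,k+1)$ to the blue solution $(3k,2k,3,2k)$. Note these chains are uniform in $k$: no case analysis on the residue of $k$ is needed, contrary to your guess, because targets such as $k+1$ and $k+2$ are chosen so that divisibility by $4$ is automatic, and the contradiction hypotheses are $4\in B$ and $3\in B$ rather than $1\in B$ (once $3,4\in R$ give $2k\in B$, the value $1\in R$ falls out of $(2k,k,1,k)$). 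Without this piece the induction never starts, so the proposal as written does not prove the lemma.

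On your second concern you are right, and in fact more careful than the source: forcing $4j-1\in R$ genuinely requires $(j+1)k$ to be blue (hence $(j+1)k\le M$ and $2j+2\in R$), and the paper's own proof simply invokes $(2k,3k,4j-1,(j+1)k)$ and $(jk,jk,2j,jk)$ under the stated hypothesis $jk\le M$, so it silently shares this boundary issue; in the paper's applications $M$ is of order $k^2/16$, which covers the multiples actually needed except possibly at the top index. Flagging this is to your credit, but flagging does not resolve it, and in any case the missing bootstrap of $3,4\in R$ is the decisive defect of the proposal.
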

\begin{proof}
Since $k>4j-1$, it follows that the sets $\{1, 2, 3,\ldots, 4j-1\}$ and $\{k, 2k,\ldots,
jk\}$ are disjoint. Suppose that a $2$-coloring satisfying the hypotheses of the lemma has been given. We assume $2\in R$, then $k\in B$ by considering $(k,k,2,k)$.
\begin{claim}\label{claim1}
$4\in R$.
\end{claim}
\begin{proof}
Assume, for a contradiction, that $4\in B$. Then $2k\in R$ by
considering $(2k, 2k, 4, 2k)$, and $k+2\in R$ comes from $(4, 4, 4,
k+2)$. From $(k+2, k+2, 2, k+1)$, we have $k+1\in R$, and hence
$(k+1, 3, 3, k+1)$ shows that $3\in R$. As a consequence, we see
that $6\in R$ by considering $(k+2, 6, 3, k+2)$. Since $(3k, 3k, 6,
3k)$, it follows that $3k\in R$. But $(3k, 3k, 2, 2k)$ is a red
solution, a contradiction.
\end{proof}

From Claim \ref{claim1}, we have $4\in R$. Furthermore, we have the
following claim.
\begin{claim}\label{claim2}
$3\in R$.
\end{claim}
\begin{proof}
Assume, for a contradiction, that $3\in B$. Then $k+1\in R$ by
considering $(3, k+1, 3, k+1)$, and $k+2\in B$ comes from $(k+2,
k+2, 2, k+1)$. From $(4, 2k, 2, k+1)$, we have $2k\in B$.
Furthermore, $(2k, k, 1, k)$ shows that $1\in R$. As a consequence,
we see that $3k\in B$ by considering $(4, 3k, 1, k+1)$. But $(3k,
2k, 3, 2k)$ is a blue solution, a contradiction.
\end{proof}
Since $4\in R$, we have $2k\in B$ by considering $(2k, 2k, 4, 2k)$. From
here, we use $(2k, k, 1, k)$ to see that $1\in R$. Now, suppose that
$j$ is a positive integer such that $jk\leq M$.
If $j=1$, we have $1,2,3\in R$ and $k\in B$. If $j=2,3,4$, by considering $(2k, 2k, 4i, (i+1)k), (k, 2k, 4i+1, (i+1)k), (k, k, 4i+2, (i+1)k), (2k, 3k, 4i+3, (i+2)k)$ and $(ik, ik, 2i, ik)$, where $1\leq i \leq j$, it is clear that $1, 2, 3,..., 4j-1\in R$ and $k, 2k,..., jk \in B$. For $j\geq 4$,
suppose that $1, 2, 3,..., 4(j-1)-1\in R$ and $k, 2k,..., (j-1)k \in B$, then $2j, 2(j+1)\in R$ and hence $4j-1\in R$ and $jk\in B$ by considering $(2k, 3k, 4j-1, (j+1)k)$ and $(jk, jk, 2j, jk)$. It follows that $1, 2, 3,..., 2j,...,4j-1\in R$ and $k, 2k,..., jk \in B$.
\end{proof}

We now give the proof of Theorem \ref{th-Exact-1} by the following
propositions. The proofs of some propositions are similarly to the
propositions here, and we move these proofs to Appendix I.

\begin{proposition}\label{pro3-1}
If $k\equiv 0 \pmod {16}$, then
$\operatorname{RR}(x+y+kz=4w)=\frac{k^2+6k+16}{16}$.
\end{proposition}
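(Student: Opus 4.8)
The plan is to establish the exact value $\operatorname{RR}(x+y+kz=4w)=\frac{k^2+6k+16}{16}$ when $k\equiv 0\pmod{16}$ by proving matching upper and lower bounds. Write $N=\frac{k^2+6k+16}{16}$; since $k=16t$ for some positive integer $t\geq 2$ (recall $k\geq 32$), one computes $N=16t^2+6t+1$, and in particular the relevant index $j$ with $jk$ just below $N$ is $j=t=k/16$, because $jk=16t^2<N\leq (j+1)k=16t^2+16t$. This is the arithmetic bookkeeping that makes Lemma \ref{lem1} applicable at the right scale.

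For the lower bound $\operatorname{RR}(x+y+kz=4w)\geq N$, I would exhibit an explicit $2$-coloring of $[1,N-1]$ with no monochromatic solution to $x+y+kz=4w$. The natural candidate, guided by the proof of the lower bound in Lemma \ref{lem-upBounds} and the structure forced in Lemma \ref{lem1}, is a ``layered'' coloring: color an initial block of small integers (roughly $[1,4j-1]$, i.e.\ $[1,k/4-1]$) one color, the block of multiples-of-$k$-sized intervals another, and so on, so that any solution $x+y+kz=4w$ forces the variables into different blocks. Concretely, if $x,y,z$ all lie in the small red block then $x+y+kz$ is at least $k$ and at most roughly $k+8j$, forcing $w$ near $k/4$, which should land in a blue block; and if $x,y,z$ all lie in a blue block one checks $x+y+kz$ overshoots $4w$ for every blue $w<N$. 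The check is a finite case analysis on which block each of $x,y,z,w$ occupies, using only the inequalities $1\le x,y,z,w\le N-1$.

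For the upper bound $\operatorname{RR}(x+y+kz=4w)\leq N$, suppose for contradiction that some $2$-coloring $\chi$ of $[1,N]$ admits no monochromatic solution. Since $(w,w,w,w)$ with $w+w+kw=4w$ has no positive solution for $k\geq 2$, there is no ``diagonal'' obstruction, so I would normalize by assuming $2\in R$ (the case $2\in B$ being symmetric, or handled by a short separate argument as in Lemma \ref{lem1}). Then Lemma \ref{lem1}, applied with $M=N$ and $j=k/16$ (valid since $k>4j-1$ and $jk=16t^2\leq N$), gives immediately that $1,2,\ldots,4j-1\in R$ and $k,2k,\ldots,jk\in B$. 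From this forced configuration I would push further: use equations of the form $(ak,bk,c,dk)$ and $(e,f,g,h)$ with small arguments to force the colors of a few more specific integers near $N$, and then produce either a red solution entirely inside $[1,4j-1]\cup\{$a few forced red values$\}$ or a blue solution among the multiples of $k$ together with one or two forced blue values near $N$, contradicting the assumption. The precise solution tuples that close the argument will depend on the residue $k\equiv 0\pmod{16}$ through the exact value of $N\bmod k$ and $N\bmod 4$.

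The main obstacle I anticipate is the upper-bound endgame: Lemma \ref{lem1} only forces colors up to $jk\approx k^2/16$, whereas $N$ is slightly larger than $jk$ (by $6t+1$), so one must separately control the colors of the integers in the short interval $(jk,N]$ and of a handful of auxiliary values like $(j+1)k$, $2jk$, and small multiples thereof — these may exceed $N$, so one has to be careful to only invoke solutions all of whose entries lie in $[1,N]$. Getting a clean contradiction will require choosing the final solution tuple so that every coordinate is provably red (resp.\ blue) using only Lemma \ref{lem1} plus a minimal number of extra forcings; finding that tuple is the delicate combinatorial step, and it is exactly the part that differs case-by-case across the residue classes in Theorem \ref{th-Exact-1}.
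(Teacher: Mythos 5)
Your outline follows the same strategy as the paper (invoke Lemma \ref{lem1} with $j=k/16$, force a few more colors, derive a contradiction for the upper bound; exhibit a block coloring for the lower bound), but the decisive steps are not carried out, and you concede as much when you write that finding the final solution tuple is ``exactly the part that differs case-by-case.'' That concession marks a genuine gap: the whole content of the proposition, beyond the generic scaffolding shared by all twenty residue classes, \emph{is} that endgame. Concretely, after Lemma \ref{lem1} gives $1,\dots,\tfrac{k-4}{4}\in R$ and $k,\dots,\tfrac{k^2}{16}\in B$, the paper uses $(2,2,1,\tfrac{k+4}{4})$ and $(4,4,1,\tfrac{k+8}{4})$ to force $\tfrac{k+4}{4},\tfrac{k+8}{4}\in B$, then $\bigl(\tfrac{k+8}{4},\tfrac{k+8}{4},\tfrac{k+4}{4},\tfrac{k^2+6k+16}{16}\bigr)$ to force $N=\tfrac{k^2+6k+16}{16}\in R$, and closes with the red solution $\bigl(N,\tfrac{2k+48}{16},\tfrac{3k+16}{16},N\bigr)$, which works because $N+\tfrac{2k+48}{16}+k\cdot\tfrac{3k+16}{16}=4N$ and both $\tfrac{2k+48}{16}$ and $\tfrac{3k+16}{16}$ lie in the forced red interval. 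None of these tuples appear in your proposal, and without them there is no contradiction.

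The lower bound is also left unverified. You describe a vague ``layered'' coloring with several blocks, but for this residue class the correct coloring is the simplest possible one: $[1,k/4]$ red and $[k/4+1,N-1]$ blue. The only computation needed is that a blue triple $x,y,z$ forces $4w\geq(k/4+1)(k+2)=\tfrac{k^2+6k+8}{4}$, hence $w\geq\lceil\tfrac{k^2+6k+8}{16}\rceil=\tfrac{k^2+6k+16}{16}=N$, which exceeds every blue element; the ceiling evaluation is where the hypothesis $k\equiv 0\pmod{16}$ enters, and you never perform it. In short, your proposal is a plan for a proof of the right shape, not a proof: both halves still require the explicit witnesses (the coloring and the solution tuples) that constitute the actual argument.
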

\begin{proof}
To show the upper bound, suppose that there is no monochromatic
solution for any $2$-coloring of $[1,\frac{k^2+6k+16}{16}]$. From
Lemma \ref{lem1}, $1, 2,...,\frac{k-4}{4}\in R$ and $k, 2k,\ldots,
\frac{k^2}{16}\in B$. Furthermore, $(2, 2, 1, \frac{k+4}{4})$ and
$(4, 4, 1, \frac{k+8}{4})$ show that $\frac{k+4}{4}, \frac{k+8}{4}\in B$.
We have $\frac{k^2+6k+16}{16}\in R$ by considering $(\frac{k+8}{4},
\frac{k+8}{4}, \frac{k+4}{4}, \frac{k^2+6k+16}{16})$. Since
$\frac{2k+48}{16}\in R$ and $\frac{3k+16}{16}\in R$, then
$(\frac{k^2+6k+16}{16},\frac{2k+48}{16},\frac{3k+16}{16},
\frac{k^2+6k+16}{16})$ is a red solution, a contradiction.

For the lower bound, $\frac{k^2+6k+16}{16}-1=\frac{k^2+6k}{16}$. Color the integers in
$[1,\frac{k}{4}]$ red and the integers in
$[\frac{k}{4}+1,\frac{k^2+6k}{16}]$ blue. Let $x,y,z,w$ be a
monochromatic solution of $x+y+kz=4w$. If $x, y, z$ are all red,
then $x+y+kz\geq k+2$, and hence $w\geq
\lceil\frac{k+2}{4}\rceil=\frac{k+4}{4}$, and so there is no red
solution. If $x, y, z$ are all blue, then $x+y+kz\geq
\frac{k^2+6k+8}{4}$, and hence $w\geq
\lceil\frac{k^2+6k+8}{16}\rceil=\frac{k^2+6k+16}{16}>
\frac{k^2+6k}{16}$, and so there is no blue solution.
\end{proof}

\begin{proposition}\label{pro3-2}
If $k\equiv 1 \pmod {16}$, then
$\operatorname{RR}(x+y+kz=4w)=\frac{k^2+5k+10}{16}$.
\end{proposition}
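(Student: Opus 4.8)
The goal is to prove $\operatorname{RR}(x+y+kz=4w)=\tfrac{k^2+5k+10}{16}$ for $k\equiv 1\pmod{16}$, and the natural template is the one used in Proposition~\ref{pro3-1}: establish the upper bound by a coloring-forcing argument that drives any putative monochromatic-solution-free $2$-coloring of $[1,\tfrac{k^2+5k+10}{16}]$ into a contradiction, and establish the matching lower bound by exhibiting an explicit $2$-coloring of $[1,\tfrac{k^2+5k+10}{16}-1]=[1,\tfrac{k^2+5k-6}{16}]$ with no monochromatic solution. First I would write $k=16t+1$ so that all the fractional quantities appearing ($\tfrac{k-1}{4}$, $\tfrac{k^2+5k-6}{16}$, etc.) are manifestly integers, and keep track of which residue classes the relevant ``threshold'' integers fall into; this bookkeeping is what makes the case split by $k\bmod 16$ necessary in the first place.

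\textbf{Lower bound.} For the lower bound I would use the same two-block coloring as in Proposition~\ref{pro3-1}: color $[1,\tfrac{k-1}{4}]$ (or the nearest integer cutoff — here $\lfloor k/4\rfloor=\tfrac{k-1}{4}$) red and the rest of $[1,\tfrac{k^2+5k-6}{16}]$ blue. If $x,y,z$ are all red then $x+y+kz\ge k+2$, forcing $w\ge\lceil(k+2)/4\rceil$, which I must check lands in the blue block, so no red solution exists. If $x,y,z$ are all blue then $x+y+kz$ is at least $2(\tfrac{k-1}{4}+1)+k(\tfrac{k-1}{4}+1)=\tfrac{(k+2)(k+7)}{4}$ (roughly), forcing $w\ge\lceil\,\cdot/4\rceil$, and the arithmetic should show this exceeds $\tfrac{k^2+5k-6}{16}$, i.e.\ falls outside the interval — that inequality, after clearing denominators, is a polynomial inequality in $k$ valid for $k\ge 32$. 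The only delicate point is getting the ceilings exactly right for the residue class $k\equiv 1\pmod{16}$, which is precisely why $\tfrac{k^2+5k+10}{16}$ rather than $\tfrac{k^2+6k+16}{16}$ is the right value here.

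\textbf{Upper bound.} This is the main obstacle. I would invoke Lemma~\ref{lem1} to get, assuming $2\in R$, that $1,2,\dots,4j-1\in R$ and $k,2k,\dots,jk\in B$ for the largest $j$ with $jk\le M:=\tfrac{k^2+5k+10}{16}$; for $k\equiv 1\pmod{16}$ one computes $j=\tfrac{k-1}{16}$ (since $\tfrac{k-1}{16}\cdot k=\tfrac{k^2-k}{16}\le M$ while $(\tfrac{k-1}{16}+1)k>M$), so $1,\dots,\tfrac{k-5}{4}\in R$ and $k,\dots,\tfrac{k^2-k}{16}\in B$. Then I would run the forcing: use small equations like $(2,2,1,\cdot)$, $(4,4,1,\cdot)$ to pin down the colors of a few integers just above $\tfrac{k}{4}$ (these become blue), then combine two such blue values via $(a,a,b,w)$ with $b$ red to force $M=\tfrac{k^2+5k+10}{16}\in R$, and finally produce a red solution $(M, r_1, r_2, M)$ where $r_1,r_2$ are small red integers with $r_1+r_2+kr_2=3M$ — i.e.\ $r_1+(k+1)r_2=3M$ — using that both $r_1$ and $r_2$ lie in the already-established red set $\{1,\dots,\tfrac{k-5}{4}\}$. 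The arithmetic identity that must hold is that $3M=\tfrac{3k^2+15k+30}{16}$ decomposes as (small red) $+(k+1)\cdot$(small red); verifying such a decomposition exists within the red range, for this residue class, is the crux. One must also dispose of the case $2\in B$ (symmetric/parallel forcing argument, or a short separate contradiction), and handle the finitely many small $k$ not covered by the asymptotic inequalities by the standing hypothesis $k\ge 32$. Since the structure is identical to Proposition~\ref{pro3-1} with only the constants changed, I expect the proof to be a routine — if somewhat tedious — variation, with the genuine work being the verification of the correct ceilings and the correct red decomposition of $3M$ for $k\equiv 1\pmod{16}$.
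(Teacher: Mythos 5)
Your skeleton does match the paper's: Lemma~\ref{lem1} (applied after the harmless WLOG $2\in R$, since the two colors are interchangeable) gives $1,2,\ldots,\frac{k-5}{4}\in R$ and $k,2k,\ldots,\frac{k^2-k}{16}\in B$ exactly as you computed with $j=\frac{k-1}{16}$, and your lower-bound coloring ($[1,\frac{k-1}{4}]$ red, the rest of $[1,\frac{k^2+5k-6}{16}]$ blue, using $\lceil\frac{k^2+5k+6}{16}\rceil=\frac{k^2+5k+10}{16}$) is precisely the paper's. The trouble is the upper bound, where the concrete ingredients you wrote down are wrong for this residue class and the step you yourself call the crux is left unverified. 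The sample solutions $(2,2,1,\cdot)$ and $(4,4,1,\cdot)$ would need $w=\frac{k+4}{4}$ and $\frac{k+8}{4}$, which are not integers when $k\equiv 1\pmod 4$; the forced blue values here are $\frac{k+3}{4}$ and $\frac{k+7}{4}$, obtained from $(1,2,1,\frac{k+3}{4})$ and $(3,4,1,\frac{k+7}{4})$. Likewise, forcing $M=\frac{k^2+5k+10}{16}\in R$ ``via $(a,a,b,w)$ with $b$ red'' forces nothing, since a tuple containing both colors can never be monochromatic; one needs all of $x,y,z$ blue, as in $(\frac{k+3}{4},\frac{k+7}{4},\frac{k+3}{4},M)$.

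Most seriously, your closing identity $r_1+(k+1)r_2=3M$ with $r_1,r_2\in[1,\frac{k-5}{4}]$ has no solution at all: since $r_1=3M-(k+1)r_2$ decreases by $k+1$ as $r_2$ increases by $1$, the consecutive integers $r_2=\frac{3k-3}{16}$ and $r_2=\frac{3k+13}{16}$ give $r_1=\frac{15k+33}{16}>\frac{k-5}{4}$ and $r_1=\frac{17-k}{16}<1$ respectively, so the red window is skipped entirely. The correct condition for the tuple $(x,y,z,w)=(M,r_1,r_2,M)$ is $r_1+kr_2=3M$, and this does admit the red choice $r_1=\frac{2k+30}{16}$, $r_2=\frac{3k+13}{16}$ (both at most $\frac{k-5}{4}$ once $k\geq 33$, i.e.\ for every $k\equiv 1\pmod{16}$ with $k\geq 32$), yielding the red solution $(M,\frac{2k+30}{16},\frac{3k+13}{16},M)$ and the contradiction. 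So your route is salvageable, and after this repair it is actually shorter than the paper's own argument, which after forcing $M\in R$ continues the forcing chain (through $\frac{k^2+3k+20}{16}$, $\frac{2k+10}{4}$, $\frac{k-1}{4}$, $\frac{k^2+k-2}{16}$, $\frac{k-25}{4}$, $\frac{k^2-12k-21}{16}$) and ends with a blue solution; but as submitted, the decisive computation is absent and the version you did write down fails, so this is a genuine gap rather than a routine omission.
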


\begin{proposition}\label{pro3-3}
If $k\equiv 2 \pmod {16}$, then
$\operatorname{RR}(x+y+kz=4w)=\frac{k^2+4k+4}{16}$.
\end{proposition}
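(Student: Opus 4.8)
The plan is to follow the same two-sided template used for Proposition \ref{pro3-1}, specialising all the fractions to the residue class $k\equiv 2\pmod{16}$. For the upper bound, suppose for contradiction that some $2$-coloring of $[1,\frac{k^2+4k+4}{16}]$ has no monochromatic solution to $x+y+kz=4w$. First I would apply Lemma \ref{lem1}: with $j=\frac{k-2}{4}$ (so that $k>4j-1$ and $jk=\frac{k^2-2k}{4}\le\frac{k^2+4k+4}{16}$ for $k$ large, which holds since $k\ge 32$), and after fixing $2\in R$ without loss of generality, we get $1,2,\dots,4j-1=k-3\in R$ and $k,2k,\dots,jk=\frac{k(k-2)}{4}\in B$. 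Then I would run the short forcing chain: equations of the form $(a,a,1,w)$ with small red $a$ force a few more integers just above $\frac{k}{4}$ to be blue, the specific blue value $\frac{k^2+4k+4}{16}$ is forced red by a triple of the shape $(b,b,c,\frac{k^2+4k+4}{16})$ with $b,c$ in the already-determined blue block, and finally a red triple $(\frac{k^2+4k+4}{16},\,d,\,e,\,\frac{k^2+4k+4}{16})$ with $d,e$ small red integers produces the contradiction. The exact residue arithmetic (checking that $\frac{2k+\cdot}{16}$ and $\frac{3k+\cdot}{16}$ land on integers that are already red, i.e. at most $k-3$) is the bookkeeping step, entirely analogous to Proposition \ref{pro3-1}.

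For the lower bound I would exhibit the explicit $2$-coloring: color $[1,\lfloor k/4\rfloor]=[1,\frac{k-2}{4}]$ red and $[\frac{k-2}{4}+1,\frac{k^2+4k+4}{16}-1]=[\frac{k+2}{4},\frac{k^2+4k-12}{16}]$ blue, and check it is solution-free. If $x,y,z$ are all red then $x+y+kz\ge k+2$, so $w\ge\lceil\frac{k+2}{4}\rceil=\frac{k+2}{4}$, which lies outside the red block — no red solution. If $x,y,z$ are all blue then $x+y+kz\ge 2\cdot\frac{k+2}{4}+k\cdot\frac{k+2}{4}=\frac{(k+2)^2}{4}=\frac{k^2+4k+4}{4}$, so $w\ge\lceil\frac{k^2+4k+4}{16}\rceil=\frac{k^2+4k+4}{16}$ (the numerator is divisible by $16$ when $k\equiv2\pmod{16}$, since then $k^2\equiv4$ and $4k\equiv8\pmod{16}$, giving $k^2+4k+4\equiv16\equiv0$), which exceeds the top of the blue block $\frac{k^2+4k-12}{16}$ — no blue solution. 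Hence $\operatorname{RR}(x+y+kz=4w)\ge\frac{k^2+4k+4}{16}$, matching the upper bound.

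The main obstacle, as in all the cases of Theorem \ref{th-Exact-1}, is getting the forcing chain in the upper bound exactly right: one must pick the index $j$ in Lemma \ref{lem1} so that the blue "staircase" $k,2k,\dots,jk$ reaches just below the target value, then identify the precise small set of additional forced colors and the precise triple that collapses. The only genuinely case-specific work is verifying that the auxiliary quantities appearing in the final red solution — here the analogues of $\frac{2k+48}{16}$ and $\frac{3k+16}{16}$ from Proposition \ref{pro3-1} — are integers in the range $[1,k-3]$ and therefore already known to be red; this is a divisibility check that uses $k\equiv 2\pmod{16}$ in an essential way, and it is the step where an error in the residue class would show up. Everything else is a direct transcription of the argument already given for Proposition \ref{pro3-1}.
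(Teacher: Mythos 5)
Your lower bound is correct and coincides with the paper's: the coloring $[1,\frac{k-2}{4}]$ red, $[\frac{k+2}{4},\frac{k^2+4k-12}{16}]$ blue, together with the divisibility check $k^2+4k+4\equiv 0\pmod{16}$ for $k\equiv 2\pmod{16}$, is exactly what the paper does. The upper bound, however, has a genuine gap, in two respects. First, your invocation of Lemma \ref{lem1} is wrong as stated: with $j=\frac{k-2}{4}$ you would need $jk=\frac{k^2-2k}{4}\leq \frac{k^2+4k+4}{16}$, which is false for every $k\geq 5$ (the left side is roughly four times the right), so the lemma does not give you $1,2,\ldots,k-3\in R$. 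The admissible choice is $j=\frac{k-2}{16}$ (an integer precisely because $k\equiv 2\pmod{16}$), which yields only $1,2,\ldots,\frac{k-2}{4}-1\in R$ and $k,2k,\ldots,\frac{k^2-2k}{16}\in B$; this weaker conclusion is what the paper uses, and it suffices, but your stronger claim is unjustified and the error would propagate if the final contradiction relied on red integers above $\frac{k-2}{4}-1$.

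Second, the forcing chain is not "bookkeeping" to be deferred; it is the entire case-specific content of the upper bound, and you never produce it. The paper's chain is short and must be exhibited: $(1,1,1,\frac{k+2}{4})$ forces $\frac{k+2}{4}\in B$; then $(\frac{k+2}{4},\frac{k+2}{4},\frac{k+2}{4},\frac{k^2+4k+4}{16})$ forces $\frac{k^2+4k+4}{16}\in R$; and finally $(\frac{k^2+4k+4}{16},\frac{2k+12}{16},\frac{3k+10}{16},\frac{k^2+4k+4}{16})$ is a monochromatic red solution, because for $k\equiv 2\pmod{16}$ both $\frac{2k+12}{16}$ and $\frac{3k+10}{16}$ are integers and lie in $[1,\frac{k-2}{4}-1]$ once $k\geq 34$ (the smallest admissible $k$ in this residue class with $k\geq 32$), hence are already red by the corrected application of Lemma \ref{lem1}. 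Note also that your proposed shape for the intermediate step, a triple $(b,b,c,\frac{k^2+4k+4}{16})$ with $b,c$ in the blue staircase $\{k,2k,\ldots\}$, does not match what actually works here: the forcing uses the single newly-blued value $\frac{k+2}{4}$, not multiples of $k$. Without an explicit chain of this kind, the upper bound remains unproved.
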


\begin{proposition}\label{pro3-4}
If $k\equiv 3,35 \pmod {64}$, then
$\operatorname{RR}(x+y+kz=4w)=\frac{4k^2+30k+66}{64}$.
\end{proposition}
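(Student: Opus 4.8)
The plan is to follow the template of Propositions~\ref{pro3-1}--\ref{pro3-3}: prove the upper bound $\operatorname{RR}(x+y+kz=4w)\le N$ by a colour-forcing argument and the matching lower bound by an explicit colouring, where $N=\frac{4k^2+30k+66}{64}$. First I record the elementary observations that the hypothesis $k\equiv 3,35\pmod{64}$ is equivalent to $k\equiv 3\pmod{32}$, that in particular $k\equiv 3\pmod 4$ and $k$ is odd, and that $N$ is then an integer; the standing assumption $k\ge 32$ of this section will be used to keep the forced integers distinct and in range.

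For the lower bound I would use a three-block interval colouring. Put $a=\lceil\frac{k+2}{4}\rceil-1=\frac{k+1}{4}$ and $b=\lceil\frac{(a+1)(k+2)}{4}\rceil-1=\lceil\frac{k^2+7k+10}{16}\rceil-1$, and colour $[1,a]$ red, $[a+1,b]$ blue, and $[b+1,N-1]$ red. A blue solution would force $x+y+kz\ge(a+1)(k+2)$, hence $w\ge b+1$, so none exists. For a red solution: if $z\le a$ then $x+y+kz\le a(k+2)$ and a short computation places $w$ inside $[a+1,b]$ (so it is blue) unless one of $x,y$ lies in the top block $[b+1,N-1]$; if $z$ lies in the top block then $kz>4(N-1)$; the one remaining case, $x,w\in[b+1,N-1]$ with $y,z\le a$, requires $4w-x\notin\{\,y+kz:1\le y,z\le a\,\}$. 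As $[b+1,N-1]$ has only boundedly many elements, this is a finite check, and $N$ is exactly the largest value for which it succeeds when $k\equiv 3\pmod{32}$: the binding instance is $x=w=N-1$, for which one needs $3(N-1)$ to land in the gap $\bigl(z_{0}k+a,\ (z_{0}+1)k\bigr]$ of $\{y+kz\}$ for the relevant $z_{0}$. This is the point at which the residue of $k$ modulo $64$ (equivalently modulo $32$) fixes the value of $N$.

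For the upper bound, suppose a $2$-colouring of $[1,N]$ has no monochromatic solution to $x+y+kz=4w$; as in Propositions~\ref{pro3-1}--\ref{pro3-3} we may assume $2\in R$. Applying Lemma~\ref{lem1} with $j=\lfloor N/k\rfloor$ (so $jk\le N<(j+1)k$ and $k>4j-1$) forces $1,2,\dots,4j-1\in R$ and $k,2k,\dots,jk\in B$. From there I would chase colours exactly as in the proof of Proposition~\ref{pro3-1}, using tuples $(x,y,z,w)$ with $4\mid x+y+kz$ (for any small $x,y$ a suitable $z$ exists since $\gcd(k,4)=1$) to pin down the colours of the integers just above $\frac{k+2}{4}$, of certain integers near $\frac{k^2}{16}$, and finally of $N$ itself; these force a monochromatic tuple of the same shape as the one that closes the proof of Proposition~\ref{pro3-1}, namely $(N,y,z,N)$ with $y+kz=3N$ and $y,z$ already known to be red, a contradiction. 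Hence $\operatorname{RR}(x+y+kz=4w)=N$.

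The main obstacle is twofold. First, verifying that the three-block colouring is genuinely extremal and that the threshold is $N$ rather than $N\pm1$: this rests on the fine arithmetic of $3(N-1)$ and $3N$ modulo $k$, and is precisely why the class $k\equiv 3,35\pmod{64}$ must be isolated, the ``$+66$'' in the numerator being exactly what makes $N$ an integer \emph{and} places $3N$ on a representable value $y+kz$ while $3(N-1)$ falls in a gap. Second, the upper-bound forcing chain is long, and each auxiliary tuple must simultaneously have a multiple-of-$4$ left-hand side and land on an already-coloured integer, so the bookkeeping of residues (and the check, using $k\ge 32$, that all forced integers are distinct and lie in $[1,N]$) is where most of the work sits. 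Everything else is routine and parallels Propositions~\ref{pro3-1}--\ref{pro3-3}.
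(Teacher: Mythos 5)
Your overall architecture mirrors the paper's (Lemma~\ref{lem1} plus colour-chasing for the upper bound; the three-block colouring with $[1,\frac{k+1}{4}]\cup[\frac{k^2+7k+18}{16},N-1]$ red and $[\frac{k+5}{4},\frac{k^2+7k+2}{16}]$ blue, writing $N=\frac{4k^2+30k+66}{64}$, for the lower bound), and your block boundaries coincide with the paper's colouring. But your lower-bound verification has a genuine gap: your case list ends with ``$x,w$ in the top block, $y,z\le\frac{k+1}{4}$'', and that case is in fact impossible by size alone (for any such $x,w$ one gets $y>\frac{k+1}{4}$ when $z\le\frac{3k+7}{16}$ and $y<0$ when $z\ge\frac{3k+23}{16}$, independently of the residue of $k$), so it is not the binding instance. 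The case you omit --- $x$, $y$ and $w$ all in the top red block with $z\le\frac{k+1}{4}$ --- is the one that survives the size bounds, and it is exactly where the paper does the real work: writing $x=\frac{4k^2+28k+\alpha}{64}$, $y=\frac{4k^2+28k+\beta}{64}$, $w=\frac{4k^2+28k+\gamma}{64}$ with $\alpha,\beta,\gamma\in[72,2k+2]$, it shows $z=\frac{8k+56+(4\gamma-\alpha-\beta)/k}{16}$ lies in $[\frac{8k+52}{64},\frac{8k+63}{64}]$ and hence is never an integer. Your remark that the top block ``has only boundedly many elements'' is also false: it contains roughly $\frac{k}{32}$ integers, so there is no uniform finite check and the algebraic argument cannot be dispensed with.

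The upper-bound endgame you commit to also fails: a tuple $(N,y,z,N)$ with $y+kz=3N$ and $y,z$ small reds does not exist in general. Since $k\equiv3\pmod{16}$, an integral $z$ near $\frac{3k}{16}$ must be $\frac{3k+7}{16}$ or $\frac{3k+23}{16}$; the first forces $y=\frac{62k+198}{64}$, far above the red range supplied by Lemma~\ref{lem1} (indeed inside the region that gets forced blue), and the second forces $y=\frac{198-2k}{64}\le 0$ for $k\ge 99$. The paper instead forces $\frac{k+5}{4}$, $\frac{k+9}{4}$, $\frac{5k+33}{16}$ to be blue, deduces that both $N$ and $\frac{k^2+7k+18}{16}$ are red, and closes with the red tuple $(3,\frac{k^2+7k+18}{16},\frac{3k+23}{16},N)$ --- that is, the large forced-red element, not $N$ itself, supplies the second summand; you gesture at pinning down integers near $\frac{k^2}{16}$ but never use them in the contradiction. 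Finally, $k=35$ (the only value in $[32,64)$ with $k\equiv3\pmod{32}$) escapes the general chain (for instance $\frac{3k+23}{16}=8$ exceeds the red range $1,\dots,7$ that Lemma~\ref{lem1} provides there), and, as in the paper, it must be settled by a separate explicit computation with $N=94$; your sketch does not provide for this.
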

\begin{proof}
If $k\geq 64$. To show the upper bound, suppose that there is no monochromatic
solution for any $2$-coloring of $[1,\frac{4k^2+30k+66}{64}]$. From
Lemma \ref{lem1}, we know that $1, 2,\ldots,\frac{k-3}{4}-1\in R$. The
solutions $(10, \frac{k-7}{4}, 1, \frac{5k+33}{16})$,
$(4,5,1,\frac{k+9}{4})$ and $(3, 2, 1, \frac{k+5}{4})$ show that
$\frac{5k+33}{16},\frac{k+9}{4},$
$\frac{k+5}{4}\in B$. By considering
$(\frac{5k+33}{16}, \frac{5k+33}{16}, \frac{k+5}{4},
\frac{4k^2+30k+66}{64})$, we have $\frac{4k^2+30k+66}{64}\in R$. We know that
$\frac{k^2+7k+18}{16}\in R$ by considering
$(\frac{k+9}{4},\frac{k+9}{4},\frac{k+5}{4}, \frac{k^2+7k+18}{16})$.
Since $3, \frac{3k+23}{16}\in R$, it follows that $(3,\frac{k^2+7k+18}{16}, \frac{3k+23}{16},$
$\frac{4k^2+30k+66}{64})$ is a red solution, a contradiction.

For the lower bound, $\frac{4k^2+30k+66}{64}-1=\frac{4k^2+30k+2}{64}$. Color $[1,\frac{k+1}{4}]\cup [\frac{k^2+7k+18}{16},\frac{4k^2+30k+2}{64}]$ red,
$[\frac{k+5}{4},\frac{k^2+7k+2}{16}]$ blue. Suppose that
there is a monochromatic solution of $x+y+kz=4w$, say $x,y,z,w$. For
any solution with $x,y,z$ all blue, we have $4w\geq
\frac{k+5}{4}(k+2)$, and so $w$ must be red. Thus, every
monochromatic solution is red. For every such solution, $w\geq
\frac{k^2+7k+18}{16}$. Recall that $z\leq \frac{k+1}{4}$.

\begin{claim}\label{claim3}
$x$ or $y$ is at least $\frac{k^2+7k+18}{16}$.
\end{claim}
\begin{proof}
Assume, to the contrary, that $x,y<\frac{k^2+7k+18}{16}$. Then $x, y
\leq \frac{k+1}{4}$, and hence $\frac{k^2+7k+18}{16} \leq w \leq
\lfloor\frac{k^2+3k+2}{16}\rfloor$, a contradiction.
\end{proof}

From Claim \ref{claim3}, without loss of generality, assume
that $x \geq \frac{k^2+7k+18}{16}$. It follows from $y+kz=4w-x$
that
$$
\frac{4(k^2+7k+18)}{16}-\frac{4k^2+30k+2}{64} \leq y+kz \leq
\frac{4(4k^2+30k+2)}{64}-\frac{k^2+7k+18}{16},
$$
i.e.,
$$
\frac{12k^2+82k+286}{64} \leq y+kz \leq \frac{12k^2+92k-64}{64}.
$$
If $z \geq \frac{3k+23}{16}$, then $y \leq
\frac{12k^2+92k-64}{64}-\frac{3k^2+23k}{16}=-1<0$, which is
impossible. If $z \leq \frac{3k+7}{16}$, then $y \geq
\frac{12k^2+82k+286}{64}-\frac{3k^2+7k}{16}=\frac{54k+286}{64}>\frac{k+1}{4}$.
Since $y \in R$, then $y\geq \frac{k^2+7k+18}{16}$. Assume
that
$x=\frac{4k^2+28k+a}{64},y=\frac{4k^2+28k+b}{64},w=\frac{4k^2+28k+c}{64}$,
with $a,b,c \in[72,2k+2]$. Therefore,
$$
z=\frac{4w-x-y}{k}=\frac{8k^2+56k+4c-a-b}{64k}=\frac{8k+56+(4c-a-b)/k}{16},
$$
Since $\frac{284-4k}{k} \leq \frac{4c-a-b}{k} \leq
\frac{8k-136}{k}$, it follows that $\frac{4c-a-b}{k}\in[-4,7]$, and
$z\in [\frac{8k+52}{64}, \frac{8k+63}{64}]$ is not an integer, and
so there are no monochromatic solutions.

If $32\leq k< 64$, then $k=35$. To show the upper bound,
suppose that there is no monochromatic solution for any $2$-coloring
of $[1,94]$. From Lemma \ref{lem1}, we have $1,2,\ldots,7\in R$. The
solutions $(2,3,1,10)$, $(4,5,1,11)$, $(13,4,1,13)$, $(24,2,2,24)$,
and $(82,1,7,82)$ show that $10,11,13,24,82\in B$. Then we have
$8,93,94\in R$ by considering $(24,24,8,82)$, $(11,11,10,93)$, and
$(13,13,10,94)$. Therefore, $(3,93,8,94)$ is a red solution, a
contradiction. For the lower bound, then $94-1=93$ and we color
$[1,9]\cup\{93\}$ red, $[10,92]$ blue. Clearly, there are no
monochromatic solutions.
\end{proof}

\begin{proposition}\label{pro3-5}
If $k\equiv 19,51 \pmod {64}$, then
$\operatorname{RR}(x+y+kz=4w)=\frac{4k^2+30k+34}{64}$.
\end{proposition}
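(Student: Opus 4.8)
The plan is to follow the same two-sided template used in Proposition \ref{pro3-4}: establish the upper bound by forcing a monochromatic solution in any $2$-coloring of $[1,\frac{4k^2+30k+34}{64}]$, and establish the lower bound by exhibiting an explicit $2$-coloring of $[1,\frac{4k^2+30k+34}{64}-1]$ with no monochromatic solution. Throughout I would split into the generic range $k\geq 64$ and the small leftover cases $32\leq k<64$ with $k\equiv 19,51\pmod{64}$ (namely $k=19$ is excluded since $k\geq 32$, leaving $k=51$), which are handled by a direct finite check exactly as the $k=35$ case was dispatched at the end of Proposition \ref{pro3-4}.

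For the upper bound, I would start from Lemma \ref{lem1}: assuming $2\in R$ (if $2\in B$ the symmetric argument applies, or one reduces to this case as in the earlier propositions), the lemma gives $1,2,\ldots,\lfloor\frac{k-3}{4}\rfloor\in R$ and $k,2k,\ldots\in B$ up to the relevant multiple. The key step is to locate a short explicit chain of forced colorings: using small-coefficient solutions of $x+y+kz=4w$ with $z=1$ (of the form $(x,y,1,\frac{x+y+k}{4})$) one pushes several values near $\frac{k+c}{4}$ and near $\frac{5k+c}{16}$ into $B$; then a solution of the shape $(\beta,\beta,\gamma,\frac{4k^2+30k+34}{64})$ with $\beta\in B$ forces $\frac{4k^2+30k+34}{64}\in R$; simultaneously a solution $(\frac{k+c}{4},\frac{k+c}{4},\frac{k+c'}{4},\frac{k^2+7k+d}{16})$ forces some value $\approx\frac{k^2+7k+d}{16}$ into $R$; and finally a red solution $(\text{small},\frac{k^2+7k+d}{16},\frac{3k+e}{16},\frac{4k^2+30k+34}{64})$ produces the contradiction. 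Finding the exact residue-correct constants $c,c',d,e$ so that all the quantities are integers and the arithmetic $x+y+kz=4w$ balances is the bookkeeping core of the argument; this is where the $\pmod{64}$ hypothesis is used, and it is the step I expect to be the main obstacle — not conceptually hard, but requiring care that every displayed quantity is a positive integer in the right color class and does not exceed $\frac{4k^2+30k+34}{64}$.

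For the lower bound I would color $[1,\frac{k+c_1}{4}]\cup[\frac{k^2+7k+c_2}{16},\frac{4k^2+30k+34}{64}-1]$ red and the complementary interval $[\frac{k+c_3}{4},\frac{k^2+7k+c_4}{16}-1]$ blue, with constants chosen to match $k\equiv 19,51\pmod{64}$. The verification mirrors Proposition \ref{pro3-4}: first, any solution with $x,y,z$ all blue has $4w\geq\frac{k+c_3}{4}(k+2)$ exceeding the top of the red-high interval, so no blue solution exists; hence every monochromatic solution is red, with $w\geq\frac{k^2+7k+c_2}{16}$ and $z\leq\frac{k+c_1}{4}$. A short claim (as in Claim \ref{claim3}) shows $x$ or $y$ must lie in the red-high interval; say $x\geq\frac{k^2+7k+c_2}{16}$. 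Writing $x=\frac{4k^2+28k+a}{64}$, $y=\frac{4k^2+28k+b}{64}$, $w=\frac{4k^2+28k+c}{64}$ with $a,b,c$ in a narrow range forced by the interval endpoints, one computes $z=\frac{4w-x-y}{k}=\frac{8k+56+(4c-a-b)/k}{16}$ and shows the bounded error term $(4c-a-b)/k$ keeps $z$ strictly between two consecutive integers, so $z\notin\mathbb{Z}$ and no red solution exists either. The only genuinely new labor relative to the previous proposition is recomputing these constants for the shifted constant term $34$ in place of $66$; I would also separately note that when $y$ is forced to be small and red ($y\leq\frac{k+c_1}{4}$) the same non-integrality of $z$ rules out that sub-case, and I would finish with the explicit finite coloring for $k=51$ (color, say, $[1,\,t]\cup\{\frac{4k^2+30k+34}{64}-1\}$ red and the rest blue, with $t$ chosen by the same recipe) verified by inspection.
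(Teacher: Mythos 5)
Your outline follows exactly the route the paper takes (Lemma~\ref{lem1} plus a chain of forced colorings for the upper bound; a three-interval coloring and the non-integrality-of-$z$ computation for the lower bound), but as written it is a template rather than a proof: every quantity that actually carries the content of the proposition is left undetermined, and you yourself flag the determination of the constants as ``the main obstacle.'' For a statement of this kind the exact value $\frac{4k^2+30k+34}{64}$ \emph{is} the output of that bookkeeping, so deferring it is a genuine gap. For comparison, the paper's chain is: $(6,\frac{k-7}{4},1,\frac{5k+17}{16})$, $(4,5,1,\frac{k+9}{4})$, $(3,2,1,\frac{k+5}{4})$ force $\frac{5k+17}{16},\frac{k+9}{4},\frac{k+5}{4}\in B$; then $(\frac{5k+17}{16},\frac{5k+17}{16},\frac{k+5}{4},\frac{4k^2+30k+34}{64})$ and $(\frac{k+9}{4},\frac{k+9}{4},\frac{k+5}{4},\frac{k^2+7k+18}{16})$ force $\frac{4k^2+30k+34}{64},\frac{k^2+7k+18}{16}\in R$, and $(1,\frac{k^2+7k+18}{16},\frac{3k+23}{16},\frac{4k^2+30k+34}{64})$ gives the red contradiction. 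The lower-bound coloring is $[1,\frac{k+1}{4}]\cup[\frac{k^2+7k+18}{16},\frac{4k^2+30k-30}{64}]$ red and $[\frac{k+5}{4},\frac{k^2+7k+2}{16}]$ blue, and writing $x,y,w=\frac{4k^2+28k+a}{64},\frac{4k^2+28k+b}{64},\frac{4k^2+28k+c}{64}$ with $a,b,c\in[72,2k-30]$ yields $\frac{4c-a-b}{k}\in[-4,7]$ and $z\in[\frac{8k+52}{64},\frac{8k+63}{64}]$, not an integer. Until you exhibit such a chain and verify each membership, integrality and range condition for the residues $19,51\pmod{64}$, the claimed value is not established; note in particular that the constant changes from $66$ to $34$ precisely because the forced blue element drops from $\frac{5k+33}{16}$ to $\frac{5k+17}{16}$, which is the kind of detail your sketch cannot see.

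A smaller point: your proposed separate finite check at $k=51$ (mimicking $k=35$ in Proposition~\ref{pro3-4}) is unnecessary, and the paper does not make it. The $k=35$ case was special only because that chain uses the red element $10$, which Lemma~\ref{lem1} guarantees only for $k\geq 47$; here the analogous solution is $(6,\frac{k-7}{4},1,\frac{5k+17}{16})$, needing only $6\leq\frac{k-3}{4}-1$, and the largest small red element used, $\frac{3k+23}{16}$, satisfies $\frac{3k+23}{16}\leq\frac{k-7}{4}$ exactly when $k\geq 51$, so the uniform argument already covers the smallest admissible value $k=51$. This is harmless extra work in your plan, but it shows the case split should be driven by where the chain's constants outgrow Lemma~\ref{lem1}, not copied from the previous proposition.
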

\begin{proof}
To show the upper bound, suppose that there is no monochromatic
solution for any $2$-coloring of $[1,\frac{4k^2+30k+34}{64}]$. From
Lemma \ref{lem1}, we have $1, 2,...,\frac{k-3}{4}-1\in R$. The
solutions $(6, \frac{k-7}{4}, 1, \frac{5k+17}{16})$,
$(4,5,1,\frac{k+9}{4})$ and $(3, 2, 1, \frac{k+7}{4})$ show that
$\frac{5k+17}{16},\frac{k+9}{4}, \frac{k+5}{4}\in B$. We
have $\frac{4k^2+30k+34}{64}\in R$ by considering
$(\frac{5k+17}{16}, \frac{5k+17}{16}, \frac{k+5}{4},
\frac{4k^2+30k+34}{64})$. By considering
$(\frac{k+9}{4},\frac{k+9}{4},\frac{k+5}{4}, \frac{k^2+7k+18}{16})$,
we know that $\frac{k^2+7k+18}{16}\in R$. Since $1, \frac{3k+23}{16}\in
R$, it follows that $(1,\frac{k^2+7k+18}{16}, \frac{3k+23}{16},
\frac{4k^2+30k+34}{64})$ is a red solution, a contradiction.

For the lower bound, $\frac{4k^2+30k+34}{64}-1=\frac{4k^2+30k-30}{64}$.
Color $[1,\frac{k+1}{4}]\cup[\frac{k^2+7k+18}{16},\frac{4k^2+30k-30}{64}]$ red,
$[\frac{k+5}{4},\frac{k^2+7k+2}{16}]$ blue. For any
solution with $x,y,z$ all blue, we have $4w\geq \frac{k+5}{4}(k+2)$,
and so $w$ must be red. Therefore, every monochromatic solution is red.
For every such solution, $w\geq \frac{k^2+7k+18}{16}$. Note that
$z\leq \frac{k+1}{4}$. If $x, y \leq \frac{k+1}{4}$, then
$\frac{k^2+7k+18}{16} \leq w \leq
\lfloor\frac{k^2+3k+2}{16}\rfloor$, a contradiction. Thus, at least
one of $x$ or $y$ must be at least $\frac{k^2+7k+18}{16}$. If $x
\geq \frac{k^2+7k+18}{16}$, then it follows from $y+kz=4w-x$ that
$$
\frac{4(k^2+7k+18)}{16}-\frac{4k^2+30k-30}{64} \leq y+kz \leq
\frac{4(4k^2+30k-30)}{64}-\frac{k^2+7k+18}{16},
$$
i.e.,
$$
\frac{12k^2+82k+318}{64} \leq y+kz \leq \frac{12k^2+92k-192}{64}.
$$
If $z \geq \frac{3k+23}{16}$, then $y \leq
\frac{12k^2+92k-192}{64}-\frac{3k^2+23k}{16}=-3<0$, which is
impossible. If $z \leq \frac{3k+7}{16}$, then $y \geq
\frac{12k^2+82k+318}{64}-\frac{3k^2+7k}{16}=\frac{54k+318}{64}>\frac{k+1}{4}$.
Since $y \in R$, we have $y\geq \frac{k^2+7k+18}{16}$. We can assume
that
$x=\frac{4k^2+28k+a}{64},y=\frac{4k^2+28k+b}{64},w=\frac{4k^2+28k+c}{64}$,
with $a,b,c \in[72,2k-30]$. Thus,
$$
z=\frac{4w-x-y}{k}=\frac{8k^2+56k+4c-a-b}{64k}=\frac{8k+56+(4c-a-b)/k}{16}.
$$
Since $\frac{348-4k}{k} \leq \frac{4c-a-b}{k} \leq
\frac{8k-264}{k}$, it follows that $\frac{4c-a-b}{k}\in[-4,7]$, and
hence $z\in [\frac{8k+52}{64}, \frac{8k+63}{64}]$ is not an integer,
and so there are no monochromatic solutions.
\end{proof}

\begin{proposition}\label{pro3-6}
If $k\equiv 4,36 \pmod {64}$, then
$\operatorname{RR}(x+y+kz=4w)=\frac{4k^2+26k+24}{64}$.
\end{proposition}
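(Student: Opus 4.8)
The plan is to follow the two–sided template of Propositions~\ref{pro3-1}, \ref{pro3-4} and \ref{pro3-5}, carrying out the upper and lower bounds separately and splitting off the single exceptional value $k=36$ (the only $k\ge 32$ in these residue classes below $64$, as $k=4$ is barred by the standing assumption $k\ge 32$). Note that $k\equiv 4,36\pmod{64}$ is precisely the subset of $k\equiv 4\pmod{16}$ for which $\frac{4k^2+26k+24}{64}$ is an integer, so in the arithmetic below I only ever need $k\equiv 4\pmod{16}$ together with ``$k$ large'', and $k=36$ is dispatched by hand. Since the equation $x+y+kz=4w$ does not distinguish the two colours, throughout we may relabel so that $2\in R$.

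\textbf{Upper bound.} Assume for contradiction that some $2$-colouring of $\bigl[1,\frac{4k^2+26k+24}{64}\bigr]$ has no monochromatic solution. Applying Lemma~\ref{lem1} with the largest admissible $j=\lfloor k/16\rfloor=\frac{k-4}{16}$ yields $1,2,\dots,\frac{k-8}{4}\in R$ and $k,2k,\dots,\frac{k(k-4)}{16}\in B$. Next I run the standard colour-forcing chain: the solutions $\bigl(2i,2i,1,\frac{k+4i}{4}\bigr)$ with $2i\le\frac{k-8}{4}$ force a long block of consecutive values $\frac{k+4}{4},\frac{k+8}{4},\dots$ to be blue (in particular both $\frac{k+4}{4}$ and $\frac{5k+12}{16}$ are blue); then $\bigl(\frac{5k+12}{16},\frac{5k+12}{16},\frac{k+4}{4},\frac{4k^2+26k+24}{64}\bigr)$ has $x,y,z$ all blue, so it forces $\frac{4k^2+26k+24}{64}\in R$, and $\bigl(\frac{k+4}{4},\frac{k+4}{4},\frac{k+4}{4},\frac{k^2+6k+8}{16}\bigr)$ forces the threshold value $\frac{k^2+6k+8}{16}\in R$. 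Finally $\bigl(1,\frac{k^2+6k+8}{16},\frac{3k+20}{16},\frac{4k^2+26k+24}{64}\bigr)$ is a solution all of whose entries are by now red (using $\frac{3k+20}{16}\le\frac{k-8}{4}$, which holds once $k\ge 52$, hence for all $k\ge 64$ in these classes), a contradiction. The only work here is to check that each displayed quadruple really is an integer solution, which amounts to a handful of congruences modulo $16$ and $64$ and goes through uniformly for $k\equiv 4$ and $k\equiv 36\pmod{64}$.

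\textbf{Lower bound.} Put $N=\frac{4k^2+26k+24}{64}-1$ and $T=\frac{k^2+6k+8}{16}$, and colour $\bigl[1,\frac{k}{4}\bigr]\cup[T,N]$ red and $\bigl[\frac{k+4}{4},T-1\bigr]$ blue. If $x,y,z$ are all blue then $4w=x+y+kz\ge\frac{k+4}{4}(k+2)=4T$, so $w\ge T$ and $w$ is red (or $w\notin[1,N]$); hence every monochromatic solution contained in $[1,N]$ is red. In a red solution we get $z\le\frac{k}{4}$ (otherwise $kz>4N$), $w\ge T$ (otherwise $4w\le k+2<4T$), and if both $x,y\le\frac{k}{4}$ then $4w=x+y+kz\le\frac{k^2+2k}{4}<4T$, impossible; so without loss of generality $x\ge T$. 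Bounding $y+kz=4w-x$ between $4T-N$ and $4N-T$ and casing on $z$ then forces $y>\frac{k}{4}$, hence $y\ge T$, while $z$ is confined to an $O(1)$-length window near $\frac{3k}{16}$. Writing $x$, $y$, $w$ in the form $\frac{4k^2+\alpha k+\beta+u}{64}$ with $u$ in a short interval and solving $kz=4w-x-y$ expresses $z$ as $\frac{8k+\gamma+u'/k}{64}$ whose numerator is trapped in an interval of length $<64$ that contains no multiple of $64$; thus $z$ is never an integer, so $[1,N]$ is solution-free and $\operatorname{RR}(x+y+kz=4w)\ge\frac{4k^2+26k+24}{64}$.

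\textbf{The value $k=36$, and the main obstacle.} For $k=36$ one has $\frac{4k^2+26k+24}{64}=96$, and I would verify $\operatorname{RR}(x+y+36z=4w)=96$ directly, exactly as $k=35$ is handled inside Proposition~\ref{pro3-4}: for the upper bound, deduce a few small reds from Lemma~\ref{lem1} and chase a short explicit list of quadruples to force $95,96\in R$ together with a red solution; for the lower bound, exhibit the explicit colouring of $[1,95]$ (a lower red interval, a blue interval, and a single red top value) and confirm by a short finite check that it has no monochromatic solution. As in the companion propositions the delicate point is the lower bound: one must pin down $T$ and the block boundaries so that the interval trapping the numerator of $z$ has length strictly less than $64$ and avoids every multiple of $64$ — equivalently, so that $4w-x-y$ is never a multiple of $k$ in the feasible window. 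Verifying that the choice $T=\frac{k^2+6k+8}{16}$ forced on us by the target value actually accomplishes this, uniformly over $k\equiv 4,36\pmod{64}$, is the crux of the argument.
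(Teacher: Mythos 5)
Your proposal is correct and takes essentially the same route as the paper's proof: the identical Lemma \ref{lem1} forcing chain (getting $\frac{k+4}{4},\frac{5k+12}{16}\in B$, then $\frac{4k^2+26k+24}{64},\frac{k^2+6k+8}{16}\in R$, and the red solution $(1,\frac{k^2+6k+8}{16},\frac{3k+20}{16},\frac{4k^2+26k+24}{64})$), the same lower-bound coloring $[1,\frac{k}{4}]\cup[\frac{k^2+6k+8}{16},N]$ red and $[\frac{k+4}{4},\frac{k^2+6k-8}{16}]$ blue with the ``$z$ is not an integer'' argument, and the same separate treatment of $k=36$ (value $96$, with $[1,9]\cup\{95\}$ red and $[10,94]$ blue). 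The arithmetic you defer does go through exactly as you sketch: with $a,b,c\in[32,2k-40]$ one finds $z\in\left[\frac{8k+44}{64},\frac{8k+55}{64}\right]$, and since $8k+44\equiv 12\pmod{64}$ when $k\equiv 4\pmod{16}$, this window contains no integer.
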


\begin{proposition}\label{pro3-7}
If $k\equiv 20,52 \pmod {64}$, then
$\operatorname{RR}(x+y+kz=4w)=\frac{4k^2+26k+56}{64}$.
\end{proposition}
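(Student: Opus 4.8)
The proof will follow the two-step template of the proofs of Propositions~\ref{pro3-4} and~\ref{pro3-5}. Set $N=\frac{4k^2+26k+56}{64}$; since $k\equiv 20,52\pmod{64}$ the numerator is divisible by $64$, so $N\in\mathbb{Z}$ and $N-1=\frac{4k^2+26k-8}{64}$. We establish $\operatorname{RR}(x+y+kz=4w)\le N$ by deriving a contradiction from the assumption that some $2$-colouring of $[1,N]$ has no monochromatic solution of $x+y+kz=4w$, and $\operatorname{RR}(x+y+kz=4w)\ge N$ by exhibiting an explicit $2$-colouring of $[1,N-1]$ with no monochromatic solution.

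For the upper bound, assume a $2$-colouring of $[1,N]$ with no monochromatic solution and, after renaming colours if necessary, that $2\in R$. Since $k\equiv 4\pmod{16}$ and $jk\le N$ for $j$ up to order $k/16$, Lemma~\ref{lem1} forces a long initial segment $1,2,\dots$ into $R$ together with the matching multiples $k,2k,\dots$ into $B$. From there I would run a short forcing chain, as in Propositions~\ref{pro3-4}--\ref{pro3-5}: solutions of the shape $(a,b,1,w)$ with $a,b$ small red pin down a few values of the forms $\tfrac{k+c}{4}$ and $\tfrac{\alpha k+\beta}{16}$ as blue; a solution $(w',w',z',N)$ having those blue values in its first three coordinates then forces $N\in R$; and finally a solution $(s,t,z'',N)$ with $s,t,z''$ all red --- one of them a mid-range value of the form $\tfrac{k^2+7k+c}{16}$ --- is a red solution, the desired contradiction. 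The single small value $k=52$ in this residue class (recall that $k\ge 32$ throughout the section) is handled by running the same chain with explicit integers, exactly as the case $k=35$ is treated inside the proof of Proposition~\ref{pro3-4}.

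For the lower bound I would use a three-block colouring of $[1,N-1]$: colour $\big[1,\tfrac{k+c_1}{4}\big]\cup\big[\tfrac{k^2+7k+c_2}{16},\,N-1\big]$ red and the middle block $\big[\tfrac{k+c_3}{4},\,\tfrac{k^2+7k+c_4}{16}\big]$ blue, with the constants $c_i$ (depending on $k\bmod 64$) chosen so that all block endpoints are integers and $c_1$ is small enough that no solution has $w$ in the small red block. If $x,y,z$ are all blue then $4w\ge\tfrac{k+c_3}{4}(k+2)$ forces $w$ into the large red block, so every monochromatic solution is entirely red; in such a solution $w$ lies in the large red block (hence $w\ge\tfrac{k^2+7k+c_2}{16}$) while $z\le\tfrac{k+c_1}{4}$, since $k$ times any element of the large red block exceeds $4(N-1)$; and $x,y$ cannot both be $\le\tfrac{k+c_1}{4}$, since that would leave no admissible value for $w$, so at least one of $x,y$ lies in the large red block. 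Writing $x=\tfrac{4k^2+28k+a}{64}$, $y=\tfrac{4k^2+28k+b}{64}$, $w=\tfrac{4k^2+28k+c}{64}$ with $a,b,c$ in a short interval, one computes $z=\tfrac{4w-x-y}{k}=\tfrac{8k+56+(4c-a-b)/k}{16}$; the sub-cases $z$ large versus $z$ small (split at a value of the form $\tfrac{3k+c_5}{16}$) are excluded by the same sign estimates as in Proposition~\ref{pro3-5}, and the remaining range confines $(4c-a-b)/k$ to $[-4,7]$, so $z$ is trapped strictly between two consecutive integers and cannot be one.

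The main obstacle is bookkeeping rather than ideas: one must fix the additive constants $c_1,\dots,c_5$ --- and the precise values forced red or blue along the upper-bound chain --- so that every block endpoint is an integer for $k\equiv 20,52\pmod{64}$ and so that the estimates $4w\ge\tfrac{k+c_3}{4}(k+2)$, $z\le\tfrac{k+c_1}{4}$, and $(4c-a-b)/k\in[-4,7]$ all hold with no off-by-one slippage. The value $N=\frac{4k^2+26k+56}{64}$ should drop out as the least admissible start of the extremal red block compatible with the blue-overshoot bound; once the constants are pinned down, both directions reduce to routine arithmetic.
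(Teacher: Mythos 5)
Your proposal follows the same two-part scheme as the paper's own proof: Lemma~\ref{lem1} plus a short forcing chain ending in a red solution through $N=\frac{4k^2+26k+56}{64}$ for the upper bound, and a three-block colouring together with the non-integrality of $z=\frac{4w-x-y}{k}$ (checked via residues mod $64$) for the lower bound, so it is essentially the paper's argument. The only adjustment is that for $k\equiv 20,52\pmod{64}$ (hence $k\equiv 4\pmod{16}$) the anchors are $\frac{k+4}{4}$, $\frac{5k+28}{16}$ and $\frac{k^2+6k+8}{16}$ rather than the odd-$k$ forms $\frac{k^2+7k+c}{16}$, $x=\frac{4k^2+28k+a}{64}$ you quoted: the paper colours $\left[1,\frac{k}{4}\right]\cup\left[\frac{k^2+6k+8}{16},N-1\right]$ red and $\left[\frac{k+4}{4},\frac{k^2+6k-8}{16}\right]$ blue, writes $x,y,w=\frac{4k^2+24k+\ast}{64}$ with $\ast\in[32,2k-8]$, and lands $z\in\left[\frac{8k+44}{64},\frac{8k+55}{64}\right]$, which contains no integer; with those constants your plan coincides with the published proof (and no separate treatment of $k=52$ is actually needed).
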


\begin{proposition}\label{pro3-8}
If $k\equiv 5,37 \pmod {64}$, then
$\operatorname{RR}(x+y+kz=4w)=\frac{4k^2+22k+46}{64}$.
\end{proposition}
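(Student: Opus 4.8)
The plan is to mirror, essentially verbatim, the two-part scheme carried out for the neighbouring residue classes in Propositions \ref{pro3-4} and \ref{pro3-5}. Write $N=\frac{4k^2+22k+46}{64}$. For the upper bound I assume a $2$-colouring of $[1,N]$ with no monochromatic solution to $x+y+kz=4w$ and force a (red) solution; for the lower bound I exhibit an explicit solution-free $2$-colouring of $[1,N-1]$. Since $k\equiv 5,37\pmod{64}$ forces $k\equiv 5\pmod{16}$ and $k\equiv 5\pmod 8$, one first checks --- separately for the two residues --- that every fraction with denominator $4$, $16$ or $64$ appearing below is in fact an integer. The ambient hypothesis $k\ge 32$ of the section leaves $k=37$ as the only value with $32\le k<64$; that case I would dispose of by the same kind of direct finite computation used for $k=35$ in Proposition \ref{pro3-4}, and from here on I assume $k\ge 64$.

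For the upper bound I apply Lemma \ref{lem1} with the largest $j$ with $jk\le N$, obtaining a red initial segment $1,2,\dots$ up to roughly $\tfrac{k}{4}$ and a blue progression $k,2k,\dots$ up to roughly $\tfrac{k^2}{16}$. Then, exactly as in Propositions \ref{pro3-4}--\ref{pro3-5}, I chain a handful of explicit solutions: a couple of solutions of the form $(a,b,1,\,\cdot\,)$ with $a,b$ small red numbers --- together with one ``balancing'' solution having $z=1$ and one coordinate near $\tfrac{k}{4}$ --- force two values, one near $\tfrac{k}{4}$ and one near $\tfrac{5k}{16}$, to be blue; a solution with those two blue values as $x,y$ and $w=N$ forces $N\in R$; a solution with $x=y$ equal to one of the blue values forces a value of order $\tfrac{k^2}{16}$ to be red; and a final red solution whose three inputs are by then all known to be red produces the contradiction. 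The content of this half is choosing the $O(1)$ additive constants in these solutions so that each equation is solvable over the integers in $[1,N]$ for both $k\equiv 5$ and $k\equiv 37$ modulo $64$.

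For the lower bound I use a colouring of the standard shape, $[1,\rho_1]\cup[\rho_2,N-1]$ red and $[\rho_1+1,\rho_2-1]$ blue, with $\rho_1$ of order $\tfrac{k}{4}$ and $\rho_2$ of order $\tfrac{k^2}{16}$, chosen so that (a) a blue solution would need $4w\ge(k+2)(\rho_1+1)$, which exceeds $\rho_2-1$, so there is no blue solution; and (b) a red solution then has $w\ge\rho_2$ and $z\le\rho_1$, whence a counting argument forces $x$ or $y$, say $x$, to lie in $[\rho_2,N-1]$ as well. Writing $x,y,w$ in the normalised form $\tfrac{4k^2+\beta k+a}{64}$, $\tfrac{4k^2+\beta k+b}{64}$, $\tfrac{4k^2+\beta k+c}{64}$ for the appropriate $\beta$ with $a,b,c$ confined to an explicit short interval, substitution gives
$$
z=\frac{4w-x-y}{k}=\frac{8k^2+56k+4c-a-b}{64k}=\frac{8k+56+(4c-a-b)/k}{64},
$$
and since $k\equiv 5\pmod 8$ gives $8k+56\equiv 32\pmod{64}$ while $(4c-a-b)/k$ is pinned to a short range near $0$, this is never an integer. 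Hence $[1,N-1]$ has no monochromatic solution.

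The step I expect to be the genuine obstacle is the joint calibration of the additive constants $22$ and $46$ (and of the interior breakpoints $\rho_1,\rho_2$): they must be chosen so that simultaneously (i) all the fractions occurring are integers for both residue classes mod $64$, (ii) the forcing chain in the upper bound really does terminate in a monochromatic solution, and (iii) in the lower bound the window for $(4c-a-b)/k$ is short enough and misses the residue imposed by $8k+56\pmod{64}$, so that $z$ is non-integral. It is this triple constraint --- and essentially nothing else --- that distinguishes the classes $5,37$ from $19,51$ or $21,53$ and produces the different coefficients in Theorem \ref{th-Exact-1}; the lone value $k=37$ in $[32,64)$ must additionally be verified by hand.
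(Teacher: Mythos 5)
Your overall strategy coincides with the paper's (Lemma \ref{lem1} plus a short forcing chain for the upper bound; a two-interval colouring plus a non-integrality argument for $z$ for the lower bound), but as written the proposal has a genuine gap: every piece of content that actually establishes the statement for the classes $5,37\pmod{64}$ is deferred. You never specify the forcing chain, the breakpoints $\rho_1,\rho_2$, or the normalising constant $\beta$, and you yourself label their calibration ``the genuine obstacle''; but that calibration \emph{is} the proposition. For the record, the paper's choices are: from Lemma \ref{lem1} the red segment $1,2,\ldots,\frac{k-1}{4}-1$; the solutions $(2,1,1,\frac{k+3}{4})$, $(3,4,1,\frac{k+7}{4})$ and $(7,\frac{k-5}{4},1,\frac{5k+23}{16})$ forcing $\frac{k+3}{4},\frac{k+7}{4},\frac{5k+23}{16}\in B$; then $(\frac{5k+23}{16},\frac{5k+23}{16},\frac{k+3}{4},\frac{4k^2+22k+46}{64})$ and $(\frac{k+7}{4},\frac{k+7}{4},\frac{k+3}{4},\frac{k^2+5k+14}{16})$ forcing $\frac{4k^2+22k+46}{64}$ and $\frac{k^2+5k+14}{16}$ red, and finally the red solution $(2,\frac{k^2+5k+14}{16},\frac{3k+17}{16},\frac{4k^2+22k+46}{64})$ gives the contradiction; for the lower bound the colouring is $[1,\frac{k-1}{4}]\cup[\frac{k^2+5k+14}{16},\frac{4k^2+22k-18}{64}]$ red and $[\frac{k+3}{4},\frac{k^2+5k-2}{16}]$ blue. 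None of this is recoverable from the proposal.

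Moreover, the one concrete computation you do give is imported from the wrong residue class: the numerator $8k^2+56k+4c-a-b$ corresponds to the breakpoint $\frac{k^2+7k+18}{16}$ (i.e.\ $\beta=28$), which is the setup for $k\equiv 3,19,35,51\pmod{64}$ in Propositions \ref{pro3-4} and \ref{pro3-5}, and your congruence $8k+56\equiv 32\pmod{64}$ reflects that. For $k\equiv 5,37\pmod{64}$ the correct normalisation is $x,y,w=\frac{4k^2+20k+\cdot}{64}$ with $a,b,c\in[14,2k-18]$, giving $z=\frac{8k+40+(4c-a-b)/k}{64}$ with $(4c-a-b)/k\in[-4,7]$, i.e.\ $z\in\left[\frac{8k+36}{64},\frac{8k+47}{64}\right]$, which is non-integral because $8k+40\equiv 16\pmod{64}$; the conclusion happens to survive with your constants, but their provenance shows the calibration you acknowledge as essential was not actually carried out. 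Finally, no separate hand-check of $k=37$ is needed: unlike $k=35$ in Proposition \ref{pro3-4}, the general argument for this class goes through for all $k\geq 32$, which is how the paper treats it.
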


\begin{proposition}\label{pro3-9}
If $k\equiv 21,53 \pmod {64}$, then
$\operatorname{RR}(x+y+kz=4w)=\frac{4k^2+22k+78}{64}$.
\end{proposition}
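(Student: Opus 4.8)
\medskip
\noindent\textbf{Proposed proof.} The plan is to mimic the two-sided arguments of Propositions~\ref{pro3-4} and~\ref{pro3-5}. For $k\equiv 21,53\pmod{64}$ one has $k\equiv 5\pmod{16}$ and $k\equiv 1\pmod 4$, so $M:=\tfrac{4k^2+22k+78}{64}$ is an integer and so are $W:=\tfrac{k^2+5k+14}{16}$, $\tfrac{5k+39}{16}$, $\tfrac{3k+17}{16}$, $\tfrac{3k+1}{16}$, $\tfrac{k+3}{4}$, $\tfrac{k+7}{4}$. Assume first $k\ge 64$; the single small value $k=53$ is treated at the end.

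\smallskip
\noindent For the upper bound, suppose a $2$-coloring of $[1,M]$ has no monochromatic solution to $x+y+kz=4w$, and (renaming colors) take $2\in R$. Lemma~\ref{lem1} with $j=\tfrac{k-5}{16}$ (one checks $jk\le M$ and $k>4j-1$) gives $1,2,\dots,\tfrac{k-9}{4}\in R$. Then the solutions $(1,2,1,\tfrac{k+3}{4})$, $(3,4,1,\tfrac{k+7}{4})$ and $(12,\tfrac{k-9}{4},1,\tfrac{5k+39}{16})$ have all input entries red, forcing $\tfrac{k+3}{4},\tfrac{k+7}{4},\tfrac{5k+39}{16}\in B$; next $(\tfrac{5k+39}{16},\tfrac{5k+39}{16},\tfrac{k+3}{4},M)$ forces $M\in R$ and $(\tfrac{k+7}{4},\tfrac{k+7}{4},\tfrac{k+3}{4},W)$ forces $W\in R$; and since $\tfrac{3k+17}{16}\le\tfrac{k-9}{4}$ for $k\ge 53$, the tuple $(4,W,\tfrac{3k+17}{16},M)$ is an all-red solution, a contradiction. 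Each tuple is checked against $x+y+kz=4w$ by a one-line identity.

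\smallskip
\noindent For the lower bound, color $[1,\tfrac{k-1}{4}]\cup[W,M-1]$ red and $[\tfrac{k+3}{4},W-1]$ blue (these three blocks partition $[1,M-1]$). If $x,y,z$ are all blue then $x+y+kz\ge(k+2)\tfrac{k+3}{4}>4(W-1)$, so $w\ge W$ is red; thus every monochromatic solution is all red. In an all-red solution $z\le\tfrac{k-1}{4}$ (otherwise $kz>4w$), and if also $x,y\le\tfrac{k-1}{4}$ then $4w\le(k+2)\tfrac{k-1}{4}<4W$ makes $w\le\tfrac{k-1}{4}$, contradicting $4w\ge k+2$; so we may assume $x\in[W,M-1]$. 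Writing members of the top block as $\tfrac{4k^2+20k+t}{64}$ with $t\in[56,2k+14]$ and solving $x+y+kz=4w$ for $z$ yields two subcases: if $y\le\tfrac{k-1}{4}$ then $z$ must lie in the open interval $(\tfrac{3k+1}{16},\tfrac{3k+17}{16})$ between two consecutive integers, so $z$ is not an integer; and if $y\in[W,M-1]$ then a congruence mod $16$ pins $z=\tfrac{k+5}{2}$, which is strictly between $\tfrac{k-1}{4}$ and $W$ and hence blue --- again impossible. So no monochromatic solution exists and $\operatorname{RR}(x+y+kz=4w)\ge M$.

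\smallskip
\noindent For $k=53$ (so $M=195$), Lemma~\ref{lem1} gives only $1,\dots,11\in R$ and $53,106,159\in B$; then $(1,2,1,14),(3,4,1,15),(5,6,1,16),(9,10,1,18)$ put $14,15,16,18\in B$, after which $(15,15,14,193)$ and $(16,18,14,194)$ give $193,194\in R$. If $\chi(12)=R$ then $(12,11,1,19),(19,19,14,195),(4,193,11,195)$ produce an all-red solution; if $\chi(12)=B$ then $(12,14,14,192),(192,1,11,194)$ produce one. The lower bound for $k=53$ is witnessed by coloring $[1,13]\cup\{193,194\}$ red and $[14,192]$ blue. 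The only genuine work throughout is the bookkeeping: choosing the auxiliary constants so that every divisibility holds simultaneously for $k\equiv 21$ and $k\equiv 53\pmod{64}$, and so that the elimination step really traps $z$ in a unit interval with integer endpoints (or on the single blue value $\tfrac{k+5}{2}$); the rest is the same machinery as in Propositions~\ref{pro3-4}--\ref{pro3-5}.
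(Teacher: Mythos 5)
Your overall skeleton is the same as the paper's: Lemma~\ref{lem1} plus a forcing chain of explicit tuples for the upper bound (your tuple $(12,\tfrac{k-9}{4},1,\tfrac{5k+39}{16})$ in place of the paper's $(11,\tfrac{k-5}{4},1,\tfrac{5k+39}{16})$ is a harmless variant, and is in fact better matched to what Lemma~\ref{lem1} actually delivers), and the identical two-block coloring $[1,\tfrac{k-1}{4}]\cup[\tfrac{k^2+5k+14}{16},M-1]$ red, middle blue, for the lower bound. Your first lower-bound subcase ($y\le\tfrac{k-1}{4}$, trapping $z$ strictly between the consecutive integers $\tfrac{3k+1}{16}$ and $\tfrac{3k+17}{16}$) is correct and matches the paper, and your separate treatment of $k=53$ (which the paper's general argument does not really cover, since Lemma~\ref{lem1} only yields $1,\dots,11\in R$ there) checks out and is a welcome addition.

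However, the second lower-bound subcase contains a genuine error. With $x=\tfrac{4k^2+20k+a}{64}$, $y=\tfrac{4k^2+20k+b}{64}$, $w=\tfrac{4k^2+20k+c}{64}$ and $a,b,c\in[56,2k+14]$, solving $x+y+kz=4w$ gives $z=\tfrac{8k^2+40k+4c-a-b}{64k}=\tfrac{8k+40+(4c-a-b)/k}{64}$, so $z$ is of size about $\tfrac{k+5}{8}$, not $\tfrac{k+5}{2}$; your value is off by a factor of $4$, and no ``congruence mod $16$'' pins $z$ at $\tfrac{k+5}{2}$ (indeed, a mod-$16$ argument could not possibly suffice here, since $k\equiv 5,37\pmod{64}$ and $k\equiv 21,53\pmod{64}$ agree mod $16$ yet have different Rado numbers; also, having already shown $z\le\tfrac{k-1}{4}$, any alleged $z=\tfrac{k+5}{2}$ would be excluded without the colour argument, which is a sign the algebra went astray). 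The correct conclusion of this subcase is what the paper proves: since $\tfrac{4c-a-b}{k}\in[-4,7]$, one gets $z\in\bigl[\tfrac{8k+36}{64},\tfrac{8k+47}{64}\bigr]$, and because $8k\equiv 40\pmod{64}$ for $k\equiv 21,53\pmod{64}$, the interval $[8k+36,\,8k+47]$ contains no multiple of $64$, so $z$ is never an integer and the subcase is vacuous --- this is exactly where the hypothesis mod $64$ (rather than mod $16$) is used. So the statement you aim at in that subcase is true, but the justification you give would not establish it; replace it with the mod-$64$ integrality argument above and the proof is complete.
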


\begin{proposition}\label{pro3-10}
If $k\equiv 6,38 \pmod {64}$, then
$\operatorname{RR}(x+y+kz=4w)=\frac{4k^2+18k+68}{64}$.
\end{proposition}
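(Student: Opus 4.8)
The plan is to follow the two-sided template already used for Propositions~\ref{pro3-4} and~\ref{pro3-5}. Write $N=\frac{4k^2+18k+68}{64}$, so $N-1=\frac{4k^2+18k+4}{64}$. Since $k\equiv 6,38\pmod{64}$ forces $k\equiv 6\pmod{16}$, the first routine check is that every quantity that will appear is an integer: expressions of the form $\frac{k-6}{4}$, $\frac{k+c_1}{4}$, $\frac{5k+c_2}{16}$, $\frac{k^2+7k+c_3}{16}$ for the relevant constants $c_i$, and $N$ itself. As with $k=35$ in Proposition~\ref{pro3-4}, I would also keep an eye on the one small residue with $32\le k<64$, namely $k=38$; if it is not already covered by the general argument it can be settled by an explicit short chain of solutions to $x+y+kz=4w$ on $[1,N]$ together with an explicit block colouring of $[1,N-1]$.

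\textbf{Upper bound.} Assume some $2$-colouring of $[1,N]$ has no monochromatic solution to $x+y+kz=4w$; after swapping colours if necessary, assume $2\in R$. Apply Lemma~\ref{lem1} with $j=\frac{k-6}{16}$, for which $4j-1=\frac{k-6}{4}-1$ and $jk=\frac{k^2-6k}{16}\le N$, to get $1,2,\ldots,\frac{k-6}{4}-1\in R$ and $k,2k,\ldots,\frac{k^2-6k}{16}\in B$. Next I would choose a handful of solutions whose first three coordinates already lie in $R$ in order to force the first one or two integers just above $\frac{k}{4}$ into $B$: one of the form $(a_0,\frac{k-6}{4}-1,1,w_0)$ with $a_0$ small and $a_0+\frac{k-6}{4}-1+k\equiv 0\pmod 4$, and small ones such as $(4,5,1,\frac{k+9}{4})$ and $(3,2,1,\frac{k+5}{4})$. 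A solution of the shape $(\text{blue},\text{blue},\text{small blue},N)$ then forces $N\in R$; one of the shape $(\text{blue},\text{blue},\text{small blue},\frac{k^2+7k+c_3}{16})$ forces a value near $\frac{k^2}{16}$ into $R$; and finally a solution $(\text{small red},\frac{k^2+7k+c_3}{16},\text{small red},N)$ turns out to be all red, a contradiction. The precise intermediate constants are pinned down by requiring $x+y+kz$ to be divisible by $4$ and $w$ to come out integral, and this is exactly where the residue $k\equiv 6,38\pmod{64}$ is used.

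\textbf{Lower bound.} It suffices to produce a $2$-colouring of $[1,N-1]$ with no monochromatic solution. Mirroring the earlier propositions, colour $\left[1,\frac{k+c_4}{4}\right]\cup\left[\frac{k^2+7k+c_3}{16},\frac{4k^2+18k+4}{64}\right]$ red and the complementary middle interval blue, for suitable constants $c_3,c_4$. One first checks that if $x,y,z$ are all blue then $x+y+kz$ is large enough that $w$ is forced above the top of the blue block, hence $w$ is red; so every monochromatic solution is red, forcing $w\ge \frac{k^2+7k+c_3}{16}$ while $z\le \frac{k+c_4}{4}$ lies in the first red block. Since the first red block is too small for two of its elements to supply, together with the $kz$ term, the value $4w$, at least one of $x,y$ must lie in the upper red block; writing $x=\frac{4k^2+28k+a}{64}$, $y=\frac{4k^2+28k+b}{64}$, $w=\frac{4k^2+28k+c}{64}$ with $a,b,c$ in an explicit short range, one computes $z$ and shows, via a residue count modulo $64$ using $8k\equiv 48\pmod{64}$, that its numerator lands in a short interval of consecutive integers near $8k+56$ containing no multiple of $64$; hence $z$ is not an integer and there is no red solution.

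The main obstacle is the bookkeeping, in two places. In the upper bound one must choose the auxiliary solutions so that the forced chain of colours closes up into a monochromatic solution precisely when the interval has length $N$, not anything shorter. In the lower bound one must choose the two red blocks so that (i) every all-blue solution overshoots the blue block in the $w$-coordinate and (ii) the residue computation genuinely keeps every multiple of $64$ out of the interval containing the numerator of $z$. Both reduce to elementary but delicate modular arithmetic in the residue of $k$ modulo $64$ (and, for (ii), possibly modulo a higher power of $2$), together with keeping every displayed fraction integral; once the constants are fixed the verifications are routine and entirely parallel to those in Propositions~\ref{pro3-4}--\ref{pro3-9}.
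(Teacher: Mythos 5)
Your skeleton is exactly the paper's: Lemma~\ref{lem1} plus a short chain of forced colours for the upper bound, and a two-red-blocks-around-one-blue-block colouring with a non-integrality argument for $z$ for the lower bound. The problem is that, for a proposition whose entire content is the determination of the constants, you have deferred precisely those constants, and the placeholders you do exhibit are imported from the odd-residue cases (Propositions~\ref{pro3-4}, \ref{pro3-5}) and fail when $k\equiv 6\pmod{16}$: $\frac{k+5}{4}$ and $\frac{k+9}{4}$ are not integers (since $k+5\equiv 11$ and $k+9\equiv 15\pmod{16}$), so the solutions $(3,2,1,\frac{k+5}{4})$ and $(4,5,1,\frac{k+9}{4})$ do not exist here, and the threshold $\frac{k^2+7k+c_3}{16}$ is not the right one. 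This last point is not merely cosmetic. The smallest blue element in this residue class is $\frac{k+2}{4}$ (forced by $(1,1,1,\frac{k+2}{4})$), so an all-blue triple $x,y,z$ only guarantees $4w\geq (k+2)\cdot\frac{k+2}{4}$, i.e.\ $w\geq \frac{(k+2)^2}{16}=\frac{k^2+4k+4}{16}$. If the upper red block starts at your larger value $\frac{k^2+7k+c_3}{16}$, then $w=\frac{(k+2)^2}{16}$ can still lie in the blue block and the step ``every monochromatic solution is red'' collapses; the colouring would simply not be valid.

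For comparison, the paper's instantiation is: $\frac{k+2}{4},\frac{5k+34}{16}\in B$ from $(1,1,1,\frac{k+2}{4})$ and $(10,\frac{k-6}{4},1,\frac{5k+34}{16})$; then $(\frac{5k+34}{16},\frac{5k+34}{16},\frac{k+2}{4},N)$ forces $N=\frac{4k^2+18k+68}{64}\in R$, $(\frac{k+2}{4},\frac{k+2}{4},\frac{k+2}{4},\frac{k^2+4k+4}{16})$ forces $\frac{k^2+4k+4}{16}\in R$, and $(4,\frac{k^2+4k+4}{16},\frac{3k+14}{16},N)$ is the closing red solution. The lower-bound colouring is $[1,\frac{k-2}{4}]\cup[\frac{k^2+4k+4}{16},N-1]$ red and the middle blue, so one writes $x=\frac{4k^2+16k+a}{64}$, $y=\frac{4k^2+16k+b}{64}$, $w=\frac{4k^2+16k+c}{64}$ with $a,b,c\in[16,2k+4]$, and $64z$ lands in $[8k+28,8k+39]$, which contains no multiple of $64$ because $8k\equiv 48\pmod{64}$ — not in an interval near $8k+56$ as you predict. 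Finally, no separate small case arises here: unlike $k=35$ in Proposition~\ref{pro3-4}, the smallest admissible value $k=38$ is covered by the general argument. So: right template, but the instantiation is the proof, and the trial constants you give would have to be discarded rather than adjusted.
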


\begin{proposition}\label{pro3-11}
If $k\equiv 22,54 \pmod {64}$, then
$\operatorname{RR}(x+y+kz=4w)=\frac{4k^2+18k+36}{64}$.
\end{proposition}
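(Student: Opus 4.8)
For the final statement (Proposition~\ref{pro3-11}), the plan is to follow the two-part template used for Propositions~\ref{pro3-1}, \ref{pro3-4} and~\ref{pro3-5}: establish the upper bound $\operatorname{RR}(x+y+kz=4w)\le\frac{4k^2+18k+36}{64}$ by a forcing argument built on Lemma~\ref{lem1}, then match it with an explicit $2$-coloring for the lower bound. Since $k\equiv22\pmod{32}$ in both residue classes, the quarter-integers $\frac{k+2}{4},\frac{k+6}{4}$ and the thresholds $\frac{k^2+\alpha k+\beta}{16}$ that will appear are genuine integers.

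For the upper bound, put $N=\frac{4k^2+18k+36}{64}$ and suppose some $2$-coloring of $[1,N]$ has no monochromatic solution; after swapping colors if needed so that $2\in R$, Lemma~\ref{lem1} with $j=\lfloor N/k\rfloor\approx k/16$ gives $1,2,\ldots,4j-1\in R$ and $k,2k,\ldots,jk\in B$. I would then feed small red numbers into solutions of the form $(a,b,1,w)$ to force a short chain of values $\frac{k+2}{4},\frac{k+6}{4},\ldots$ into $B$; a solution $(c,c,\tfrac{k+2}{4},N)$ forces $N\in R$; a solution $(\tfrac{k+6}{4},\tfrac{k+6}{4},\tfrac{k+2}{4},w)$ forces the threshold $w=\frac{k^2+\alpha k+\beta}{16}\in R$; and a final tuple $(r_1,r_2,r_3,N)$ with $r_1$ a small red number, $r_2=\frac{k^2+\alpha k+\beta}{16}$ the threshold just shown red, and $r_3\approx\frac{3k}{16}$ red by Lemma~\ref{lem1} (so that $r_2+kr_3\approx4N$ and the four entries balance) produces a red solution summing to $N$, a contradiction. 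The bookkeeping is the same as in Proposition~\ref{pro3-4}; only the additive constants change, $18k+36$ replacing $30k+66$.

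For the lower bound, set $N-1=\frac{4k^2+18k-28}{64}$ and color $\bigl[1,\tfrac{k+2}{4}\bigr]\cup\bigl[\tfrac{k^2+\alpha k+\beta}{16},N-1\bigr]$ red and the complementary block $\bigl[\tfrac{k+6}{4},\tfrac{k^2+\alpha k+\beta}{16}-1\bigr]$ blue, with $\alpha,\beta$ the constants read off from the upper-bound step so that $\frac{k^2+\alpha k+\beta}{16}$ is the least $w$ occurring in a red solution with small $z$ and so that $\frac{(k+2)(k+6)}{4}>4\bigl(\tfrac{k^2+\alpha k+\beta}{16}-1\bigr)$. The latter inequality forces $w$ into the upper red block whenever $x,y,z$ are all blue, so every monochromatic solution is red. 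In a red solution $z\le\tfrac{k+2}{4}$, and if both $x,y\le\tfrac{k+2}{4}$ then $w$ cannot reach the upper red block; hence, WLOG, $x\ge\tfrac{k^2+\alpha k+\beta}{16}$. Then $y+kz=4w-x$ confines $y+kz$ to a short interval, and splitting on $z\ge\tfrac{3k+\gamma}{16}$ versus $z\le\tfrac{3k+\gamma'}{16}$ (using $y\in R$, hence $y\ge\tfrac{k^2+\alpha k+\beta}{16}$, in the second subcase, after which one writes $x,y,w$ in the form $\tfrac{4k^2+28k+\ast}{64}$ and solves for $z$) pins $z$ into an interval of length strictly less than $1$ containing no integer, exactly as in Propositions~\ref{pro3-4}--\ref{pro3-5}. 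Hence no monochromatic solution exists and $\operatorname{RR}(x+y+kz=4w)\ge N$.

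The main obstacle is entirely computational: fixing the constants $\alpha,\beta,\gamma,\gamma'$ (equivalently, identifying the extremal red solution) so that the two bounds meet and so that the final range for $z$ genuinely misses $\mathbb{Z}$. A secondary point is that the smallest member(s) of the residue classes---$k=54$, and $k=22$ if it lies in range---must be verified directly by hand, as in the $k=35$ case of Proposition~\ref{pro3-4}, since for tiny $j$ the forcing chain from Lemma~\ref{lem1} does not have enough room.
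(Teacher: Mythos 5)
Your plan reproduces the paper's two-part template (forcing chain from Lemma~\ref{lem1} for the upper bound; a two-block coloring plus the ``$z$ lands in an integer-free interval'' trap for the lower bound), but as it stands it is not a proof of the stated value: everything that actually distinguishes this proposition from Propositions~\ref{pro3-4}--\ref{pro3-5} --- the constants $\alpha,\beta,\gamma,\gamma'$, the identity of the forced elements, and the verification that the two bounds meet at $\frac{4k^2+18k+36}{64}$ --- is explicitly deferred (``the main obstacle is entirely computational''). That computation is the content of the proposition. For the record, the paper's choices are: $(1,1,1,\frac{k+2}{4})$ and $(6,\frac{k-6}{4},1,\frac{5k+18}{16})$ force $\frac{k+2}{4},\frac{5k+18}{16}\in B$; then $(\frac{5k+18}{16},\frac{5k+18}{16},\frac{k+2}{4},N)$ forces $N=\frac{4k^2+18k+36}{64}\in R$ and $(\frac{k+2}{4},\frac{k+2}{4},\frac{k+2}{4},\frac{k^2+4k+4}{16})$ forces the threshold $\frac{(k+2)^2}{16}\in R$; the contradiction is the red tuple $(2,\frac{k^2+4k+4}{16},\frac{3k+14}{16},N)$.

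Moreover, the one concrete commitment you do make is wrong. You propose coloring $[1,\frac{k+2}{4}]$ red in the lower-bound construction; but $1+1+k\cdot 1=4\cdot\frac{k+2}{4}$, so $(1,1,1,\frac{k+2}{4})$ is a solution and would be monochromatic red --- the point $\frac{k+2}{4}$ must be blue. (Independently, $x=y=z=\frac{k+2}{4}$ forces $w=\frac{(k+2)^2}{16}$, which is exactly the start of the upper red block, giving a second red solution.) The paper instead colors $[1,\frac{k-2}{4}]\cup[\frac{k^2+4k+4}{16},\frac{4k^2+18k-28}{64}]$ red and $[\frac{k+2}{4},\frac{k^2+4k-12}{16}]$ blue, and then runs the interval argument with $z\in[\frac{8k+28}{64},\frac{8k+39}{64}]$. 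Your closing caution about small members of the residue classes is reasonable ($k=22$ falls under the $6\le k<32$ table from \cite{RoMy08}; the displayed argument is only claimed for $k\ge 32$, and for $k=54$ one must check that Lemma~\ref{lem1} supplies enough red initial segment, since $4k\le M$ can fail there), but noting a potential obstacle is not the same as resolving it; the decisive steps remain unproven in your proposal.
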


\begin{proposition}\label{pro3-12}
If $k\equiv 7 \pmod {16}$, then
$\operatorname{RR}(x+y+kz=4w)=\frac{k^2+8k+7}{16}$.
\end{proposition}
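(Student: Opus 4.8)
The plan is to establish the two bounds separately; the upper bound is a short colour‑chasing argument built on Lemma~\ref{lem1}, while the lower bound follows the three‑interval template of Proposition~\ref{pro3-4}.

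\textbf{Upper bound.} Suppose for contradiction that some $2$‑colouring of $[1,N]$ with $N=\tfrac{k^2+8k+7}{16}$ admits no monochromatic solution of $x+y+kz=4w$; by colour symmetry we may assume $2\in R$. Applying Lemma~\ref{lem1} with the largest $j$ satisfying $jk\le N$, we obtain $1,2,\dots,\tfrac{k-7}{4}-1\in R$ and $k,2k,\dots,\tfrac{k(k-7)}{16}\in B$. I would then force colours along a short chain: the solutions $(2,3,1,\tfrac{k+5}{4})$, $(1,1,2,\tfrac{k+1}{2})$ and $(4,5,1,\tfrac{k+9}{4})$ put $\tfrac{k+5}{4},\tfrac{k+1}{2},\tfrac{k+9}{4}$ in $B$; then $\big(\tfrac{k+5}{4},\tfrac{k+1}{2},\tfrac{k+5}{4},N\big)$ forces $N\in R$ and $\big(\tfrac{k+5}{4},\tfrac{k+9}{4},\tfrac{k+5}{4},\tfrac{k^2+7k+14}{16}\big)$ forces $\tfrac{k^2+7k+14}{16}\in R$; finally $\big(\tfrac{k^2+7k+14}{16},\tfrac{k^2+7k+14}{16},\tfrac{k+9}{8},N\big)$ is an all‑red solution, the desired contradiction. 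Only two kinds of routine checks arise: that each displayed quadruple really satisfies $x+y+kz=4w$ and is integral (which is where $k\equiv 7 \pmod{16}$ enters), and that $\tfrac{k+9}{4},\tfrac{k+9}{8}$ still lie below $\tfrac{k-7}{4}$, valid once $k$ is large enough; the small residues $k=7$ and $k=23$, which are not covered by Lemma~\ref{lem1}, are dealt with by hand, as $k=35$ was in Proposition~\ref{pro3-4}.

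\textbf{Lower bound.} I would exhibit the $2$‑colouring of $[1,N-1]=\big[1,\tfrac{k^2+8k-9}{16}\big]$ that colours $\big[1,\tfrac{k+1}{4}\big]$ and $\big[\tfrac{k^2+7k+14}{16},\tfrac{k^2+8k-9}{16}\big]$ red and the intermediate block $\big[\tfrac{k+5}{4},\tfrac{k^2+7k-2}{16}\big]$ blue; since $\tfrac{k+1}{4}$ and $\tfrac{k+5}{4}$ are consecutive integers, as are $\tfrac{k^2+7k-2}{16}$ and $\tfrac{k^2+7k+14}{16}$, these three intervals partition $[1,N-1]$. As in Proposition~\ref{pro3-4} one checks: (i) any $x,y,z$ in the blue block force $4w\ge\tfrac{k+5}{4}(k+2)$, hence $w\ge\tfrac{k^2+7k+14}{16}$ is red, so no all‑blue solution; (ii) any $x,y,z$ in the low red block force $w$ into the blue block, so no all‑red solution with $x,y,z$ low; (iii) hence a monochromatic solution is red and, since $kz$ is cubic in $k$ when $z$ is in the top block while $4w\le 4(N-1)$ is only quadratic, $z$ lies in the low red block and $w$ together with at least one of $x,y$ — say $x$ — lies in the top red block. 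Bounding $y+kz=4w-x$ with $x,w\in\big[\tfrac{k^2+7k+14}{16},\tfrac{k^2+8k-9}{16}\big]$, a split on the size of $z$ shows that large $z$ forces $y\le 0$ and small $z$ forces $y>\tfrac{k+1}{4}$, hence $y$ red and therefore in the top red block as well (and $16\mid 3k+11$ rules out any value of $z$ strictly in between). Writing $x,y,w$ as $\tfrac{k^2+7k+14+16i}{16}$ with offsets $i_x,i_y,i_w\in\big[0,\tfrac{k-23}{16}\big]$ and solving $kz=4w-x-y$ confines $16z$ to an interval of length less than $6$ lying strictly between the consecutive multiples of $16$, namely $2k+2$ and $2k+18$; so $z$ is not an integer, a contradiction.

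\textbf{Main obstacle.} The delicate point, exactly as in Proposition~\ref{pro3-4}, is step (iii): the two interior endpoints $\tfrac{k^2+7k-2}{16}$ and $\tfrac{k^2+7k+14}{16}$ must be tuned so that the catching property of the blue block in (i)--(ii) holds on the nose — each endpoint off by one unit on the $\tfrac{1}{16}$‑scale, which is what makes the partition work — and so that, after substituting the ranges of the offsets into $z=\frac{4w-x-y}{k}$, the resulting window for $16z$ misses every multiple of $16$. Verifying this residue computation modulo $16$, together with the bookkeeping that excludes intermediate values of $z$ and disposes of $k=7,23$, is the heart of the argument; everything else is bounded arithmetic.
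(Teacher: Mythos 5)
Your proposal is correct and follows essentially the same route as the paper's proof: the upper bound is the same Lemma~\ref{lem1} colour chain (you reach the identical final red solution $\bigl(\tfrac{k^2+7k+14}{16},\tfrac{k^2+7k+14}{16},\tfrac{2k+18}{16},\tfrac{k^2+8k+7}{16}\bigr)$, merely forcing $\tfrac{k+9}{4}$ blue via $(4,5,1,\cdot)$ where the paper uses $\tfrac{k+7}{4}$ via $(3,4,1,\cdot)$), and the lower bound uses the same three-interval colouring and the same mod~$16$ window argument for $z$, with your blue endpoint $\tfrac{k^2+7k-2}{16}$ being the correct reading of the paper's interval. One immaterial slip: $\tfrac{k+9}{4}$ neither needs to nor does lie below $\tfrac{k-7}{4}$ (it is only forced blue), and only $\tfrac{k+9}{8}$ must lie in the red initial segment.
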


\begin{proof}
To show the upper bound, suppose that there is no monochromatic
solution for any $2$-coloring of $[1,\frac{k^2+8k+7}{16}]$. From
Lemma \ref{lem1}, we have $1, 2,...,\frac{k-7}{4}-1\in R$, then $\frac{2k+18}{16}\in R$. The solutions $(2, 3, 1, \frac{k+5}{4})$ and $(3, 4, 1, \frac{k+7}{4})$
show that $\frac{k+5}{4}, \frac{k+7}{4}\in B$. We have
$\frac{k^2+7k+14}{16}\in R$ by considering $(\frac{k+7}{4},
\frac{k+7}{4}, \frac{k+5}{4}$, $\frac{k^2+7k+14}{16})$. Using $1,
2\in R$ in $(1,1,2, \frac{2k+2}{4})$, we know that $\frac{2k+2}{4}\in B$,
and hence $\frac{k^2+8k+7}{16}\in R$ by considering
$(\frac{k+5}{4},\frac{2k+2}{4},\frac{k+5}{4},\frac{k^2+8k+7}{16})$,
then $(\frac{k^2+7k+14}{16},\frac{k^2+7k+14}{16},
\frac{2k+18}{16}, \frac{k^2+8k+7}{16})$ is a red solution, a
contradiction.

For the lower bound, $\frac{k^2+8k+7}{16}-1=\frac{k^2+8k-9}{16}$. Color $[1,\frac{k+1}{4}]\cup [\frac{k^2+7k+14}{16},\frac{k^2+8k-9}{16}]$ red,
$[\frac{k+5}{4},\frac{k^2+7k+14}{16}]$ blue. For any solution
with $x,y,z$ all blue, we know that $4w\geq \frac{k+5}{4}(k+2)$, so $w$
must be red. Clearly, every monochromatic solution is red. For every
such solution, $w\geq \frac{k^2+7k+14}{16}$and then $z\leq
\frac{k+1}{4}$, it follows that if $x, y \leq \frac{k+1}{4}$, then
$\frac{k^2+7k+14}{16} \leq w \leq
\lfloor\frac{k^2+3k+2}{16}\rfloor$, a contradiction. Therefore, at least
one of $x$ or $y$ must be at least $\frac{k^2+7k+14}{16}$. If $x
\geq \frac{k^2+7k+14}{16}$, then it follows from $y+kz=4w-x$ that
$$
\frac{4(k^2+7k+14)}{16}-\frac{k^2+8k-9}{16} \leq y+kz \leq
\frac{4(k^2+8k-9)}{16}-\frac{k^2+7k+14}{16},
$$
i.e.,
$$
\frac{3k^2+20k+73}{16} \leq y+kz \leq \frac{3k^2+25k-50}{16}.
$$
If $z \geq \frac{3k+27}{16}$, then $y \leq
\frac{3k^2+25k-50}{16}-\frac{3k^2+27k}{16}=\frac{-2k-50}{16}<0$,
which is impossible. If $z \leq \frac{3k+11}{16}$, then $y \geq
\frac{3k^2+20k+73}{16}-\frac{3k^2+11k}{16}=\frac{9k+73}{16}>\frac{k+1}{4}$.
Since $y \in R$, it follows that $y\geq \frac{k^2+7k+14}{16}$. We can assume
that
$x=\frac{k^2+7k+a}{16},y=\frac{k^2+7k+b}{16},w=\frac{k^2+7k+c}{16}$,
with $a,b,c \in[14,k-9]$. Thus,
$$
z=\frac{4w-x-y}{k}=\frac{2k^2+14k+4c-a-b}{16k}=\frac{2k+14+(4c-a-b)/k}{16},
$$
Since $\frac{74-2k}{k} \leq \frac{4c-a-b}{k} \leq \frac{4k-64}{k}$,
it follows that $\frac{4c-a-b}{k}\in[-2,3]$, and hence $z\in
[\frac{2k+12}{16}, \frac{2k+17}{16}]$ is not an integer, and so
there are no monochromatic solutions.
\end{proof}

\begin{proposition}\label{pro3-13}
If $k\equiv 8 \pmod {16}$, then
$\operatorname{RR}(x+y+kz=4w)=\frac{k^2+7k+8}{16}$.
\end{proposition}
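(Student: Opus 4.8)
\medskip
\noindent\textbf{Proof proposal.}
The plan is to follow the two-sided template of Propositions~\ref{pro3-1} and~\ref{pro3-12}: obtain the upper bound from a short forcing chain launched from Lemma~\ref{lem1}, and the lower bound from an explicit red--blue--red interval colouring whose all-red case is killed by a non-integrality argument for $z$. Write $M=\frac{k^{2}+7k+8}{16}$. Since $k\equiv 8\pmod{16}$ we have $k\equiv 0\pmod 4$, the integer $t:=\frac{k-8}{16}$ satisfies $t\ge 2$ (recall $k\ge 32$), and $M,\ \tfrac{k+4}{4},\ \tfrac{k+2}{2},\ \tfrac{k+8}{8},\ \tfrac{k^{2}+6k+16}{16}$ are all integers.

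For the upper bound I would assume some $2$-colouring of $[1,M]$ has no monochromatic solution; as the equation is colour-blind, assume $2\in R$. Lemma~\ref{lem1} with $j=t$ (the largest index with $jk\le M$) yields $1,\dots,\tfrac{k-12}{4}\in R$ and $k,2k,\dots,\tfrac{k^{2}-8k}{16}\in B$. Then the solutions $(2,2,1,\tfrac{k+4}{4})$, $(4,4,1,\tfrac{k+8}{4})$, $(2,2,2,\tfrac{k+2}{2})$ force $\tfrac{k+4}{4},\tfrac{k+8}{4},\tfrac{k+2}{2}\in B$, after which $(\tfrac{k+8}{4},\tfrac{k+8}{4},\tfrac{k+4}{4},\tfrac{k^{2}+6k+16}{16})$ forces $\beta:=\tfrac{k^{2}+6k+16}{16}\in R$. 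Since $\beta\le M$ and $z_{0}:=\tfrac{k+8}{8}=2t+2\le\tfrac{k-12}{4}$ (this is where $t\ge 2$ is used), the solution $(\beta,\beta,z_{0},M)$ is entirely red except possibly at $M$, so $M\in B$; but $(\tfrac{k+2}{2},\tfrac{k+4}{4},\tfrac{k+4}{4},M)$ is entirely blue except possibly at $M$, so $M\in R$ --- a contradiction. Hence $\operatorname{RR}(x+y+kz=4w)\le M$.

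For the lower bound I would colour $[1,\tfrac k4]\cup[\tfrac{k^{2}+6k+16}{16},\tfrac{k^{2}+7k-8}{16}]$ red and $[\tfrac k4+1,\tfrac{k^{2}+6k}{16}]$ blue (these blocks are consecutive and cover $[1,M-1]$). If $x,y,z$ are all blue then $x+y+kz\ge(\tfrac k4+1)(k+2)=\tfrac{k^{2}+6k+8}{4}$, so $w\ge\tfrac{k^{2}+6k+16}{16}$ is not blue. If $x,y,z$ are all red, then either they all lie in $[1,\tfrac k4]$, whence $\tfrac k4<w\le\tfrac{k^{2}+2k}{16}$ places $w$ in the blue block; or $z\le\tfrac k4$ while at least one of $x,y$ lies in the upper red block, forcing $w$ there too, in which case $z=\tfrac{4w-x-y}{k}$ is confined to an interval of length less than $1$ whose fractional endpoints, because $k\equiv 8\pmod{16}$, straddle no integer. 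Either way there is no monochromatic solution, so $\operatorname{RR}(x+y+kz=4w)\ge M$, and the two bounds combine to give the claimed value.

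I expect the main obstacle to be the upper-bound bookkeeping: isolating the minimal list of seed solutions that simultaneously forces $\beta$ red and then produces the contradictory pair of one-variable constraints at $M$, while keeping every intermediate value inside $[1,M]$. On the lower-bound side the delicate point is verifying that the residual interval for $z$ really misses every integer; both reductions are sensitive to the residue $k\equiv 8\pmod{16}$, which fixes the fractional parts making $z$ non-integral and the integrality of the fractions above. The few $k\equiv 8\pmod{16}$ with $k<32$ (namely $k=8,24$), if within the scope of Theorem~\ref{th-Exact-1}, would be disposed of by direct inspection, exactly as $k=35$ is handled in the proof of Proposition~\ref{pro3-4}.
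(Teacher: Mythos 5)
Your proposal is correct and follows essentially the same route as the paper: the upper bound uses the identical forcing chain ($\tfrac{k+4}{4},\tfrac{k+8}{4},\tfrac{2k+4}{4}\in B$, then $\beta=\tfrac{k^{2}+6k+16}{16}\in R$) and the identical pair of witness solutions at $M$, since your $\bigl(\beta,\beta,\tfrac{k+8}{8},M\bigr)$ and $\bigl(\tfrac{k+2}{2},\tfrac{k+4}{4},\tfrac{k+4}{4},M\bigr)$ are exactly the paper's $\bigl(\beta,\beta,\tfrac{2k+16}{16},M\bigr)$ and $\bigl(\tfrac{k+4}{4},\tfrac{2k+4}{4},\tfrac{k+4}{4},M\bigr)$, and the lower bound uses the same coloring $[1,\tfrac k4]\cup[\tfrac{k^{2}+6k+16}{16},M-1]$ red, middle blue, with the same mod-$16$ non-integrality argument for $z$. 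The only cosmetic difference is that the paper first forces $y$ (not just $x$) into the upper red block before the residue computation, whereas you compress that step; when written out your case analysis reduces to the same calculation.
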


\begin{proposition}\label{pro3-14}
If $k\equiv 9 \pmod {16}$, then
$\operatorname{RR}(x+y+kz=4w)=\frac{k^2+6k+9}{16}$.
\end{proposition}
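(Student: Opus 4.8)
\noindent\textbf{Proof proposal for Proposition \ref{pro3-14}.} The plan is to run the same two–sided argument as in the preceding propositions, tuned to $k\equiv 9\pmod{16}$, where the target value factors as $\frac{k^2+6k+9}{16}=\left(\frac{k+3}{4}\right)^2$. Throughout write $N=\frac{k^2+6k+9}{16}$, $a=\frac{k+3}{4}$ and $w_0=\frac{k^2+5k+18}{16}$; note $a,\,a+1=\frac{k+7}{4},\,a+2=\frac{k+11}{4},\,2a=\frac{k+3}{2},\,w_0,\,\frac{k+7}{8}$ are all integers in this residue class, and $w_0<N$.

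For the upper bound I would suppose some $2$-coloring of $[1,N]$ has no monochromatic solution and, after interchanging colors if necessary, that $2\in R$; Lemma \ref{lem1} then puts $1,2,\dots,\frac{k-13}{4}$ in $R$ together with a run of multiples of $k$ in $B$, so in particular $5,6,\frac{k+7}{8}\in R$. A short chain of explicit solutions pins down the integers just above $k/4$: $(1,2,1,a)$ gives $a\in B$, $(3,4,1,a+1)$ and $(5,6,1,a+2)$ give $a+1,a+2\in B$, and $(2,4,2,2a)$ gives $2a\in B$. Now the all-blue tuple $(a+1,a+2,a,\,\cdot\,)$ satisfies $(a+1)+(a+2)+ka=\frac{k^2+5k+18}{4}=4w_0$, forcing $w_0\in R$ (else it is a blue solution); likewise $(a,2a,a,\,\cdot\,)$ satisfies $a+2a+ka=(k+3)a=\frac{(k+3)^2}{4}=4N$, forcing $N\in R$. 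Finally $(w_0,w_0,\frac{k+7}{8},N)$ is a solution, since $2w_0+k\cdot\frac{k+7}{8}=\frac{4k^2+24k+36}{16}=4N$, and all four entries are red; this contradiction gives $\operatorname{RR}(x+y+kz=4w)\le N$.

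For the lower bound I would colour $[1,N-1]$ by making $[1,\frac{k-1}{4}]\cup[w_0,N-1]$ red and $[\frac{k+3}{4},\frac{k^2+5k+2}{16}]$ blue, and check it is solution-free. If $x,y,z\le\frac{k-1}{4}$ then $w\ge\lceil\frac{k+2}{4}\rceil=\frac{k+3}{4}$ while also $4w\le\frac{(k-1)(k+2)}{4}$, so $w$ lands strictly inside the blue block; if $x,y,z$ are all blue then $w\ge\lceil\frac{(k+2)(k+3)}{16}\rceil=w_0$, overshooting the blue block. The only remaining possibility is a red solution with $w\in[w_0,N-1]$: then $z$ cannot lie in the upper red block (else $kz>4N$), so $z\le\frac{k-1}{4}$; a quick estimate shows $x$ or $y$ must lie in the upper red block, whence (bounding $y+kz=4w-x$) so must the other; writing $x=\frac{k^2+5k+a}{16}$, $y=\frac{k^2+5k+b}{16}$, $w=\frac{k^2+5k+c}{16}$ with $a,b,c\in[18,k-7]$ and solving, $z=\frac{4w-x-y}{k}$ gives $16z=2k+10+\frac{4c-a-b}{k}$ with $\frac{4c-a-b}{k}\in(-2,4)$; since $2k+10\equiv 12\pmod{16}$ and no integer in $(-2,4)$ is $\equiv 4\pmod{16}$, $z$ is not an integer, a contradiction, so $\operatorname{RR}(x+y+kz=4w)\ge N$.

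I expect the delicate step to be this last case analysis for the lower bound — showing the forced interval memberships of $x,y,z,w$ force the Diophantine equation on $z$ to have no solution — rather than the upper bound, whose only real subtlety is choosing the explicit solutions so that every entry is an integer in the right color class (this is exactly what fixes the constants $18$ and $\frac{k+7}{8}$). One should also confirm the smallest admissible $k$ in this class behaves as expected, as was done for $k=35$ in Proposition \ref{pro3-4}; under the standing hypothesis $k\ge 32$ the first relevant value is $k=41$, for which $5,6,\frac{k+7}{8}\in R$ and the chain above already goes through verbatim, so no separate argument is needed.
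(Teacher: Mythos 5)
Your proposal is correct and follows essentially the same route as the paper's own proof: the identical lower-bound coloring $[1,\frac{k-1}{4}]\cup[\frac{k^2+5k+18}{16},\frac{k^2+6k-7}{16}]$ red, $[\frac{k+3}{4},\frac{k^2+5k+2}{16}]$ blue with the same mod-$16$ non-integrality argument for $z$, and an upper-bound chain ending in the same red solution $\left(\frac{k^2+5k+18}{16},\frac{k^2+5k+18}{16},\frac{2k+14}{16},\frac{k^2+6k+9}{16}\right)$. The only differences are which auxiliary solutions force the intermediate colors, and there your choices (e.g.\ $(3,4,1,\frac{k+7}{4})$, $(5,6,1,\frac{k+11}{4})$) are actually cleaner, since the paper's witnesses involving $\frac{k+9}{4}$ are not integers when $k\equiv 9\pmod{16}$, and your open-interval bound $\frac{4c-a-b}{k}\in(-2,4)$ fixes the paper's slightly too tight claim of $[-2,3]$.
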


\begin{proposition}\label{pro3-15}
If $k\equiv 10 \pmod {16}$, then
$\operatorname{RR}(x+y+kz=4w)=\frac{k^2+5k+10}{16}$.
\end{proposition}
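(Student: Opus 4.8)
The plan is to prove the matching upper and lower bounds separately, following the template of Propositions~\ref{pro3-1} and~\ref{pro3-12}. Write $k=16t+10$, so that $N:=\frac{k^{2}+5k+10}{16}=16t^{2}+25t+10$ is an integer and the quantities appearing below are integral (e.g.\ $\frac{k-14}{4}=4t-1$ and $\frac{k^{2}-10k}{16}=16t^{2}+10t$). For the upper bound, suppose some $2$-colouring of $[1,N]$ has no monochromatic solution of $x+y+kz=4w$; by the colour-swap symmetry of the equation we may assume $2\in R$. Since $tk=\frac{k^{2}-10k}{16}\le N$ and $k>4t-1$, Lemma~\ref{lem1} applies with $j=t$ and forces $1,2,\dots,\frac{k-14}{4}\in R$ and $k,2k,\dots,\frac{k^{2}-10k}{16}\in B$. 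From here I would run a short chain of forced colourings: using the parity constraint $x+y+kz\equiv 0\pmod 4$ (equivalently $x+y+2z\equiv 0\pmod 4$, since $k\equiv 2\pmod 4$), solutions $(a,b,c,w)$ whose first three coordinates are already red push a few values near $\tfrac{k}{4}$ into $B$, these in turn force a value $u$ of size $\Theta(k^{2})$ into $R$, and finally force $N\in R$; the contradiction is a red solution $(u,x_{0},z_{0},N)$ with $x_{0},z_{0}$ small red integers from the initial red segment chosen so that $u+x_{0}+kz_{0}=4N$. Hence $\operatorname{RR}(x+y+kz=4w)\le N$, apart from the few smallest $k$ (essentially only $k=42$) which fall outside the range where the chain works and must be checked by hand, as $k=35$ is in Proposition~\ref{pro3-4}.

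For the lower bound I would exhibit a colouring of $[1,N-1]$ with no monochromatic solution. The naive colouring ``$R=[1,\frac{k-2}{4}]$, $B=$ everything else'' fails, since $\bigl(\frac{k+2}{4},\frac{k+2}{4},\frac{k+2}{4},\frac{(k+2)^{2}}{16}\bigr)$ would be all-blue; so, as in Propositions~\ref{pro3-4} and~\ref{pro3-5}, I would colour $[1,\frac{k+c_{1}}{4}]\cup\bigl[\frac{k^{2}+c_{2}k+c_{3}}{16},\,N-1\bigr]$ red and the intervening interval blue, with the lower endpoint of the red ``top block'' placed just above $\bigl\lceil\frac{(k+2)\beta_{1}}{4}\bigr\rceil$, where $\beta_{1}$ is the least blue integer. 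Then: (a) every all-blue triple $(x,y,z)$ gives $4w\ge(k+2)\beta_{1}$, so $w$ lands in the red top block (and no integer of $[1,N-1]$ sits in the gap between the two red pieces, again by $k\equiv 10\pmod{16}$), which rules out all-blue solutions; (b) for an all-red solution one has $z\le\frac{k+c_{1}}{4}$, a short inequality chain (mirroring the displayed bounds in Proposition~\ref{pro3-4}) forces $x$ or $y$ into the top red block, and writing the top-block members in the form $\frac{k^{2}+c_{2}k+a}{16}$ one computes $z=\frac{4w-x-y}{k}$ in the shape $\frac{(\text{linear in }k)+(4c-a-b)/k}{16}$ and checks that the correction term pins $z$ strictly between two consecutive integers, so there is no all-red solution either.

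The main obstacle is step (b): the endpoints $c_{1},c_{2},c_{3}$ of the two red blocks must be chosen so that the residues of $a,b,c$ modulo $16$ (forced by requiring $x,y,w$ to be integers in the top block) make $4c-a-b$ land in a residue class modulo $16$ incompatible with $k\mid 4c-a-b$ over the narrow range $|4c-a-b|=O(k)$; lining these endpoints up exactly is the delicate bookkeeping, and it is precisely there that the residue $k\equiv 10\pmod{16}$ is used. The upper-bound cascade, by contrast, is mechanical once the right auxiliary solutions are guessed, provided one verifies that each intermediate value lies in $[1,N]$ and carries the colour the chain claims.
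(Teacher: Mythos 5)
Your overall strategy is the paper's: invoke Lemma~\ref{lem1} to pin down the initial red segment and the blue multiples of $k$, run a forcing chain to a red contradiction for the upper bound, and for the lower bound use a two-block red colouring $[1,\tfrac{k-2}{4}]\cup[\tfrac{k^2+4k+4}{16},N-1]$ with the middle interval blue, killing all-red solutions by showing $z=\tfrac{4w-x-y}{k}$ cannot be an integer. But as written your proposal has a genuine gap: everything that actually constitutes the proof is deferred. For the upper bound you never exhibit the chain; the paper's is concrete and short --- $(1,1,1,\tfrac{k+2}{4})$ and $(5,5,1,\tfrac{k+10}{4})$ force $\tfrac{k+2}{4},\tfrac{k+10}{4}\in B$, then $(\tfrac{k+10}{4},\tfrac{k+10}{4},\tfrac{k+2}{4},\tfrac{k^2+4k+20}{16})$ forces $\tfrac{k^2+4k+20}{16}\in R$, then $(4,4,2,\tfrac{2k+8}{4})$ forces $\tfrac{2k+8}{4}\in B$, then $(\tfrac{k+2}{4},\tfrac{2k+8}{4},\tfrac{k+2}{4},N)$ forces $N\in R$ --- and without it there is no argument. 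Moreover, the one endgame detail you do commit to, a contradiction of the shape $(u,x_0,z_0,N)$ with a single large red $u$ and both $x_0,z_0$ in the small red segment, does not work for $k\equiv 10\pmod{16}$: with the naturally forced large value $u=\tfrac{k^2+4k+20}{16}$, the unique $z_0$ making $x_0=4N-u-kz_0$ positive and at most $k$ is $z_0=\tfrac{3k+2}{16}$, which gives $x_0=\tfrac{14k+20}{16}\approx 0.87k$, far outside the red initial segment. The paper's contradiction instead uses two large red entries, $(\tfrac{k^2+4k+20}{16},\tfrac{k^2+4k+20}{16},\tfrac{2k+12}{16},N)$, so your sketch would need this different shape to close.

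The lower bound has the same problem: you leave the endpoints $c_1,c_2,c_3$ and the residue bookkeeping unspecified, and you yourself flag that this is where $k\equiv 10\pmod{16}$ enters. The paper's choices are red $=[1,\tfrac{k-2}{4}]\cup[\tfrac{k^2+4k+4}{16},\tfrac{k^2+5k-6}{16}]$, blue $=[\tfrac{k+2}{4},\tfrac{k^2+4k-12}{16}]$; then $a,b,c\in[4,k-6]$, $\tfrac{4c-a-b}{k}\in[-2,3]$, and $z\in[\tfrac{2k+6}{16},\tfrac{2k+11}{16}]$, an interval containing no multiple of $16$ precisely because $2k\equiv 4\pmod{16}$. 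Supplying these constants and that final residue check (together with the explicit forcing chain above) is exactly the missing content; as submitted, the proposal is a correct plan rather than a proof. (Your worry about a separate hand-check for the smallest case $k=42$ is unnecessary: the paper's chain and colouring already work there, since all forced values lie in $[1,N]=[1,124]$.)
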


\begin{proposition}\label{pro3-16}
If $k\equiv 11 \pmod {16}$, then
$\operatorname{RR}(x+y+kz=4w)=\frac{k^2+9k+20}{16}$.
\end{proposition}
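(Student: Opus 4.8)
The plan is to prove matching upper and lower bounds for $M:=\frac{k^2+9k+20}{16}=\frac{(k+4)(k+5)}{16}$, which is a positive integer because $16\mid k+5$ when $k\equiv11\pmod{16}$; the argument follows the template of Propositions \ref{pro3-1} and \ref{pro3-12}.

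For the lower bound I would use the three-block $2$-colouring of $[1,M-1]$ that colours $\bigl[1,\tfrac{k+1}{4}\bigr]$ and the top block $\bigl[\tfrac{k^2+7k+10}{16},\,M-1\bigr]$ red and the middle block $\bigl[\tfrac{k+5}{4},\,\tfrac{k^2+7k+10}{16}-1\bigr]$ blue. Here $\tfrac{k^2+7k+10}{16}=\tfrac{(k+2)(k+5)}{16}$ is again an integer for $k\equiv11\pmod{16}$, and is exactly the least value of $w$ in an all-blue solution (attained at $x=y=z=\tfrac{k+5}{4}$). One then checks: if $x,y,z$ are all blue then $4w=x+y+kz\ge\tfrac{(k+2)(k+5)}{4}$, so $w\ge\tfrac{k^2+7k+10}{16}$ lies in the top red block, and hence every monochromatic solution is red. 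In a red solution $z\le\tfrac{k+1}{4}$ (a big red $z$ would force $kz>4(M-1)$) and $w\ge\tfrac{k^2+7k+10}{16}$ (else $4w\le k+1<k+2\le x+y+kz$); if in addition $x,y\le\tfrac{k+1}{4}$ then $w\le\tfrac{(k+1)(k+2)}{16}<\tfrac{k^2+7k+10}{16}$, a contradiction, so at least one of $x,y$ lies in the top block. Writing that big entry and $w$ as $\tfrac{k^2+7k+a}{16}$ and $\tfrac{k^2+7k+c}{16}$ and solving $z=\tfrac{4w-x-y}{k}$ confines $z$ to an interval of width $\tfrac{5}{16}$ with endpoints of the form $\tfrac{2k+\bullet}{16}$, and a residue check modulo $16$ (this is where $k\equiv11\pmod{16}$ enters) shows the interval contains no integer.

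For the upper bound, suppose a $2$-colouring of $[1,M]$ has no monochromatic solution; as in Propositions \ref{pro3-1}--\ref{pro3-12} we may take $2\in R$. Lemma \ref{lem1} applies with $j=\tfrac{k+5}{16}$, for which $4j-1=\tfrac{k+1}{4}$ and $jk=\tfrac{k^2+5k}{16}\le M$; it gives $1,2,\dots,\tfrac{k+1}{4}\in R$ and $k,2k,\dots,\tfrac{k^2+5k}{16}\in B$. From here: (i) equations $(a,b,1,\tfrac{a+b+k}{4})$ with $a,b$ red and $a+b\equiv1\pmod4$ (the parity forced by $k\equiv3\pmod4$) show that $\tfrac{k+5}{4},\tfrac{k+9}{4},\dots$ are blue, and equations with third coordinate $2$ show that $\tfrac{k+5}{2}$ is blue; (ii) since $\tfrac{k+5}{2}+\tfrac{k+5}{2}+k\cdot\tfrac{k+5}{4}=\tfrac{(k+4)(k+5)}{4}=4M$, the tuple $\bigl(\tfrac{k+5}{2},\tfrac{k+5}{2},\tfrac{k+5}{4},M\bigr)$ forces $M\in R$, and $\bigl(\tfrac{k+5}{4},\tfrac{k+5}{4},\tfrac{k+5}{4},\tfrac{k^2+7k+10}{16}\bigr)$ forces $\tfrac{k^2+7k+10}{16}\in R$; (iii) force one or two further large red values by all-blue equations and assemble $M$, $\tfrac{k^2+7k+10}{16}$, these, and small red integers into one last equation $x+y+kz=4w$ with all entries red, a contradiction.

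The main obstacle is step (iii), together with the precise choice of forced-blue values feeding it in step (i): because $k\equiv11\pmod{16}$, several quantities that close the argument in neighbouring residue classes (for instance $\tfrac{k^2+7k+18}{16}$, used when $k\equiv3\pmod{16}$) are not integers here, so the forcing chain must be rebuilt around the integers that are available in this class---$\tfrac{k+5}{4}$, $\tfrac{k+5}{2}$, $\tfrac{k^2+7k+10}{16}$, $M$, and a few large values obtained en route---and one must verify that these suffice to exhibit the concluding monochromatic red solution. The threshold $\tfrac{k^2+7k+10}{16}$ and the empty-interval check for $z$ in the lower bound are the remaining delicate points, but both reduce to routine modular arithmetic once the colouring is fixed.
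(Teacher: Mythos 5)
Your plan coincides with the paper's proof in all structural respects: the same three-block colouring $[1,\frac{k+1}{4}]\cup[\frac{k^2+7k+10}{16},M-1]$ red, middle block blue, for the lower bound, with the same "every monochromatic solution is red, both $x$ and $y$ must lie in the top block, and $z=\frac{4w-x-y}{k}$ lands in a short interval containing no integer" argument; and the same forcing chain $\frac{k+5}{4}\in B$, $\frac{k+5}{2}\in B$, $\frac{(k+2)(k+5)}{16}\in R$, $M\in R$ for the upper bound. The one genuine gap is exactly the step you flag as the main obstacle: you never exhibit the concluding red solution, and you speculate that "one or two further large red values" must first be forced by all-blue equations. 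No such extra values are needed: the contradiction is the single identity
\begin{equation*}
M+M+k\cdot\frac{k+5}{8}
=\frac{2(k+4)(k+5)+2k(k+5)}{16}
=4\cdot\frac{(k+2)(k+5)}{16},
\end{equation*}
so $\bigl(M,\,M,\,\frac{k+5}{8},\,\frac{k^2+7k+10}{16}\bigr)$ is a solution whose entries are all red, since $\frac{k+5}{8}$ is an integer for $k\equiv 11\pmod{16}$ and $\frac{k+5}{8}\le\frac{k+1}{4}$ puts it inside the range covered by Lemma \ref{lem1}. This is precisely how the paper closes the upper bound, so your step (iii) is not an obstacle at all once you notice that $M$ and $\frac{(k+2)(k+5)}{16}$ can serve as $w$ and as the repeated variable in the same equation.

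Two smaller points in the lower bound. First, before writing $x,y,w$ in the form $\frac{k^2+7k+\bullet}{16}$ you must justify that $y$ (not just $x$) lies in the top block; as in the paper this comes from a dichotomy on $z$: if $z\ge\frac{3k+31}{16}$ then $y<0$, and if $z\le\frac{3k+15}{16}$ then $y>\frac{k+1}{4}$, so $y$ being red forces it large (note $\frac{3k+15}{16}$ is an integer in this residue class, so the dichotomy covers all integer $z$). Second, with $a,b,c\in[10,2k+4]$ the resulting window for $z$ is $\bigl[\frac{2k+11}{16},\frac{2k+21}{16}\bigr]$, of width $\frac{10}{16}$ rather than $\frac{5}{16}$; the conclusion survives because $2k+11\equiv 1$ and $2k+21\equiv 11\pmod{16}$, so no multiple of $16$ lies between them. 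With the closing identity supplied and these details written out, your argument becomes the paper's proof.
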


\begin{proof}
To show the upper bound, suppose that there is no monochromatic
solution for any $2$-coloring of $[1,\frac{k^2+9k+20}{16}]$. From
Lemma \ref{lem1}, we have $1, 2,...,\frac{k-11}{4}-1\in R$, then $\frac{2k+10}{16}\in R$. The
solution $(2, 3, 1, \frac{k+5}{4})$ show that $\frac{k+5}{4}\in B$.
We know that $\frac{k^2+7k+14}{16}\in R$ by considering
$(\frac{k+5}{4}, \frac{k+5}{4}, \frac{k+5}{4},
\frac{k^2+7k+10}{16})$. Using $2,5\in R$ in $(5,5,2,
\frac{2k+10}{4})$, we have $\frac{2k+10}{4}\in B$, then
$\frac{k^2+9k+20}{16}\in R$ by considering
$(\frac{2k+10}{4},\frac{2k+10}{4},\frac{k+5}{4},\frac{k^2+9k+20}{16})$,
and so $(\frac{k^2+9k+20}{16},\frac{k^2+9k+20}{16},
\frac{2k+10}{16}, \frac{k^2+7k+10}{16})$ is a red solution, a
contradiction.

For the lower bound, $\frac{k^2+9k+20}{16}-1=\frac{k^2+9k+4}{16}$.
Color $[1,\frac{k+1}{4}]\cup [\frac{k^2+7k+10}{16},\frac{k^2+9k+4}{16}]$ red, and
$[\frac{k+5}{4},\frac{k^2+7k+10}{16}]$ blue. For any solution
with $x,y,z$ all blue, we have $4w\geq \frac{k+5}{4}(k+2)$, so $w$
must be red. Clearly, every monochromatic solution is red. For every
such solution, $w\geq \frac{k^2+7k+10}{16}$. Recall that $z\leq
\frac{k+1}{4}$. If $x, y \leq \frac{k+1}{4}$, then
$\frac{k^2+7k+10}{16} \leq w \leq
\lfloor\frac{k^2+3k+2}{16}\rfloor$, a contradiction. Thus, at least
one of $x$ or $y$ must be at least $\frac{k^2+7k+10}{16}$. If $x
\geq \frac{k^2+7k+10}{16}$, then it follows from $y+kz=4w-x$ that
$$
\frac{4(k^2+7k+10)}{16}-\frac{k^2+9k+4}{16} \leq y+kz \leq
\frac{4(k^2+9k+4)}{16}-\frac{k^2+7k+10}{16},
$$
i.e.,
$$
\frac{3k^2+19k+36}{16} \leq y+kz \leq \frac{3k^2+29k+6}{16}.
$$
If $z \geq \frac{3k+31}{16}$, then $y \leq
\frac{3k^2+29k+6}{16}-\frac{3k^2+31k}{16}=\frac{-2k+6}{16}<0$, which
is impossible. If $z \leq \frac{3k+15}{16}$, then $y \geq
\frac{3k^2+29k+6}{16}-\frac{3k^2+15k}{16}=\frac{14k+6}{16}>\frac{k+1}{4}$.
Since $y \in R$, we have $y\geq \frac{k^2+7k+10}{16}$. We can assume
that
$x=\frac{k^2+7k+a}{16},y=\frac{k^2+7k+b}{16},w=\frac{k^2+7k+c}{16}$,
with $a,b,c \in[10,2k+4]$. Thus, we have
$$
z=\frac{4w-x-y}{k}=\frac{2k^2+14k+4c-a-b}{16k}=\frac{2k+14+(4c-a-b)/k}{16},
$$
Since $\frac{32-4k}{k} \leq \frac{4c-a-b}{k} \leq \frac{8k-4}{k}$,
it follows that $\frac{4c-a-b}{k}\in[-3,7]$, and hence $z\in
[\frac{2k+11}{16}, \frac{2k+21}{16}]$ is not an integer, and so
there are no monochromatic solutions.
\end{proof}

\begin{proposition}\label{pro3-17}
If $k\equiv 12 \pmod {16}$, then
$\operatorname{RR}(x+y+kz=4w)=\frac{k^2+8k+16}{16}$.
\end{proposition}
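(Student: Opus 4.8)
The plan is to use the two-sided scheme of Propositions~\ref{pro3-1}, \ref{pro3-4} and \ref{pro3-16}: prove the upper bound by a forced-colouring argument anchored at Lemma~\ref{lem1} and ending in an explicit monochromatic solution, and prove the matching lower bound by exhibiting one $2$-colouring and ruling out monochromatic solutions via an integrality obstruction on $z$. Throughout I would use $k\equiv 12\pmod{16}$ (and $k\ge 32$, hence $k\ge 44$) to make every displayed fraction an integer --- in particular $16\mid k+4$, $8\mid k+4$, $16\mid k^2+6k+8$, $16\mid 2k+12$ --- and to clear the finitely many strict inequalities below.

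For the upper bound, suppose a $2$-colouring of $[1,\frac{k^2+8k+16}{16}]$ has no monochromatic solution; swapping colours if necessary, assume $2\in R$. Then Lemma~\ref{lem1} applies with $j=\frac{k+4}{16}$ (here $jk=\frac{k^2+4k}{16}\le\frac{k^2+8k+16}{16}$ and $4j-1=\frac k4$), giving $1,2,\dots,\frac k4\in R$ and $k,2k,\dots,\frac{k^2+4k}{16}\in B$. I would next chase forced colours: the solutions $(2,2,1,\frac{k+4}{4})$ and $(4,4,2,\frac{k+4}{2})$ force $\frac{k+4}{4},\frac{k+4}{2}\in B$; then $(\frac{k+4}{4},\frac{k+4}{4},\frac{k+4}{4},\frac{k^2+6k+8}{16})$ forces $\frac{k^2+6k+8}{16}\in R$ and $(\frac{k+4}{2},\frac{k+4}{2},\frac{k+4}{4},\frac{k^2+8k+16}{16})$ forces $\frac{k^2+8k+16}{16}\in R$. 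Since $\frac{k+4}{8}$ is an integer in $[1,\frac k4]$, it is red, so $(\frac{k^2+8k+16}{16},\frac{k^2+8k+16}{16},\frac{k+4}{8},\frac{k^2+6k+8}{16})$ is a red solution of $x+y+kz=4w$, a contradiction; hence $\operatorname{RR}(x+y+kz=4w)\le\frac{k^2+8k+16}{16}$.

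For the lower bound I would colour $[1,\frac k4]\cup[\frac{k^2+6k+8}{16},\frac{k^2+8k}{16}]$ red and $[\frac{k+4}{4},\frac{k^2+6k-8}{16}]$ blue. If $x,y,z$ are all blue then $x+y+kz\ge(k+2)\cdot\frac{k+4}{4}=\frac{k^2+6k+8}{4}$, so $w\ge\frac{k^2+6k+8}{16}$ is red; thus every monochromatic solution is red. In a red solution, $4w=x+y+kz\ge k+2>k$ gives $w>\frac k4$, so $w$ is in the large red block, $w\ge\frac{k^2+6k+8}{16}$; also $kz\le 4w\le\frac{k^2+8k}{4}$ gives $z\le\frac{k+8}{4}<\frac{k^2+6k+8}{16}$, so $z\le\frac k4$; and if $x,y\le\frac k4$ then $4w\le\frac{k^2+2k}{4}$, contradicting $w\ge\frac{k^2+6k+8}{16}$, so we may assume $x\ge\frac{k^2+6k+8}{16}$. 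Using $x,w\in[\frac{k^2+6k+8}{16},\frac{k^2+8k}{16}]$ one gets $\frac{3k^2+16k+32}{16}\le y+kz=4w-x\le\frac{3k^2+26k-8}{16}$, whence $z\ge\frac{3k+28}{16}$ would force $y<0$ while $z\le\frac{3k+12}{16}$ forces $y>\frac k4$; as these thresholds are consecutive integers, $y$ also lies in the large red block. Writing $16x=k^2+6k+a$, $16y=k^2+6k+b$, $16w=k^2+6k+c$ with $a,b,c\in[8,2k]$ gives $z=\frac{4w-x-y}{k}=\frac{2k+12+(4c-a-b)/k}{16}$ with $(4c-a-b)/k\in(-4,8)$; so integrality of $z$ would require $t:=(4c-a-b)/k\in\{-3,\dots,7\}$ and $16\mid 2k+12+t$, but $2k+12+t\equiv 4+t\pmod{16}$, which for $t\in\{-3,\dots,7\}$ runs over $\{1,\dots,11\}$, never $0$. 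So the colouring has no monochromatic solution and $\operatorname{RR}(x+y+kz=4w)\ge\frac{k^2+8k+16}{16}$.

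The routine part is the bookkeeping: checking that every displayed fraction is an integer under $k\equiv 12\pmod{16}$, that each listed quadruple genuinely solves $x+y+kz=4w$, and that $k\ge 44$ suffices for the inequalities ($\frac{k+8}{4}<\frac{k^2+6k+8}{16}$, $\frac{k^2+2k}{16}<\frac{k^2+6k+8}{16}$, and so on). The one genuinely delicate choice --- which I would fix first --- is placing the lower edge of the large red block exactly at $\frac{k^2+6k+8}{16}=\frac14(k+2)\cdot\frac{k+4}{4}$: it must be low enough that no all-blue solution escapes into blue, yet high enough that the two $z$-thresholds $\frac{3k+12}{16}$ and $\frac{3k+28}{16}$ differ by exactly $1$ and the residues $2k+12+t$ miss $0\bmod 16$. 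Getting this single constant right is what simultaneously closes both halves of the argument.
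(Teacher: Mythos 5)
Your proposal is correct and follows essentially the same route as the paper's own proof: the identical lower-bound colouring $[1,\frac k4]\cup[\frac{k^2+6k+8}{16},\frac{k^2+8k}{16}]$ red, $[\frac{k+4}{4},\frac{k^2+6k-8}{16}]$ blue with the same integrality obstruction $z\in[\frac{2k+9}{16},\frac{2k+19}{16}]$, and the same forced chain $\frac{k+4}{4},\frac{k+4}{2}\in B$, $\frac{k^2+6k+8}{16},\frac{k^2+8k+16}{16}\in R$ ending in the red solution $(\frac{k^2+8k+16}{16},\frac{k^2+8k+16}{16},\frac{k+4}{8},\frac{k^2+6k+8}{16})$. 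Your explicit choice of $j=\frac{k+4}{16}$ in Lemma~\ref{lem1} and the consecutive-threshold remark are just slightly more careful statements of what the paper does implicitly.
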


\begin{proposition}\label{pro3-18}
If $k\equiv 13 \pmod {16}$, then
$\operatorname{RR}(x+y+kz=4w)=\frac{k^2+7k+12}{16}$.
\end{proposition}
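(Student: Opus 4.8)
The plan is to follow the template of the proofs of Propositions~\ref{pro3-12} and~\ref{pro3-16}, treating the upper and lower bounds separately. Throughout, write $N=\frac{k^2+7k+12}{16}$, which is an integer because $k\equiv 13\pmod{16}$, and put $k=16t+13$; the standing hypothesis $k\ge 32$ forces $t\ge 2$, which is the only slack the argument needs.

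\textbf{Upper bound.} Suppose some $2$-coloring of $[1,N]$ has no monochromatic solution; after exchanging the two colors we may assume $2\in R$. Since $4(t+1)-1<k$ and $(t+1)k\le N$, Lemma~\ref{lem1} applies with $j=t+1$ and gives $[1,\frac{k-1}{4}]=\{1,2,\dots,4t+3\}\subseteq R$. I would then run a short forcing chain: the solution $(1,2,1,\frac{k+3}{4})$ has right-hand value $k+3$, so $\frac{k+3}{4}\in B$; a solution with small red entries, say $(3,3,2,\frac{k+3}{2})$, forces $\frac{k+3}{2}\in B$; now $(\frac{k+3}{2},\frac{k+3}{2},\frac{k+3}{4},N)$ has all of $x,y,z$ blue, so $N\in R$, and $(\frac{k+3}{4},\frac{k+3}{4},\frac{k+3}{4},\frac{k^2+5k+6}{16})$ forces $\frac{k^2+5k+6}{16}\in R$. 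Finally $\frac{2k+6}{16}=2t+2$ lies in $[1,\frac{k-1}{4}]\subseteq R$, and one checks the identity $N+N+k\cdot\frac{2k+6}{16}=4\cdot\frac{k^2+5k+6}{16}$, so $(N,N,\frac{2k+6}{16},\frac{k^2+5k+6}{16})$ is an all-red solution, a contradiction; hence $\operatorname{RR}(x+y+kz=4w)\le N$.

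\textbf{Lower bound.} I would color $[1,\frac{k-1}{4}]\cup[\frac{k^2+5k+6}{16},\frac{k^2+7k-4}{16}]$ red and the rest of $[1,N-1]$ blue, and verify there is no monochromatic solution. If $x,y,z$ are all blue then each is at least $\frac{k+3}{4}$, so $4w\ge(k+2)\cdot\frac{k+3}{4}$, whence $w\ge\frac{k^2+5k+6}{16}$ is red, so no blue solution exists. For a red solution, $z$ cannot lie in the top red block (otherwise $kz>4(N-1)$), so $z\le\frac{k-1}{4}$; likewise $w\ge\frac{k^2+5k+6}{16}$ (otherwise $4w\le k-1<k+2\le x+y+kz$); and an averaging bound rules out $x,y\le\frac{k-1}{4}$ holding simultaneously, so after relabelling $x$ lies in the top red block. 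Two subcases remain, according to whether $y$ is in the bottom or the top red block. In each, substituting the endpoints of the red blocks into $z=\frac{4w-x-y}{k}$ confines $16z$ to an open interval of length less than $16$ whose left endpoint ($3k+9$ or $2k+6$, respectively) is a multiple of $16$ since $k\equiv 13\pmod{16}$; hence the interval contains no multiple of $16$, contradicting the fact that $16z$ is one. So the coloring has no monochromatic solution and $\operatorname{RR}(x+y+kz=4w)\ge N$.

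The genuinely fiddly step is the lower-bound case analysis: the argument hinges on choosing the two red blocks so that, simultaneously, the blue-solution bound lands strictly above the blue block, the bottom block is small enough that no red solution can place $w$ in it, and the final ``no multiple of $16$'' squeeze succeeds in both subcases; a single off-by-one in these endpoints would break it. Everything else is bookkeeping of exactly the kind already carried out in Propositions~\ref{pro3-12} and~\ref{pro3-16}, and the hypothesis $k\ge 32$ (hence $t\ge 2$) is precisely what makes all the small red integers used in the forcing chain available.
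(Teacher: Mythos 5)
Your proposal is correct and follows essentially the same route as the paper: the upper-bound forcing chain uses exactly the same solutions $(1,2,1,\frac{k+3}{4})$, $(3,3,2,\frac{2k+6}{4})$, $(\frac{2k+6}{4},\frac{2k+6}{4},\frac{k+3}{4},\frac{k^2+7k+12}{16})$, $(\frac{k+3}{4},\frac{k+3}{4},\frac{k+3}{4},\frac{k^2+5k+6}{16})$ and the same final red solution, and the lower bound uses the same coloring and the same ``$16z$ falls strictly between consecutive multiples of $16$'' computation. The only differences are cosmetic: you organize the endgame as two subcases on $y$ rather than the paper's bound on $y+kz$ followed by a split on $z$, and your red block $[\frac{k^2+5k+6}{16},\frac{k^2+7k-4}{16}]$ is the consistent reading of the paper's coloring (whose stated left endpoint $\frac{k^2+5k-10}{16}$ overlaps the blue block and is evidently a typo).
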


\begin{proposition}\label{pro3-19}
If $k\equiv 14 \pmod {16}$, then
$\operatorname{RR}(x+y+kz=4w)=\frac{k^2+6k+8}{16}$.
\end{proposition}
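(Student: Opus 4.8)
The plan is to follow the template of Propositions~\ref{pro3-1} and~\ref{pro3-16}: establish the upper bound by a forcing argument anchored on Lemma~\ref{lem1}, and the lower bound by exhibiting a single $2$-coloring of $[1,N-1]$ with no monochromatic solution, where $N=\tfrac{k^2+6k+8}{16}$. Since $k\equiv14\pmod{16}$ one has $k\equiv2\pmod4$ and $k\equiv6\pmod8$, so $\tfrac{k+2}{4}$, $\tfrac{k+2}{2}$, $\tfrac{(k+2)^2}{16}$ and — crucially — $\tfrac{k+2}{8}$ are all integers; these are the quantities the proof revolves around.

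For the upper bound, suppose some $2$-coloring of $[1,N]$ has no monochromatic solution to $x+y+kz=4w$, and by color symmetry assume $2\in R$. Applying Lemma~\ref{lem1} with $j=\tfrac{k+2}{16}$ (for which $jk\le N$ and $k>4j-1$) gives $1,2,\dots,\tfrac{k-2}{4}\in R$; in particular $1,2$ and $\tfrac{k+2}{8}$ are red. Processing the solutions
$$\bigl(1,1,1,\tfrac{k+2}{4}\bigr),\quad\bigl(\tfrac{k+2}{4},\tfrac{k+2}{4},\tfrac{k+2}{4},\tfrac{(k+2)^2}{16}\bigr),\quad\bigl(2,2,2,\tfrac{k+2}{2}\bigr),\quad\bigl(\tfrac{k+2}{2},\tfrac{k+2}{2},\tfrac{k+2}{4},N\bigr)$$
in turn forces, successively, $\tfrac{k+2}{4}\in B$, then $\tfrac{(k+2)^2}{16}\in R$, then $\tfrac{k+2}{2}\in B$, then $N\in R$. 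Finally $\bigl(N,N,\tfrac{k+2}{8},\tfrac{(k+2)^2}{16}\bigr)$ is a solution of $x+y+kz=4w$ all four of whose entries are now red, a contradiction; hence $\operatorname{RR}(x+y+kz=4w)\le N$. Every named value lies in $[1,N]$, so this chain is uniform for all $k\ge14$ and no separate small cases (as in Proposition~\ref{pro3-4}) arise.

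For the lower bound, color $[1,\tfrac{k-2}{4}]$ red, $[\tfrac{k+2}{4},\tfrac{(k+2)^2}{16}-1]$ blue, and $[\tfrac{(k+2)^2}{16},N-1]$ red (a genuine partition of $[1,N-1]$ once $k\ge6$). If $x,y,z$ are all blue then $x+y+kz\ge\tfrac{k+2}{4}(k+2)=\tfrac{(k+2)^2}{4}$, so $w\ge\tfrac{(k+2)^2}{16}>N-1$; thus every monochromatic solution is red. In a red solution $z$ cannot lie in the top block, since then $kz\ge k\tfrac{(k+2)^2}{16}>4N>4w$; so $z\le\tfrac{k-2}{4}$, and as $x+y+kz\ge k+2$ we get $w\ge\tfrac{k+2}{4}>\tfrac{k-2}{4}$, forcing $w$ into the top block $[\tfrac{(k+2)^2}{16},N-1]$. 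If both $x,y\le\tfrac{k-2}{4}$ then $w\le\tfrac{(k-2)(k+2)}{16}<\tfrac{(k+2)^2}{16}$, impossible; so say $x$ lies in the top block. Then $y\ge1$ forces $kz=4w-x-y\le\tfrac{3k^2+20k-52}{16}$, i.e.\ $z\le\tfrac{3k+6}{16}$, whence $y=4w-x-kz\ge\tfrac{k+6}{4}>\tfrac{k-2}{4}$, so $y$ lies in the top block too. Now $x,y,w\in[\tfrac{(k+2)^2}{16},\tfrac{k^2+6k-8}{16}]$, so
$$kz=4w-x-y\in\Bigl[\tfrac{2k^2+4k+32}{16},\tfrac{2k^2+16k-40}{16}\Bigr],\qquad\text{i.e.}\qquad z\in\Bigl[\tfrac{k+2+16/k}{8},\tfrac{k+8-20/k}{8}\Bigr].$$
This interval has length $\tfrac{6-36/k}{8}<1$, and its only nearby integer candidates $\tfrac{k+2}{8}$ and $\tfrac{k+10}{8}$ fall strictly below, respectively above it — this is precisely where $k\equiv6\pmod8$ is used. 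Hence $z$ is not an integer, there is no monochromatic solution, and $\operatorname{RR}(x+y+kz=4w)\ge N$.

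The only genuinely delicate point is the last one: calibrating the three block boundaries so that the final interval for $z$ has length below $1$ yet misses every integer. The rest — the chain of forced colors, excluding $z$ and small $x,y$ from the relevant blocks, and the containment checks for the named values — is routine bookkeeping, and the upper-bound chain is essentially forced once one insists on producing a red solution of the shape $(N,N,z',w')$ with $w'<N$.
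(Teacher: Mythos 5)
Your proposal is correct and follows essentially the same route as the paper: the upper-bound forcing chain $(1,1,1,\tfrac{k+2}{4})$, $(\tfrac{k+2}{4},\tfrac{k+2}{4},\tfrac{k+2}{4},\tfrac{(k+2)^2}{16})$, $(2,2,2,\tfrac{k+2}{2})$, $(\tfrac{k+2}{2},\tfrac{k+2}{2},\tfrac{k+2}{4},N)$ ending in the red solution $(N,N,\tfrac{k+2}{8},\tfrac{(k+2)^2}{16})$ is exactly the paper's chain, and your lower-bound coloring $[1,\tfrac{k-2}{4}]\cup[\tfrac{(k+2)^2}{16},N-1]$ red, $[\tfrac{k+2}{4},\tfrac{(k+2)^2}{16}-1]$ blue coincides with the paper's, with the same concluding argument that $z$ would have to lie in an interval of length less than $1$ missing the consecutive integers $\tfrac{k+2}{8}$ and $\tfrac{k+10}{8}$. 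The only differences are cosmetic (you justify $z\le\tfrac{k-2}{4}$ and the bound $z\le\tfrac{3k+6}{16}$ a bit more explicitly, and phrase the final interval directly rather than via the paper's parametrization $x=\tfrac{k^2+4k+a}{16}$, etc.).
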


\begin{proposition}\label{pro3-20}
If $k\equiv 15 \pmod {16}$, then
$\operatorname{RR}(x+y+kz=4w)=\frac{k^2+7k+22}{16}$.
\end{proposition}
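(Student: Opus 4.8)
\noindent\emph{Proof plan.} Write $N=\frac{k^{2}+7k+22}{16}$. Since $k\equiv 15\pmod{16}$ we have $k^{2}+7k+22\equiv 0\pmod{16}$ (so $N\in\mathbb{Z}$), and moreover $k\equiv 3\pmod 4$ makes $\frac{k+5}{4}$ and $\frac{k+17}{4}$ integers, $k\equiv 7\pmod 8$ makes $\frac{k+33}{8}$ an integer, and $k\equiv 15\pmod{16}$ makes $\frac{3k+19}{16}$ an integer; these are exactly the divisibility facts the argument will consume. The plan is the standard two‑sided argument: a block colouring for the lower bound, and Lemma~\ref{lem1} followed by a short forcing chain for the upper bound.

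For the lower bound I would imitate Proposition~\ref{pro3-1} and use a \emph{two}-block colouring of $[1,N-1]$: colour $[1,\frac{k+1}{4}]$ red and $[\frac{k+5}{4},N-1]$ blue (these are consecutive blocks, as $\frac{k+1}{4}+1=\frac{k+5}{4}$). If $x,y,z$ are all red then $x+y+kz\ge k+2$, so $w\ge\lceil\frac{k+2}{4}\rceil=\frac{k+5}{4}$ lies outside the red block; if $x,y,z$ are all blue then $x+y+kz\ge\frac{(k+2)(k+5)}{4}=\frac{k^{2}+7k+10}{4}$, and because $k^{2}+7k+10\equiv 4\pmod{16}$ this forces $w\ge\lceil\frac{k^{2}+7k+10}{16}\rceil=N$, again outside the block. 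Hence $[1,N-1]$ has no monochromatic solution and $\operatorname{RR}(x+y+kz=4w)\ge N$.

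For the upper bound, suppose a $2$-colouring of $[1,N]$ admits no monochromatic solution; swapping the two colours if necessary, we may assume $2\in R$. Applying Lemma~\ref{lem1} with $j=\frac{k+1}{16}$ (an integer, with $4j-1=\frac{k-3}{4}<k$ and $jk=\frac{k(k+1)}{16}\le N$) gives $1,2,\dots,\frac{k-3}{4}\in R$; since $k\equiv 15\pmod{16}$ together with $k\ge 32$ force $k\ge 47$, the four values $8,9,\frac{k+33}{8},\frac{3k+19}{16}$ all lie in $[1,\frac{k-3}{4}]$ and so are red. Now I would run the chain: the red triple $(2,3,1)$ with $2+3+k\cdot1=4\cdot\frac{k+5}{4}$ forces $\frac{k+5}{4}\in B$; the red triple $(8,9,1)$ with $8+9+k\cdot1=4\cdot\frac{k+17}{4}$ forces $\frac{k+17}{4}\in B$; then $\frac{k+5}{4}+\frac{k+17}{4}+k\cdot\frac{k+5}{4}=\frac{k^{2}+7k+22}{4}=4N$ forces $N\in R$ (otherwise this is an all‑blue solution); and finally $N+\frac{k+33}{8}+k\cdot\frac{3k+19}{16}=\frac{4k^{2}+28k+88}{16}=4N$, so $\bigl(N,\frac{k+33}{8},\frac{3k+19}{16},N\bigr)$ is an all‑red solution, a contradiction. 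Thus $\operatorname{RR}(x+y+kz=4w)\le N$, and with the lower bound the result follows.

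The only genuine difficulty is discovering this chain. Two structural points must be pinned down: first, that the minimal $w$ forced by an all‑blue solution equals $N$, which is the reason two blocks already suffice and no ``middle'' red block appears (in contrast with Propositions~\ref{pro3-4}, \ref{pro3-12} and~\ref{pro3-16}); and second, that $\frac{k+17}{4}$ is the correct auxiliary value to pair with $\frac{k+5}{4}$ so that the two blue values hit $4N$ on the nose. Once these are fixed, the congruence $k\equiv 15\pmod{16}$ is used only to make every fraction above an integer, and $k\ge 32$ (hence $k\ge 47$) only to keep $8,9,\frac{k+33}{8},\frac{3k+19}{16}$ below $\frac{k-3}{4}$; the remaining verifications are routine arithmetic, and no separate treatment of small $k$ is needed.
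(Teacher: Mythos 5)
Your proof is correct and follows essentially the same route as the paper: the identical two-block lower-bound colouring $[1,\frac{k+1}{4}]$ red / $[\frac{k+5}{4},N-1]$ blue, Lemma~\ref{lem1} to force $1,\dots,\frac{k-3}{4}\in R$, a short forcing chain to make $N=\frac{k^2+7k+22}{16}$ red, and the same final red solution $\bigl(N,\frac{2k+66}{16},\frac{3k+19}{16},N\bigr)$. The one substantive difference is the intermediate step: the paper forces $\frac{k+5}{4},\frac{k+11}{4}\in B$ via $(3,2,1,\cdot)$ and $(6,5,1,\cdot)$ and then uses $\bigl(\frac{k+11}{4},\frac{k+11}{4},\frac{k+5}{4},N\bigr)$, but for $k\equiv 15\pmod{16}$ one has $k+11\equiv 2\pmod 4$, so $\frac{k+11}{4}$ is not an integer and that step is broken as printed; your replacement pair $\frac{k+5}{4},\frac{k+17}{4}$ (forced blue via $(2,3,1,\cdot)$ and $(8,9,1,\cdot)$, both available since $k\ge 47$ puts $8,9\le\frac{k-3}{4}$) satisfies $\frac{k+5}{4}+\frac{k+17}{4}+k\cdot\frac{k+5}{4}=4N$ with integer entries, so your chain actually repairs the paper's argument rather than merely reproducing it. All integrality and range checks you record ($\frac{k+17}{4}$, $\frac{k+33}{8}$, $\frac{3k+19}{16}$ integral and at most $\frac{k-3}{4}$ or $N$ as needed) are accurate, as is the colour-swap reduction to $2\in R$ before invoking Lemma~\ref{lem1}.
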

\begin{proof}
To show the upper bound, we suppose that there is no monochromatic
solution for any $2$-coloring of $[1,\frac{k^2+7k+22}{16}]$. From
Lemma \ref{lem1}, we have $1, 2,...,\frac{k-3}{4}\in R$. The
solutions $(3, 2, 1, \frac{k+5}{4})$ and $(6, 5, 1, \frac{k+11}{4})$
show that $\frac{k+5}{4}, \frac{k+11}{4}\in B$. Then
$\frac{k^2+7k+22}{16}\in R$ by considering
$(\frac{k+11}{4},\frac{k+11}{4},\frac{k+5}{4},\frac{k^2+7k+22}{16})$,
and hence $(\frac{k^2+7k+22}{16},\frac{2k+66}{16}, \frac{3k+19}{16},
\frac{k^2+7k+22}{16})$ is a red solution.

For the lower bound, $\frac{k^2+7k+22}{16}-1=\frac{k^2+7k+6}{16}$. Color all
elements in $[1, \frac{k+1}{4}]$ red and the remaining elements
blue. If $x, y, z$ are all red, then $x+y+kz\geq k+2$, and so $w\geq
\lceil\frac{k+2}{4}\rceil=\frac{k+5}{4}$, and hence there is no red
solution. If $x, y, z$ are all blue, then $x+y+kz\geq
\frac{k^2+7k+10}{4}$, and so $w\geq
\lceil\frac{k^2+7k+10}{16}\rceil=\frac{k^2+7k+22}{16}> M-1$ and
hence there is no blue solution.
\end{proof}

\noindent\textbf{Proof of Theorem \ref{th-Exact-1}}: If $6\leq k<32$
and $k\neq 16$, the values of $\operatorname{RR}(x+y+kz=4w)$, given
in \cite{RoMy08}, are in the Table $2$, and they all satisfy the
formulas in Theorem \ref{th-Exact-1}. For $k\geq 32$, the result
follows from Propositions \ref{pro3-1} to \ref{pro3-20}.
\begin{table}[H]
\caption{Some values of $\operatorname{RR}(x+y+kz=4 w)$.}
\begin{center}
\begin{tabularx}{15cm}{p{0.8cm}<{}
  p{0.05cm}<{} p{0.05cm}<{} p{0.05cm}<{} p{0.05cm}<{} p{0.15cm}<{} p{0.15cm}<{}  p{0.12cm}<{} p{0.12cm}<{} p{0.12cm}<{} p{0.12cm}<{} p{0.12cm}<{} p{0.12cm}<{} p{0.12cm}<{} p{0.12cm}<{} p{0.12cm}<{} p{0.12cm}<{} p{0.12cm}<{} p{0.12cm}<{}  p{0.12cm}<{} p{0.12cm}<{} p{0.12cm}<{} p{0.12cm}<{} p{0.12cm}<{} p{0.12cm}<{} p{0.12cm}<{}  }
\hline
$k$ &6&7&8&9&10&11&12&13&14&15
&17&18&19&20&21&22&23&24&25&26&27&28&29&30&31  \\
\hline
value   & $5$ & $7$ & $8$ & $9$ & $10$ & $15$ & $16$ & $17$ & $18$  & $24$ & $24$  & $25$ & $32$ & $34$ & $36$ & $37$ & $45$ & $47$ & $49$ & $51$ & $62$ & $64$ & $66$ & $68$ & $75$ \\
\hline
\end{tabularx}
\end{center}
\end{table}

\section{Results for five variable Rado numbers}

In this section, we let $\mathcal{E}(5,k,k+j)$ denote the equation
$x+y+z+kv=(k+j)w$. We now determine the exact values for
$\operatorname{RR}(\mathcal{E}(5,k,k+j))$ for $1\leq j\leq 5$. We
call a coloring of $[1,n]$ \emph{valid} if it contains no
monochromatic solution to $\mathcal{E}(5,k,k+j)$.

It is clear that $\operatorname{RR}(\mathcal{E}(5,k,3))=1$ for all
$k\in \mathbb{Z}^+$.

\begin{theorem}
For $k \in \mathbb{Z}^+$,
$\operatorname{RR}(\mathcal{E}(5,k,2))=\operatorname{RR}(\mathcal{E}(5,k,4))=4$.
\end{theorem}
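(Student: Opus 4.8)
The plan is to prove, for each $j\in\{2,4\}$, that $\operatorname{RR}(\mathcal{E}(5,k,j))=4$ by establishing the two bounds $\operatorname{RR}(\mathcal{E}(5,k,j))\ge 4$ and $\operatorname{RR}(\mathcal{E}(5,k,j))\le 4$ separately, where $\mathcal{E}(5,k,j)$ is the equation $x+y+z+kv=(k+j)w$ (so the two equations are $x+y+z+kv=(k+2)w$ and $x+y+z+kv=(k+4)w$). The key observation, which makes everything uniform in $k$, is that specialising $v=w$ turns $\mathcal{E}(5,k,j)$ into $x+y+z=jw$, an equation not involving $k$; hence any monochromatic solution of $x+y+z=jw$ in $[1,n]$, read with $v:=w$, is automatically a monochromatic solution of $\mathcal{E}(5,k,j)$ for every $k\in\mathbb{Z}^{+}$.

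For the lower bound I use the $2$-colouring of $[1,3]$ with $1,3$ red and $2$ blue. There is no blue solution, since a blue solution forces $x=\cdots=w=2$, i.e.\ $6+2k=2(k+j)$, which is false for $j\in\{2,4\}$. There is no red solution by a parity argument: in a red solution all of $x,y,z,v,w$ lie in $\{1,3\}$, so $x+y+z$ is a sum of three odd numbers, hence odd, whereas $(k+j)w-kv\equiv(k+j)-k=j\equiv 0\pmod 2$ because $w,v$ are odd and $j$ is even; so $x+y+z\neq(k+j)w-kv$. Thus $\operatorname{RR}(\mathcal{E}(5,k,j))\ge 4$ for $j=2,4$ and all $k$.

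For the upper bound, let $\chi$ be any $2$-colouring of $[1,4]$; I claim there exist $x,y,z,w\in[1,4]$, all of the same $\chi$-colour, with $x+y+z=jw$, and then $(x,y,z,w,w)$ is a monochromatic solution of $\mathcal{E}(5,k,j)$. For $j=2$ the ``certificates'' $(x,y,z;w)$ are $(1,1,2;2)$, $(2,2,2;3)$, $(4,2,2;4)$ and $(4,1,1;3)$, which are monochromatic precisely when $\{1,2\}$, $\{2,3\}$, $\{2,4\}$ or $\{1,3,4\}$ is monochromatic under $\chi$. In any $2$-colouring of $\{1,2,3,4\}$ at least one of these four sets is monochromatic: if neither $\{1,2\}$ nor $\{2,3\}$ is, then $\chi(1)=\chi(3)$, and then $\chi(4)$ either agrees with $\chi(1)=\chi(3)$, making $\{1,3,4\}$ monochromatic, or agrees with $\chi(2)$, making $\{2,4\}$ monochromatic. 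For $j=4$ the certificates are $(1,1,2;1)$, $(3,3,2;2)$, $(4,2,2;2)$ and $(4,4,4;3)$, monochromatic when $\{1,2\}$, $\{2,3\}$, $\{2,4\}$ or $\{3,4\}$ is; and if none of $\{1,2\},\{2,3\},\{3,4\}$ is monochromatic then $\chi(1)=\chi(3)$ and $\chi(2)=\chi(4)$, so $\{2,4\}$ is monochromatic. In either case $\chi$ admits a monochromatic solution, so $\operatorname{RR}(\mathcal{E}(5,k,j))\le 4$, and combining with the lower bound gives the claimed equalities.

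I do not expect a genuine obstacle: once the $v=w$ reduction is noted, the argument is a finite check on $2$-colourings of $[1,3]$ and $[1,4]$. The only point requiring a little care is to choose certificate triples with small support, so that the resulting four set-conditions are exhaustive over all $2$-colourings of $\{1,2,3,4\}$, and to verify that the single colouring ``$1,3$ red, $2$ blue'' simultaneously witnesses the lower bound for both equations — which the parity remark makes transparent.
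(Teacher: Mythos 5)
Your proposal is correct and takes essentially the same route as the paper: the lower bound uses the identical colouring of $[1,3]$ ($1,3$ red, $2$ blue), and your four $j=2$ certificates, read with $v=w$, are exactly the paper's solutions $(1,1,2,2,2)$, $(2,2,2,3,3)$, $(1,1,4,3,3)$, $(2,2,4,4,4)$, just organized as a covering argument over the sets $\{1,2\},\{2,3\},\{2,4\},\{1,3,4\}$ instead of the paper's forcing chain starting from $\chi(1)$. The only differences are cosmetic but welcome: your parity check of the valid colouring replaces the paper's enumeration of attainable values, and you spell out the $j=4$ certificates that the paper dispatches with ``Similarly''.
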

\begin{proof}
Assume, for a contradiction, that there exists a $2$-coloring of
$[1,4]$ with no monochromatic solution to $x+y+z+kv=(k+2)w$. We
assume that $1$ is red. Considering the solutions $(1,1,2,2,2),
(2,2,2,3,3), (1,1,4,3,3)$, in order, we find that $2$ is blue, $3$
is red, $4$ is blue. But $(2,2,4,4,4)$ is a blue solution, a
contradiction. Thus $\operatorname{RR}(\mathcal{E}(5,k,2))\leq 4$
for all $k \in \mathbb{Z}^+$.

It is easy to find that the valid coloring of $[1,3]$ is $rbr$. If $x, y, z, v$ are all red, then $x+y+z+kv\in \{k+3, k+5, k+7, k+9, 3k+3, 3k+5, 3k+7,
3k+9\}$. If $x, y, z, v$ are all blue, then $x+y+z+kv=2k+6$. If $w$
is red, then $(k+2)w\in \{k+2, 3k+6\}$. If $w$ is blue, then
$(k+2)w=2k+4$. We denote these results by
$$
R_{x,y,z,v}=\{k+3, k+5, k+7, k+9, 3k+3, 3k+5, 3k+7, 3k+9\},
$$
$$
R_{w}=\{k+2, 3k+6\}, B_{w}=\{k+2, 3k+6\}, B_{x,y,z,v}=\{2k+6\}.
$$
Then $R_{x,y,z,v}\cap R_w=\emptyset$ and $B_{x,y,z,v}\cap
B_w=\emptyset$, and hence $\operatorname{RR}(\mathcal{E}(5,k,2))=4$
for all $k \in \mathbb{Z}^+$.

Similarly, $\operatorname{RR}(\mathcal{E}(5,k,4))=4$ for all $k \in
\mathbb{Z}^+$.
\end{proof}

\begin{theorem}
For $k \in \mathbb{Z}^+$,
$$
\operatorname{RR}(\mathcal{E}(5,k,5))=
\begin{cases} 3& \text{for $k=1, 2, 3, 4$},\\
5 & \text{for $5\leq k\leq 11$,and $k=13, 14, 17$},\\
6 & \text{for $k=12, 15, 16, 18, 19, 22$},\\
8 & \text{for $k=21, 23, 24$,and $26\leq k\leq 29$},\\
9 & \text{for $k=20, 25$ and $k\geq 30$}.\\
\end{cases}
$$
\end{theorem}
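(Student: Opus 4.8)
The plan is to pin down $\operatorname{RR}(\mathcal{E}(5,k,k+5))$ by combining one universal upper bound with case-by-case lower bounds, supplementing the latter with short forcing arguments whenever the claimed value is smaller than $9$. First I would record the uniform bound $\operatorname{RR}(\mathcal{E}(5,k,k+5))\le 9$: if $(x,y,z,w)$ is a monochromatic solution of $x+y+z=5w$, then $(x,y,z,w,w)$ is a monochromatic solution of $x+y+z+kv=(k+5)w$, so $\operatorname{RR}(\mathcal{E}(5,k,k+5))\le\operatorname{RR}(\mathcal{E}_1(4,5))$; and Theorem~\ref{th-SaD13-1}, applied with $n=4$ and $\ell=5$ (the entry ``$\ell=5$ and $n=4$''), gives $\operatorname{RR}(\mathcal{E}_1(4,5))=9$. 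This is exactly Lemma~\ref{lem-upBounds} specialised to $m=5$, $\ell-k=5$.

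The engine for everything else is a bookkeeping observation: rewrite the equation as $x+y+z=5w+k(w-v)$. On an interval $[1,n]$ with $n\le 9$ and $k$ not too small, the left side lies in $[3,3n]$, so a solution has either $w=v$ — in which case it is a solution of $x+y+z=5w$ and only $w\in\{1,2,3,4\}$ can occur — or $v>w$, which (for $k\ge 20$) forces $v=w+1$, $w\ge 7$ and $x+y+z=5w-k$, leaving only one or two ``sporadic'' families $(x,y,z,w+1,w)$. This turns each residue/value class into a finite check. I would use it to settle $k=20$, $k=25$ and all $k\ge 30$ at once: the $2$-colouring of $[1,8]$ with colour classes $\{1,4,5,6\}$ and $\{2,3,7,8\}$ has no monochromatic solution of $x+y+z=5w$ (a brief triple-by-triple verification: every triple summing to $5w$ with $w\le 4$ either has $w$ in the other class or cannot be formed inside one class), and for each such $k$ the finitely many sporadic solutions in $[1,8]$ — for $k\ge 30$ just $(x,y,z,8,7)$ with $x+y+z=35-k\le 15$ — are not monochromatic, either because the two forced colours differ or because the required value of $x+y+z$ is not a sum of three elements of a single class. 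With the universal bound this gives $\operatorname{RR}(\mathcal{E}(5,k,k+5))=9$ for those $k$.

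For the remaining finitely many $k$ I would prove $\operatorname{RR}(\mathcal{E}(5,k,k+5))=v$ with $v\in\{3,5,6,8\}$ in the usual two halves. The lower bound $\operatorname{RR}\ge v$ is witnessed by an explicit valid colouring of $[1,v-1]$, frequently a restriction of the colouring above (for instance the classes $\{1,4,5\}$ and $\{2,3\}$ on $[1,5]$, or $\{1,4,5,6\}$ and $\{2,3,7\}$ on $[1,7]$), with the classes tweaked to kill the sporadic solutions peculiar to that $k$. The upper bound $\operatorname{RR}\le v$ is the standard forcing argument on $[1,v]$: assuming a valid colouring and taking $\chi(1)=R$ (the case $\chi(1)=B$ being symmetric), one gets $\chi(2)=\chi(3)=B$ from the solutions $(1,2,2,1,1)$ and $(1,1,3,1,1)$, propagates the colours of $4,5,6,\dots$ using solutions with $w\in\{1,2\}$, and finishes with a solution — sometimes a sporadic one, hence $k$-dependent — that is forced to be monochromatic.

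The main obstacle is precisely that neither the forcing chains nor the list of sporadic solutions is uniform in $k$: the answer genuinely jumps ($9$ at $k=20$ but $8$ at $k=21$; $8$ at $k=29$ but $9$ at $k=30$; $k=22$ sitting at $6$ between neighbours valued $5$ and $8$), so each residue class — and within the value-$8$ and value-$5$ classes essentially each individual $k$ — needs its own short argument, and at every step one must guard against overlooking a solution with $v\ne w$. The bookkeeping observation of the second paragraph is what keeps this finite and mechanical; carrying it out is routine but lengthy.
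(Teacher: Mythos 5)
Your overall strategy is the same as the paper's: a uniform bound $\operatorname{RR}(\mathcal{E}(5,k,k+5))\le 9$ followed by a finite, $k$-dependent analysis of colourings of $[1,n]$ for small $n$. Your route to the bound $9$ (embedding solutions of $x+y+z=5w$ via $v=w$ and quoting $\operatorname{RR}(\mathcal{E}_1(4,5))=9$ from Theorem \ref{th-SaD13-1}) is legitimate and arguably cleaner than the paper's direct forcing on $[1,9]$, and your colouring of $[1,8]$ with classes $\{1,4,5,6\}$ and $\{2,3,7,8\}$ is exactly the paper's colouring $rbbrrrbb$. Still, two things keep the proposal from being a proof. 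First, a concrete slip in the one case you do execute: for $k\ge 20$ a solution in $[1,8]$ with $v\neq w$ has $v=w+1$ and $x+y+z=5w-k\ge 3$, i.e.\ $w\ge\lceil (k+3)/5\rceil$; this forces $w\ge 7$ only when $k\ge 30$. For $k=20$ the sporadic families $(x,y,z,6,5)$ and $(x,y,z,7,6)$, and for $k=25$ the family $(x,y,z,7,6)$, also occur and must be checked (for $k=20$ both $5$ and $6$ are red, so one needs that $5$ is not a sum of three elements of $\{1,4,5,6\}$). These checks do succeed, but your argument as written would skip them, so the verification for $k=20,25$ is incomplete as stated.

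Second, and more seriously, the cases with values $3,5,6,8$ --- all $k\le 29$ except $20,25$, which is the bulk of the theorem --- are only described, not carried out: no valid colouring of $[1,2]$, $[1,4]$, $[1,5]$ or $[1,7]$ is actually verified for any specific $k$, and no upper-bound argument is completed for any of them. Keep in mind that an upper bound $\operatorname{RR}\le v$ requires eliminating \emph{every} $2$-colouring of $[1,v]$ for the given $k$, not just following one forcing chain from $\chi(1)=R$; the paper does this by enumerating all colourings of $[1,v]$ (with $1$ red) that survive the $k$-independent constraints --- e.g.\ only $rbbrr$ for $n=5$ and only $rbbrrrbb$ for $n=8$ --- and then computing, for each, the exact set of $k$ for which a coincidence $ik+j=i'(k+5)$ between the sets $R_{x,y,z,v}$, $R_w$ (resp.\ $B_{x,y,z,v}$, $B_w$) creates a monochromatic solution. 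Your plan would have to reproduce this enumeration and the intersection bookkeeping for each of the roughly twenty-seven remaining values of $k$; until that is done, what you have actually established is the uniform upper bound of $9$ and (modulo the slip above) the cases $k\in\{20,25\}\cup[30,\infty)$.
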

\begin{proof}
Assume, for a contradiction, that there exists a $2$-coloring of
$[1, 9]$ with no monochromatic solution to $x+y+z+kv=(k+5)w$.
Without loss of generality, we assume that $1\in R$. Considering
the solutions $(2,2,1,1,1)$, $(3,1,1,1,1)$, $(4,4,2,2,2)$, $(6,2,2,2,2)$ and
$(9,3,3,3,3)$, one can see $2,3\in B$ and $4,6,9\in R$. Since the solution $(6,9,5,4,4)$ is not monochromatic solution, it implies that $5\in B$, which contradicts to the fact that
$(5,5,5,3,3)$ is a blue solution. Hence, we have
$\operatorname{RR}(\mathcal{E}(k,5))\leq 9$, for all $k \in
\mathbb{Z}^+$.

It is easy to check that the only valid $2$-colorings (using $r$ for
red, $b$ for blue, and assuming that $1$ is red) of $[1,n]$ for
$n=3, 5, 6, 8, 9$ are in Table $3$ by using Matlab.

\begin{table}[htbp]
\caption{The valid $2$-colorings of $[1,n]$.}
\begin{center}
\begin{tabularx}{10cm}{p{1cm}<{} p{3cm}<{}}
\hline
$n$ & Valid colorings   \\
\hline
$3$  & $rbb $  \\
$5$  & $rbbrr$ \\
$6$  & $rbbrrr$ \\
$8$  & $rbbrrrbb$ \\
\hline
\end{tabularx}
\end{center}
\end{table}

For $n=3$, we consider the valid coloring $rbb$. If $x, y, z, v$ are
all red, then $x+y+z+kv=k+3$. If $x, y, z, v$ are all blue, then
$x+y+z+kv\in \{2k+6, 2k+7, 2k+8, 2k+9, 3k+6, 3k+7, 3k+8, 3k+9\}$. If
$w$ is red, then $(k+5)w=k+5$. If $w$ is blue, then $(k+5)w\in
\{2k+10, 3k+15\}$. We denote these results by
$$
R_{x,y,z,v}=\{k+3\}, R_{w}=\{k+5\}, B_{w}=\{2k+10, 3k+15\},
$$
and
$$
B_{x,y,z,v}=\{2k+6, 2k+7, 2k+8, 2k+9, 3k+6, 3k+7, 3k+8, 3k+9\}.
$$

We see that $R_{x,y,z,v}\cap R_w= \emptyset$ and $B_{x,y,z,v}\cap
B_w\neq \emptyset$ only when $k=4 \ (2k+10=3k+6)$, or $k=1 \
(2k+10=3k+7)$, or $k=2 \ (2k+10=3k+8)$, or $k=1 \ (2k+10=3k+9)$.
Thus, we have $\operatorname{RR}(\mathcal{E}(k,5))=3$ for
$k=1,2,3,4$.

For $n=5$, by considering the valid coloring $rbbrr$, we see that
$$
R_{x,y,z,v}=\{ik+j:i=1,4,5; j=3,6,7,9,10,...,15\};
R_{w}=\{i(k+5):i=1,4,5\}
$$
$$
B_{x,y,z,v}=\{ik+j:i=2,3; j=6,7,8,9\}; B_{w}=\{i(k+5):i=2,3\}.
$$
It is clear that $R_{x,y,z,v}\cap R_{w}\neq\emptyset$ only when
$k=20-j \ (4k+20=5k+j)$, $j=3,6,7,9,10,\ldots,15$.
Then $B_{x,y,z,v}\cap B_{w}\neq \emptyset$ only when $k=10-j \
(2k+10=3k+j), j=6,7,8,9$, and hence $k=1,2,\ldots,11,13,14,17$. Therefore, we have
$\operatorname{RR}(\mathcal{E}(k,5))=5$, for $k=5,6,\ldots,11,13,14,17$.

For $n=6$, by considering the valid coloring $rbbrrr$, we see that
$$
R_{x,y,z,v}=\{ik+j:i=1,4,5,6; j=3,6,7,\ldots,18\};
R_{w}=\{i(k+5):i=1,4,5,6\}
$$
$$
B_{x,y,z,v}=\{ik+j:i=2,3; j=6,7,8,9\}; B_{w}=\{i(k+5):i=2,3\}.
$$
It is clear that $R_{x,y,z,v}\cap R_{w}\neq\emptyset$ only when
$k=20-j \ (4k+20=5k+j)$, $j=3,6,7,\ldots,18$ or $k=\frac{20-j}{2} \ (4k+20=6k+j)$, $j=6,8,10,12,14,16,18$ or $k=25-j \ (5k+25=6k+j)$, $j=3,6,7,\ldots,18$.
Then $B_{x,y,z,v}\cap B_{w}\neq \emptyset$ only when $k=10-j \
(2k+10=3k+j), j=6,7,8,9$, and hence $k=1,2,\ldots,19,22$. Thus, we have
$\operatorname{RR}(\mathcal{E}(k,5))=6$, for $k=12,15,16,18,19,22$.

For $n=8$, by considering the valid coloring $rbbrrrbb$, we see that
$$
R_{x,y,z,v}=\{ik+j:i=1,4,5,6; j=3,6,7,\ldots,18\};
R_{w}=\{i(k+5):i=1,4,5,6\}
$$
$$
B_{x,y,z,v}=\{ik+j:i=2,3,7,8; j\in [6,24]\backslash\{10,15,20\}\}; B_{w}=\{i(k+5):i=2,3,7,8\}.
$$
It is clear that $R_{x,y,z,v}\cap R_{w}\neq\emptyset$ only when
$k=20-j \ (4k+20=5k+j)$, $j=3,6,7,\ldots,18$ or $k=\frac{20-j}{2} \ (4k+20=6k+j)$, $j=6,8,10,12,14,16,18$ or $k=25-j \ (5k+25=6k+j)$, $j=3,6,7,\ldots,18$.
Then $B_{x,y,z,v}\cap B_{w}\neq \emptyset$ only when
$k=10-j \ (2k+10=3k+j), j=6,7,8,9$, or $k=\frac{15-j}{4} \ (3k+15=7k+j), j=7$,
or $k=35-j \ (7k+35=8k+j), j\in [6,24]\backslash\{10,15,20\}$,
or $k=j-15 \ (3k+15=2k+j), j\in [16,24]\backslash\{20\}$
 and hence $k\in [1,29]\backslash \{20,25\}$. Thus, we have
$\operatorname{RR}(\mathcal{E}(k,5))=8$, for $k=21,23,24,26,\ldots,29$.

\end{proof}

We now give the exact value for
$\operatorname{RR}(\mathcal{E}(5,k,1))$, and its proof is in
Appendix II.
\begin{theorem}\label{th-Last}
For $k \in \mathbb{Z}^+$,
$$
\operatorname{RR}(\mathcal{E}(5,k,1))=
\begin{cases} 3& \text{for $k=3, 4$},\\
4 & \text{for $k=5, 6$},\\
5 & \text{for $k=1, 2, 7, 8, 9$},\\
6 & \text{for $k=10, 11, 12$},\\
7 & \text{for $k=13, 14$},\\
8 & \text{for $k=15,16$},\\
9 & \text{for $k=17, 18$},\\
10 & \text{for $k=19, 20$},\\
11 & \text{for $k\geq 21$}.\\
\end{cases}
$$
\end{theorem}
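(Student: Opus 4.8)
The plan is to establish each value in the table by a matching pair of bounds: an upper bound from a colour-forcing argument on some interval $[1,N]$, and a lower bound from an explicit valid $2$-colouring of $[1,N-1]$. Everything hinges on rewriting the equation $\mathcal{E}(5,k,1):\ x+y+z+kv=(k+1)w$ as
\[
x+y+z \;=\; k(w-v)+w .
\]
On a short interval $[1,N]$ the right-hand side forces $w-v$ to be small: if $w>v$ then $x+y+z\ge k+1$, and if $w<v$ then $x+y+z\le w-k<3$. So, when $N$ is not too large compared with $k$, the only relevant solution families are $w=v$ (which is the equation $x+y+z=w$), $w=v+1$ (which reads $x+y+z=k+w$), and, for smaller $k$, one or two further families such as $w=v-1$ and $w=v+2$. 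This is the mechanism behind the piecewise dependence on $k$; in particular it explains the stabilisation for large $k$, since putting $v=w$ turns any monochromatic solution of $x+y+z=w$ into one of $\mathcal{E}(5,k,1)$, whence $\operatorname{RR}(\mathcal{E}(5,k,1))\le\operatorname{RR}(x+y+z=w)=11$. (The value $11$ is the classical generalised Schur number and is easy to reprove: up to swapping colours the unique valid $2$-colouring of $[1,10]$ is $R=\{1,2,9,10\}$, $B=\{3,4,5,6,7,8\}$, and any colouring of $[1,11]$ contains the monochromatic $1+1+9=11$ or $3+3+5=11$.)

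For the upper bounds I would, for each block of $k$, assume a valid $2$-colouring of $[1,N]$ exists, put $1\in R$ without loss of generality, and propagate colours one integer at a time: at each step select a solution $(x,y,z,v,w)$ drawn from whichever of the families $w=v,\ w=v+1,\ w=v-1,\ w=v+2$ is active for the current $k$ and $N$, three of whose entries already have known colour, so that the colour of a fourth integer is forced; after a handful of steps one reaches a tuple all of whose entries share a colour, a contradiction. The tuples needed are short, of the shape $(a,a,b,c,d)$ or $(a,b,c,c,d)$ with small entries, and the choice of family at each step is governed by the size and residue of $k$, which is exactly where the block boundaries $k=3,5,7,10,13,15,17,19,21$ come from.

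For the lower bounds I would exhibit a valid colouring of $[1,N-1]$ in each range: for the smallest $k$ the Schur-type colouring $R=\{1,4\}$, $B=\{2,3\}$ of $[1,4]$, and for the large-$k$ regime the colouring $R=\{1,2,9,10\}$, $B=\{3,\dots,8\}$ of $[1,10]$, whose red triple sums form the set $\{3,4,5,6,11,12,13,14,19,20,21,22,27,28,29,30\}$ and whose blue triple sums lie in $[9,24]$, so that neither $w=v$ nor $w=v+1$ is realised monochromatically once $k$ is in the claimed range; the intermediate ranges use similar short colourings tuned to $k$. For each block one then checks directly that none of the finitely many active families yields a monochromatic tuple; as elsewhere in the paper (Section~4), a short computer search both confirms these colourings and, in the small cases, shows they are essentially unique, which is what pins the lower bounds to the stated values.

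The step I expect to be the main obstacle is the intermediate range, roughly $13\le k\le 20$, where the Rado number is still strictly below the stable value $11$: there the upper-bound forcing argument has to exploit the auxiliary families $w=v\pm1$ and $w=v+2$ rather than just $x+y+z=w$, the lower-bound colouring genuinely depends on $k$ (not on a single extremal configuration), and matching the two directions block by block — verifying that the value jumps exactly at each listed threshold — is the bulk of the work. A related subtlety to watch is that the family $w=v+1$ can reintroduce monochromatic red solutions into the otherwise-extremal colouring $R=\{1,2,9,10\}$, $B=\{3,\dots,8\}$ precisely when $k+2$ is a red triple sum, so the large-$k$ argument must confirm that this does not happen for the $k$ in question.
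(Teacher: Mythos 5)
Your proposal is a programme rather than a proof. For every block of $k$ the upper-bound forcing chains and the lower-bound colourings are only described generically (``after a handful of steps'', ``similar short colourings tuned to $k$''), and you yourself concede that the intermediate range, roughly $13\le k\le 20$, is ``the bulk of the work''. That is exactly the part that carries the content. The paper settles it by a different, exhaustive mechanism: it first proves $\operatorname{RR}(\mathcal{E}(5,k,1))\le 11$ for all $k$ by a single explicit forcing chain, then lists (by computer) all valid $2$-colourings of $[1,n]$ for $3\le n\le 10$, and for each colouring intersects the value sets $R_{x,y,z,v}=\{ik+j\}$, $R_w=\{i(k+1)\}$ and their blue analogues to read off precisely the $k$ for which that colouring survives. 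Your family decomposition $x+y+z=k(w-v)+w$ with $w-v\in\{0,\pm1,2\}$ is a sensible way to organise this, and your derivation of the uniform upper bound $11$ from $\operatorname{RR}(x+y+z=w)=11$ via $v=w$ is correct and cleaner than the paper's, but nothing in the sketch replaces the block-by-block verification.

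More seriously, the one concrete ingredient you do supply fails, at exactly the subtlety you flag and leave unchecked. For $k=25,26,27,28$ we have $k+2\in\{27,28,29,30\}$, which is a red triple sum of $R=\{1,2,9,10\}$: for instance, for $k=25$ the tuple $(x,y,z,v,w)=(9,9,9,1,2)$ satisfies $9+9+9+25\cdot 1=52=26\cdot 2$ and is entirely red (similarly $(9,9,10,1,2)$, $(9,10,10,1,2)$, $(10,10,10,1,2)$ for $k=26,27,28$). Since, as you note, the extremal colouring of $[1,10]$ for $x+y+z=w$ is unique up to swapping colours, every other $2$-colouring of $[1,10]$ already contains a monochromatic solution of $x+y+z=w$, hence of $\mathcal{E}(5,k,1)$ with $v=w$; so the lower bound $\operatorname{RR}\ge 11$ cannot be established for these $k$ by any colouring. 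The same tuple invalidates both valid colourings of $[1,9]$ (each has $1,2,9$ in one class), while $R=\{1,2\}$, $B=[3,8]$ is valid on $[1,8]$ when $k\ge 22$, so for $k=25,\dots,28$ the true value is $9$, not $11$. Thus the gap is not repairable: carried out honestly, your plan would disprove part of the stated theorem. (This also exposes an oversight in the paper's own Appendix II, which lists the offending parameters $j=27,\dots,30$, i.e.\ $k=25,\dots,28$, for the colouring $rrbbbbbbrr$ and then silently drops them, and omits $j=27$ altogether in the $n=9$ case.)
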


\bibliographystyle{line}

\section{(For referee) Appendix I: Proof of Theorem \ref{th-Exact-1}}

The proof of Proposition \ref{pro3-2} is:
\begin{proof}
To show the upper bound, we suppose that there is no monochromatic
solution for any $2$-coloring of $[1,\frac{k^2+5k+10}{16}]$. From
Lemma \ref{lem1}, we have $1, 2,...,\frac{k-5}{4}\in R$ and that $k,
2k,..., \frac{k^2-k}{16}\in B$. So $(1, 2, 1, \frac{k+3}{4})$ and
$(3, 4, 1, \frac{k+7}{4})$ show that $\frac{k+3}{4}\in B$ and
$\frac{k+7}{4}\in B$. In turn we have $\frac{k^2+5k+10}{16}\in R$ by
considering $(\frac{k+3}{4}, \frac{k+7}{4}, \frac{k+3}{4},
\frac{k^2+5k+10}{16})$. Using $\frac{2k+30}{16}\in R$ in
$(\frac{k^2+5k+10}{16},$ $\frac{k^2+3k+20}{16}, \frac{2k+14}{16},
\frac{k^2+5k+10}{16})$ we have $\frac{k^2+5k+10}{16}\in B$.
Furthermore, we have $\frac{2k+10}{4}\in B$ by considering $(5, 5,
2, \frac{2k+10}{4})$, and so $(\frac{2k+10}{4}, \frac{2k+10}{4},
\frac{k-1}{4}, \frac{k^2+3k+20}{16})$ shows that $\frac{k-1}{4}\in
R$, and hence $\frac{k^2+k-2}{16}\in B$ by considering
$(\frac{k-1}{4}, \frac{k-1}{4}, \frac{k-1}{4},$
$\frac{k^2+k-2}{16})$. Using $\frac{k^2-k}{16}\in B$ in $(k, 2k,
\frac{k-13}{4}, \frac{k^2-k}{16})$, we have $\frac{k-25}{4}\in R$, then $\frac{k^2-12k-21}{16}\in B$ by considering
$(\frac{k-25}{4}, 1, \frac{k-13}{4}, \frac{k^2-12k-21}{16})$.  Therefore, $(\frac{k^2-12k-21}{16},
\frac{3k+13}{16},$ $ \frac{3k+13}{16}, \frac{k^2+k-2}{16})$ is a
blue solution, a contradiction.

To prove the lower bounds, $\frac{k^2+5k+10}{16}-1=\frac{k^2+5k-6}{16}$. We color the integers in $[1,\frac{k-1}{4}]$ red and the integers in
$[\frac{k+3}{4},\frac{k^2+5k-6}{16}]$ blue. Let $x,y,z,w$
be a monochromatic solution of $x+y+kz=4w$. If $x, y, z$ are all
red, then $x+y+kz\geq k+2$, and hence $w\geq
\lceil\frac{k+2}{4}\rceil=\frac{k+3}{4}$, and so there is no red
solution. If $x, y, z$ are all blue, then $x+y+kz\geq
\frac{k^2+5k+6}{4}$, and so $w\geq
\lceil\frac{k^2+5k+6}{16}\rceil=\frac{k^2+5k+10}{16}> \frac{k^2+5k-6}{16}$, and
hence there is no blue solution.
\end{proof}

The proof of Proposition \ref{pro3-3} is:
\begin{proof}
To show the upper bound, we suppose that there is no monochromatic
solution for any $2$-coloring of $[1,\frac{k^2+4k+4}{16}]$. From
Lemma \ref{lem1}, we have $1, 2,...,\frac{k-2}{4}-1\in R$ and that
$k, 2k,..., \frac{k^2-2k}{16}\in B$. So $(1, 1, 1, \frac{k+2}{4})$
show that $\frac{k+2}{4}\in B$. In turn we have
$\frac{k^2+4k+4}{16}\in R$ by considering $(\frac{k+2}{4},
\frac{k+2}{4}, \frac{k+2}{4}, \frac{k^2+4k+4}{16})$ For our
contradiction, we see now that
$(\frac{k^2+4k+4}{16},\frac{2k+12}{16}, \frac{3k+10}{16},
\frac{k^2+4k+4}{16})$ is a red solution.

For the lower bound, $\frac{k^2+4k+4}{16}-1=\frac{k^2+4k-12}{16}$. We color the integers in $[1,\frac{k-2}{4}]$ red and the integers in
$[\frac{k+2}{4},\frac{k^2+4k-12}{16}]$ blue.
If $x, y, z$ are all red, then $x+y+kz\geq k+2$, and
so $w\geq \lceil\frac{k+2}{4}\rceil=\frac{k+2}{4}$, and hence there
is no red solution. If $x, y, z$ are all blue, then $x+y+kz\geq
\frac{k^2+4k+4}{4}$, and so $w\geq
\lceil\frac{k^2+4k+4}{16}\rceil=\frac{k^2+4k+4}{16}> \frac{k^2+4k-12}{16}$ and hence
there is no blue solution.
\end{proof}

The proof of Proposition \ref{pro3-6} is:
\begin{proof}
If $k\geq 48$, to show the upper bound, suppose that there is no monochromatic
solution for any $2$-coloring of $[1,\frac{4k^2+30k+34}{64}]$. From
Lemma \ref{lem1}, we have $1, 2,...,\frac{k-4}{4}-1\in R$. The
solutions $(5, \frac{k-8}{4}, 1, \frac{5k+12}{16})$ and
$(2,2,1,\frac{k+4}{4})$ show that $\frac{5k+12}{16},\frac{k+4}{4}\in
B$. In turn we have $\frac{4k^2+26k+24}{64}\in R$ by considering
$(\frac{5k+12}{16}, \frac{5k+12}{16}, \frac{k+4}{4},
\frac{4k^2+26k+24}{64})$. By considering
$(\frac{k+4}{4},\frac{k+4}{4},\frac{k+4}{4}, \frac{k^2+6k+8}{16})$
we have $\frac{k^2+6k+8}{16}\in R$. Since $1, \frac{3k+20}{16}\in
R$, we see that $(1,\frac{k^2+6k+8}{16}, \frac{3k+20}{16},
\frac{4k^2+26k+24}{64})$ is a red solution, a contradiction.

For the lower bound, $\frac{4k^2+26k+24}{64}-1=\frac{4k^2+26k-40}{64}$. Color $[1,\frac{k}{4}]\cup[\frac{k^2+6k+8}{16},\frac{4k^2+26k-40}{64}]$ red and
$[\frac{k+4}{4},\frac{k^2+6k-8}{16}]$ blue. For any solution
with $x,y,z$ all blue, we have $4w\geq \frac{k+4}{4}(k+2)$, and so
$w$ must be red. Thus, every monochromatic solution is red. For
every such solution, $w\geq \frac{k^2+6k+8}{16}$. We have $z\leq
\frac{k}{4}$, so if $x, y \leq \frac{k}{4}$ then
$\frac{k^2+6k+8}{16} \leq w \leq \lfloor\frac{k^2+2k}{16}\rfloor$, a
contradiction. Therefore, at least one of $x$ or $y$ must be at
least $\frac{k^2+6k+8}{16}$. If $x \geq \frac{k^2+6k+8}{16}$, then
it follows from $y+kz=4w-x$ that
$$\frac{4(k^2+6k+8)}{16}-\frac{4k^2+26k-40}{64} \leq y+kz \leq
\frac{4(4k^2+26k-40)}{64}-\frac{k^2+6k+8}{16},$$
i.e.,
$\frac{12k^2+70k+168}{64} \leq y+kz \leq \frac{12k^2+80k-192}{64}.$
If $z \geq \frac{3k+20}{16}$, then $y \leq
\frac{12k^2+80k-192}{64}-\frac{3k^2+20k}{16}=-3<0$, which is
impossible. If $z \leq \frac{3k+4}{16}$, then $y \geq
\frac{12k^2+70k+168}{64}-\frac{3k^2+4k}{16}=\frac{54k+168}{64}>\frac{k}{4}$.
Since $y \in R$, we have $y\geq \frac{k^2+6k+8}{16}$. We can assume
that
$x=\frac{4k^2+24k+a}{64},y=\frac{4k^2+24k+b}{64},w=\frac{4k^2+24k+c}{64}$,
with $a,b,c \in[32,2k-40]$. Thus,
$z=\frac{4w-x-y}{k}=\frac{8k^2+48k+4c-a-b}{64k}=\frac{8k+48+(4c-a-b)/k}{16}.$
Since $\frac{168-4k}{k} \leq \frac{4c-a-b}{k} \leq
\frac{8k-224}{k}$, it follows that $\frac{4c-a-b}{k}\in[-4,7]$, and
hence $z\in [\frac{8k+44}{64}, \frac{8k+55}{64}]$ is not an integer,
and so there are no monochromatic solutions.

If $16<k<48$, we know that $k=36$. To show the upper bound, we
suppose that there is no monochromatic solution for any $2$-coloring of
$[1,96]$. From Lemma \ref{lem1}, we have $1,2,...,7\in R$. The solutions
$(2,2,1,10)$, $(7,5,1,12)$, $(26,6,2,26)$, $(85,3,7,85)$ show that $10,12,26,85\in B$. Then we have $8,95,96\in R$ by
considering $(26,26,8,85)$, $(10,10,10,95)$ and $(12,12,10,96)$.
Therefore, $(3,95,8,96)$ is a red solution, a contradiction.

For the lower bound, $96-1=95$,
we color $[1,9]\cup\{95\}$ red,
$[10,94]$ blue. It is easy to proof that there are no monochromatic solutions.

\end{proof}

The proof of Proposition \ref{pro3-7} is:
\begin{proof}
To show the upper bound, we suppose that there is no monochromatic
solution for any $2$-coloring of $[1,\frac{4k^2+26k+56}{64}]$. From
Lemma \ref{lem1}, we have $1, 2,...,\frac{k-4}{4}-1\in R$. The
solutions $(8, \frac{k-8}{4}, 1, \frac{5k+28}{16})$ and
$(2,2,1,\frac{k+4}{4})$ show that $\frac{5k+28}{16},\frac{k+4}{4}\in
B$. In turn we have $\frac{4k^2+26k+56}{64}\in R$ by considering
$(\frac{5k+28}{16}, \frac{5k+28}{16}, \frac{k+4}{4},
\frac{4k^2+26k+56}{64})$. By considering
$(\frac{k+4}{4},\frac{k+4}{4},\frac{k+4}{4}, \frac{k^2+6k+8}{16})$
we have $\frac{k^2+6k+8}{16}\in R$. Since $1, \frac{3k+20}{16}\in
R$, one can see that $(3,\frac{k^2+6k+8}{16}, \frac{3k+20}{16},
\frac{4k^2+26k+56}{64})$ is a red solution, a contradiction.

For the lower bound, $\frac{4k^2+26k+56}{64}-1=\frac{4k^2+26k-8}{64}$. Color $[1,\frac{k}{4}]\cup [\frac{k^2+6k+8}{16},\frac{4k^2+26k-8}{64}]$ red and $[\frac{k+4}{4},\frac{k^2+6k-8}{16}]$
blue. For any
solution with $x,y,z$ all blue, we have $4w\geq \frac{k+4}{4}(k+2)$,
so $w$ must be red. Thus, every monochromatic solution is red. For
every such solution, $w\geq \frac{k^2+6k+8}{16}$. Recall that $z\leq
\frac{k}{4}$. If $x, y \leq \frac{k}{4}$, then $\frac{k^2+6k+8}{16}
\leq w \leq \lfloor\frac{k^2+2k}{16}\rfloor$, a contradiction. Thus,
at least one of $x$ or $y$ must be at least $\frac{k^2+6k+8}{16}$.
If $x \geq \frac{k^2+6k+8}{16}$, since $y+kz=4w-x$, then
$$
\frac{4(k^2+6k+8)}{16}-\frac{4k^2+26k-8}{64} \leq y+kz \leq
\frac{4(4k^2+26k-8)}{64}-\frac{k^2+6k+8}{16},$$
i.e.,$\frac{12k^2+70k+136}{64} \leq y+kz \leq \frac{12k^2+80k-64}{64}.$
If $z \geq \frac{3k+20}{16}$, then $y \leq
\frac{12k^2+80k-64}{64}-\frac{3k^2+20k}{16}=-1<0$, which is
impossible. If $z \leq \frac{3k+4}{16}$, then $y \geq
\frac{12k^2+70k+136}{64}-\frac{3k^2+4k}{16}=\frac{54k+136}{64}>\frac{k}{4}$.
Since $y \in R$, we have $y\geq \frac{k^2+6k+8}{16}$. Assume
that
$x=\frac{4k^2+24k+a}{64},y=\frac{4k^2+24k+b}{64},w=\frac{4k^2+24k+c}{64}$,
with $a,b,c \in[32,2k-8]$. Thus,$z=\frac{4w-x-y}{k}=\frac{8k^2+48k+4c-a-b}{64k}=\frac{8k+48+(4c-a-b)/k}{16}.$
Since $\frac{144-4k}{k} \leq \frac{4c-a-b}{k} \leq \frac{8k-96}{k}$,
follows that $\frac{4c-a-b}{k}\in[-4,7]$, and hence $z\in
[\frac{8k+44}{64}, \frac{8k+55}{64}]$ is not an integer, and so
there are no monochromatic solutions.
\end{proof}

The proof of Proposition \ref{pro3-8} is:
\begin{proof}
To show the upper bound, we suppose that there is no monochromatic
solution for any $2$-coloring of $[1,\frac{4k^2+22k+46}{64}]$. From
Lemma \ref{lem1}, we have $1, 2,...,\frac{k-1}{4}-1\in R$. The
solutions $(7, \frac{k-5}{4}, 1, \frac{5k+23}{16})$,
$(2,1,1,\frac{k+3}{4})$, and $(3,4,1,\frac{k+7}{4})$ show that
$\frac{5k+23}{16},\frac{k+3}{4},\frac{k+7}{4}\in B$. In turn we have
$\frac{4k^2+26k+46}{64}\in R$ by considering $(\frac{5k+23}{16},
\frac{5k+23}{16}, \frac{k+3}{4}, \frac{4k^2+22k+46}{64})$.
Considering $(\frac{k+7}{4},\frac{k+7}{4},\frac{k+3}{4},
\frac{k^2+5k+14}{16})$ we have $\frac{k^2+5k+14}{16}\in R$. Since
$2, \frac{3k+17}{16}\in R$, we see that $(2,\frac{k^2+5k+14}{16},
\frac{3k+17}{16}, \frac{4k^2+22k+46}{64})$ is a red solution, a
contradiction.

For the lower bound, $\frac{4k^2+22k+46}{64}-1=\frac{4k^2+22k-18}{64}$. Color $[1,\frac{k-1}{4}]\cup [\frac{k^2+5k+14}{16},\frac{4k^2+22k-18}{64}]$ red and
$[\frac{k+3}{4},\frac{k^2+5k-2}{16}]$ blue. For any
solution with $x,y,z$ all blue, we have $4w\geq \frac{k+3}{4}(k+2)$,
and so $w$ must be red. Thus, every monochromatic solution is red.
For every such solution, $w\geq \frac{k^2+5k+14}{16}$. We have
$z\leq \frac{k+1}{4}$, so if $x, y \leq \frac{k+1}{4}$ then
$\frac{k^2+5k+14}{16} \leq w \leq
\lfloor\frac{k^2+3k+2}{16}\rfloor$, a contradiction. Therefore, at
least one of $x$ or $y$ must be at least $\frac{k^2+5k+14}{16}$. If
$x \geq \frac{k^2+5k+14}{16}$, then it follows from $y+kz=4w-x$ that$$
\frac{4(k^2+5k+14)}{16}-\frac{4k^2+22k-18}{64} \leq y+kz \leq
\frac{4(4k^2+22k-18)}{64}-\frac{k^2+5k+14}{16},$$
i.e.,$
\frac{12k^2+58k+242}{64} \leq y+kz \leq \frac{12k^2+68k-128}{64}.$
If $z \geq \frac{3k+17}{16}$, then $y \leq
\frac{12k^2+68k-128}{64}-\frac{3k^2+68k}{16}=-2<0$, which is
impossible. If $z \leq \frac{3k+1}{16}$, then $y \geq
\frac{12k^2+58k+242}{64}-\frac{3k^2+k}{16}=\frac{54k+242}{64}>\frac{k-1}{4}$.
Since $y \in R$, we have $y\geq \frac{k^2+5k+14}{16}$. We can assume
that
$x=\frac{4k^2+20k+a}{64},y=\frac{4k^2+20k+b}{64},w=\frac{4k^2+20k+c}{64}$,
with $a,b,c \in[14,2k-18]$. Thus, we have$
z=\frac{4w-x-y}{k}=\frac{8k^2+40k+4c-a-b}{64k}=\frac{8k+40+(4c-a-b)/k}{16}.$
Since $\frac{92-4k}{k} \leq \frac{4c-a-b}{k} \leq \frac{8k-100}{k}$,
it follows that $\frac{4c-a-b}{k}\in[-4,7]$, and hence $z\in
[\frac{8k+36}{64}, \frac{8k+47}{64}]$ is not an integer, and so
there are no monochromatic solutions.
\end{proof}

The proof of Proposition \ref{pro3-9} is:
\begin{proof}
To show the upper bound, we suppose that there is no monochromatic
solution for any $2$-coloring of $[1,\frac{4k^2+22k+78}{64}]$. From
Lemma \ref{lem1}, we have $1, 2,...,\frac{k-1}{4}-1\in R$. The
solutions $(11, \frac{k-5}{4}, 1, \frac{5k+39}{16})$,
$(2,1,1,\frac{k+3}{4})$, and $(3,4,1,\frac{k+7}{4})$ show that
$\frac{5k+39}{16},\frac{k+3}{4},\frac{k+7}{4}\in B$. In turn we have
$\frac{4k^2+26k+78}{64}\in R$ by considering $(\frac{5k+39}{16},
\frac{5k+39}{16}, \frac{k+3}{4}, \frac{4k^2+22k+78}{64})$. By
considering $(\frac{k+7}{4},\frac{k+7}{4},\frac{k+3}{4},
\frac{k^2+5k+14}{16})$, we have $\frac{k^2+5k+14}{16}\in R$. Since
$4, \frac{3k+17}{16}\in R$, it follows that
$(4,\frac{k^2+5k+14}{16}, \frac{3k+17}{16}, \frac{4k^2+22k+78}{64})$
is a red solution, a contradiction.

For the lower bound, $\frac{4k^2+22k+78}{64}-1=\frac{4k^2+22k+18}{64}$. Color $[1,\frac{k-1}{4}]\cup [\frac{k^2+5k+14}{16},\frac{4k^2+22k+18}{64}]$ red and
$[\frac{k+3}{4},\frac{k^2+5k-2}{16}]$ blue. For any
solution with $x,y,z$ all blue, we have $4w\geq \frac{k+3}{4}(k+2)$,
so $w$ must be red. Clearly, every monochromatic solution is red.
For every such solution, $w\geq \frac{k^2+5k+14}{16}$. We have
$z\leq \frac{k+1}{4}$, so if $x, y \leq \frac{k+1}{4}$ then
$\frac{k^2+5k+14}{16} \leq w \leq
\lfloor\frac{k^2+3k+2}{16}\rfloor$, a contradiction. Thus, at least
one of $x$ or $y$ must be at least $\frac{k^2+5k+14}{16}$. If $x
\geq \frac{k^2+5k+14}{16}$, then it follows from $y+kz=4w-x$ that$$
\frac{4(k^2+5k+14)}{16}-\frac{4k^2+22k+18}{64} \leq y+kz \leq
\frac{4(4k^2+22k+18)}{64}-\frac{k^2+5k+14}{16},$$
i.e.,$
\frac{12k^2+58k+206}{64} \leq y+kz \leq \frac{12k^2+68k+16}{64}.$
If $z \geq \frac{3k+17}{16}$, then $y \leq
\frac{12k^2+68k+16}{64}-\frac{3k^2+17k}{16}=-2<0$, which is
impossible. If $z \leq \frac{3k+1}{16}$, then $y \geq
\frac{12k^2+58k+206}{64}-\frac{3k^2+k}{16}=\frac{54k+206}{64}>\frac{k-1}{4}$.
Since $y \in R$, we have $y\geq \frac{k^2+5k+14}{16}$. We can assume
that
$x=\frac{4k^2+20k+a}{64},y=\frac{4k^2+20k+b}{64},w=\frac{4k^2+20k+c}{64}$,
with $a,b,c \in[56,2k+18]$. Therefore,$
z=\frac{4w-x-y}{k}=\frac{8k^2+40k+4c-a-b}{64k}=\frac{8k+40+(4c-a-b)/k}{16}.$
Since $\frac{188-4k}{k} \leq \frac{4c-a-b}{k} \leq
\frac{8k-40}{k}$, it follows that $\frac{4c-a-b}{k}\in[-4,7]$, and
$z\in [\frac{8k+36}{64}, \frac{8k+47}{64}]$ is not an integer, and
so there are no monochromatic solutions.
\end{proof}

The proof of Proposition \ref{pro3-10} is:

\begin{proof}
To show the upper bound, we suppose that there is no monochromatic
solution for any $2$-coloring of $[1,\frac{4k^2+18k+68}{64}]$. From
Lemma \ref{lem1}, we have $1, 2,...,\frac{k-2}{4}-1\in R$. The
solutions $(10, \frac{k-6}{4}, 1, \frac{5k+34}{16})$ and
$(1,1,1,\frac{k+2}{4})$ show that $\frac{5k+34}{16},\frac{k+2}{4}\in
B$. In turn we have $\frac{4k^2+18k+68}{64}\in R$ by considering
$(\frac{5k+34}{16}, \frac{5k+34}{16}, \frac{k+2}{4},
\frac{4k^2+18k+68}{64})$. By considering
$(\frac{k+2}{4},\frac{k+2}{4},\frac{k+2}{4}, \frac{k^2+4k+4}{16})$
we have $\frac{k^2+4k+4}{16}\in R$. Since $4, \frac{3k+14}{16}\in
R$, it follows that $(4,\frac{k^2+4k+4}{16}, \frac{3k+14}{16},
\frac{4k^2+18k+68}{64})$ is a red solution, a contradiction.

For the lower bound, $\frac{4k^2+18k+68}{64}-1=\frac{4k^2+18k+4}{64}$. Color $[1,\frac{k-2}{4}]\cup [\frac{k^2+4k+4}{16},\frac{4k^2+18k+4}{64}]$ red and
$[\frac{k+2}{4},\frac{k^2+4k-12}{16}]$ blue. For any solution
with $x,y,z$ all blue, we have $4w\geq \frac{k+3}{4}(k+2)$, and so
$w$ must be red. Therefore, every monochromatic solution is red. For
every such solution, $w\geq \frac{k^2+4k+4}{16}$. Recall $z\leq
\frac{k-2}{4}$. If $x, y \leq \frac{k-2}{4}$, then
$\frac{k^2+4k+4}{16} \leq w \leq \lfloor\frac{k^2-4}{16}\rfloor$, a
contradiction. Thus, at least one of $x$ or $y$ must be at least
$\frac{k^2+4k+4}{16}$. If $x \geq \frac{k^2+4k+4}{16}$, then it
follows from $y+kz=4w-x$ that
$$
\frac{4(k^2+4k+4)}{16}-\frac{4k^2+18k+4}{64} \leq y+kz \leq
\frac{4(4k^2+18k+4)}{64}-\frac{k^2+4k+4}{16},
$$
i.e.,$\frac{12k^2+46k+60}{64} \leq y+kz \leq \frac{12k^2+56k}{64}.$
If $z \geq \frac{3k+14}{16}$, then $y \leq
\frac{12k^2+56k}{64}-\frac{3k^2+14k}{16}=0$, which is impossible. If
$z \leq \frac{3k-2}{16}$, then $y \geq
\frac{12k^2+46k+60}{64}-\frac{3k^2-2k}{16}=\frac{48k+60}{64}>\frac{k-2}{4}$.
Since $y \in R$, we have $y\geq \frac{k^2+4k+4}{16}$. We can assume
that
$x=\frac{4k^2+16k+a}{64},y=\frac{4k^2+16k+b}{64},w=\frac{4k^2+16k+c}{64}$,
with $a,b,c \in[16,2k+4]$. Thus, $
z=\frac{4w-x-y}{k}=\frac{8k^2+32k+4c-a-b}{64k}=\frac{8k+32+(4c-a-b)/k}{16}.$
Since $\frac{56-4k}{k} \leq \frac{4c-a-b}{k} \leq \frac{8k-16}{k}$,
it follows that $\frac{4c-a-b}{k}\in[-4,7]$, and $z\in
[\frac{8k+28}{64}, \frac{8k+39}{64}]$ is not an integer, and so
there are no monochromatic solutions.
\end{proof}

The proof of Proposition \ref{pro3-11} is:
\begin{proof}
To show the upper bound, we suppose that there is no monochromatic
solution for any $2$-coloring of $[1,\frac{4k^2+18k+36}{64}]$. From
Lemma \ref{lem1}, we have $1, 2,...,\frac{k-2}{4}-1\in R$. The
solutions $(6, \frac{k-6}{4}, 1, \frac{5k+18}{16})$ and
$(1,1,1,\frac{k+2}{4})$ show that $\frac{5k+18}{16},\frac{k+2}{4}\in
B$. In turn we have $\frac{4k^2+18k+68}{64}\in R$ by considering
$(\frac{5k+18}{16}, \frac{5k+18}{16}, \frac{k+2}{4},
\frac{4k^2+18k+36}{64})$. By considering
$(\frac{k+2}{4},\frac{k+2}{4},\frac{k+2}{4}, \frac{k^2+4k+4}{16})$
we have $\frac{k^2+4k+4}{16}\in R$. Since $4, \frac{3k+14}{16}\in
R$, one can see that $(2,\frac{k^2+4k+4}{16}, \frac{3k+14}{16},
\frac{4k^2+18k+36}{64})$ is a red solution, a contradiction.

For the lower bound, $\frac{4k^2+18k+68}{64}=\frac{4k^2+18k+4}{64}$. Color $[1,\frac{k-2}{4}]\cup [\frac{k^2+4k+4}{16},\frac{4k^2+18k-28}{64}]$ red and
$[\frac{k+2}{4},\frac{k^2+4k-12}{16}]$ blue. For any solution
with $x,y,z$ all blue, we have $4w\geq \frac{k+3}{4}(k+2)$, so $w$
must be red. It is clear that every monochromatic solution is red.
For every such solution, $w\geq \frac{k^2+4k+4}{16}$. We have $z\leq
\frac{k-2}{4}$, so if $x, y \leq \frac{k-2}{4}$ then
$\frac{k^2+4k+4}{16} \leq w \leq \lfloor\frac{k^2-4}{16}\rfloor$, a
contradiction. Thus, at least one of $x$ or $y$ must be at least
$\frac{k^2+4k+4}{16}$. If $x \geq \frac{k^2+4k+4}{16}$, then it
follows from $y+kz=4w-x$ that
$$
\frac{4(k^2+4k+4)}{16}-\frac{4k^2+18k-28}{64} \leq y+kz \leq
\frac{4(4k^2+18k-28)}{64}-\frac{k^2+4k+4}{16},$$
i.e.,$
\frac{12k^2+46k+92}{64} \leq y+kz \leq \frac{12k^2+56k-128}{64}.$
If $z \geq \frac{3k+14}{16}$, then $y \leq
\frac{12k^2+56k-128}{64}-\frac{3k^2+14k}{16}=-2<0$, which is
impossible. If $z \leq \frac{3k-2}{16}$, then $y \geq
\frac{12k^2+46k+92}{64}-\frac{3k^2-2k}{16}=\frac{48k+92}{64}>\frac{k-2}{4}$.
Since $y \in R$, we have $y\geq \frac{k^2+4k+4}{16}$. We can assume
that
$x=\frac{4k^2+16k+a}{64},y=\frac{4k^2+16k+b}{64},w=\frac{4k^2+16k+c}{64}$,
with $a,b,c \in[16,2k-28]$. Thus, we have$
z=\frac{4w-x-y}{k}=\frac{8k^2+32k+4c-a-b}{64k}=\frac{8k+32+(4c-a-b)/k}{16}.$
Since $\frac{120-4k}{k} \leq \frac{4c-a-b}{k} \leq
\frac{8k-144}{k}$, it follows that $\frac{4c-a-b}{k}\in[-4,7]$, and
hence $z\in [\frac{8k+28}{64}, \frac{8k+39}{64}]$ is not an integer,
and so there are no monochromatic solutions.
\end{proof}

The proof of Proposition \ref{pro3-13} is:
\begin{proof}
To show the upper bound, we suppose that there is no monochromatic
solution for any $2$-coloring of $[1,\frac{k^2+7k+8}{16}]$. From
Lemma \ref{lem1}, we have $1, 2,...,\frac{2k}{16}+1\in R$. The
solutions $(2, 2, 1, \frac{k+4}{4})$ and $(4, 4, 1, \frac{k+8}{4})$
show that $\frac{k+4}{4}, \frac{k+8}{4}\in B$. In turn we have
$\frac{k^2+6k+16}{16}\in R$ by considering $(\frac{k+8}{4},
\frac{k+8}{4}, \frac{k+4}{4}$, $\frac{k^2+6k+16}{16})$. Using $2\in
R$ in $(2,2,2, \frac{2k+4}{4})$, we have $\frac{2k+4}{4}\in B$. We
have $\frac{k^2+7k+8}{16}\in R$ by considering
$(\frac{k+4}{4},\frac{2k+4}{4},\frac{k+4}{4},\frac{k^2+7k+8}{16})$,
and hence $(\frac{k^2+6k+16}{16},\frac{k^2+6k+16}{16},
\frac{2k+16}{16}, \frac{k^2+7k+8}{16})$ is a red solution, a
contradiction.

For the lower bound, $\frac{k^2+7k+8}{16}-1=\frac{k^2+7k-8}{16}$.
Color $[1,\frac{k}{4}]\cup [\frac{k^2+6k+16}{16},\frac{k^2+7k+8}{16}]$ red and $[\frac{k+4}{4},\frac{k^2+6k}{16}]$
blue. For any
solution with $x,y,z$ all blue, we have $4w\geq \frac{k+4}{4}(k+2)$,
so $w$ must be red. Clearly, every monochromatic solution is red.
For every such solution, $w\geq \frac{k^2+6k+16}{16}$. We have
$z\leq \frac{k}{4}$, so if $x, y \leq \frac{k}{4}$ then
$\frac{k^2+6k+16}{16} \leq w \leq \frac{k^2+2k}{16}$, a
contradiction. Thus, at least one of $x$ or $y$ must be at least
$\frac{k^2+6k+16}{16}$. If $x \geq \frac{k^2+6k+16}{16}$, then it
follows from $y+kz=4w-x$ that
$$
\frac{4(k^2+6k+16)}{16}-\frac{k^2+7k-8}{16} \leq y+kz \leq
\frac{4(k^2+7k-8)}{16}-\frac{k^2+6k+16}{16},
$$
i.e.,$\frac{3k^2+17k+72}{16} \leq y+kz \leq \frac{3k^2+22k-48}{16}.
$
If $z \geq \frac{3k+24}{16}$, then $y \leq
\frac{3k^2+22k-48}{16}-\frac{3k^2+24k}{16}=\frac{-2k-48}{16}<0$,
which is impossible. If $z \leq \frac{3k+8}{16}$, then $y \geq
\frac{3k^2+17k+72}{16}-\frac{3k^2+8k}{16}=\frac{9k+72}{16}>\frac{k}{4}$.
Since $y \in R$, we have $y\geq \frac{k^2+6k+16}{16}$. We can assume
that
$x=\frac{k^2+6k+a}{16},y=\frac{k^2+6k+b}{16},w=\frac{k^2+6k+c}{16}$,
with $a,b,c \in[16,k-8]$. Thus, we have$
z=\frac{4w-x-y}{k}=\frac{2k^2+12k+4c-a-b}{16k}=\frac{2k+12+(4c-a-b)/k}{16}.$
Since $\frac{80-2k}{k} \leq \frac{4c-a-b}{k} \leq \frac{4k-64}{k}$,
it follows that $\frac{4c-a-b}{k}\in[-2,3]$, and hence $z\in
[\frac{2k+10}{16}, \frac{2k+15}{16}]$ is not an integer, and so
there are no monochromatic solutions.
\end{proof}

The proof of Proposition \ref{pro3-14} is:
\begin{proof}
To show the upper bound, we suppose that there is no monochromatic
solution for any $2$-coloring of $[1,\frac{k^2+6k+9}{16}]$. From
Lemma \ref{lem1}, we have $1, 2,...,\frac{2k}{16}+1\in R$. The
solutions $(1, 2, 1, \frac{k+3}{4})$ and $(4, 5, 1, \frac{k+9}{4})$
show that $\frac{k+3}{4}, \frac{k+8}{4}\in B$. We have
$\frac{k^2+5k+18}{16}\in R$ by considering $(\frac{k+9}{4},
\frac{k+9}{4}, \frac{k+3}{4}, \frac{k^2+5k+18}{16})$. Using $2\in R$
in $(3,3,2, \frac{2k+6}{4})$, we have $\frac{2k+6}{4}\in B$.
Furthermore, one can see that $\frac{k^2+6k+9}{16}\in R$ by
considering
$(\frac{k+3}{4},\frac{2k+6}{4},\frac{k+3}{4},\frac{k^2+6k+9}{16})$,
and so $(\frac{k^2+5k+18}{16},\frac{k^2+5k+18}{16},
\frac{2k+14}{16}, \frac{k^2+6k+9}{16})$ is a red solution, a
contradiction.

For the lower bound, $\frac{k^2+6k+9}{16}-1=\frac{k^2+6k-7}{16}$. Color $[1,\frac{k-1}{4}]\cup [\frac{k^2+5k+18}{16},\frac{k^2+6k-7}{16}]$ red, $[\frac{k+3}{4},\frac{k^2+5k+2}{16}]$ and
blue. For any
solution with $x,y,z$ all blue, then $4w\geq \frac{k+3}{4}(k+2)$, so
$w$ must be red. Clearly, every monochromatic solution is red. For
every such solution, $w\geq \frac{k^2+5k+18}{16}$. Recall that
$z\leq \frac{k-1}{4}$. If $x, y \leq \frac{k-1}{4}$, then
$\frac{k^2+5k+18}{16} \leq w \leq \frac{k^2+k-2}{16}$, a
contradiction. Thus, at least one of $x$ or $y$ must be at least
$\frac{k^2+5k+18}{16}$. If $x \geq \frac{k^2+5k+18}{16}$, then it
follows from $y+kz=4w-x$ that
$$
\frac{4(k^2+5k+18)}{16}-\frac{k^2+6k-7}{16} \leq y+kz \leq
\frac{4(k^2+6k-7)}{16}-\frac{k^2+5k+18}{16},$$
i.e.,$
\frac{3k^2+14k+79}{16} \leq y+kz \leq \frac{3k^2+19k-46}{16}.$
If $z \geq \frac{3k+21}{16}$, then $y \leq
\frac{3k^2+19k-46}{16}-\frac{3k^2+21k}{16}=\frac{-2k-46}{16}<0$,
which is impossible. If $z \leq \frac{3k+5}{16}$, then $y \geq
\frac{3k^2+14k+79}{16}-\frac{3k^2+5k}{16}=\frac{9k+79}{16}>\frac{k-1}{4}$.
Since $y \in R$, we have $y\geq \frac{k^2+5k+18}{16}$. We can assume
that
$x=\frac{k^2+5k+a}{16},y=\frac{k^2+5k+b}{16},w=\frac{k^2+5k+c}{16}$,
with $a,b,c \in[18,k-7]$. Thus, we have$
z=\frac{4w-x-y}{k}=\frac{2k^2+10k+4c-a-b}{16k}=\frac{2k+10+(4c-a-b)/k}{16}.$
Since $\frac{86-2k}{k} \leq \frac{4c-a-b}{k} \leq \frac{4k-64}{k}$,
it follows that $\frac{4c-a-b}{k}\in[-2,3]$, and hence $z\in
[\frac{2k+8}{16}, \frac{2k+13}{16}]$ is not an integer, and so there
are no monochromatic solutions.
\end{proof}

The proof of Proposition \ref{pro3-15} is:
\begin{proof}
To show the upper bound, we suppose that there is no monochromatic
solution for any $2$-coloring of $[1,\frac{k^2+5k+10}{16}]$. From
Lemma \ref{lem1}, we have $1, 2,...,\frac{k-10}{4}-1\in R$. The
solutions $(1, 1, 1, \frac{k+2}{4})$ and $(5, 5, 1, \frac{k+10}{4})$
show that $\frac{k+2}{4}, \frac{k+10}{4}\in B$. In turn we have
$\frac{k^2+4k+20}{16}\in R$ by considering $(\frac{k+10}{4},
\frac{k+10}{4}, \frac{k+2}{4}, \frac{k^2+4k+20}{16})$. Using $2\in
R$ in $(4,4,2, \frac{2k+8}{4})$ we have $\frac{2k+8}{4}\in B$. Then
$\frac{k^2+5k+10}{16}\in R$ by considering
$(\frac{k+2}{4},\frac{2k+8}{4},\frac{k+2}{4},\frac{k^2+5k+10}{16})$,
and hence $(\frac{k^2+4k+20}{16},\frac{k^2+4k+20}{16},
\frac{2k+12}{16}, \frac{k^2+5k+10}{16})$ is a red solution, a
contradiction.

For the lower bound, $\frac{k^2+5k+10}{16}-1=\frac{k^2+5k-6}{16}$. Color $[1,\frac{k-2}{4}]\cup [\frac{k^2+4k+4}{16},\frac{k^2+5k-6}{16}]$ red and
$[\frac{k+2}{4},\frac{k^2+4k-12}{16}]$ blue. For any solution
with $x,y,z$ all blue, we have $4w\geq \frac{k+2}{4}(k+2)$, and so
$w$ must be red. Thus, every monochromatic solution is red. For
every such solution, $w\geq \frac{k^2+4k+4}{16}$. Recall that $z\leq
\frac{k-2}{4}$. If $x, y \leq \frac{k-2}{4}$ then
$\frac{k^2+4k+4}{16} \leq w \leq \frac{k^2-4}{16}$, a contradiction.
Therefore, at least one of $x$ or $y$ must be at least
$\frac{k^2+4k+4}{16}$. If $x \geq \frac{k^2+4k+4}{16}$, since
$y+kz=4w-x$, then
$$
\frac{4(k^2+4k+4)}{16}-\frac{k^2+5k-6}{16} \leq y+kz \leq
\frac{4(k^2+5k-6)}{16}-\frac{k^2+4k+4}{16},
$$
i.e.,$
\frac{3k^2+11k+22}{16} \leq y+kz \leq \frac{3k^2+16k-28}{16}.$
If $z \geq \frac{3k+18}{16}$ then $y \leq
\frac{3k^2+16k-28}{16}-\frac{3k^2+18k}{16}=\frac{-2k-28}{16}<0$,
which is impossible. If $z \leq \frac{3k+2}{16}$, then $y \geq
\frac{3k^2+11k+22}{16}-\frac{3k^2+2k}{16}=\frac{9k+22}{16}>\frac{k-2}{4}$.
Since $y \in R$, we have $y\geq \frac{k^2+4k+4}{16}$. We can assume
that
$x=\frac{k^2+4k+a}{16},y=\frac{k^2+4k+b}{16},w=\frac{k^2+4k+c}{16}$,
with $a,b,c \in[4,k-6]$. Thus$
z=\frac{4w-x-y}{k}=\frac{2k^2+8k+4c-a-b}{16k}=\frac{2k+8+(4c-a-b)/k}{16}.$
Since $\frac{28-2k}{k} \leq \frac{4c-a-b}{k} \leq \frac{4k-32}{k}$,
it follows that $\frac{4c-a-b}{k}\in[-2,3]$, and hence $z\in
[\frac{2k+6}{16}, \frac{2k+11}{16}]$ is not an integer, and so there
are no monochromatic solutions.
\end{proof}

The proof of Proposition \ref{pro3-17} is:

\begin{proof}
To show the upper bound, we suppose that there is no monochromatic
solution for any $2$-coloring of $[1,\frac{k^2+8k+16}{16}]$. From
Lemma \ref{lem1}, we have $1, 2,...,\frac{k+1}{4}\in R$. The
solution $(2, 2, 1, \frac{k+4}{4})$ shows that $\frac{k+4}{4}\in B$.
Furthermore, $\frac{k^2+6k+8}{16}\in R$ by considering
$(\frac{k+4}{4}, \frac{k+4}{4}, \frac{k+4}{4},
\frac{k^2+6k+8}{16})$. Using $2,4\in R$ in $(4,4,2,
\frac{2k+8}{4})$, we have $\frac{2k+8}{4}\in B$. Then we know that
$\frac{k^2+8k+16}{16}\in R$ by considering
$(\frac{2k+8}{4},\frac{2k+8}{4},\frac{k+4}{4},\frac{k^2+8k+16}{16})$.
It follows that $(\frac{k^2+8k+16}{16},\frac{k^2+8k+16}{16},
\frac{2k+8}{16}, \frac{k^2+6k+8}{16})$ is a red solution, a contradiction.

For the lower bound, $\frac{k^2+8k+16}{16}-1=\frac{k^2+8k}{16}$. Color $[1,\frac{k}{4}]\cup [\frac{k^2+6k+8}{16},\frac{k^2+8k}{16}]$ red and $[\frac{k+4}{4},\frac{k^2+6k-8}{16}]$
blue. For any
solution with $x,y,z$ all blue, we have $4w\geq \frac{k+4}{4}(k+2)$,
and so $w$ must be red. Thus, every monochromatic solution is red.
For every such solution, $w\geq \frac{k^2+6k+8}{16}$. We have $z\leq
\frac{k}{4}$, so if $x, y \leq \frac{k}{4}$ then
$\frac{k^2+6k+8}{16} \leq w \leq \lfloor\frac{k^2+2k}{16}\rfloor$, a
contradiction. Thus, at least one of $x$ or $y$ must be at least
$\frac{k^2+6k+8}{16}$. If $x \geq \frac{k^2+6k+8}{16}$, since
$y+kz=4w-x$, then$$
\frac{4(k^2+6k+8)}{16}-\frac{k^2+8k}{16} \leq y+kz \leq
\frac{4(k^2+8k)}{16}-\frac{k^2+6k+8}{16},$$
i.e.,$
\frac{3k^2+16k+32}{16} \leq y+kz \leq \frac{3k^2+26k-8}{16}.$
If $z \geq \frac{3k+28}{16}$, then $y \leq
\frac{3k^2+26k-8}{16}-\frac{3k^2+28k}{16}=\frac{-2k-8}{16}<0$, which
is impossible. If $z \leq \frac{3k+12}{16}$, then $y \geq
\frac{3k^2+16k+32}{16}-\frac{3k^2+12k}{16}=\frac{4k+32}{16}>\frac{k}{4}$.
Since $y \in R$, we have $y\geq \frac{k^2+6k+8}{16}$. We can assume
that
$x=\frac{k^2+6k+a}{16},y=\frac{k^2+6k+b}{16},w=\frac{k^2+6k+c}{16}$,
with $a,b,c \in[8,2k]$. Thus, we have$
z=\frac{4w-x-y}{k}=\frac{2k^2+12k+4c-a-b}{16k}=\frac{2k+12+(4c-a-b)/k}{16}.$
Since $\frac{32-4k}{k} \leq \frac{4c-a-b}{k} \leq \frac{8k-16}{k}$,
it follows that $\frac{4c-a-b}{k}\in[-3,7]$, and hence $z\in
[\frac{2k+9}{16}, \frac{2k+19}{16}]$ is not an integer, and so there
are no monochromatic solutions.
\end{proof}

The proof of Proposition \ref{pro3-18} is:
\begin{proof}
To show the upper bound, we suppose that there is no monochromatic
solution for any $2$-coloring of $[1,\frac{k^2+7k+12}{16}]$. From
Lemma \ref{lem1}, we have $1, 2,...,\frac{k+1}{4}\in R$. The
solution $(1, 2, 1, \frac{k+3}{4})$ shows that $\frac{k+3}{4}\in B$.
In turn we have $\frac{k^2+5k+6}{16}\in R$ by considering
$(\frac{k+3}{4}, \frac{k+3}{4}, \frac{k+3}{4},
\frac{k^2+5k+6}{16})$. Using $2,3\in R$ in $(3,3,2,
\frac{2k+6}{4})$, we have $\frac{2k+6}{4}\in B$. Then we know that
$\frac{k^2+7k+12}{16}\in R$ by considering
$(\frac{2k+6}{4},\frac{2k+6}{4},\frac{k+3}{4},\frac{k^2+7k+12}{16})$,
and hence $(\frac{k^2+7k+12}{16},\frac{k^2+7k+12}{16},
\frac{2k+6}{16}, \frac{k^2+5k+6}{16})$ is a red solution, a
contradiction.

For the lower bound, $\frac{k^2+7k+12}{16}-1=\frac{k^2+7k-4}{16}$. Color $[1,\frac{k-1}{4}]\cup [\frac{k^2+5k-10}{16},\frac{k^2+7k-4}{16}]$ red and
$[\frac{k+3}{4},\frac{k^2+5k-10}{16}]$ blue. For any solution
with $x,y,z$ all blue, we have $4w\geq \frac{k+3}{4}(k+2)$, and so
$w$ must be red. Thus, every monochromatic solution is red. For
every such solution, $w\geq \frac{k^2+5k+6}{16}$. We have $z\leq
\frac{k-1}{4}$, so if $x, y \leq \frac{k-1}{4}$ then
$\frac{k^2+5k+6}{16} \leq w \leq \lfloor\frac{k^2+k-2}{16}\rfloor$,
a contradiction. Thus at least one of $x$ or $y$ must be at least
$\frac{k^2+5k+6}{16}$. If $x \geq \frac{k^2+5k+6}{16}$, since
$y+kz=4w-x$, then$$
\frac{4(k^2+5k+6)}{16}-\frac{k^2+7k-4}{16} \leq y+kz \leq
\frac{4(k^2+7k-4)}{16}-\frac{k^2+5k+6}{16},$$
i.e.,$
\frac{3k^2+13k+28}{16} \leq y+kz \leq \frac{3k^2+23k-22}{16}.$
If $z \geq \frac{3k+25}{16}$, then $y \leq
\frac{3k^2+23k-22}{16}-\frac{3k^2+25k}{16}=\frac{-2k-22}{16}<0$,
which is impossible. If $z \leq \frac{3k+9}{16}$, then $y \geq
\frac{3k^2+13k+28}{16}-\frac{3k^2+9k}{16}=\frac{4k+28}{16}>\frac{k-1}{4}$.
Since $y \in R$, we have $y\geq \frac{k^2+6k+8}{16}$. We can assume
that
$x=\frac{k^2+5k+a}{16},y=\frac{k^2+5k+b}{16},w=\frac{k^2+5k+c}{16}$,
with $a,b,c \in[6,2k-4]$. Thus$
z=\frac{4w-x-y}{k}=\frac{2k^2+10k+4c-a-b}{16k}=\frac{2k+10+(4c-a-b)/k}{16}.$
Since $\frac{32-4k}{k} \leq \frac{4c-a-b}{k} \leq \frac{8k-28}{k}$,
it follows that $\frac{4c-a-b}{k}\in[-3,7]$, and hence $z\in
[\frac{2k+7}{16}, \frac{2k+17}{16}]$ is not an integer, and so there
are no monochromatic solutions.
\end{proof}

The proof of Proposition \ref{pro3-19} is:
\begin{proof}
To show the upper bound, we suppose that there is no monochromatic
solution for any $2$-coloring of $[1,\frac{k^2+6k+8}{16}]$. From
Lemma \ref{lem1}, we have $1, 2,...,\frac{k-2}{4}\in R$. The
solution $(1, 1, 1, \frac{k+2}{4})$ shows that $\frac{k+2}{4}\in B$.
In turn we have $\frac{k^2+4k+4}{16}\in R$ by considering
$(\frac{k+2}{4}, \frac{k+2}{4}, \frac{k+2}{4},
\frac{k^2+4k+4}{16})$. Using $2\in R$ in $(2,2,2, \frac{2k+4}{4})$,
we have $\frac{2k+4}{4}\in B$. By considering
$(\frac{2k+4}{4},\frac{2k+4}{4},\frac{k+2}{4},\frac{k^2+6k+8}{16})$,
we have $\frac{k^2+6k+8}{16}\in R$, and so
$(\frac{k^2+6k+8}{16},\frac{k^2+6k+8}{16}, \frac{2k+4}{16},
\frac{k^2+4k+4}{16})$ is a red solution.

For the lower bound,
$\frac{k^2+6k+8}{16}-1=\frac{k^2+6k-8}{16}$. We color
$[1,\frac{k-2}{4}]\cup [\frac{k^2+4k+4}{16},\frac{k^2+6k-8}{16}]$ red and $[\frac{k+2}{4},\frac{k^2+4k-12}{16}]$
blue. For any
solution with $x,y,z$ all blue, we have $4w\geq \frac{k+2}{4}(k+2)$,
so $w$ must be red. Clearly, every monochromatic solution is red.
For every such solution, $w\geq \frac{k^2+4k+4}{16}$. Note that
$z\leq \frac{k-2}{4}$. If $x, y \leq \frac{k-2}{4}$, then
$\frac{k^2+4k+4}{16} \leq w \leq \lfloor\frac{k^2-4}{16}\rfloor$, a
contradiction. Thus, at least one of $x$ or $y$ must be at least
$\frac{k^2+4k+4}{16}$. If $x \geq \frac{k^2+4k+4}{16}$, then it
follows from $y+kz=4w-x$ that
$$\frac{4(k^2+4k+4)}{16}-\frac{k^2+6k-8}{16} \leq y+kz \leq
\frac{4(k^2+6k-8)}{16}-\frac{k^2+4k+4}{16},$$ i.e.,
$\frac{3k^2+10k+24}{16} \leq y+kz \leq \frac{3k^2+20k-36}{16}$. If
$z \geq \frac{3k+22}{16}$, then $y \leq
\frac{3k^2+20k-36}{16}-\frac{3k^2+22k}{16}=\frac{-2k-36}{16}<0$,
which is impossible. If $z \leq \frac{3k+6}{16}$, then $y \geq
\frac{3k^2+10k+24}{16}-\frac{3k^2+6k}{16}=\frac{4k+24}{16}>\frac{k-2}{4}$.
Since $y \in R$, we have $y\geq \frac{k^2+4k+4}{16}$. We can assume
that
$x=\frac{k^2+4k+a}{16},y=\frac{k^2+4k+b}{16},w=\frac{k^2+4k+c}{16}$,
with $a,b,c \in[4,2k-8]$. Thus, we have
$z=\frac{4w-x-y}{k}=\frac{2k^2+8k+4c-a-b}{16k}=\frac{2k+8+(4c-a-b)/k}{16}$.
Since $\frac{32-4k}{k} \leq \frac{4c-a-b}{k} \leq \frac{8k-40}{k}$,
it follows that $\frac{4c-a-b}{k}\in[-3,7]$, and hence $z\in
[\frac{2k+5}{16}, \frac{2k+15}{16}]$ is not an integer, and so there
are no monochromatic solutions.
\end{proof}

\section{(For referee) Appendix II: Proof of Theorem \ref{th-Last}}

The proof of Theorem \ref{th-Last} is:

\begin{proof}
Assume, for a contradiction, that there exists a $2$-coloring of
$[1, 5]$ with no monochromatic solution to $x+y+z+kv=(k+1)w$.
Without loss of generality, we assume that $1$ is red. Considering
the solutions $(1,1,1,3,3)$, $(3,3,3,9,9)$, $(1,4,4,9,9)$ and
$(3,4,4,11,11)$, one can see $3,9,4,11$ must be blue, red, blue,
red, respectively, which contradicts to the fact that
$(1,1,9,11,11)$ is a red solution. Hence, we have
$\operatorname{RR}(\mathcal{E}(k,1))\leq 11$, for all $k \in
\mathbb{Z}^+$.

One can easily check that the only valid $2$-colorings (using $r$
for red, $b$ for blue, and assuming that $1$ is red) of $[1,n]$ for
$n=3, 4, 5, 6,7, 8, 9,10$ are in Table $4$.
\begin{table}[H]\label{Talbe2}
\caption{The valid $2$-colorings of $[1,n]$.}
\begin{center}
\begin{tabularx}{10cm}{p{1cm}<{} p{3cm}<{}}
\hline
$n$ & Valid colorings   \\
\hline
$3$  & $rbb, rrb$  \\
$4$  & $rbbr, rbbb, rrbb$ \\
$5$  & $rbbbr, rrbbb, rbbrr, rbbrb, rbbbb$ \\
$6$  & $rrbbbb, rbbbrr, rbbbbr$ \\
$7$  & $rrbbbbr, rbbbbrr, rrbbbbb$ \\
$8$  & $rrbbbbbb, rrbbbbrr, rrbbbbrb, rrbbbbbr$ \\
$9$  & $rrbbbbbbr, rrbbbbbrr$ \\
$10$  & $rrbbbbbbrr$ \\
\hline
\end{tabularx}
\end{center}
\end{table}
If $n\leq 2$, then there is no monochromatic solution to
$\mathcal{E}(5,k,k+j)$. For $n=3$, we consider the valid coloring
$rbb$. If $x,y,z,v$ are all red, then $x+y+z+kv=k+3$. If $x,y,z,v$
are all blue, then $x+y+z+kv\in \{2k+6, 2k+7, 2k+8, 2k+9, 3k+6,
3k+7, 3k+8, 3k+9\}$. If $w$ is red, then $(k+1)w=k+1$. If $w$ is
blue, then $(k+1)w\in \{2k+2, 3k+3\}$. We denote these results by
$R_{x,y,z,v}=\{k+3\}, R_{w}=\{k+1\}$, $B_{w}=\{2k+2, 3k+3\}$,
$B_{x,y,z,v}=\{2k+6, 2k+7, 2k+8, 2k+9, 3k+6, 3k+7, 3k+8, 3k+9\}$. We
see that $B_{x,y,z,v}\cap B_w\neq \emptyset$ only when $k=3 \
(2k+6=3k+3)$, $k=4 \ (2k+7=3k+3)$, $k=5 \ (2k+8=3k+3)$, $k=6 \
(2k+9=3k+3)$.

We now consider the valid coloring $rrb$. If $x, y, z, v$ are all
red, the possible values of $x+y+z+kv$ form the set $\{k+3, k+4,
k+5, k+6, 2k+3, 2k+4, 2k+5, 2k+6 \}$. If $x,y,z,v$ are all blue,
then $x+y+z+kv=3k+9$. If $w$ is red, then $(k+1)w\in \{k+1, 2k+2\}$.
If $w$ is blue, then $(k+1)w=3k+3$. We denote these results by
$R_{x,y,z,v}=\{k+3, k+4, k+5, k+6, 2k+3, 2k+4, 2k+5, 2k+6 \}$, and
$R_{w}=\{k+1, 2k+2\}, \ B_{x,y,z,v}=\{3k+9\}$, $B_{w}=\{3k+3\}$. We
see that $R_{x,y,z,v}\cap R_w\neq \emptyset$ only when $k=1 \
(k+3=2k+2)$, $k=2 \ (k+4=2k+2)$, $k=3 \ (k+5=2k+2)$, $k=4 \
(k+6=2k+2)$. Thus, we have $\operatorname{RR}(\mathcal{E}(k,1))=3$,
for $k=3,4$.

For $n=4$, the valid $2$-colorings are as in Table $5$.
\begin{table}[h]
\caption{The valid $2$-colorings of $[1,4]$.}
\begin{center}
\begin{tabularx}{17cm}{p{3.5cm}<{} p{12.55cm}<{}}
\hline
Valid colorings & Sets  \\
\hline $rbbr$  & $R_{x,y,z,v}=\{ik+j:i=1,4; j=3, 6, 9, 12\};
R_{w}=\{i(k+1):i=1,4\}$
\\& $B_{x,y,z,v}=\{ik+j:i=2,3; j=6, 7, 8, 9\}; B_{w}=\{i(k+1):i=2,3\}$  \\
$rbbb$  & $R_{x,y,z,v}=\{ik+j:i=1; j=3\}; R_{w}=\{i(k+1):i=1\}$
\\& $B_{x,y,z,v}=\{ik+j:i=2,3,4; j=6, 7, 8, 9,10,11,12\}$
\\&$B_{w}=\{i(k+1):i=2,3,4\}$ \\
$rrbb$  & $R_{x,y,z,v}=\{ik+j:i=1,2; j=3,4,5,6\};
R_{w}=\{i(k+1):i=1,2\}$
\\& $B_{x,y,z,v}=\{ik+j:i=3,4; j=9,10,11,12\}; B_{w}=\{i(k+1):i=3,4\}$  \\
\hline
\end{tabularx}
\end{center}
\end{table}

Considering the valid coloring $rbbr$, we see that $R_{x,y,z,v}\cap
R_{w}=\emptyset$ for all $k$, and $B_{x,y,z,v}\cap B_{w}\neq
\emptyset$ only when $k=j-3 \ (3k+3=2k+j)$, $j=6, 7, 8, 9$, and
hence $k=3, 4, 5, 6$. For the valid coloring $rbbb$, we see that
$R_{x,y,z,v}\cap R_{w}=\emptyset$ for all $k$, and $B_{x,y,z,v}\cap
B_{w}\neq \emptyset$ only when $k=j-3 \ (3k+3=2k+j)$, $j=6, 7, 8, 9,
10, 11, 12$, or $k=\frac{j-4}{2} \ (4k+4=2k+j)$, $j=6, 8, 10, 12$,
or $k=j-4 \ (4k+4=3k+j), j=6, 7, 8, 9, 10, 11, 12$. Therefore, $k=1,
2, 3, 4, 5, 6, 7, 8, 9$. For the valid coloring $rrbb$, we see that
$R_{x,y,z,v}\cap R_{w}\neq \emptyset$ only when $k=j-2 \ (2k+2=k+j),
j=3, 4, 5, 6$, and $B_{x,y,z,v}\cap B_{w}\neq \emptyset$ only when
$k=j-4 \ (4k+4=3k+j)$, $j=9, 10, 11, 12$, and hence $k=1, 2, 3, 4,
5, 6, 7, 8$. So, we have $\operatorname{RR}(\mathcal{E}(k,1))=4$ for
$k=5,6$.

For $n=5$, the valid $2$-colorings are as in the following Table
$6$.
\begin{table}[H]
\caption{The valid $2$-colorings of $[1,5]$.}
\begin{center}
\begin{tabularx}{17cm}{p{3.5cm}<{} p{12.55cm}<{}}
\hline
Valid colorings & Sets  \\
\hline $rbbbr$  & $R_{x,y,z,v}=\{ik+j:i=1,5; j=3, 7, 11, 15\};
R_{w}=\{i(k+1):i=1, 5\}$
\\& $B_{x,y,z,v}=\{ik+j:i=2, 3, 4; j=6, 7, 8, 9, 10, 11, 12\}$\\
 &$B_{w}=\{i(k+1):i=2, 3, 4\}$  \\
$rrbbb$  & $R_{x,y,z,v}=\{ik+j:i=1, 2; j=3, 4, 5, 6\};
R_{w}=\{i(k+1):i=1, 2\}$
\\& $B_{x,y,z,v}=\{ik+j:i=3, 4, 5; j=9,10,11,12,13,14,15\}$\\
&$B_{w}=\{i(k+1):i=3,4,5\}$ \\
$rbbrr$  & $R_{x,y,z,v}=\{ik+j:i=1,4,5; j=3,6,10,11,\ldots,15\}$\\
&$R_{w}=\{i(k+1):i=1,4,5\}$
\\& $B_{x,y,z,v}=\{ik+j:i=2,3; j=6,7,8,9\}; B_{w}=\{i(k+1):i=2,3\}$  \\
$rbbrb$  & $R_{x,y,z,v}=\{ik+j:i=1,4; j=3,6,9,12\};
R_{w}=\{i(k+1):i=1,4\}$
\\& $B_{x,y,z,v}=\{ik+j:i=2,3,5; j=6,7,8,9,11,12,13,15\}$\\ &$B_{w}=\{i(k+1):i=2,3,5\}$  \\
$rbbbb$  & $R_{x,y,z,v}=\{ik+j:i=1; j=3\}; R_{w}=\{i(k+1):i=1\}$
\\& $B_{x,y,z,v}=\{ik+j:i=2,3,4,5; j=6,7,\ldots,15\}$\\
& $B_{w}=\{i(k+1):i=2,3,4,5\}$ \\
\hline
\end{tabularx}
\end{center}
\end{table}

Considering the valid coloring $rbbbr$, we see that $R_{x,y,z,v}\cap
R_{w}=\emptyset$ for all $k$, and $B_{x,y,z,v}\cap B_{w}\neq
\emptyset$ only when $k=j-3 \ (3k+3=2k+j)$, $j=6, 7, 8, 9,10,11,12$,
or $k=\frac{j-4}{2} \ (4k+4=2k+j)$, $j=6,8,10,12$, or $k=j-4 \
(4k+4=3k+j)$, $j=6, 7, 8, 9,10,11,12$, and hence $k=1,2,\ldots,9$.

For the valid coloring $rrbbb$, we see that $R_{x,y,z,v}\cap
R_{w}\neq\emptyset$ only when $k=j-2 \ (2k+2=k+j)$, $j=3,4,5,6$, and
$B_{x,y,z,v}\cap B_{w}\neq \emptyset$ only when $k=j-4 \
(4k+4=3k+j)$, $j=9,10,\ldots,15$, $k=\frac{j-5}{2} \ (5k+5=3k+j)$,
$j=9,11,13,15$, or $k=j-5 \ (5k+5=4k+j), j=9,10,\ldots,15$, and hence
$k=1,2,\ldots,11$.

For the valid coloring $rbbrr$, one can see that $R_{x,y,z,v}\cap
R_{w}\neq\emptyset$ only when $k=\frac{j-4}{3} \ (4k+4=k+j)$,
$j=10,13$, or $k=\frac{j-5}{4} \ (5k+5=k+j)$, $j=13$, or $k=j-5 \
(5k+5=4k+j)$, $j=6,10,11,\ldots,15$. Then $B_{x,y,z,v}\cap B_{w}\neq
\emptyset$ only when $k=j-3 \ (3k+3=2k+j)$, $j=6,7,8,9$, and hence
$k=1,2,\ldots,10$.

For the valid coloring $rbbrb$, we see that $R_{x,y,z,v}\cap
R_{w}=\emptyset$, and $B_{x,y,z,v}\cap B_{w}\neq \emptyset$ only
when $k=j-3 \ (3k+3=2k+j)$, $j=6,7,8,9,11,12,13,15$, or
$k=\frac{j-5}{3} \ (5k+5=2k+j)$, $j=8,11$, or $k=\frac{j-5}{2} \
(5k+5=3k+j)$, $j=7,9,11,13,15$. Therefore, we have
$k=1,2,\ldots,10,12$.

For the valid coloring $rbbbb$, one can see that $R_{x,y,z,v}\cap
R_{w}=\emptyset$, and $B_{x,y,z,v}\cap B_{w}\neq \emptyset$ only
when $k=j-3 \ (3k+3=2k+j)$, $j=6,7,\ldots,15$, or $k=\frac{j-4}{2} \
(4k+4=2k+j)$, $j=6,8,10,12,14$, or $k=\frac{j-5}{3}  \ (5k+5=2k+j)$,
$j=8,11,14$, or $k=j-4 \ (4k+4=3k+j)$, $j=6,7,\ldots,15$, or
$k=\frac{j-5}{2} \ (5k+5=3k+j)$, $j=7,9,11,13,15$ or $k=j-5 \
(5k+5=4k+j)$, $j=6,7,\ldots,15$. Thus, $k=1,2,\ldots,12$, and hence
$\operatorname{RR}(\mathcal{E}(k,1))=5$, for $k=1,2,7,8,9$.

For $n=6$, the valid $2$-colorings are as in the following Table
$7$.
\begin{table}[H]
\caption{The valid $2$-colorings of $[1,6]$.}
\begin{center}
\begin{tabularx}{17cm}{p{3.5cm}<{} p{12.55cm}<{}}
\hline
Valid colorings & Sets  \\
\hline $rrbbbb$  & $R_{x,y,z,v}=\{ik+j:i=1,2; j=3,4,5,6\};
R_{w}=\{i(k+1):i=1,2\}$
\\& $B_{x,y,z,v}=\{ik+j:i=3,4,5,6; j=9,10,\ldots,18\}$\\
&$B_{w}=\{i(k+1):i=3,4,5,6\}$  \\
$rbbbrr$  & $R_{x,y,z,v}=\{ik+j:i=1,5,6;
j=3,7,8,11,12,13,15,16,17,18\}$ \\ &$R_{w}=\{i(k+1):i=1,5,6\}; B_{w}=\{i(k+1):i=2,3,4\}$
\\& $B_{x,y,z,v}=\{ik+j:i=2,3,4; j=6,7,...,12\}$ \\
$rbbbbr$  & $R_{x,y,z,v}=\{ik+j:i=1,6; j=3,8,13,18\};
R_{w}=\{i(k+1):i=1,6\}$
\\& $B_{x,y,z,v}=\{ik+j:i=2,3,4,5; j=6,7,\ldots,15\}$\\
&$B_{w}=\{i(k+1):i=2,3,4,5\}$  \\
\hline
\end{tabularx}
\end{center}
\end{table}

Considering the valid coloring $rrbbbb$, we see that
$R_{x,y,z,v}\cap R_{w}\neq\emptyset$ only when $k=j-2 \
(2k+2=k+j),j=3,4,5,6$, and $B_{x,y,z,v}\cap B_{w}\neq \emptyset$
only when $k=j-4 \ (4k+4=3k+j), j=9,10,\ldots,18$, $k=\frac{j-5}{2} \
(5k+5=3k+j), j=9,11,13,15,17$, $k=\frac{j-6}{3} \ (6k+6=3k+j)$,
$j=9,12,15,18$, $k=j-5 \ (5k+5=4k+j)$, $j=9,10,\ldots,18$,
$k=\frac{j-6}{2} \ (6k+6=4k+j)$, $j=10,12,14,16,18$, $k=j-6 \
(6k+6=5k+j), j=9,10,\ldots,18$, and hence $k=1,2,\ldots,14$.

For the valid coloring $rbbbrr$, we see that $R_{x,y,z,v}\cap
R_{w}\neq\emptyset$ only when $k=\frac{j-5}{4} \ (5k+5=k+j)$,
$j=13,17$, $k=\frac{j-6}{5} \ (6k+6=k+j)$, $j=11,16$, $k=j-6 \
(6k+6=5k+j)$, $j=7,8,11,12,13,15,16,17,18$, and $B_{x,y,z,v}\cap
B_{w}\neq \emptyset$ only when $k=j-3 \ (3k+3=2k+j)$,
$j=7,8,11,12,13,15,16,17,18$, $k=\frac{j-4}{2} \ (4k+4=2k+j)$,
$j=8,12,16,18$ and $k=j-4 \ (4k+4=3k+j)$,
$j=7,8,11,12,13,15,16,17,18$, and so $k=1,2,\ldots,15$.

For the valid coloring $rbbbbr$, we see that $R_{x,y,z,v}\cap
R_{w}=\emptyset$ and $B_{x,y,z,v}\cap B_{w}\neq \emptyset$ only when
$k=j-3  \ (3k+3=2k+j), j=6,7,\ldots,15$, $k=\frac{j-4}{2} \
(4k+4=2k+j), j=6,8,10,12,14$, $k=\frac{j-5}{3} \ (5k+5=2k+j),
j=8,11,14$, $k=j-4 \ (4k+4=3k+j), j=6,7,\ldots,15$, $k=\frac{j-5}{2} \
(5k+5=3k+j), j=7,9,11,13,15$, $k=j-5 \ (5k+5=4k+j),
j=6,7,\ldots,15$,thus $k=1,2,\ldots,12$. It follows that
$\operatorname{RR}(\mathcal{E}(k,1))=6$, for $k=10,11,12$.

For $n=7$, the valid $2$-colorings are as in Table $8$.
\begin{table}[ht]
\caption{The valid 2-colorings of $[1,7]$.}
\begin{center}
\begin{tabularx}{17cm}{p{3.5cm}<{} p{12.55cm}<{}}
\hline
Valid colorings & Sets  \\
\hline
$rrbbbbr$  & $R_{x,y,z,v}=\{ik+j:i=1,2,7; j=3,4,5,6,9,10,11,15,16,21\}$\\
& $R_{w}=\{i(k+1):i=1,2,7\}$;$B_{w}=\{i(k+1):i=3,4,5,6\}$
\\& $B_{x,y,z,v}=\{ik+j:i=3,4,5,6; j=9,10,\ldots,18\}$\\
$rbbbbrr$  & $R_{x,y,z,v}=\{ik+j:i=1,6,7;
j=3,8,9,13,14,15,18,19,20,21\}$ \\ &$R_{w}=\{i(k+1):i=1,6,7\}$;$B_{w}=\{i(k+1):i=2,3,4,5\}$
\\& $B_{x,y,z,v}=\{ik+j:i=2,3,4,5; j=6,7,...,15\}$\\
$rrbbbbb$  & $R_{x,y,z,v}=\{ik+j:i=1,2; j=3,4,5,6\};
R_{w}=\{i(k+1):i=1,2\}$
\\& $B_{x,y,z,v}=\{ik+j:i=3,4,5,6,7; j=9,10,\ldots,21\}$\\
&$B_{w}=\{i(k+1):i=3,4,5,6,7\}$  \\
\hline
\end{tabularx}
\end{center}
\end{table}

Considering the valid coloring $rrbbbbr$, we see that
$R_{x,y,z,v}\cap R_{w}\neq\emptyset$ only when $k=j-2 \ (2k+2=k+j),
j=3,4,5,6,9,10,11,15,16,21$, and $B_{x,y,z,v}\cap B_{w}\neq
\emptyset$ only when $k=j-4 \ (4k+4=3k+j), j=9,10,\ldots,18$,
$k=\frac{j-5}{2} \ (5k+5=3k+j), j=9,11,13,15,17$, $k=\frac{j-6}{3} \
(6k+6=3k+j), j=9,12,15,18$, $k=j-5 \ (5k+5=4k+j), j=9,10,...,18$,
$k=\frac{j-6}{2} \ (6k+6=4k+j), j=10,12,14,16,18$, $k=j-6 \
(6k+6=5k+j), j=9,10,\ldots,18$, thus $k=1,2,\ldots,14,19$.

For the valid coloring $rbbbbrr$, we see that $R_{x,y,z,v}\cap
R_{w}\neq\emptyset$ only when $k=\frac{j-6}{5}, j=21$,
$k=\frac{j-7}{6}, j=13,19$, $k=j-7, j=8,9,13,14,15,18,19,20,21$, and
$B_{x,y,z,v}\cap B_{w}\neq \emptyset$ only when $k=j-3 \
(3k+3=2k+j), j=6,7,\ldots,15$, $k=\frac{j-4}{2} \ (4k+4=2k+j),
j=6,8,10,12,14$, $k=\frac{j-5}{3} \ (5k+5=2k+j), j=8,11,14$, $k=j-4
\ (4k+4=3k+j), j=6,7,\ldots,15$, $k=\frac{j-5}{2} \ (5k+5=3k+j),
j=7,9,11,13,15$, $k=j-5 \ (5k+5=4k+j), j=6,7,\ldots,15$, thus
$k=1,2,\ldots,14$.

For the valid coloring $rrbbbbb$, we see that $R_{x,y,z,v}\cap
R_{w}\neq\emptyset$ only when $k=j-2 \ (2k+2=k+j),j=3,4,5,6$, and
$B_{x,y,z,v}\cap B_{w}\neq \emptyset$ only when $k=j-4 \
(4k+4=3k+j), j=9,10,\ldots,21$, $k=\frac{j-5}{2} \ (5k+5=3k+j),
j=9,11,13,15,17,19,21$, $k=\frac{j-6}{3} \ (6k+6=3k+j),
j=9,12,15,18,21$, $k=\frac{j-7}{4} \ (7k+7=3k+j), j=11,15,19$,
$k=j-5 \ (5k+5=4k+j), j=9,10,\ldots,21$, $k=\frac{j-6}{2} \
(6k+6=4k+j), j=10,12,14,16,18,20$, $k=\frac{j-7}{3} \ (7k+7=4k+j),
j=10,13,16,19,21$, $k=j-6 \ (6k+6=5k+j), j=9,10,\ldots,21$,
$k=\frac{j-7}{2} \ (7k+7=5k+j), j=9,11,13,15,17,19,21$, $k=j-7 \
(7k+7=6k+j), j=9,10,\ldots,21$, thus $k=1,2,\ldots,17$. It follows that
$\operatorname{RR}(\mathcal{E}(k,1))=6$, for $k=13,14$.

For $n=8$, the valid $2$-colorings are as in the following Table
$9$.
\begin{table}[ht]
\caption{The valid 2-colorings of [1,8].}
\begin{center}
\begin{tabularx}{17cm}{p{3.5cm}<{} p{12.55cm}<{}}
\hline
Valid colorings & Sets  \\
\hline $rrbbbbbb$  & $R_{x,y,z,v}=\{ik+j:i=1,2; j=3,4,5,6\};
R_{w}=\{i(k+1):i=1,2\}$
\\& $B_{x,y,z,v}=\{ik+j:i=3,4,5,6,7,8; j=9,10,\ldots,24\}$ \\ & $B_{w}=\{i(k+1):i=3,4,5,6,7,8\}$  \\
$rrbbbbrr$  & $R_{x,y,z,v}=\{ik+j:i=1,2,7,8;
j=3,4,5,6,9,10,11,12,15,16,17,18\}$ \\
&$R_{w}=\{i(k+1):i=1,2,7,8\}$;$B_{w}=\{i(k+1):i=3,4,5,6\}$
\\& $B_{x,y,z,v}=\{ik+j:i=3,4,5,6; j=9,10,\ldots,18\}$\\
$rrbbbbrb$  & $R_{x,y,z,v}=\{ik+j:i=1,2,7; j=3,4,5,6,9,10,11,15,16,21\}$\\
& $R_{w}=\{i(k+1):i=1,2,7\}$; $B_{w}=\{i(k+1):i=3,4,5,6,8\}$
\\& $B_{x,y,z,v}=\{ik+j:i=3,4,5,6,8; j=9,10,\ldots,22,24\}$\\
$rrbbbbbr$  & $R_{x,y,z,v}=\{ik+j:i=1,2,8; j=3,4,5,6,10,11,12,17,18,24\}$\\ & $R_{w}=\{i(k+1):i=1,2,8\}$; $B_{w}=\{i(k+1):i=3,4,5,6,7\}$ \\
& $B_{x,y,z,v}=\{ik+j:i=3,4,5,6,7; j=9,10,\ldots,21\}$ \\
\hline
\end{tabularx}
\end{center}
\end{table}

Considering the valid coloring $rrbbbbbb$, we see that
$R_{x,y,z,v}\cap R_{w}\neq\emptyset$ only when
$k=j-2(2k+2=k+j),j=3,4,5,6$, and $B_{x,y,z,v}\cap B_{w}\neq
\emptyset$ only when $k=j-4 \ (4k+4=3k+j), j=9,10,\ldots,24$,
$k=\frac{j-5}{2} \ (5k+5=3k+j), j=9,11,13,15,17,19,21,23$,
$k=\frac{j-6}{3} \ (6k+6=3k+j), j=9,12,15,18,21,24$,
$k=\frac{j-7}{4} \ (7k+7=3k+j), j=11,15,19,23$, $k=\frac{j-8}{5} \
(8k+8=3k+j), j=13,18,23$, $k=j-5 \ (5k+5=4k+j), j=9,10,\ldots,24$,
$k=\frac{j-6}{2} \ (6k+6=4k+j), j=10,12,14,16,18,20,22,24$,
$k=\frac{j-7}{3} \ (7k+7=4k+j), j=10,13,16,19,22$, $k=\frac{j-8}{4}
\ (8k+8=4k+j), j=12,16,20,24$, $k=j-6 \ (6k+6=5k+j), j=9,10,\ldots,24$,
$k=\frac{j-7}{2} \ (7k+7=5k+j), j=9,11,13,15,17,19,21,23$,
$k=\frac{j-8}{3} \ (8k+8=5k+j), j=11,14,17,20,23$, $k=j-7 \
(7k+7=6k+j), j=9,10,\ldots,24$, $k=\frac{j-8}{2} \ (8k+8=6k+j),
j=10,12,14,16,18,20,22,24$, $k=j-8 \ (8k+8=7k+j), j=9,10,\ldots,24$,
thus $k=1,2,\ldots,19$.

For the valid coloring $rrbbbbrr$, we see that $R_{x,y,z,v}\cap
R_{w}\neq\emptyset$ only when $k=j-2 \ (2k+2=k+j),
j=3,4,5,6,9,10,11,12,15,16,17,18$, $k=\frac{j-8}{7} \
(8k+8=k+j),j=15$, $k=\frac{j-7}{5} \ (7k+7=2k+j),j=12,17$, $k=j-8 \
(8k+8=7k+j), j=9,10,11,12,15,16,17,18$ and $B_{x,y,z,v}\cap
B_{w}\neq \emptyset$ only when $k=j-4 \ (4k+4=3k+j), j=9,10\ldots,18$,
$k=\frac{j-5}{2} \ (5k+5=3k+j), j=9,11,13,15,17$, $k=\frac{j-6}{3} \
(6k+6=3k+j), j=9,12,15,18$, $k=j-5 \ (5k+5=4k+j), j=9,10,\ldots,18$,
$k=\frac{j-6}{2} \ (6k+6=4k+j), j=10,12,14,16,18$, $k=j-6 \
(6k+6=5k+j), j=9,10,\ldots,18$, thus $k=1,2,\ldots,16$.

For the valid coloring $rrbbbbrb$, we see that $R_{x,y,z,v}\cap
R_{w}\neq\emptyset$ only when $k=j-2 \ (2k+2=k+j),
j=3,4,5,6,9,10,11,15,16,21$, and $B_{x,y,z,v}\cap B_{w}\neq
\emptyset$ only when $k=j-4 \ (4k+4=3k+j), j=9,10,\ldots,22,24$,
$k=\frac{j-5}{2} \ (5k+5=3k+j), j=9,11,13,15,17,19,21$,
$k=\frac{j-6}{3} \ (6k+6=3k+j), j=9,12,15,18,21,24$,
$k=\frac{j-8}{5} \ (8k+8=3k+j), j=13,18$, $k=j-5 \ (5k+5=4k+j),
j=9,10,\ldots,22,24$, $k=\frac{j-6}{2} \ (6k+6=4k+j),
j=10,12,14,16,18,20,22,24$, $k=\frac{j-8}{4} \ (8k+8=4k+j),
j=12,16,20,24$, $k=j-6 \ (6k+6=5k+j), j=9,10,\ldots,22,24$,
$k=\frac{j-8}{3} \ (8k+8=5k+j), j=11,14,17,20$, $k=\frac{j-8}{2} \
(8k+8=6k+j), j=10,12,14,16,18,20,22,24$, thus $k=1,2,\ldots,20$.

For the valid coloring $rrbbbbbr$, we see that $R_{x,y,z,v}\cap
R_{w}\neq\emptyset$ only when $k=j-2 \
(2k+2=k+j),j=3,4,5,6,10,11,12,17,18,24$, and $B_{x,y,z,v}\cap
B_{w}\neq \emptyset$ only when $k=j-4 \ (4k+4=3k+j), j=9,10,\ldots,21$,
$k=\frac{j-5}{2} \ (5k+5=3k+j), j=9,11,13,15,17,19,21$,
$k=\frac{j-6}{3} \ (6k+6=3k+j), j=9,12,15,18,21$, $k=\frac{j-7}{4} \
(7k+7=3k+j), j=11,15,19$, $k=j-5 \ (5k+5=4k+j), j=9,10,\ldots,21$,
$k=\frac{j-6}{2} \ (6k+6=4k+j), j=10,12,14,16,18,20$,
$k=\frac{j-7}{3} \ (7k+7=4k+j), j=10,13,16,19,21$, $k=j-6 \
(6k+6=5k+j), j=9,10,\ldots,21$, $k=\frac{j-7}{2} \ (7k+7=5k+j),
j=9,11,13,15,17,19,21$, $k=j-7 \ (7k+7=6k+j), j=9,10,\ldots,21$, thus
$k=1,2,\ldots,17,22$. It follows that
$\operatorname{RR}(\mathcal{E}(k,1))=8$, for $k=15,16$.

For $n=9$, the valid $2$-colorings are as in Table $10$.
\begin{table}[h]
\caption{The valid $2$-colorings of $[1,9]$.}
\begin{center}
\begin{tabularx}{17cm}{p{3.5cm}<{} p{12.55cm}<{}}
\hline
Valid colorings & Sets  \\
\hline
$rrbbbbbbr$  & $R_{x,y,z,v}=\{ik+j:i=1,2,9; j=3,4,5,6,11,12,13,19,20\}$\\
& $B_{x,y,z,v}=\{ik+j:i=3,4,5,6,7,8; j=9,10,\ldots,24\}$\\
& $R_{w}=\{i(k+1):i=1,2,9\}$;$B_{w}=\{i(k+1):i=3,4,5,6,7,8\}$ \\
$rrbbbbbrr$  & $R_{x,y,z,v}=\{ik+j:i=1,2,8,9; j=3,4,5,6,10,11,12,13,17,18,19,20,$\\
&$24,25,26,27\}$; $R_{w}=\{i(k+1):i=1,2,8,9\}$\\
& $B_{x,y,z,v}=\{ik+j:i=3,4,5,6,7; j=9,10,\ldots,21\}$  \\
&$B_{w}=\{i(k+1):i=3,4,5,6,7\}$\\
\hline
\end{tabularx}
\end{center}
\end{table}

Considering the valid coloring $rrbbbbbbr$, we see that
$R_{x,y,z,v}\cap R_{w}\neq\emptyset$ only when $k=j-2 \ (2k+2=k+j),
j=3,4,5,6,11,12,13,19,20$, $B_{x,y,z,v}\cap B_{w}\neq \emptyset$
only when $k=j-4 \ (4k+4=3k+j), j=9,10,...,24$, $k=\frac{j-5}{2} \
(5k+5=3k+j), j=9,11,13,15,17,19,21,23$, $k=\frac{j-6}{3} \
(6k+6=3k+j), j=9,12,15,18,21,24$, $k=\frac{j-7}{4}(7k+7=3k+j),
j=11,15,19,23$, $k=\frac{j-8}{5} \ (8k+8=3k+j), j=13,18,23$, $k=j-5
\ (5k+5=4k+j), j=9,10,\ldots,24$, $k=\frac{j-6}{2} \ (6k+6=4k+j),
j=10,12,14,16,18,20,22,24$, $k=\frac{j-7}{3} \ (7k+7=4k+j),
j=10,13,16,19,22$, $k=\frac{j-8}{4}(8k+8=4k+j), j=12,16,20,24$,
$k=j-6 \ (6k+6=5k+j), j=9,10,\ldots,24$, $k=\frac{j-7}{2} \
(7k+7=5k+j), j=9,11,13,15,17,19,21,23$, $k=\frac{j-8}{3} \
(8k+8=5k+j), j=11,14,17,20,23$, $k=j-7 \ (7k+7=6k+j),
j=9,10,\ldots,24$, $k=\frac{j-8}{2} \ (8k+8=6k+j),
j=10,12,14,16,18,20,22,24$, $k=j-8 \ (8k+8=7k+j), j=9,10,\ldots,24$,
thus $k=1,2,\ldots,19$.

For the valid coloring $rrbbbbbrr$, we see that $R_{x,y,z,v}\cap
R_{w}\neq\emptyset$ only when $k=j-2 \
(2k+2=k+j),j=3,4,5,6,10,11,12,13,17,18,19,20,24,25$,
$k=\frac{j-8}{6} \ (8k+8=2k+j),j=20$, $k=\frac{j-9}{8} \
(9k+9=k+j),j=17,25$, $k=j-9 \
(9k+9=8k+j),j=10,11,12,13,17,18,19,20,24,25$, and $B_{x,y,z,v}\cap
B_{w}\neq \emptyset$ only when $k=j-4 \ (4k+4=3k+j), j=9,10,\ldots,21$,
$k=\frac{j-5}{2} \ (5k+5=3k+j), j=9,11,13,15,17,19,21$,
$k=\frac{j-6}{3} \ (6k+6=3k+j), j=9,12,15,18,21$, $k=\frac{j-7}{4} \
(7k+7=3k+j), j=11,15,19$, $k=j-5 \ (5k+5=4k+j), j=9,10,\ldots,21$,
$k=\frac{j-6}{2} \ (6k+6=4k+j), j=10,12,14,16,18,20$,
$k=\frac{j-7}{3} \ (7k+7=4k+j), j=10,13,16,19,21$, $k=j-6 \
(6k+6=5k+j), j=9,10,\ldots,21$, $k=\frac{j-7}{2} \ (7k+7=5k+j),
j=9,11,13,15,17,19,21$, $k=j-7 \ (7k+7=6k+j), j=9,10,\ldots,21$, thus
$k=1,2,\ldots,18,22,23$. It follows that
$\operatorname{RR}(\mathcal{E}(k,1))=9$, for $k=17,18$.

For $n=10$, the valid $2$-colorings are as in Table $11$.
\begin{table}[h]
\caption{The valid $2$-colorings of $[1,10]$.}
\begin{center}
\begin{tabularx}{17cm}{p{3.5cm}<{} p{12.55cm}<{}}
\hline
Valid colorings & Sets  \\
\hline
$rrbbbbbbrr$  & $R_{x,y,z,v}=\{ik+j:i=1,2,9,10; j=3,4,5,6,11,12,13,14,19,20,21,$\\
& $22,27,28,29,30\}$; $R_{w}=\{i(k+1):i=1,2,9,10\}$\\
& $B_{x,y,z,v}=\{ik+j:i=3,4,5,6,7,8; j=9,10,\ldots,24\}$\\
 &$B_{w}=\{i(k+1):i=3,4,5,6\}$ \\

\hline
\end{tabularx}
\end{center}
\end{table}

Considering the valid coloring $rrbbbbbbrr$, we see that
$R_{x,y,z,v}\cap R_{w}\neq\emptyset$ only when $k=j-2(2k+2=k+j),
j=3,4,5,6,11,12,13,19,20,21,22,27,28,29,30$, or $k=\frac{j-10}{9} \
(10k+10=k+j),j=19,28$, and $B_{x,y,z,v}\cap B_{w}\neq \emptyset$
only when $k=j-4 \ (4k+4=3k+j), j=9,10,\ldots,24$, or $k=\frac{j-5}{2}
\ (5k+5=3k+j)$, $j=9,11,13,15,17,19,21,23$, or $k=\frac{j-6}{3} \
(6k+6=3k+j)$, $j=9,12,15,18,21,24$, or $k=\frac{j-7}{4} \
(7k+7=3k+j)$, $j=11,15,19,23$, or $k=\frac{j-8}{5} \ (8k+8=3k+j)$,
$j=13,18,23$, or $k=j-5 \ (5k+5=4k+j)$, $j=9,10,\ldots,24$,
$k=\frac{j-6}{2} \ (6k+6=4k+j)$, $j=10,12,14,16,18,20,22,24$, or
$k=\frac{j-7}{3} \ (7k+7=4k+j)$, $j=10,13,16,19,22$,
$k=\frac{j-8}{4} \ (8k+8=4k+j)$, $j=12,16,20,24$, or $k=j-6 \
(6k+6=5k+j)$, $j=9,10,\ldots,24$, or $k=\frac{j-7}{2} \ (7k+7=5k+j)$,
$j=9,11,13,15,17,19,21,23$, or $k=\frac{j-8}{3} \ (8k+8=5k+j)$,
$j=11,14,17,20,23$, $k=j-7 \ (7k+7=6k+j), j=9,10,\ldots,24$, or
$k=\frac{j-8}{2} \ (8k+8=6k+j)$, $j=10,12,14,16,18,20,22,24$, or
$k=j-8 \ (8k+8=7k+j)$, $j=9,10,\ldots,24$. Thus, $k=1,2,\ldots,19$, and
hence $\operatorname{RR}(\mathcal{E}(k,1))=10$, for $k=19$.
\end{proof}

\end{document}